\documentclass[uplatex,11pt,a4paper]{amsart} % arXiv用
\usepackage{mathtools,amssymb,amsthm}
\usepackage{bm}
\usepackage{mathrsfs} % \mathscr
\usepackage{stmaryrd} % inverse \mapsto (\mapsfrom and \longmapsfrom) https://ftp.yz.yamagata-u.ac.jp/pub/CTAN/fonts/stmaryrd/stmaryrd.pdf
\usepackage{eucal}

\usepackage{xcolor, graphicx, here}
\definecolor{darkgreen}{rgb}{0.0, 0.6, 0.0}

\usepackage{tikz}
\usetikzlibrary{intersections, calc, arrows, cd}

\usepackage[colorlinks,citecolor=darkgreen,linkcolor=red]{hyperref} % arXiv用

\usepackage{geometry}
\numberwithin{equation}{section}
\numberwithin{figure}{section}
\numberwithin{table}{section}
\allowdisplaybreaks %align環境で式の途中で改ページを許す

\usepackage{enumitem}
\setenumerate[1]{label={\upshape(\arabic*)},nosep}
\setenumerate[2]{label={\upshape(\alph*)},nosep}
\setitemize{nosep}
\setdescription{nosep}

\newlist{enumarabic}{enumerate}{1}
\setlist*[enumarabic]{label={\upshape(\arabic*)}, nosep}

\newlist{enumAlph}{enumerate}{1}
\setlist*[enumAlph]{label={\upshape(\Alph*)}, nosep}

\newlist{enumalph}{enumerate}{1}
\setlist*[enumalph]{label={\upshape(\alph*)}, nosep}

\newlist{enumRoman}{enumerate}{1}
\setlist*[enumRoman]{label={\upshape(\Roman*)}), nosep}

\newlist{enumroman}{enumerate}{1}
\setlist*[enumroman]{label={\upshape(\roman*)}, nosep}

\usepackage{cleveref}
 \crefname{section}{\S\!}{\S\S\!}
 \crefname{subsection}{\S\!}{\S\S\!}
 \crefname{subsubsection}{\S\!}{\S\S\!}

\theoremstyle{plain}
\newtheorem{theorem}{Theorem}[section]
 \crefname{theorem}{Theorem}{Theorems}
\newtheorem{theorem-definition}[theorem]{Theorem-Definition}
 \crefname{theorem-definition}{Theorem-Definition}{Theorem-Definition}
\newtheorem{lemma}[theorem]{Lemma}
 \crefname{lemma}{Lemma}{Lemmas}
\newtheorem{proposition}[theorem]{Proposition}
 \crefname{proposition}{Proposition}{Propositions}
\newtheorem{proposition-definition}[theorem]{Proposition-Definition}
 \crefname{proposition-definition}{Proposition-Definition}{Proposition-Definition}
\newtheorem{corollary}[theorem]{Corollary}
 \crefname{corollary}{Corollary}{Corollaries}

 \crefname{fact}{Fact}{Facts}
\newtheorem{claim}[theorem]{Claim}
 \crefname{claim}{Claim}{Claims}

 \crefname{conjecture}{Conjecture}{Conjectures}

 \crefname{problem}{Problem}{Problems}

 \crefname{question}{Question}{Questions}
\newtheorem{theoremA}{Theorem}
 \crefname{theoremA}{Theorem}{Theorems} 

\theoremstyle{definition}
\newtheorem{definition}[theorem]{Definition}
 \crefname{definition}{Definition}{Definitions}
\newtheorem{notation}[theorem]{Notation}
 \crefname{notation}{Notation}{Notations}
\newtheorem{remark}[theorem]{Remark}
 \crefname{remark}{Remark}{Remarks}
 \newtheorem{example}[theorem]{Example}
 \crefname{example}{Example}{Examples}

 \crefname{observation}{Observation}{Observations}

 \crefname{construction}{Construction}{Constructions}

\newenvironment{pfclaim}[1][$\Diamond$]
{\def\claimQED{{#1}}\noindent {\em Proof of the claim. }}
{\leavevmode\unskip\penalty9999 \hbox{}\nobreak\hfill
    \quad\hbox{\claimQED}{\smallskip}}

\newcommand{\ol}{\overline}

\newcommand{\wh}{\widehat}
\newcommand{\wt}{\widetilde}

\newcommand{\xr}{\xrightarrow}

\newcommand{\F}{\mathbb{F}}
\newcommand{\N}{\mathbb{N}}
\newcommand{\Q}{\mathbb{Q}}
\newcommand{\R}{\mathbb{R}}
\newcommand{\Z}{\mathbb{Z}}

\newcommand{\bbA}{\mathbb{A}}
\newcommand{\bbL}{\mathbb{L}}

\newcommand{\bbR}{\mathbb{R}}

\newcommand{\fraka}{\mathfrak{a}}
\newcommand{\frakb}{\mathfrak{b}}

\newcommand{\m}{\mathfrak{m}}
\newcommand{\n}{\mathfrak{n}}
\newcommand{\p}{\mathfrak{p}}

\DeclareMathOperator{\pd}{\mathrm{pd}}

\DeclareMathOperator{\Spec}{Spec}

\DeclareMathOperator{\depth}{depth}
\DeclareMathOperator{\edim}{edim}

\DeclareMathOperator{\rk}{rk}

\newcommand{\torsion}[1]{\textrm{$#1$-}\mathrm{tor}}
\newcommand{\Itor}{\torsion{I}}
\newcommand{\Jtor}{\torsion{J}}

\newcommand{\Iztor}{\torsion{I_0}}
\newcommand{\Izstor}{\torsion{I_0^{s.\flat}}}

\def\et{\mathrm{\acute{e}t}}

\newcommand{\red}{\mathrm{red}}

\newcommand{\calD}{\mathcal{D}}

\newcommand{\Ab}{\mathsf{Ab}}

\newcommand{\id}{\mathrm{id}}
\DeclareMathOperator{\Hom}{Hom}

\DeclareMathOperator{\Tor}{Tor}
\DeclareMathOperator{\Ker}{Ker}

\DeclareMathOperator{\Image}{Im} \renewcommand{\Im}{\Image}
\DeclareMathOperator{\cosk}{cosk}
\DeclareMathOperator{\sk}{sk}

\newenvironment{bsmatrix}{\left[\begin{smallmatrix}}{\end{smallmatrix}\right]}

\begin{document}
\title[Structural properties and tilting correspondences of perfectoid towers]{Structural properties and tilting correspondences of perfectoid towers}

\author[K.\ Hayashi]{Kazuki Hayashi}
\address{Department of Mathematics, Institute of Science Tokyo, 2-12-1 Ookayama, Meguro, Tokyo 152-8551}
\email{hazuki0694@gmail.com}

\subjclass[2020]{Primary 14G45; Secondary 13B02}
\keywords{perfectoid rings, perfectoid towers, tilting, \'{e}tale cohomology, Koszul homology, $p$-Stanley--Reisner rings, Cohen--Macaulay rings, Gorenstein rings, complete intersection rings, regular rings}

\begin{abstract}
We prove that every perfectoid tower can be realized as the fiber product of a diagram involving perfectoid towers that are either $p$-torsion free or perfect of characteristic $p$. As an application, we conclude that separated perfectoid towers are reduced. We also establish the tilting invariance of Koszul homology for perfectoid towers. As further applications, we prove that tilting preserves fundamental properties of Noetherian local rings such as being Cohen--Macaulay, Gorenstein, complete intersection, or regular.
\end{abstract}

\maketitle

\setcounter{tocdepth}{2}
\tableofcontents
\section{Introduction}

In commutative algebra in mixed characteristic, the theory of perfectoid rings has become a powerful tool, notably through Y.~Andr\'{e}'s breakthrough resolution of the direct summand conjecture. Recently, S.~Ishiro, K.~Nakazato and K.~Shimomoto \cite{INS25} introduced the notion of \emph{perfectoid towers}, which provide an approximation of perfectoid rings by Noetherian rings, as well as a generalization of perfectoid rings. After that, various constructions of perfectoid towers were found (\cite{Ish26, IS25, HIS26}).
However, their intrinsic properties have remained largely unexplored. In this paper, we therefore study the structure and properties of perfectoid towers and apply them to a fundamental problem in perfectoid theory, namely, \emph{tilting correspondences}.

Let us first review some of results on structural properties of perfectoid rings, mainly from \cite[\S2.1]{CS24}. All typical examples of perfectoid rings are either $p$-torsion free or entirely $p$-torsion (i.e., have characteristic $p$). In fact, any perfectoid ring is decomposed into perfectoid rings with either one of these properties:

\begin{proposition}[{\cite[Rem.\ 8.9]{Lau18}, \cite[\S2.1.3]{CS24}, \cite[16.4.24]{GR24}}]
\label{prop:decompring}
Let $A$ be a perfectoid $\Z_p$-algebra.
Then $\wt{A}\coloneqq A/A_{\textrm{$p$-}\mathrm{tor}}$, $(A/pA)_{\mathrm{red}}$, $(\wt{A}/p\wt{A})_{\mathrm{red}}$ are perfectoid, and we have the decompositions
\[
A\xr{\cong} \wt{A} \times_{(\wt{A}/p\wt{A})_\red} (A/pA)_\red,\quad A^\flat \xr{\cong} (\wt{A})^\flat \times_{(\wt{A}/p\wt{A})_\red} (A/pA)_\red.
\]
\end{proposition}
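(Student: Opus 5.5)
The plan is to prove the three perfectoidness claims first, then the decomposition of $A$, and finally obtain the tilted decomposition formally from the untilted one. Throughout I use the standard presentation $A\cong W(A^\flat)/\xi$ with $\xi$ distinguished, and I fix a unit multiple $\varpi\in A$ with $\varpi^p=pu$ admitting a compatible system of $p$-power roots $\varpi^{1/p^n}$ (images of $[\varpi^{\flat 1/p^n}]$). Then $\sqrt{pA}$ is the ideal generated by all $\varpi^{1/p^n}$ together with the nilradical. I do not expect to need the tower machinery of the paper.

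\emph{Step 1 (the key lemma).} I would first show $A[p^\infty]=A[\sqrt{pA}]$, i.e.\ $p$-power torsion is exactly what is killed by all $\varpi^{1/p^n}$, and that $A[p^\infty]\cap\sqrt{pA}=0$. I would transport both statements to $A^\flat$ via $\theta$ and the sharp map: in the perfect ring $A^\flat$ the analogue is $A^\flat[\varpi^{\flat\infty}]=A^\flat[\varpi^{\flat 1/p^\infty}]$. This holds because $\varpi^{\flat N}y=0$ implies $(\varpi^{\flat N/p^n}y^{1/p^n})^{p^n}=0$, and reducedness then gives $\varpi^{\flat N/p^n}y^{1/p^n}=0$; applying Frobenius yields $\varpi^{\flat N/p^n}y=0$. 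The analogue of the trivial intersection is proved the same way. The passage from $A^\flat$ back to $A$ uses that $\xi$ is a nonzerodivisor in $W(A^\flat)$ and a $\varpi$-adic approximation of elements of $A$ by Teichmüller sums. \textbf{This is the main obstacle}: bookkeeping the torsion through $W(A^\flat)/\xi$. If this step resists, I would fall back on the explicit form $p=\varpi^p u$ and a direct $d$-adic argument in $A$.

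\emph{Step 2 (perfectoidness).} By Step 1, $A[p^\infty]$ is killed by $\varpi$, hence is bounded torsion. Therefore $\wt A$ is $p$-adically complete and $p$-torsion free. Its Frobenius on $\wt A/p$ is surjective, and $\varpi$ still satisfies $\varpi^p\mid p$, so $\wt A$ is perfectoid by the standard criterion for $p$-torsion free rings. For $(A/pA)_\red$: $A/pA$ is semiperfect, and the reduced quotient of a semiperfect $\F_p$-algebra has injective (reducedness) and surjective Frobenius. It is thus perfect, hence perfectoid. The same argument applies to $(\wt A/p\wt A)_\red$.

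\emph{Step 3 (the fiber square).} Note that $(A/pA)_\red=A/\sqrt{pA}$, and the natural map $A\to \wt A\times_{(\wt A/p\wt A)_\red}(A/pA)_\red$ has kernel $A[p^\infty]\cap\sqrt{pA}$, which vanishes by Step 1. For surjectivity, take a pair $(a,b)$, lift $b$ to $a'\in A$, and observe that $a-\bar a'\in\sqrt{p\wt A}$. The remaining point is that the preimage of $\sqrt{p\wt A}$ in $A$ is $\sqrt{pA}+A[p^\infty]$. I would show this by writing $x^n=pa+t$ with $t$ torsion, and using that $t\cdot\sqrt{pA}=0$ (Step 1) to get $x^{n}\cdot x^{n}\in pA+t^2$. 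Iterating with $p$-power roots of $\varpi$, I expect to conclude $\varpi^{1/p^m}$-divisibility of a power of $x$ modulo torsion. Adjusting $a'$ by the resulting element of $\sqrt{pA}$ then produces the preimage.

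\emph{Step 4 (tilting).} Tilting is the limit $R\mapsto\varprojlim_{x\mapsto x^p}R/p$. It commutes with fiber products of rings along surjections, here $\wt A\to(\wt A/p\wt A)_\red$. It is also the identity on perfect $\F_p$-algebras. Applying $(-)^\flat$ to the decomposition of Step 3 therefore gives $A^\flat\cong(\wt A)^\flat\times_{(\wt A/p\wt A)_\red}(A/pA)_\red$.
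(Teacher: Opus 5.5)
There is a genuine gap at the one non-formal point of the argument: extracting $p$-th roots modulo the torsion. You never prove
\[
x^p\in pA+A[p^\infty]\ \Longrightarrow\ x\in\varpi A+A[p^\infty],
\]
that is, that $\varphi\colon\wt{A}/\varpi\wt{A}\to\wt{A}/p\wt{A}$ is injective. Yet Steps~2 and~3 both depend on it.

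In Step~2, $p$-torsion freeness, $p$-adic completeness, $\varpi^p\mid p$ and surjectivity of Frobenius on $\wt{A}/p\wt{A}$ do not make a ring perfectoid. The criterion for $p$-torsion free rings requires $\varphi\colon\wt{A}/\varpi\wt{A}\to\wt{A}/p\wt{A}$ to be \emph{bijective}. For instance, let $R$ be the $p$-adic completion of $\Z_p[p^{1/p^\infty}][Y^{1/p^\infty}]/(Y^p-p)$. It is a completed free $\Z_p[p^{1/p^\infty}]$-module on the $Y^j$, $j\in\Z[1/p]\cap[0,p)$, hence $p$-torsion free, and $R/pR$ is semiperfect. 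But $Y^p\in pR$ while $Y\notin p^{1/p}R$, so $R$ is not perfectoid.

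In Step~3 you rightly note that $A[p^\infty]\cap\sqrt{pA}=0$ only gives injectivity, and that surjectivity needs the preimage of $\sqrt{p\wt{A}}$ to be $\sqrt{pA}+A[p^\infty]$. But that is the same root-extraction statement. Your sketch ($x^{2n}\in pA+t^2$, then iterate) only produces information about powers of $x$, which is what you started from, so it cannot close. The transport from $A^\flat$ to $A$ in Step~1 is likewise only announced, not carried out.

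The missing idea is to extract the roots where Frobenius is injective, namely in the tilt. What Step~1 must really deliver is not the characteristic-$p$ statements in isolation, but the identification $A[p^\infty]\cong A^\flat[(\varpi^\flat)^\infty]$, compatible with $A/\varpi\cong A^\flat/\varpi^\flat$ and $A/p\cong A^\flat/(\varpi^\flat)^p$. This is the fact recalled in the introduction; for towers it is \cref{thm:INS3.35}~(1). Granting it, $\wt{A}/\varpi\wt{A}$ and $\wt{A}/p\wt{A}$ become the quotients of $A^\flat$ by $\varpi^\flat A^\flat+A^\flat[(\varpi^\flat)^\infty]$ and $(\varpi^\flat)^pA^\flat+A^\flat[(\varpi^\flat)^\infty]$. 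In the perfect ring $A^\flat$ the claim is then immediate: if $y^p=(\varpi^\flat)^pa+t$ with $\varpi^\flat t=0$, then $(\varpi^\flat y)^p=((\varpi^\flat)^2a^{1/p})^p$. Hence $\varpi^\flat(y-\varpi^\flat a^{1/p})=0$, so $y\in\varpi^\flat A^\flat+A^\flat[(\varpi^\flat)^\infty]$. This is exactly the trick in the proof of \cref{thm:PB}.

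Equivalently, suppose you know Frobenius permutes the image of $A[p^\infty]$ in $A/pA$; this is condition \textbf{(g)} for the constant tower of \cref{ex:perfectoid tower}~(2). Then $x^p\equiv t$ yields $s\in A[p^\infty]$ with $(x-s)^p\in pA$, hence $x-s\in\varpi A$. This is what the snake-lemma argument of \cref{prop:torftower} does for towers.

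With this lemma in hand, the rest of your outline goes through:
\begin{itemize}
\item $\wt{A}$ is perfectoid.
\item $\sqrt{p\wt{A}}=\bigcup_n\varpi^{1/p^n}\wt{A}$ is the image of $\sqrt{pA}$, which gives the surjectivity you need in Step~3.
\item Step~4 works as you say, since $(A/pA)_\red\to(\wt{A}/p\wt{A})_\red$ is a surjection of $\F_p$-algebras, so reduction mod $p$ commutes with the fiber product.
\end{itemize}
The paper itself does not prove this proposition; it quotes it from the literature.
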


The proposition reduces many situations involving perfectoid rings to the $p$-torsion free case. For example, we deduce from \cref{prop:decompring} that every perfectoid ring is reduced (\cite[Lemma 8.10]{Lau18}). The decomposition as in \cref{prop:decompring} admits the following converse:

\begin{proposition}[{\cite[Ex.\ 3.8 (6)]{BIM19}, \cite[Prop.\ 2.1.4]{CS24}, \cite[Prop.\ 16.3.25]{GR24}}]
\label{prop:gluingring}
If $B$ is a perfectoid $\Z_p$-algebra and $A'\to(B/pB)_\red$ is any map of perfect $\F_p$-algebras, then $A\coloneqq B\times_{(B/pB)_\red}A'$ is a perfectoid ring.
\end{proposition}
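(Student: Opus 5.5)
To show that $A\coloneqq B\times_{(B/pB)_\red}A'$ is perfectoid, I would use the characterization: a ring $R$ is perfectoid iff it is $\pi$-adically complete for some $\pi$ with $\pi^p\mid p$, the Frobenius $R/pR\to R/pR$ is surjective, and $\ker\theta_R$ is principal. Equivalently, by the derived-complete perspective, $R$ is perfectoid iff $R\cong W(R^\flat)/\xi$ for a distinguished $\xi$. The plan is to build $A$ as a quotient of a Witt vector ring and check the fiber square is preserved.

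\textbf{Step 1 (tilt side).} Put $C\coloneqq (B/pB)_\red$, a perfect $\F_p$-algebra, and $A'^\flat\coloneqq A'\times_C B^\flat$. Here $B^\flat\to C$ is the composite $B^\flat\to B/pB\to C$; it is surjective, since $B^\flat\to B/pB$ is. The ring $A'^\flat$ is a fiber product of perfect $\F_p$-algebras, hence perfect, and it is $\xi$-adically (say $p^\flat$-adically) complete because $B^\flat$ is and $A'$ carries the discrete structure compatible with the map. Concretely, the kernel of $B^\flat\to C$ is the radical of $(p^\flat)$, which is closed.

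\textbf{Step 2 (Witt vectors).} Since $W(-)$ preserves fiber products of perfect rings (it is a limit-preserving functor on the level of underlying sets, $W(R)=R^{\N}$), we get
\[
W(A'^\flat)\cong W(A')\times_{W(C)}W(B^\flat).
\]
Let $\xi\in W(B^\flat)$ be a distinguished generator of $\ker\theta_B$. Its image in $W(C)$ is $p$ times a unit: $\theta$ composed to $C$ kills $p$, and a distinguished element in a perfect ring of characteristic $p$ (where $p$-torsion free Witt vectors apply) is $p\cdot$unit. Multiplying $\xi$ by a unit, we may assume the image is exactly $p$. Then $\tilde\xi\coloneqq(p,\xi)\in W(A'^\flat)$ is distinguished, because $\delta(\tilde\xi)$ has unit components in both factors. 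I would also verify that units of a fiber product along a map are exactly the compatible pairs of units.

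\textbf{Step 3 (the key obstacle).} The hardest part is showing $W(A'^\flat)/\tilde\xi\cong W(A')/p\times_{W(C)/p}W(B^\flat)/\xi = A'\times_C B = A$. Quotienting by a nonzerodivisor does not commute with fiber products in general. Here, however, $W(B^\flat)\to W(C)$ is surjective, so the fiber square is also a pushout-like Milnor square of modules, and the short exact sequence
\[
0\to W(A'^\flat)\to W(A')\oplus W(B^\flat)\to W(C)\to 0
\]
is exact. Tensoring with $\mathrm{cone}(\tilde\xi)$ gives a long exact sequence. Since $p$ and $\xi$ are nonzerodivisors on $W(A')$, $W(B^\flat)$ and $W(C)$ (all perfect, or $B^\flat$ perfect), the $\Tor_1$ terms vanish. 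The snake lemma then yields $0\to W(A'^\flat)/\tilde\xi\to A'\oplus B\to C\to 0$, so $W(A'^\flat)/\tilde\xi\cong A$. Being a quotient of $W$ of a perfect ring by a distinguished element, $A$ is perfectoid, with tilt $A'^\flat$. This matches the second decomposition in \cref{prop:decompring}.
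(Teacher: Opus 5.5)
Your argument is correct. The paper gives no proof of this statement: it is quoted from \cite{BIM19}, \cite{CS24}, \cite{GR24}. The natural comparison is therefore with its tower analogue \cref{prop:gluetower}, which \cref{rem:gluetower} specializes to exactly your situation of gluing in perfect pieces.

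The basic device there is the same as in your Step~3. Surjectivity of $\sigma_i$ makes $0\to R_i\to R'_i\oplus S_i\to S'_i\to 0$ exact, and the snake and nine lemmas do the rest. But the paper deliberately never passes to Witt vectors, since, as it notes in the introduction, $\bbA_{\mathrm{inf}}=W(A^\flat)$ does not behave well for towers. Instead it reduces the fiber square modulo the fiber-product ideals (\cref{lem:PBZar}~(1)) and constructs the Frobenius projections via the universal property of kernels. It then verifies \textbf{(b)}--\textbf{(g)} level by level on $R_i/I_0R_i$, $R_{i+1}/I_1R_{i+1}$ and the $I_0$-torsion submodules, and reads off the tilt from strictness of the inverse system. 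The price is the auxiliary hypotheses (i)--(iv); for instance, the surjectivity on torsion (iv) is exactly what kills the connecting map in the snake lemma for \textbf{(f-1)}.

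In your proof the corresponding snake-lemma inputs are automatic: surjectivity of $W(B^\flat)\to W(C)$ and $p$-torsion-freeness of $W(C)$. The perfect-prism characterization then gives perfectoidness and $A^\flat\cong A'\times_C B^\flat$ in one stroke. On the other hand, your route only exists where a Witt-vector presentation does, so it says nothing about the Noetherian levels of a tower.

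When you write it up, a few points deserve a line each.
\begin{itemize}
\item In Step~2, the image of $\xi$ in $W(C)$ is $p$ times a unit because its zeroth Witt component vanishes, not merely because it is distinguished. This holds because $W(B^\flat)\to W(C)\to C$ agrees with $W(B^\flat)\xrightarrow{\theta}B\to C$. The same compatibility shows that the map $B=W(B^\flat)/\xi\to W(C)/p=C$ produced in Step~3 is the canonical projection.
\item Lifting the resulting unit of $W(C)$ to a unit of $W(B^\flat)$ uses two facts: $\ker(B^\flat\to C)$ lies in the Jacobson radical of $B^\flat$, and $W(B^\flat)$ is $p$-adically complete.
\item The completeness in Step~1 should be with respect to $(0,\xi_0)\in A'\times_C B^\flat$, where $\xi_0$ is the zeroth component of $\xi$. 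Since $\xi_0B^\flat\subseteq\ker(B^\flat\to C)$, the ideals $(0,\xi_0)^n(A'\times_C B^\flat)$ are sandwiched between $0\times\xi_0^{n+1}B^\flat$ and $0\times\xi_0^nB^\flat$. The quotient by the latter is $A'\times_C(B^\flat/\xi_0^nB^\flat)$, so $\xi_0$-adic completeness of $B^\flat$ passes to the fiber product. Closedness of the radical is not the relevant point.
\item The snake lemma in Step~3 needs only that $p$ is a nonzerodivisor on $W(C)$.
\end{itemize}
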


Using \cref{prop:decompring,prop:gluingring}, one can show that the $p$-adic completion of every ind-\'{e}tale algebra over a perfectoid ring is perfectoid (\cite[Corollary 2.2.6]{CS24}).

The first topic in this paper concerns analogues of these results for perfectoid \emph{towers} (\cref{s:str}). In the rest of this section, we fix a perfectoid tower
\begin{equation}
\label{eq:perfectoidtower}
\textrm{{\boldmath $R$}} = \{R_i\}_{i\geq 0} : \quad R=R_0\to R_1\to \cdots \to R_i \to \cdots
\end{equation}
arising from the pair $(R,I_0)$ for a commutative ring $R$ and a principal ideal $I_0\subset R$ (\cref{def:perfectoidtower}). Then its tilt (\cref{def:tilt})
\begin{equation}
\label{eq:tilt}
\textrm{{\boldmath $R$}}^\flat = \{R_i^{s.\flat}\}_{i\geq 0} : \quad R^{s.\flat}=R_0^{s.\flat}\to R_1^{s.\flat} \to \cdots \to R_i^{s.\flat}\to \cdots,
\end{equation}
is a perfectoid tower arising from $(R^{s.\flat},I_0^{s.\flat})$, where $I_0^{s.\flat}$ is the counter-part of $I_0$; for example, one has an isomorphism
\begin{equation}
\label{eq:tiltisom}
R^{s.\flat}/I_0^{s.\flat}\xr{\cong}R/I_0
\end{equation}
(\cref{thm:INS3.35}). Our decomposition theorem of perfectoid towers is stated as follows.

\begin{theoremA}[{\cref{thm:decomp}}]
\label{thm:A}
Consider a perfectoid tower \eqref{eq:perfectoidtower} and its tilt \eqref{eq:tilt}. Then the induced towers $\wt{\textrm{{\boldmath $R$}}} =\{\wt{R_i}\coloneqq R_i/(R_i)_{\textrm{$I_0$-}\mathrm{tor}}\}_{i\geq 0}$, $(\textrm{{\boldmath $R$}}/I_0\textrm{{\boldmath $R$}})_{\mathrm{red}}=\{(R_i/I_0R_i)_{\mathrm{red}}\}_{i\geq 0}$, $(\wt{\textrm{{\boldmath $R$}}}/I_0\wt{\textrm{{\boldmath $R$}}})_{\mathrm{red}}=\{(\wt{R_i}/I_0\wt{R_i})_{\mathrm{red}}\}_{i\geq 0}$ are perfectoid towers, and we have the decompositions
\[
\textrm{{\boldmath $R$}} \xr{\cong} \wt{\textrm{{\boldmath $R$}}}\times_{
(\wt{\textrm{{\boldmath $R$}}}/I_0\wt{\textrm{{\boldmath $R$}}})_{\mathrm{red}}}(\textrm{{\boldmath $R$}}/I_0\textrm{{\boldmath $R$}})_{\mathrm{red}},\quad
\textrm{{\boldmath $R$}}^\flat \xr{\cong} (\wt{\textrm{{\boldmath $R$}}})^\flat \times_{
(\wt{\textrm{{\boldmath $R$}}}/I_0\wt{\textrm{{\boldmath $R$}}})_{\mathrm{red}}}(\textrm{{\boldmath $R$}}/I_0\textrm{{\boldmath $R$}})_{\mathrm{red}}.
\]
\end{theoremA}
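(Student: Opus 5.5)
The plan is to reduce \cref{thm:A} to the ring-level statements \cref{prop:decompring,prop:gluingring}, working level by level. I will also use the defining axioms of a perfectoid tower (\cref{def:perfectoidtower}), namely:
\begin{itemize}
\item the Frobenius-type maps $R_{i+1}/I_0R_{i+1}\to R_i/I_0R_i$ are surjective, with kernel controlled by a compatible sequence of principal ideals $I_i$ satisfying $I_{i+1}^p=I_0R_{i+1}$;
\item the torsion $(R_i)_{\Iztor}$ is bounded;
\item the $I_0$-adic completion behaves well.
\end{itemize}

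\textbf{Step 1: the quotient towers are perfectoid towers.} For each $i$ I first check that $\wt{R_i}=R_i/(R_i)_{\Iztor}$, together with the induced maps, again satisfies the tower axioms. The key input is that the torsion is compatible along the tower, i.e. $(R_{i+1})_{\Iztor}\cap R_i=(R_i)_{\Iztor}$, so $\wt{R_i}\to\wt{R_{i+1}}$ is well defined and injective whenever $R_i\to R_{i+1}$ is. I then verify the Frobenius-surjectivity and kernel axioms for $\wt{R_{i+1}}/I_0\to\wt{R_i}/I_0$. Surjectivity descends from $R_{i+1}/I_0\to R_i/I_0$. For the kernel condition I would lift an element and use that a torsion element of $R_i$ has $p$-th root behaviour controlled by the bounded torsion axiom.

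For the reduced quotients $(R_i/I_0R_i)_\red$ and $(\wt{R_i}/I_0\wt{R_i})_\red$ the argument is different. These are $\F_p$-algebras on which the tower Frobenius becomes surjective, and after reduction injective, so each transition is an isomorphism onto a perfect ring up to index shift. This gives a perfect tower of characteristic $p$, hence a perfectoid tower with $I_0=0$.

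\textbf{Step 2: the decomposition for $\textrm{{\boldmath $R$}}$.} At each level I consider
\[
R_i\to\wt{R_i}\times_{(\wt{R_i}/I_0\wt{R_i})_\red}(R_i/I_0R_i)_\red .
\]
\emph{Injectivity.} An element of the kernel is $I_0$-torsion and nilpotent mod $I_0$. Since the torsion is killed by a fixed $I_0$-power, and perfectoid-tower Frobenius arguments show $(R_i)_{\Iztor}\cap \sqrt{I_0R_i}=0$ (the analogue of ``$p$-torsion meets the nilradical mod $p$ trivially''), the element is zero.

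\emph{Surjectivity.} Given a compatible pair $(a,b)$, I lift $b$ to $R_i$. The difference with $a$ in $\wt{R_i}$ lies in $\sqrt{I_0\wt{R_i}}$, and I need to lift it to an element of $\sqrt{I_0R_i}$. Here I would use that elements of the radical are $I_0$-divisible after passing up the tower, which is where the perfectoid axioms enter.

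I expect Step 2, especially the key identity $(R_i)_{\Iztor}\cap\sqrt{I_0R_i}=0$ together with the lifting in surjectivity, to be the main obstacle. My first attempt would be to pass to the limit perfectoid ring $\wh{R_\infty}$, where \cref{prop:decompring} holds, and pull the statement back to finite levels. This uses injectivity of $R_i/I_0\to R_\infty/I_0$ and torsion compatibility, as in the proof of the tilting isomorphism \eqref{eq:tiltisom}.

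\textbf{Step 3: the tilt.} Tilting commutes with fiber products of towers, since it is an inverse limit along Frobenius of the reductions mod $I_0$, and limits commute with limits. By \eqref{eq:tiltisom}, the tilt of a perfect characteristic $p$ tower is itself. Applying tilting to the decomposition of Step 2 therefore gives the second isomorphism. The only thing to check is that $(\wt{\textrm{{\boldmath $R$}}})^\flat$ agrees with the tilt computed through the fiber product. This reduces to the levelwise identification $R_i^{s.\flat}/I_0^{s.\flat}\cong R_i/I_0$ and the same reduced-quotient computation on the tilted side.
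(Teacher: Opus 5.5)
\textbf{The gap is the surjectivity half of Step 2.} You flag this step yourself, and Step 3 inherits the problem. Your injectivity argument is sound, and it is all that \cref{lem:BC} actually proves: an element of $(R_i)_{\Iztor}\cap\sqrt{I_0R_i}$ has a power in $I_0(R_i)_{\Iztor}=0$, and (b), (g) exclude nonzero nilpotent torsion. Surjectivity needs more. The bottom corner is $(\wt{R_i}/I_0\wt{R_i})_\red=R_i/\sqrt{I_0R_i+(R_i)_{\Iztor}}$, so surjectivity is equivalent to the extra identity $\sqrt{I_0R_i+(R_i)_{\Iztor}}=\sqrt{I_0R_i}+(R_i)_{\Iztor}$. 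Nothing in your sketch produces this identity. Passing to $\wh{R_\infty}$ cannot produce it either: there it holds because the needed $p$-th roots exist in the perfect ring $\wh{R_\infty}/\sqrt{I_0\wh{R_\infty}}$, but at level $i$ they only exist one level up. The paper's own proof, which goes through \cref{lem:BC}, checks only $A_{\Itor}\cap\sqrt{I}=0$ and so leaves the same step unverified.

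In fact the identity fails, so no argument can close the gap. Let $k$ be a perfect field of characteristic $p$. Let $R\subset k\llbracket g\rrbracket\times k\llbracket z\rrbracket$ be the subring of pairs $(a,d)$ with $a\equiv d(g)\bmod g^2$, put $f\coloneqq(g^p,0)\in R$, and take the tower $R\xr{\varphi}R\xr{\varphi}\cdots$ with $I_0=I_1=fR$. The axioms check out as follows.
\begin{itemize}
\item $R$ is reduced, Noetherian and local, so (e) holds.
\item $f^NR=g^{pN}k\llbracket g\rrbracket\times 0$, so $x^p\in f^{p^{i+1}}R$ forces $x\in f^{p^i}R$; this is (b).
\item $F_i$ is the natural surjection $R/f^{p^{i+1}}R\to R/f^{p^i}R$, with kernel $f^{p^i}R/f^{p^{i+1}}R=I_1R_{i+1}/I_0R_{i+1}$; this gives (c), (d), (f).
\item The torsion is $0\times z^2k\llbracket z\rrbracket$, which is killed by $f$, with $(F_i)_{\mathrm{tor}}=\id$; this is (g).
\end{itemize}
So this is a perfectoid tower. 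It equals its own tilt because $R$ is $f$-adically complete, and its torsion has no nonzero nilpotents, so it also contradicts \cref{lem:BC}.

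Now compute the pieces. We have $\wt R=k\llbracket g\rrbracket$ and $(\wt R/f\wt R)_\red=k$. Since $\sqrt{fR}=g^2k\llbracket g\rrbracket\times 0$, we get $(R/fR)_\red\cong k\llbracket z\rrbracket$. Hence the comparison map is the inclusion $R\subsetneq k\llbracket g\rrbracket\times_k k\llbracket z\rrbracket$, which misses $(g,0)$. Inside the perfection, level $i$ is cut out by $a\equiv d\bmod g^{2/p^i}$, which degenerates in the colimit to agreement of constant terms. That is why the perfectoid-ring statement survives while the finite levels do not. What your injectivity argument does prove is the decomposition of $R_i$ over $R_i/(\sqrt{I_0R_i}+(R_i)_{\Iztor})$ in place of $(\wt{R_i}/I_0\wt{R_i})_\red$.

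Separately, Step 1 misstates the axioms. Axiom (b) for the tower $\{\wt{R_i}\}$ is injectivity modulo $I_0$. That requires the cartesian square \eqref{eq:PBt} of \cref{thm:PB}, not ring-level torsion compatibility. Axiom (f-2) requires the bijectivity of $(F_i)_{\mathrm{tor}}$ from (g). There is no bounded-torsion or completion axiom.
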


One of the core ingredients in its proof is to verify certain diagrams of (possibly) non-unital rings are cartesian, which is achieved by reduction to the positive characteristic case; we also show that, conversely, such cartesian diagrams characterize perfectoid towers (\cref{thm:PB}). As in the case of perfectoid rings, we deduce from \cref{thm:A} that every ring appearing in a perfectoid tower is reduced under the separatedness assumption (\cref{cor:reduced}). Moreover, we establish a gluing theorem for perfectoid towers (\cref{prop:gluetower}).
This result not only provides a tower-theoretic analogue of \cref{prop:gluingring}, but also yields a new method for constructing perfectoid towers with possible $p$-torsion elements.
Prior to this work, the only known construction of this kind was due to Ishiro--Shimomoto (\cite[\S4.4]{IS25}), whose examples are recovered as special cases of our construction (\cref{ex:PB}). 
In contrast to the explicit and laborious calculations in \cite{IS25}, our approach is more conceptual and structural. Furthermore, combining the gluing theorem with \cref{thm:A}, we prove that perfectoid towers are stable under ind-\'{e}tale (or, more adequately, \emph{weakly \'{e}tale}) base change (\cref{thm:bc}).

The second topic, discussed in \cref{s:tilt}, concerns tilting correspondences of perfectoid towers.
We first compare Koszul homology. If $A$ is a perfectoid $\Z_p$-algebra, then the tilt $A^\flat$ admits a counter-part $p^\flat$ of $p$; we have isomorphisms $A^\flat/p^\flat A^\flat\xr{\cong}A/pA$ and $(A^\flat)_{\textrm{$p^\flat$-}\mathrm{tor}}\xr{\cong}A_{\textrm{$p$-}\mathrm{tor}}$. Since these isomorphisms are compatible with each other, one can reformulate it as the isomorphism of Kosuzl complexes $K_\bullet(p;A)\cong K_\bullet(p^\flat;A^\flat)$ (or, of derived quotients $A/^\bbL p\cong A^\flat/^\bbL p^\flat$; see \cite[Lemma 2.10]{Ito23}). A generalization of this fact to perfectoid towers was established in \cite[Remark 3.40]{INS25}. In this context, we apply \cref{thm:A} to obtain the following tilting correspondence of Koszul homology for more general sequences:

\begin{theoremA}[{\cref{thm:tilt-Kos}}]
\label{thm:C}
Consider a perfectoid tower \eqref{eq:perfectoidtower} and its tilt \eqref{eq:tilt}.
Let $\bm{x}=x_1,\ldots,x_n$ be a sequence of elements in $R$ such that $I_0=(x_1)$, and $\bm{x}^{s.\flat}=x_1^{s.\flat},\ldots,x_n^{s.\flat}$ a corresponding sequence in $R^{s.\flat}$ via the isomorphism \eqref{eq:tiltisom}. Then we have an isomorphism of Koszul complexes
\[
K_\bullet(\bm{x};R)\cong K_\bullet(\bm{x}^{s.\flat};R^{s.\flat})
\]
in the derived category $D(\Ab)$ of abelian groups.
\end{theoremA}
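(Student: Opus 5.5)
We reduce, via \cref{thm:A}, to the two ``pure'' cases: the $I_0$-torsion free case and the characteristic-$p$ (perfect) case. The key observation is that the Koszul complex of a sequence containing $x_1$ is determined, up to quasi-isomorphism, by the derived quotient by $x_1$ together with the remaining elements. Precisely, $K_\bullet(\bm{x};R)\simeq K_\bullet(x_1;R)\otimes^{\bbL}_{\Z[X_2,\dots,X_n]}\Z$ after viewing $R$ as a $\Z[X_2,\ldots,X_n]$-algebra. Equivalently,
\[
K_\bullet(\bm{x};R)\cong K_\bullet(x_2,\ldots,x_n;\,K_\bullet(x_1;R)).
\]
It therefore suffices to produce an isomorphism $K_\bullet(x_1;R)\cong K_\bullet(x_1^{s.\flat};R^{s.\flat})$ in $D(\Ab)$ that is compatible with the multiplications by $x_j$ and $x_j^{s.\flat}$ for $j\ge 2$. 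The compatibility need only hold up to homotopy, in a way that lets it be promoted to an isomorphism of dg-modules over $\Z[X_2,\dots,X_n]$.

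Concretely, I would proceed as follows. First, recall the tower-level version of \cite[Remark 3.40]{INS25}. It gives compatible isomorphisms $R^{s.\flat}/I_0^{s.\flat}\cong R/I_0$ (\cref{thm:INS3.35}) and $(R^{s.\flat})_{\Izstor}\cong R_{\Iztor}$. These identify the two homologies of $K_\bullet(x_1;-)$. Upgrading this to an isomorphism in the derived category is not automatic, because the extension class must match. Here \cref{thm:A} enters. Write $R\cong \wt{R}\times_{(\wt R/I_0\wt R)_\red}(R/I_0R)_\red$ and $R^{s.\flat}\cong \wt{R}^{s.\flat}\times_{(\wt R/I_0\wt R)_\red}(R/I_0R)_\red$.

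In each case the fiber product comes from a surjection, so there is a short exact sequence
\[
0\to R\to \wt{R}\oplus (R/I_0R)_\red\to (\wt R/I_0\wt R)_\red\to 0,
\]
and similarly for the tilt. Applying $K_\bullet(\bm{x};-)$ yields a distinguished triangle. The terms $(R/I_0R)_\red$ and $(\wt R/I_0\wt R)_\red$ are literally the same rings on both sides. The elements $x_j$ and $x_j^{s.\flat}$ act identically there, since they correspond under \eqref{eq:tiltisom}, and $x_1$ and $x_1^{s.\flat}$ act by zero. So those Koszul complexes are canonically identical.

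This reduces the problem to the $I_0$-torsion free term $\wt{R}$ versus $\wt{R}^{s.\flat}$, together with the compatibility of the connecting maps. In the torsion free case $x_1$ is a nonzerodivisor, so
\[
K_\bullet(\bm{x};\wt R)\simeq K_\bullet(x_2,\dots,x_n;\wt R/x_1\wt R),
\]
and likewise $K_\bullet(\bm{x}^{s.\flat};\wt R^{s.\flat})\simeq K_\bullet(x_2^{s.\flat},\dots;\wt R^{s.\flat}/x_1^{s.\flat})$. The isomorphism $\wt R^{s.\flat}/I_0^{s.\flat}\cong\wt R/I_0$, coming from \cref{thm:INS3.35} applied to the tower $\wt{\textrm{{\boldmath $R$}}}$, carries $x_j^{s.\flat}$ to $x_j$. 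This identifies the two Koszul complexes as actual complexes.

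The main obstacle is gluing: we must check that the identifications on the three terms commute with the maps in the triangle. Only then can we conclude, via a map of triangles, and not merely abstractly, that the cones agree. To avoid the non-uniqueness of cones, I would instead work with the strictly commutative square of rings. I would build a zigzag of honest chain maps, passing through the quasi-isomorphism $K_\bullet(\bm x;\wt R)\to K_\bullet(x_2,\dots;\wt R/x_1)$. All maps to $(\wt R/I_0\wt R)_\red$ factor through $\wt R/I_0\wt R$, and the tilting isomorphisms are natural in the tower, so the resulting diagram commutes on the nose. Taking homotopy fibers (mapping cocones) of strictly commuting diagrams then gives the desired isomorphism $K_\bullet(\bm{x};R)\cong K_\bullet(\bm{x}^{s.\flat};R^{s.\flat})$ in $D(\Ab)$.
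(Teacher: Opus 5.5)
Your outline is the paper's. \cref{thm:tilt-Kos} is deduced from \cref{lem:Kos}, which splits the ring as a fiber product of its $I_0$-torsion free quotient and a characteristic $p$ piece, applies $K_\bullet(\bm x;-)$ to get a triangle, and uses that $\pi_{\bm x,\wt R}$ is a quasi-isomorphism. (The paper uses $R\cong\wt R\times_{\wt R/I_0\wt R}R/I_0R$, which only needs $I_0R_{\Iztor}=0$, instead of the reduced decomposition of \cref{thm:A}; this difference is harmless.)

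The gap is in your gluing step: the claim that the diagram ``commutes on the nose'' is false. Put $M=(\wt R/I_0\wt R)_{\red}$.
\begin{itemize}
\item Since $x_1M=0$, one has $K_\bullet(\bm x;M)=K_\bullet(\bm x';M)\oplus K_\bullet(\bm x';M)[1]$, and $\pi_{\bm x,M}$ is the projection to the first summand, which is not a quasi-isomorphism.
\item The reduction map $K_\bullet(\bm x;\wt R)\to K_\bullet(\bm x;M)$ is nonzero on the $x_1$-degree-one part, so it does not factor through $\pi_{\bm x,\wt R}$. Factoring through $\wt R/I_0\wt R$ only gives $K_\bullet(\bm x;\wt R/I_0\wt R)$, not $K_\bullet(\bm x';\wt R/x_1\wt R)$.
\item After inverting $\pi_{\bm x,\wt R}$ in $D(\Ab)$, this map acquires a component $K_\bullet(\bm x';\wt R/x_1\wt R)\to K_\bullet(\bm x';M)[1]$. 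It is the Bockstein of $0\to\wt R/x_1\wt R\xr{x_1}\wt R/x_1^2\wt R\to\wt R/x_1\wt R\to 0$.
\item The tilting isomorphisms say nothing about $\wt R/x_1^2\wt R$ versus $\wt{R}^{s.\flat}/(x_1^{s.\flat})^2\wt{R}^{s.\flat}$.
\end{itemize}
Take the tower $\Z_p\to\Z_p[p^{1/p}]\to\cdots$ with tilt $\F_p\llbracket T\rrbracket$ and $n=1$. The map $\F_p\simeq K_\bullet(p;\Z_p)\to K_\bullet(p;\F_p)=\F_p\oplus\F_p[1]$ has second component the class of $\Z/p^2\Z$, which is nonzero in $\mathrm{Ext}^1_{\Z}(\F_p,\F_p)$. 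The tilted side gives the class of $\F_p[T]/(T^2)$, which is zero. So your square does not commute even in $D(\Ab)$ with the identifications you propose, and no zigzag of strictly commuting chain maps can produce it.

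What is missing is an actual comparison of this extension data. Given \cref{thm:INS3.35}~(1), an isomorphism $K_\bullet(x_1;R)\cong K_\bullet(x_1^{s.\flat};R^{s.\flat})$ in $D(\Ab)$ is automatic because $\Z$ is hereditary. So all the content sits in the $\Z[X_2,\ldots,X_n]$-linear structure, namely in the class of $0\to R_{\Iztor}\to R\xr{x_1}R\to R/I_0R\to 0$ in $\mathrm{Ext}^2_{\Z[X_2,\ldots,X_n]}(R/I_0R,R_{\Iztor})$. Your argument never compares this class with its tilted counterpart.

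One repair would be to correct the identification of the middle term by lifting the Bockstein discrepancy to $K_\bullet(\bm x;(R/I_0R)_{\red})$. That meets an obstruction in $\Hom_{D(\Ab)}(K_\bullet(\bm x';\wt R/x_1\wt R),K_\bullet(\bm x';R_{\Iztor})[2])$, which is not zero in general once $n\geq 2$.

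A cleaner reformulation: $p\in(x_1)$ kills all Koszul homology, and every object of $D(\Ab)$ is isomorphic to the sum of its shifted homology. So the statement is equivalent to $\dim_{\F_p}H_q(\bm x;R)=\dim_{\F_p}H_q(\bm x^{s.\flat};R^{s.\flat})$ for all $q$. By the long exact sequence of your triangle, it would suffice to show that the homology-level Bocksteins $H_q(\bm x';\wt R/x_1\wt R)\to H_{q-1}(\bm x';M)$ correspond on the two sides. That is the ingredient you would need to supply.

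The paper's proof of \cref{lem:Kos} reaches the same point and asserts the isomorphism of distinguished triangles without addressing this compatibility. Following it does not fill the gap.
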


Note that the theorem holds even for perfectoid \emph{rings} $A$ (cf.\ \cref{ex:perfectoid tower} (2)), which was already found in the proof of \cite[Proposition 16.4.18]{GR24}. However, the proof there relies on \emph{Fontaine's period ring} $\bbA_{\mathrm{inf}}(A)=W(A^\flat)$, which does not behave well for perfectoid towers.

On the other hand, we are interested in perfectoid towers consisting of Noetherian local rings. Recall that we have the following hierarchy of Noetherian local rings $(R,\m)$:
\begin{center}
regular $\Longrightarrow$ complete intersection $\Longrightarrow$ Gorenstein $\Longrightarrow$ Cohen--Macaulay.
\end{center}
These properties are characterized by several invariants such as the \emph{Krull dimension} $\dim(R)$, the \emph{embedding dimension} $\edim(R)=\dim_{R/\m}(\m/\m^2)$, the \emph{depth} $\depth(R)$, and the \emph{$q$-th deviation} $\varepsilon_q(R)=\dim_{R/\m}H_q(\bm{x})$, where $\bm{x}$ is a minimal basis of $\m$. In this context, the isomorphism \eqref{eq:tiltisom} is one of the most important methods for comparing the rings $R$ and $R^{s.\flat}$ (cf.\ \cite[\S 3D1]{INS25}). For example, if, moreover, $R$ is $I_0$-torsion free, then $R^{s.\flat}$ is $I_0^{s.\flat}$-torsion free, and thus \eqref{eq:tiltisom} implies that $R$ is Cohen--Macaulay (resp.\ Gorenstein, complete intersection) if and only if so is $R^{s.\flat}$. By applying \cref{thm:A}, we remove this torsion free assumption and obtain the following comparison result.

\begin{theoremA}[{\cref{prop:tilt-dim,thm:tilt-regular,thm:tilt-CM,thm:tilt-Gor}}]
Consider a perfectoid tower \eqref{eq:perfectoidtower} and its tilt \eqref{eq:tilt}. Assume that $R$ is a Noetherian local ring with perfect residue field. Then one has
\[
\dim(R)=\dim(R^{s.\flat}),\quad \edim(R)=\edim(R^{s.\flat}),\quad \depth(R)=\depth(R^{s.\flat}),\quad \varepsilon_q(R)=\varepsilon_q(R^{s.\flat})
\]
for all $q\in\Z$. Moreover, $R$ is Cohen--Macaulay of type $r$ \emph{(}resp.\ Gorenstein, complete intersection, hypersurface, regular\emph{)} if and only if so is $R^{s.\flat}$.
\end{theoremA}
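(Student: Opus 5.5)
Most of the statement follows from \cref{thm:C} applied to a suitably chosen sequence, together with the isomorphism \eqref{eq:tiltisom} and the decomposition of \cref{thm:A}.

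\emph{Setup.} Write $k$ for the residue field of $R$. Since $I_0\subset R$ lies in the Jacobson radical, \eqref{eq:tiltisom} identifies $R^{s.\flat}/I_0^{s.\flat}$ with $R/I_0$. So $R^{s.\flat}$ is local with the same residue field $k$. One must check that $R^{s.\flat}$ is Noetherian. The plan is to use that $R^{s.\flat}$ is $I_0^{s.\flat}$-adically complete and separated, being an inverse limit, and that $R^{s.\flat}/I_0^{s.\flat}\cong R/I_0$ is Noetherian. Then the standard criterion for complete rings modulo a principal ideal gives Noetherianity.

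\emph{Choice of sequence and transfer of invariants.} Choose a minimal generating set of $\m$ of the form $\bm{x}=x_1,\dots,x_n$ with $I_0=(x_1)$; this is possible when $x_1\notin\m^2$. If $x_1\in\m^2$, append $x_1$ to a minimal basis and account for the extra generator separately. Its image under \eqref{eq:tiltisom} gives $\bm{x}^{s.\flat}$, a generating set of the maximal ideal of $R^{s.\flat}$. By \cref{thm:C} we have $K_\bullet(\bm{x};R)\cong K_\bullet(\bm{x}^{s.\flat};R^{s.\flat})$, hence isomorphic Koszul homology groups. From this:
\begin{itemize}
\item The deviations $\varepsilon_q$ agree, because $H_1$ and all higher $H_q$ are computed from this complex; in the case $x_1\in\m^2$ a redundant generator just tensors the homology with an exterior algebra on one generator, which is harmless on both sides.
\item $\depth$ agrees, since depth equals $n$ minus the largest $q$ with $H_q(\bm{x})\neq 0$.
\item $\edim$ agrees, since $\edim=\dim_k H_1$ of the Koszul complex on $k$; alternatively, $\edim(R)=\edim(R/I_0)+1$ or $+0$, according as $x_1\notin\m^2$ or not, and one checks the same criterion transfers through \eqref{eq:tiltisom}.
\item The Cohen--Macaulay type agrees, since $\dim_k H_{n-d}$ gives $\dim_k\mathrm{Ext}^d(k,R)$ via the top nonvanishing Koszul homology after modding out a maximal regular sequence.
\end{itemize}

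\emph{Dimension.} The equality $\dim R=\dim R^{s.\flat}$ is the main obstacle, since Koszul homology does not see Krull dimension directly. My approach is to use \cref{thm:A}: $\dim R=\max\{\dim\wt{R},\dim(R/I_0)_\red\}$ because the fiber product covers $\Spec$. The same formula holds for $R^{s.\flat}$ with $(\wt R)^\flat$ and the identical ring $(R/I_0R)_\red$. In the torsion-free case $\dim\wt R=\dim(\wt R/I_0)+1$ and likewise for the tilt, using \eqref{eq:tiltisom} for $\wt{\bm R}$; the degenerate case $I_0\wt R=\wt R$ is handled separately.

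\emph{Characterizations.} Cohen--Macaulayness of type $r$ follows from $\dim=\depth$ together with the equality of types. Regularity is $\edim=\dim$. Complete intersection is $\varepsilon_2=\varepsilon_1-\dim$, or in Koszul terms $\dim_k H_1=\edim-\dim$ in the Koszul formulation. Hypersurface is this condition with value $\le 1$. Gorenstein is Cohen--Macaulay of type $1$. All of these are therefore expressed through invariants already shown to be equal.
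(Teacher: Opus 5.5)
\textbf{Gap 1: the embedding dimension.} Your treatment of Krull dimension is sound and is essentially the paper's argument: you split off $\wt{R}$ via \cref{thm:A} and use a nonzerodivisor generator of $I_0$ in the torsion-free case. Your treatment of depth is also sound and matches the paper: depth sensitivity only needs vanishing of Koszul homology of \emph{some} generating sequence of the maximal ideal.

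The argument breaks at the embedding dimension, and at everything that depends on it. Neither justification you give works.
\begin{itemize}
\item $H_1(\bm{x};k)\cong k^n$ only records the length of $\bm{x}$. This equals $\edim$ exactly when $\bm{x}$ is a minimal basis, and minimality of the transported sequence $\bm{x}^{s.\flat}$ is precisely what is in question.
\item Whether $I_0\subset\m^2$ cannot be read off from $R/I_0$. Nor can it be read off from the Koszul homology of the transported sequence together with dimension and depth.
\end{itemize}
For instance, take $R=k\llbracket x,y,z\rrbracket/(yz)$ with $I_0=(x)$, and $R'=k\llbracket y,z\rrbracket$ with $I'_0=(yz)$. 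Both generators are nonzerodivisors and $R/I_0\cong R'/I'_0$. Moreover $K_\bullet(x,y,z;R)\simeq K_\bullet(y,z;k\llbracket y,z\rrbracket/(yz))\simeq K_\bullet(yz,y,z;R')$, and both rings have dimension and depth $2$. Yet $\edim R=3\neq 2=\edim R'$, and only $R'$ is regular.

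In that situation your claim that a redundant generator is ``harmless on both sides'' also fails. The extra exterior factor appears on one side only, so the $\varepsilon_q$ comparison collapses too.

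What is missing is genuinely tower-theoretic input, which your proposal never uses. The paper supplies it in two steps.
\begin{itemize}
\item In \cref{lem:m2} it proves $I_0\subset\m^2\iff I_1\subset\m_1^2$. It writes $f_0=\sum b_ib'_i$ with $b_i,b'_i\in\m$ and lifts through the surjective Frobenius projection $F_0$, using $p\in I_0$, $I_1^p=I_0R_1$, and injectivity of $t_0$ from \cref{cor:reduced}.
\item In \cref{lem:mIm} it uses that $I_0R_i\subset I_i\m_i$ for $i\geq 1$. Hence $\m_i/I_i\m_i$ is the cokernel of multiplication by $\ol{f_i}$ on $\m_i/I_0R_i$, and is therefore carried over by $R_i^{s.\flat}/I_0^{s.\flat}R_i^{s.\flat}\cong R_i/I_0R_i$.
\end{itemize}
Together with $R^{s.\flat}\cong R_1^{s.\flat}$, this yields the $\edim$ equality and \cref{thm:tilt-regular} (2). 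That is what lets one choose minimal bases in the same case on both sides when comparing $\varepsilon_q$. Regularity, complete intersection and hypersurface all rest on it as well. Complete intersection is defined by $\varepsilon_1=\edim-\dim$ (your ``$\varepsilon_2=\varepsilon_1-\dim$'' is a slip), and hypersurface by $\edim-\depth\leq 1$.

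\textbf{Gap 2: $k$-dimensions from an isomorphism in $D(\Ab)$.} \cref{thm:C} is an isomorphism in $D(\Ab)$, so it only gives $H_q(\bm{x};R)\cong H_q(\bm{x}^{s.\flat};R^{s.\flat})$ as abelian groups. These are $k$-vector spaces, but for infinite $k$ an additive isomorphism does not determine $\dim_k$; for $k=\ol{\F_p}$ one already has $k\cong k^2$ as groups. So neither $\varepsilon_q$ nor the type follows as written.

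Notice that you never use that $k$ is perfect; the paper uses it exactly here. After passing to completions, it fixes $W(k)\to R$ lifting $\id_k$. It then builds a compatible map $k\to R^{s.\flat}=\varprojlim_{F_i}R_i/I_0R_i$ from the commutative squares relating $\varphi_k$ and $F_i$; perfectness is what makes $\varprojlim_{\varphi}k=k$. Finally it applies \cref{lem:Kos} over the base $W(k)$. The resulting isomorphism in $D(W(k))$ is $k$-linear on homology, since the homology is killed by $p$.

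Once that is in place, your computation of the type is a legitimate alternative to \cref{lem:typeKos}: $r(R)=\dim_k H_{n-t}(\bm{x};R)=\dim_k\mathrm{Ext}^t_R(k,R)$ with $t=\depth R$, valid for an arbitrary generating sequence of $\m$. Unlike the paper's route, it does not need $\bm{x}$ to be minimal. The Cohen--Macaulay-of-type-$r$ and Gorenstein statements would then be independent of the $\edim$ issue above.
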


Finally, we focus on \emph{$p$-Stanley--Reisner rings} as interest examples of rings having perfectoid towers with possible $p$-torsion elements; the notion of $p$-Stanley--Reisner rings was introduced by O.~Strahan \cite{Str25} as a mixed characteristic analogue of Stanley--Reisner rings.
It is known that every $p$-Stanley--Reisner ring with respect to an absolutely unramified complete discrete valuation ring (e.g., the Witt ring $W(k)$ of a perfect field $k$ of characteristic $p>0$) admits a perfectoid tower (\cref{ex:perfectoid tower} (4)). We apply the tilting invariance of Cohen--Macaulayness to deduce Reisner's criterion for such a $p$-Stanley--Reisner ring (\cref{cor:Reisner}).

%%%%%%%%%%%%%%%%%%%%%%%%%%%%%%%%%%%%%%%%%%%%%%%%%%%%%%%%%%%%%%%%%%%%%%%%%%%%%%%%
\medskip\noindent
\textbf{Acknowledgements.} % Acknowledgements section without numbering
The author would like to thank Kei Nakazato for continuous discussions and giving an idea of examples of non-reduced perfectoid towers, Ryo Ishizuka for discussing another proof of \cref{lem:Kos}. Moreover, the author is deeply thankful to Shinnosuke Ishiro, Nao Mochizuki, Kazuma Shimomoto, and Tatsuki Yamaguchi for their helpful comments.
Special thanks to Toshinori Kobayashi for calling his attention to the potential for further applications of tilting Koszul homology.
The author thanks Yoichi Mieda and Kanau Shimada for pointing out to an error in an earlier version of this paper.

\medskip\noindent
\textbf{Notations and conventions.}
  \begin{itemize}
  \item We consistently fix a prime number $p>0$.
  \item All rings are assumed to be commutative and unital (unless otherwise stated).
  \item For an $\F_p$-algebra $R$, let $\varphi=\varphi_R\colon R\to R$ denote the absolute Frobenius.
  \item By a \emph{pair} we simply mean a couple $(A,I)$ consisting of a ring $A$ and an ideal $I$ of $A$. When the ideal $I$ is principal, say $I=(a)$, then we often write $(A,a)$ in place of $(A,I)$.
  \item For a pair $(A,I)$ and an $A$-module $M$, we say that an element $x\in M$ is \emph{$I$-torsion} if for all $a\in I$ there exists an integer $n>0$ such that $a^nx=0$. Let $M_{\textrm{$I$-}\mathrm{tor}}$ denote the $A$-submodule of $M$ consisting of all $I$-torsion elements in $M$. We say that $M$ is \emph{$I$-torsion free} if $M_{\textrm{$I$-}\mathrm{tor}}=0$. Note that we follow the terminologies in \cite{FKI}. Let $\varphi_{I,M}\colon M_{\Itor}\to M/M_{\Itor}$ denote the composition of the inclusion $M_{\Itor}\hookrightarrow M$ followed by the canonical projection $M\twoheadrightarrow M/M_{\Itor}$.
  \item For a pair $(A,I)$, when we say an $A$-module $M$ is $I$-adically complete, we always mean that $M$ is Hausdorff complete with respect to the $I$-adic topology.
  \end{itemize}
%%%%%%%%%%%%%%%%%%%%%%%%%%%%%%%%%%%%%%%%%%%%%%%%%%%%%%%%%%%%%%%%%%%%%%%%%%%%%%%%
%%%%%%%%%%%%%%%%%%%%%%%%%%%%%%%%%%%%%%%%%%%%%%%%%%%%%%%%%%%%%%%%%%%%%%%%%%%%%%%%
\section{Preperfectoid towers and perfectoid towers}

In this section, we discuss fundamental properties of perfectoid towers.
We introduce a slightly more general notion, \emph{preperfectoid towers} (\cref{def:perfectoidtower}), and explain these differences (\cref{rem:pre}). After that, we review the tilts of (pre)perfectoid towers and prove a characterization of (pre)perfectoid towers in terms of cartesian diagrams (\cref{thm:PB}).

By a \emph{tower of rings} we simply mean an inductive system of rings $R_0\xr{t_0}R_1\xr{t_1}\cdots\to R_i\xr{t_i}\cdots$ indexed by $\N$, which we denote by $\textrm{{\boldmath $R$}}=\{R_i\}_{i\geq 0}=\{R_i,t_i\}_{i\geq 0}$. The \emph{category of towers of rings} is the category of inductive systems of rings. In particular, we can compute (co)limits of towers of rings levelwise.
Similarly, one also defines the category of towers of $R$-algebras for a fixed ring $R$ (e.g., $\F_p$).
In this paper, we use the following terminology.

\begin{definition}
\label{def:towerP}
Let $\textrm{{\boldmath $R$}}=\{R_i,t_i\}_{i\geq 0}$ be a tower of rings. For a property $P$ of rings or of $R_0$-modules, we say that \emph{{\boldmath $R$} satisfies $P$} if $R_i$ satisfies $P$ for every $i\geq 0$, where we regard $R_i$ as an $R_0$-module via the transition map $t_{i-1}\circ\cdots\circ t_0 \colon R_0\to R_i$.
\end{definition}

For example, we will consider the following properties: reduced, Noetherian, local, $I_0$-torsion free, $I_0$-adically separated, $I_0$-adically complete, where $I_0$ is an ideal of $R_0$.
Let us recall the following definitions.

\begin{definition}[{\cite[Definition 3.2]{INS25}}]
A tower of $\F_p$-algebras $\textrm{{\boldmath $R$}}=\{R_i,t_i\}_{i\geq 0}$ is called a \emph{perfect tower} if it is isomorphic to a tower of the form $R\xr{\varphi}R\xr{\varphi}R\xr{\varphi}\cdots$ for a reduced $\F_p$-algebra $R$.
\end{definition}

\begin{definition}[{\cite[Definition 3.4]{INS25}}]
\label{def:INS3.4}
Let $(R,I_0)$ be a pair. A \emph{purely inseparable tower arising from $(R,I_0)$} is a tower of rings $\textrm{{\boldmath $R$}}=\{R_i,t_i\}_{i\geq 0}$ satisfying the following conditions.
  \begin{itemize}
  \item[\textbf{(a)}] $R_0=R$ and $p\in I_0$.
  \item[\textbf{(b)}] For every $i\geq 0$, the ring homomorphism $\ol{t_i}\colon R_i/I_0R_i \to R_{i+1}/I_0R_{i+1}$ induced by $t_i$ is injective.
  \item[\textbf{(c)}] For every $i\geq 0$, we have $\Im(\varphi_{R_{i+1}/I_0R_{i+1}}) \subset \Im(\ol{t_i})$.
  \end{itemize}
In this case, for any $i\geq 0$ there exists a unique ring homomorphism $F_i\colon R_{i+1}/I_0R_{i+1}\to R_i/I_0R_i$ such that the diagram
\[
\begin{tikzcd}
R_{i+1}/I_0R_{i+1} \rar["\varphi"] \ar[rd,"F_i"'] & R_{i+1}/I_0R_{i+1} \\
 & R_i/I_0R_i \uar["\ol{t_i}"',hookrightarrow]
\end{tikzcd}
\]
commutes. We call $F_i$ the \emph{$i$-th Frobenius projection} (of {\boldmath $R$} associated to $(R,I_0)$).
\end{definition}

Perfectoid towers are defined as purely inseparable towers satisfying the additional conditions \textbf{(d)} $\sim$ \textbf{(g)}. Here condition \textbf{(e)} requires the property ``Zariskian.'' We define preperfectoid towers by removing this condition:

\begin{definition}[{cf.\ \cite[Definition 3.21]{INS25}}]
\label{def:perfectoidtower}
Let $(R,I_0)$ be a pair, and $\textrm{{\boldmath $R$}}=\{R_i,t_i\}_{i\geq 0}$ a tower of rings.
  \begin{enumerate}
  \item We say that {\boldmath $R$} is a \emph{preperfectoid tower arising from $(R,I_0)$} if it is a purely inseparable tower arising from $(R,I_0)$ satisfying the following additional conditions.
  \begin{itemize}
  \item[\textbf{(d)}] For every $i\geq 0$, the $i$-th Frobenius projection $F_i\colon R_{i+1}/I_0R_{i+1}\to R_i/I_0R_i$ is surjective.
  \item[\textbf{(f)}] $I_0$ is a principal ideal, and $R_1$ contains a principal ideal $I_1$ satisfying the following conditions.\footnote{The ideal $I_1$ is, if it exists, unique (\cite[Definition 3.25]{INS25}).}
    \begin{itemize}
    \item[\textbf{(f-1)}] $I_1^p=I_0R_1$.
    \item[\textbf{(f-2)}] For every $i\geq 0$, $\Ker(F_i)=I_1(R_{i+1}/I_0R_{i+1})$.
    \end{itemize}  
  We call $I_1$ the \emph{first perfectoid pillar} (of {\boldmath $R$} arising from $(R,I_0)$).
  \item[\textbf{(g)}] For every $i\geq 0$, $I_0(R_i)_{\textrm{$I_0$-}\mathrm{tor}}=(0)$. Moreover, there exists a bijection $(F_i)_{\mathrm{tor}}\colon (R_{i+1})_{\textrm{$I_0$-}\mathrm{tor}}\to (R_i)_{\textrm{$I_0$-}\mathrm{tor}}$ such that the diagram of sets
  \begin{equation*}
  %\label{eq:Ftor}
  \begin{tikzcd}[column sep=large]
  (R_{i+1})_{\textrm{$I_0$-}\mathrm{tor}} \rar["\varphi_{I_0,R_{i+1}}"] \dar["(F_i)_{\mathrm{tor}}"'] & R_{i+1}/I_0R_{i+1} \dar["F_i"] \\
  (R_i)_{\textrm{$I_0$-}\mathrm{tor}} \rar["\varphi_{I_0,R_i}"'] & R_i/I_0R_i
  \end{tikzcd}
  \end{equation*}
  commutes.\footnote{Since the map $\varphi_{I_0,R_i}$ is injective by \cite[Corollary 3.15]{INS25}, the map $(F_i)_{\mathrm{tor}}$ is, if it exists, unique and automatically a homomorphism of (possibly) non-unital rings.}%, where, for each $i\geq 0$, we denote by $\varphi_{I_0,R_i}$ the composition $(R_i)_{\Iztor}\hookrightarrow R_i\twoheadrightarrow R_i/I_0R_i$.
  \end{itemize}
  \item We say that {\boldmath $R$} is a \emph{perfectoid tower arising from $(R,I_0)$} if it is a preperfectoid tower arising from $(R,I_0)$ satisfying the following additional condition.
    \begin{itemize}
    \item[\textbf{(e)}] For every $i\geq 0$, $R_i$ is $I_0R_i$-adically Zariskian (i.e., $I_0R_i$ is contained in the Jacobson radical of $R_i$).
    \end{itemize}
  \end{enumerate}
\end{definition}

We first note differences between preperfectoid towers and perfectoid towers.

\begin{notation}
Let $(A,I)$ be a pair. Suppose that there exists a finite set of generators $x_1,\ldots,x_n$ for $I$ such that $p\in(x_1^p,\ldots,x_n^p)$. We then write
\[
I^{[p]}\coloneqq (x_1^p,\ldots,x_n^p).
\]
This ideal is independent of the choice of a finite set of generators for $I$ by \cite[Lemma 16.2.2]{GR24}.
\end{notation}

In \cite{INS25}, condition \textbf{(e)} was used only in \cite[Proposition 3.26]{INS25}, which can be refined as follows:

%\begin{lemma}
%Let $A$ be a ring, $I,J\subset A$ ideals. Then $I=J$ if and only if $I_\p=J_\p$ for every $\p\in\Spec A$.
%\end{lemma}
%
%\begin{proof}
%Since $I=J$ if and only if $I=I+J=J$, we may assume that $I\subset J$. Then $I=J$ if and only if $I\hookrightarrow J$ is surjective. Hence we obtain the assertion.
%\end{proof}

\begin{proposition}
\label{prop:Ii}
Let $\textrm{{\boldmath $R$}}=\{R_i,t_i\}_{i\geq 0}$ be a purely inseparable tower arising from a pair $(R,I_0)$, and assume that {\boldmath $R$} satisfies \emph{\textbf{(d)}} and \emph{\textbf{(f)}}. Let $\ol{R_i}\coloneqq R_i/I_0R_i$ for each $i\geq 0$.
  \begin{enumerate}
  \item For a sequence of ideals $\{\fraka_i\subset \ol{R_i}\}_{i\geq 0}$ such that $\Ker(F_i)\subset \fraka_{i+1}$ for every $i\geq 0$, the following conditions are equivalent.
  \begin{enumalph}
  \item $F_i^{-1}(\fraka_i)=\fraka_{i+1}$ for every $i\geq 0$.
  \item $F_i(\fraka_{i+1})=\fraka_i$ for every $i\geq 0$.
  \end{enumalph}
  \item Let $\{I_i\subset R_i\}_{i\geq 2}$ be a sequence of finitely generated ideals with $I_0R_i\subset I_i$ for every $i\geq 2$ such that $\{I_i\ol{R_i}\subset \ol{R_i}\}_{i\geq 0}$ satisfies the condition \emph{(b)} in \emph{(1)}.
    \begin{enumalph}
    \item $\Ker(F_i)\subset I_{i+1}\ol{R_{i+1}}$ for every $i\geq 0$. Hence $\{I_i\ol{R_i}\subset \ol{R_i}\}_{i\geq 0}$ satisfies the condition \emph{(a)} in \emph{(1)}.
    \item For any $i\geq 0$, there exists a system of generators $S$ for $I_{i+1}$ such that $p\in (S^p)$, and $I_{i+1}^{[p]}=I_iR_{i+1}$.
    \end{enumalph}
  \item There exists a unique sequence of finitely generated ideals $\{I_i\subset R_i\}_{i\geq 2}$ as in \emph{(2)}. Moreover, there exists a sequence of elements $\{f_i\in R_i\}_{i\geq 0}$ such that $I_i=(f_i)+I_0R_i$ and $F_i(\ol{f_{i+1}})=\ol{f_i}$ (where $\ol{f_i}$ denotes the image of $f_i$ in $\ol{R_i}$ for each $i\geq 0$).
  \end{enumerate}
\end{proposition}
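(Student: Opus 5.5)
Parts (1) and (3) follow formally from the surjectivity \textbf{(d)} of the Frobenius projections together with \textbf{(f)}. Part (2) rests on one compatibility, which I would check first: from the definition of $F_i$ and the injectivity of $\ol{t_i}$, one has $\ol{t_{i-1}}\circ F_{i-1}=F_i\circ\ol{t_i}$ and $\ol{t_i}(F_i(z))=z^p$ for $z\in\ol{R_{i+1}}$. Throughout, fix generators $I_0=(f_0)$ and $I_1=(f_1)$. By \textbf{(f-1)} we have $I_0R_j=(f_1^p)R_j$ for all $j\geq 1$, so $p\in(f_1^p)R_j$.

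For (1), the two directions are as follows.
\begin{itemize}
\item (a)$\Rightarrow$(b): since $F_i$ is surjective, $F_i(F_i^{-1}(\fraka_i))=\fraka_i$.
\item (b)$\Rightarrow$(a): $F_i^{-1}(\fraka_i)=F_i^{-1}(F_i(\fraka_{i+1}))=\fraka_{i+1}+\Ker(F_i)=\fraka_{i+1}$, using the hypothesis $\Ker(F_i)\subset\fraka_{i+1}$.
\end{itemize}

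For (2)(a), I would show $\Ker(F_i)\subset I_{i+1}\ol{R_{i+1}}$ by induction on $i$.
\begin{itemize}
\item Base case $i=0$: this is \textbf{(f-2)}, since $\Ker(F_0)=I_1\ol{R_1}$.
\item Inductive step: by \textbf{(f-2)}, $\Ker(F_i)=I_1\ol{R_{i+1}}$ is generated by $\ol{t_i}(I_1\ol{R_i})=\ol{t_i}(\Ker F_{i-1})$. By induction this lies in the ideal generated by $\ol{t_i}(I_i\ol{R_i})$. Using (b), any $y\in I_i\ol{R_i}$ has the form $y=F_i(z)$ with $z\in I_{i+1}\ol{R_{i+1}}$. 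Hence $\ol{t_i}(y)=z^p\in I_{i+1}\ol{R_{i+1}}$, which closes the induction.
\end{itemize}
Part (1) then upgrades (b) to (a). As a byproduct, $I_1R_{i+1}\subset I_{i+1}$, because $I_0R_{i+1}\subset I_{i+1}$.

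For (2)(b), take any finite generating set of $I_{i+1}$ and add $f_1$ to obtain $S$; then $p\in(f_1^p)\subset(S^p)$. I would prove the two inclusions separately.
\begin{itemize}
\item $I_{i+1}^{[p]}\subset I_iR_{i+1}$: for $s\in S$, $\ol{s^p}=\ol{t_i}(F_i(\ol s))$ with $F_i(\ol s)\in I_i\ol{R_i}$. Hence $s^p\in I_iR_{i+1}+I_0R_{i+1}=I_iR_{i+1}$.
\item $I_iR_{i+1}\subset I_{i+1}^{[p]}$: by (b), $I_i$ is generated by lifts $y_s\in R_i$ of the $F_i(\ol s)$ together with $I_0R_i$. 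Then $t_i(y_s)\equiv s^p$ modulo $I_0R_{i+1}=(f_1^p)$, so $I_iR_{i+1}\subset(S^p)$.
\end{itemize}
The main care needed here is in the bookkeeping modulo $I_0$, and this is where $f_1\in S$ is used.

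For (3), I would handle uniqueness and existence in turn.
\begin{itemize}
\item Uniqueness: by (2)(a) and (1), $I_{i+1}\ol{R_{i+1}}=F_i^{-1}(I_i\ol{R_i})$. Since $I_{i+1}\supset I_0R_{i+1}$, $I_{i+1}$ is the full preimage of this ideal in $R_{i+1}$, so it is determined inductively from $I_1$.
\item Choice of the $f_i$: take $f_0,f_1$ as above, noting $F_0(\ol{f_1})=0=\ol{f_0}$ since $f_1\in\Ker F_0$. Then choose lifts $f_{i+1}$ inductively with $F_i(\ol{f_{i+1}})=\ol{f_i}$, which is possible by \textbf{(d)}.
\item Definition of the ideals: set $I_i\coloneqq(f_i)+I_0R_i$. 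These are finitely generated and contain $I_0R_i$.
\item Verifying condition (b): surjectivity of $F_i$ gives $F_i(I_{i+1}\ol{R_{i+1}})=(\ol{f_i})=I_i\ol{R_i}$ for all $i\geq 1$. For $i=0$ this reads $F_0(I_1\ol{R_1})=0$, which holds because $I_1\ol{R_1}=\Ker F_0$.
\item Compatibility with the given $I_1$: at level $1$ the definition returns $(f_1)+I_0R_1=I_1$, since $I_0R_1=I_1^p\subset I_1$.
\end{itemize}
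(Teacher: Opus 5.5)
Your proof is correct. For (1), (2)(a) and (3) it follows the paper's argument. In (2)(a), your induction spells out the iteration the paper leaves implicit when it passes from $(I_{i+1}\ol{R_{i+1}})^{[p]}=I_i\ol{R_{i+1}}$ to $\Ker(F_i)=I_1\ol{R_{i+1}}\subset I_{i+1}\ol{R_{i+1}}$. In (3), you supply the details behind the paper's one-line appeal to \textbf{(d)} and dependent choice.

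The genuine difference is in (2)(b).
\begin{itemize}
\item The paper extends a generator of $I_0$ to a generating set $\bm{x}$ of $I_{i+1}$ and obtains $I_iR_{i+1}=(\bm{x}^p)+I_0R_{i+1}$. It then has to remove the summand $I_0R_{i+1}$. For this it inducts on $i$ to get $I_0R_i\subset I_i^{p^i}$, and rewrites the equality as $I_iR_{i+1}=(\bm{x}^p)+I_i^{p^i-1}(I_iR_{i+1})$. It then applies Nakayama at each prime containing $I_iR_{i+1}$, which is where finite generation of $I_i$ enters. There $p\in(\bm{x}^p)$ only comes out a posteriori from the final equality.
\item You instead put the generator $f_1$ of the first pillar into $S$. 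This is legitimate by your byproduct $I_1R_{i+1}\subset I_{i+1}$. Then $I_0R_{i+1}=f_1^pR_{i+1}\subset(S^p)$ holds from the outset by \textbf{(f-1)}, so $p\in(S^p)$ is immediate. Both inclusions $(S^p)\subset I_iR_{i+1}$ and $I_iR_{i+1}\subset(S^p)$ become direct computations modulo $I_0R_{i+1}$, with no induction on $i$, no localization and no Nakayama.
\end{itemize}

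What the paper's Nakayama argument buys is that it works for an arbitrary finite generating set of $I_{i+1}$; the generator of $I_0$ it includes plays no role. Nothing is lost with your adapted choice of $S$, since $I_{i+1}^{[p]}$ does not depend on the generators \cite[Lemma 16.2.2]{GR24}.

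Finally, the identity $\ol{t_{i-1}}\circ F_{i-1}=F_i\circ\ol{t_i}$ that you record at the start is true but is never used.
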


\begin{proof}
(1) (a) $\Rightarrow$ (b): This follows from the surjectivity of $F_i$.

(b) $\Rightarrow$ (a): Fix $i\geq 0$. Then $F_i(\fraka_{i+1})=\fraka_i=F_i(F_i^{-1}(\fraka_i))$ implies $F_i^{-1}(\fraka_i)\subset F_i^{-1}(\fraka_i)+\Ker(F_i) = \fraka_{i+1}+\Ker(F_i) \subset F_i^{-1}(\fraka_i)$, and so $F_i^{-1}(\fraka_i)=\fraka_{i+1}+\Ker(F_i)$. But $\Ker(F_i)\subset \fraka_{i+1}$ by assumption, and thus $F_i^{-1}(\fraka_i)=\fraka_{i+1}$.
%This can be proved in exactly the same way as \cite[Proposition 3.26 (1)]{INS25}.

(2) (a) For every $i\geq 0$, the compatibility $\ol{t_i}\circ F_i=\varphi_{\ol{R_{i+1}}}$ implies $(I_{i+1}\ol{R_{i+1}})^{[p]}=I_i\ol{R_{i+1}}$. In particular, $\Ker(F_i)=I_1\ol{R_{i+1}}\subset I_{i+1}\ol{R_{i+1}}$.

(b) We proceed by induction on $i$.
If $i=0$, then what we want to show is $I_1^p=I_0R_1$, which is assumed by condition \textbf{(f-1)}.
We henceforth assume that the assertion is true with $i$ replaced by $j$ with $0\leq j\leq i-1$. 
Choose a generator of $I_0$, and extend it to a system of generators $\bm{x}=x_1,\ldots,x_n$ for $I_{i+1}$. Set $\bm{x}^p\coloneqq x_1^p,\ldots,x_n^p$. Then $(\bm{x}^p)\ol{R_{i+1}}=(I_{i+1}\ol{R_{i+1}})^{[p]} = I_i\ol{R_{i+1}}$ as we have seen above, and hence we obtain
\[
I_iR_{i+1} = (\bm{x}^p) + I_0R_{i+1} \subset (\bm{x}^p) + I_i^{p^i-1}(I_iR_{i+1}) \subset I_iR_{i+1}.
\]
Here we use the inclusion $I_0R_i\subset I_i^{p^i}$, which follows from the induction hypothesis. Thus we get the equalities of $R_{i+1}$-modules
\begin{equation}
\label{eq:Ii}
I_iR_{i+1} = (\bm{x}^p) + I_0R_{i+1} = (\bm{x}^p) + I_i^{p^i-1}(I_iR_{i+1}).
\end{equation}
Let us deduce from this that $(\bm{x}^p) = I_iR_{i+1}$. It suffices to show that $(\bm{x}^p)(R_{i+1})_\p = I_i(R_{i+1})_\p$ for every $\p\in\Spec R_{i+1}$. We proceed by considering the following cases.
  \begin{itemize}
  \item The case $I_iR_{i+1}\not\subset \p$. Since $I_iR_{i+1}=(\bm{x}^p)+I_0R_{i+1}\subset I_{i+1}$ by assumption, it follows that $I_{i+1}\not\subset\p$. Hence $(\bm{x})^p (R_{i+1})_\p = (R_{i+1})_\p = I_i (R_{i+1})_\p$.
  \item The case $I_iR_{i+1}\subset \p$. Then $I_i(R_{i+1})_\p\subset \p (R_{i+1})_\p$, and since $I_i$ is finitely generated, we can apply Nakayama's lemma to \eqref{eq:Ii} after localizing by $\p$.
  \end{itemize}

(3) If $\{I_i\subset R_i\}_{i\geq 2}$ is a sequence of (finitely generated) ideals as in (2), then $\Ker(F_i)\subset I_{i+1}\ol{R_{i+1}}$ for every $i\geq 0$ and the condition (a) in (1) is satisfied; therefore $I_{i+1}$ is the inverse image of $F_i^{-1}(I_i\ol{R_i})$ under the canonical projection $R_{i+1}\to\ol{R_{i+1}}$. Since $I_0$ is fixed, the uniqueness follows.

The existence follows from condition \textbf{(d)} by the axiom of (dependent) choice.
\end{proof}

\begin{remark}
\label{rem:pre}
In the situation as in \cref{prop:Ii}, $I_i$ is called the \emph{$i$-th perfectoid pillar of {\boldmath $R$} arising from $(R,I_0)$} (cf.\ \cite[Definition 3.27]{INS25}). Note that \cite[Proposition 3.26]{INS25} shows that if {\boldmath $R$} satisfies \textbf{(e)}, then we can take $I_i$ to be principal for every $i\geq 0$. 
This is the only difference between preperfectoid towers and perfectoid towers: to prove all results in \cite{INS25} (particularly, \S3C2, \S3D, \S3E), it is sufficient for the ideals $I_i\ol{R_i}$ of $\ol{R_i}$ to be principal.
One of merits of perfectoid towers is to obtain a new perfectoid tower by taking shifts: if $\textrm{{\boldmath $R$}}=\{R_i,t_i\}_{i\geq 0}$ is a perfectoid tower arising from $(R,I_0)$, then for every $j\geq 0$, $\{R_{j+i},t_{j+i}\}_{i\geq 0}$ is a perfectoid tower arising from $(R_j,I_j)$.
\end{remark}

Next let us review the definition of tilts of (pre)perfectoid towers.

\begin{definition}[{\cite[Definition 3.29, Definition 3.34]{INS25}}]
\label{def:tilt}
Let $\textrm{{\boldmath $R$}}=\{R_i,t_i\}_{i\geq 0}$ be a preperfectoid tower arising from a pair $(R,I_0)$. Then we define its \emph{tilt} $\textrm{{\boldmath $R$}}^\flat=\{R_i^{s.\flat},t_i^{s.\flat}\}_{i\geq 0}$ as follows.
  \begin{itemize}
  \item For any $i\geq 0$, let $R_i^{s.\flat}\coloneqq \varprojlim(\cdots\xr{F_{i+2}}R_{i+2}/I_0R_{i+2} \xr{F_{i+1}} R_{i+1}/I_0R_{i+1} \xr{F_i} R_i/I_0R_i)$. We call $R_i^{s.\flat}$ the \emph{$i$-th small tilt} (of {\boldmath $R$} associated to $(R,I_0)$).
  \item For any $i\geq 0$, let $t_i^{s.\flat}\colon R_i^{s.\flat}\to R_{i+1}^{s.\flat}$ be the ring homomorphism such that $\Phi^{(i+1)}_m \circ t_i^{s.\flat} = \ol{t_i}\circ \Phi^{(i)}_{m}$ for all $m\geq 0$, where $\Phi^{(i)}_{m} \colon R_i^{s.\flat}\to R_{i+m}/I_0R_{i+m}$ and $\Phi^{(i+1)}_m\colon R_{i+1}^{s.\flat}\to R_{i+m+1}/I_0R_{i+m+1}$ are the canonical projections.
  \end{itemize}
Moreover, let $I_0^{s.\flat}$ denote the kernel of the $0$-th projection $\Phi^{(0)}_0\colon R^{s.\flat}\to R/I_0R$.
In general, for each $i\geq 0$, the \emph{small tilt} $I_i^{s.\flat}$ of the $i$-th perfectoid pillar $I_i$ is the kernel of the $0$-th projection $\Phi^{(i)}_0\colon R_i^{s.\flat} \to R_i/I_0R_i$.
\end{definition}

One of the main theorems of \cite{INS25} is the following.

\begin{theorem}[{\cite[Theorem 3.35 (2), Lemma 3.39, Proposition 3.41]{INS25}}]
\label{thm:INS3.35}
Let $\textrm{{\boldmath $R$}}=\{R_i,t_i\}_{i\geq 0}$ be a preperfectoid tower arising from a pair $(R,I_0)$.
  \begin{enumerate}
  \item For every $i\geq 0$, we have an isomorphism of (possibly) non-unital rings $(R_i^{s.\flat})_{\Izstor}\xr{\cong}(R_i)_{\Iztor}$ that is compatible with $t_i^{s.\flat}$ and $t_i$. Moreover, the $0$-th projection $\Phi^{(i)}_0\colon R_i^{s.\flat}\to R_i/I_0R_i$ induces an isomorphism of rings $R_i^{s.\flat}/I_0^{s.\flat}R_i^{s.\flat}\xr{\cong} R_i/I_0R_i$ that is compatible with $t_i^{s.\flat}$ and $t_i$. Consequently, we have the commutative diagram of (possibly) non-unital rings
  \[
  \begin{tikzcd}[column sep=large]
  (R_i^{s.\flat})_{\Izstor} \rar["\varphi_{I_0^{s.\flat},R_i^{s.\flat}}",hookrightarrow] \dar["\cong"'] & R_i^{s.\flat}/I_0^{s.\flat}R_i^{s.\flat} \dar["\cong"] \\
  (R_i)_{\Iztor} \rar["\varphi_{I_0,R_i}",hookrightarrow] & R_i/I_0R_i.
  \end{tikzcd}
  \]
  \item The tilt $\textrm{{\boldmath $R$}}^\flat=\{R_i^{s.\flat},t_i^{s.\flat}\}_{i\geq 0}$ is a perfectoid tower arising from $(R^{s.\flat},I_0^{s.\flat})$.
  \end{enumerate}
\end{theorem}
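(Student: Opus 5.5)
The plan is to observe that \cite{INS25} uses condition \textbf{(e)} only through \cite[Proposition 3.26]{INS25}, and there only to know that the ideals $I_i\ol{R_i}\subset\ol{R_i}$ are principal (\cref{rem:pre}). So I would first establish this principality for preperfectoid towers, and then check that the proofs of \cite[Theorem 3.35 (2), Lemma 3.39, Proposition 3.41]{INS25} go through unchanged.

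\textbf{Step 1: principal pillars.} By \cref{prop:Ii} (3) there are elements $f_i\in R_i$ with $I_i=(f_i)+I_0R_i$ and $F_i(\ol{f_{i+1}})=\ol{f_i}$. Hence $I_i\ol{R_i}=(\ol{f_i})$ is principal, generated by a compatible family. By \cref{prop:Ii} (1), (2) we also have $F_i^{-1}(I_i\ol{R_i})=I_{i+1}\ol{R_{i+1}}$ and $\Ker(F_i)=(\ol{f_1})\ol{R_{i+1}}$. Taking the shifted families $(\ol{f_{i+m}})_{m\geq 0}$ defines elements $f_i^{\flat}\in R_i^{s.\flat}$, which play the role of the $I_i^{s.\flat}$-generators in \cite{INS25}.

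\textbf{Step 2: the quotient isomorphism.} The projection $\Phi^{(i)}_0$ is surjective, because every $F_j$ is surjective by \textbf{(d)}, so compatible lifts can be chosen by dependent choice. The kernel consists of the sequences whose $0$-th coordinate vanishes. I would show that this kernel equals $f_i^{\flat}R_i^{s.\flat}$, and that $f_0^{\flat}$ maps to a unit multiple of $(f_i^{\flat})^{p^i}$, so the kernel equals $I_0^{s.\flat}R_i^{s.\flat}$. For this, given $(a_m)$ with $a_0=0$, one has $a_1\in\Ker(F_i)=(\ol{f_1})$ and inductively $a_m\in(\ol{f_{i+m}})^{p^{m}\cdot(\text{const})}$. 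One then divides coordinatewise and must make the quotients compatible under the $F$'s.

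\textbf{Step 3: the torsion isomorphism.} By \textbf{(g)} the bijections $(F_j)_{\mathrm{tor}}$ make $\{(R_{i+m})_{\Iztor}\}_m$ an inverse system of bijections, and the maps $\varphi_{I_0,R_{i+m}}$ are compatible with the $F$'s. Passing to the limit gives an injective non-unital map $(R_i)_{\Iztor}\cong\varprojlim_m(R_{i+m})_{\Iztor}\hookrightarrow R_i^{s.\flat}$. I would then show that its image is exactly $(R_i^{s.\flat})_{\Izstor}$:
\begin{itemize}
\item elements of the image are killed by $f_0^{\flat}$, using $I_0(R_j)_{\Iztor}=0$;
\item conversely, an $f_0^{\flat}$-torsion sequence has coordinates lying in the images of $\varphi_{I_0,R_{i+m}}$, via the $\Ker$ description of Step 2 and \cite[Corollary 3.15]{INS25}.
\end{itemize}
Compatibility with $t_i$, $t_i^{s.\flat}$ and the commutative square are formal from the levelwise definitions. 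Part (2) then follows as in \cite{INS25}: the verification of \textbf{(a)}--\textbf{(g)} for $\textrm{{\boldmath $R$}}^\flat$ uses part (1) together with perfectness of the limit. Condition \textbf{(e)} for $\textrm{{\boldmath $R$}}^\flat$ is automatic, since $R_i^{s.\flat}$ is $I_0^{s.\flat}$-adically complete as an inverse limit.

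\textbf{Main obstacle.} The delicate point is the divisibility argument in Step 2 (and its reuse in Step 3): producing a \emph{compatible} family of quotients $b_m$ with $a_m=\ol{f_{i+m}}\,b_m$. Quotients are only unique up to annihilators of $\ol{f_{i+m}}$, so the naive choices need not be compatible under the $F$'s. I would try first to control these annihilators via $\Ker(F)=(\ol{f_1})$ and the relation $\ol{f_{j+1}}^{\,p}=\ol{t_j}(\ol{f_j})$. Then I would correct the $b_m$ inductively, as in the proof of \cite[Lemma 3.39]{INS25}, checking at each place that only principality of $I_i\ol{R_i}$ is used.
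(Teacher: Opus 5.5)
Your framing matches the paper's. The statement is simply quoted from \cite{INS25}, and its extension to preperfectoid towers rests on \cref{rem:pre} (condition \textbf{(e)} is used in \cite{INS25} only to make the pillars principal) together with \cref{prop:Ii}~(3), which is your Step~1. Had you stopped there and invoked \cite{INS25}, nothing more would be needed. The reconstruction you give in Step~2, however, is wrong as stated.

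For $i\geq 1$, the element $f_i^{\flat}=(\ol{f_i},\ol{f_{i+1}},\ldots)$ does not even lie in $\Ker(\Phi^{(i)}_0)$, because its $0$-th coordinate $\ol{f_i}$ is nonzero in general. For $\{\Z_p[p^{1/p^j}]\}_{j\geq 0}$ and $i=1$ it is $p^{1/p}\neq 0$ in $\Z_p[p^{1/p}]/(p)$. What $f_i^{\flat}$ generates is the preimage of $I_i\ol{R_i}$, i.e.\ the small tilt of the pillar $I_i$; as its use in \cref{lem:mIm} shows, this is the kernel of $R_i^{s.\flat}\to R_i/I_i$, not of $\Phi^{(i)}_0$. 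Since $\ol{t_j}(\ol{f_j})=\ol{f_{j+1}}^{\,p}$, the image of $f_0^{\flat}$ in $R_i^{s.\flat}$ is exactly $(f_i^{\flat})^{p^i}$, and this is the element that should generate $\Ker(\Phi^{(i)}_0)$. Your two claims ``$\Ker(\Phi^{(i)}_0)=f_i^{\flat}R_i^{s.\flat}$'' and ``$\Ker(\Phi^{(i)}_0)=I_0^{s.\flat}R_i^{s.\flat}$'' therefore contradict each other.

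The bound $a_m\in(\ol{f_{i+m}})^{p^m\cdot c}$ is also false. For $i=0$ it gives $a_m\in(\ol{f_m}^{\,p^m})=0$, hence $\Ker(\Phi^{(0)}_0)=0$, although this kernel contains $(0,\ol{f_1},\ol{f_2},\ldots)$. The correct bound uses $F_{i+j}^{-1}(I_j\ol{R_{i+j}})=I_{j+1}\ol{R_{i+j+1}}$, which is \cref{prop:Ii} applied to the shifted tower; its pillars are $I_jR_{i+j}$ by \cref{lem:alphaflat}. This gives $a_m\in I_m\ol{R_{i+m}}=\ol{f_{i+m}}^{\,p^i}\ol{R_{i+m}}$, with an exponent independent of $m$. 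Simplest of all: shift the tower to start at $R_i$ and treat only $i=0$.

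The compatibility problem you flag is real, and the relations you propose do not solve it in general. The ambiguity in $b_m$ is $\mathrm{Ann}_{\ol{R_{i+m}}}(\ol{f_{i+m}}^{\,p^i})$.
\begin{itemize}
\item If $R_{i+m}$ is $I_0$-torsion free, this annihilator is $\ol{f_{i+m}}^{\,p^{i+m}-p^i}\ol{R_{i+m}}$. One Frobenius projection kills it, so replacing $b_m$ by $F_{i+m}(b_{m+1})$ yields a compatible family.
\item In general the annihilator also contains $\varphi_{I_0,R_{i+m}}((R_{i+m})_{\Iztor})$. Torsion is killed by $f_m$ and not only by $I_0$: $(f_mx)^{p^m}\in I_0(R_{i+m})_{\Iztor}=0$, and $I_0$-torsion has no nonzero nilpotents by \textbf{(b)} and \textbf{(g)}. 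By \textbf{(g)}, the Frobenius projections map these pieces bijectively onto each other. So $\Ker(F)=(\ol{f_1})$ and $\ol{f_{j+1}}^{\,p}=\ol{t_j}(\ol{f_j})$ never remove this part of the ambiguity.
\end{itemize}
Your Step~2 never uses \textbf{(g)}, and it has to. One way is to split along the sequences \eqref{eq:torfquot}, whose compatibility with the $F_j$ is exactly \textbf{(g)}. Solve on the torsion-free quotients as above, then lift; this works because the torsion pieces form a system of bijections, so $\varprojlim^1$ vanishes. Finally, observe that $\varprojlim_m\varphi_{I_0,R_{i+m}}((R_{i+m})_{\Iztor})$ meets $\Ker(\Phi^{(i)}_0)$ trivially.

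The first bullet of Step~3 needs the same fact that torsion is killed by $f_m$. The $m$-th coordinate of $f_0^{\flat}$ acts by $\ol{f_{i+m}}^{\,p^i}$, not by an element of $I_0$, so $I_0(R_j)_{\Iztor}=0$ alone is not enough.

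Lastly, \textbf{(e)} for the tilt is not automatic ``as an inverse limit.'' The limit is complete for the topology defined by the $\Ker(\Phi^{(i)}_m)$. Identifying that topology with the $I_0^{s.\flat}$-adic one requires $\Ker(\Phi^{(i)}_m)=(I_0^{s.\flat})^{p^m}R_i^{s.\flat}$. This follows from the corrected Step~2 together with the shift isomorphisms $R_i^{s.\flat}\cong R_{i+m}^{s.\flat}$ (cf.\ \cite[Lemma 3.36]{INS25}).
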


Let us give some examples relevant to this paper.

\begin{example}
\label{ex:perfectoid tower}
  \begin{enumerate}
  \item For a reduced $\F_p$-algebra $R$, the tower consisting of absolute Frobenius
  \begin{equation}
  \label{eq:perfect tower}
  R\xr{\varphi}R\xr{\varphi}R\xr{\varphi}\cdots
  \end{equation}
  is a perfectoid tower arising from $(R,0)$. A tower that is isomorphic to \eqref{eq:perfect tower} for a reduced $\F_p$-algebra is called a \emph{perfect tower} (\cite[Definition 3.2]{INS25}). A tower of rings is a perfect tower if and only if it is a perfectoid tower arising from $(R,0)$ (\cite[Lemma 3.24]{INS25}).  %Note that \eqref{eq:perfect tower} is isomorphic to the tower consisting of the ring of $p$-power roots $R\hookrightarrow R^{1/p}\hookrightarrow R^{1/p^2}\hookrightarrow\cdots$ (\cite[Example 3.3]{INS25}).
  \item Let $A$ be a perfectoid ring that is $\varpi$-adically complete for a $\varpi\in A$ with $p\in \varpi^pA$. Then
  \[
  A\xr{\id}A\xr{\id}A\xr{\id}\cdots
  \]
  is a perfectoid tower arising from $(A,\varpi^p)$ (\cite[Example 3.53]{INS25}). Its tilt is
  \[
  A^\flat \xr{\id} A^\flat \xr{\id} A^\flat \xr{\id} \cdots,
  \]
  where $A^\flat=\varprojlim_\varphi(A/\varpi A)$ is the \emph{tilt} of $A$.  
  \item Fix an algebraic closure $\ol{\Q_p}$ of $\Q_p$, and choose a compatible system $\{p^{1/p^i}\}_{i\geq 0}$ of $p$-power roots of $p$ in $\ol{\Q_p}$. Then
  \[
  \Z_p\hookrightarrow \Z_p[p^{1/p}]\hookrightarrow \Z_p[p^{1/p^2}]\hookrightarrow\cdots
  \]
  is a perfectoid tower arising from $(\Z_p,p)$ (\cite[Example 3.23 (1)]{INS25}). Its tilt is isomorphic to the tower
  \[
  \F_p\llbracket T\rrbracket\hookrightarrow \F_p\llbracket T^{1/p}\rrbracket\hookrightarrow \F_p\llbracket T^{1/p^2}\rrbracket \hookrightarrow\cdots.
  \]
  A similar construction works for complete local log-regular rings (\cite[\S3F]{INS25}).
  \item %\emph{Quotients by squarefree monomial ideals} (\cite[\S6.4]{Ish26}, \cite[\S4]{IS25}). 
  In his paper \cite{Ish26}, Ishizuka constructed perfectoid towers arising from \emph{Frobenius lifts}. After that, Ishiro--Shimomoto \cite{IS25} gave another proof, which is based on a direct calculation but includes the $p$-torsion case. Here we introduce, as an interest example, perfectoid towers arising from \emph{$p$-Stanley--Reisner rings}. Let $\Delta$ be a simplicial complex on the vertex set $\{v_1,\ldots,v_n\}$, and $C$ a ring. The \emph{$p$-Stanley--Reisner ring of $\Delta$ with respect to $C$} is the homogeneous $C$-algebra
  \[
  C\ol{[\Delta]} = C[x_2,\ldots,x_n]/\ol{I_\Delta},
  \]
  where $\ol{I_\Delta}$ is the ideal generated by all $p$-monomials $x_{i_1}x_{i_2}\cdots x_{i_s}$ ($x_1\coloneqq p$) such that $\{v_{i_1},\ldots,v_{i_s}\}\notin\Delta$.
  Now we consider the Cohen ring $C(k)$ of a field $k$ of characteristic $p>0$.\footnote{For a field $k$ of characteristic $p>0$, the \emph{Cohen ring} $C(k)$ (according to the terminology in \cite[Definition 1.53]{FO}) is the unique (up to isomorphism) absolutely unramified complete discrete valuation ring of mixed characteristic $(0,p)$ whose residue field is $k$.}
  Then a conclusion of constructions due to Ishizuka, Ishiro and Shimomoto is that $C(k)\ol{[\Delta]}$ admits a preperfectoid tower and its $0$-th small tilt is given by the Stanley--Reisner ring $k[\Delta]$ (\cite[Example 6.12]{Ish26}, \cite[Theorem 4.6, Proposition 4.26]{IS25}).
  \end{enumerate}
\end{example}

Using \cref{thm:INS3.35}, we establish our first main theorem that characterizes condition \textbf{(g)} in terms of cartesian diagrams. This result will be used in the next section. Let us start with the following lemma.

\begin{lemma}
\label{lem:tiltinj}
Let $\textrm{{\boldmath $R$}}=\{R_i,t_i\}_{i\geq 0}$ be a preperfectoid tower arising from a pair $(R,I_0)$. Then for any $i\geq 0$, the ring homomorphism $R^{s.\flat}/(I_0^{s.\flat})^{p^i}\to R^{s.\flat}/(I_0^{s.\flat})^{p^{i+1}}$ induced from the absolute Frobenius of $R^{s.\flat}$ is injective.
\end{lemma}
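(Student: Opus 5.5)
The plan is to translate everything into the inverse-limit description $R^{s.\flat}=\varprojlim_{F}R_m/I_0R_m$ of \cref{def:tilt}. Write elements as $x=(x_m)_{m\geq0}$ with $x_m\in\ol{R_m}\coloneqq R_m/I_0R_m$ and $F_m(x_{m+1})=x_m$, and let $\Phi_m\coloneqq\Phi^{(0)}_m\colon R^{s.\flat}\to\ol{R_m}$ be the projections. The lemma will follow from two facts:
\begin{enumerate}
\item[(A)] $\Ker(\Phi_m)=(I_0^{s.\flat})^{p^m}$ for every $m\geq 0$;
\item[(B)] $\varphi_{R^{s.\flat}}^{-1}(\Ker\Phi_{m+1})=\Ker\Phi_m$ for every $m\geq0$.
\end{enumerate}
Indeed, (A) and (B) give $\varphi^{-1}\bigl((I_0^{s.\flat})^{p^{i+1}}\bigr)=(I_0^{s.\flat})^{p^i}$, which is exactly the injectivity of $R^{s.\flat}/(I_0^{s.\flat})^{p^i}\to R^{s.\flat}/(I_0^{s.\flat})^{p^{i+1}}$.

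For (B), I would compute the components of $x^p$. Since $\ol{t_m}\circ F_m=\varphi_{\ol{R_{m+1}}}$, we get
\[
(x^p)_{m+1}=x_{m+1}^p=\ol{t_m}(F_m(x_{m+1}))=\ol{t_m}(x_m).
\]
By condition \textbf{(b)} the map $\ol{t_m}$ is injective, so $\Phi_{m+1}(x^p)=0$ if and only if $\Phi_m(x)=0$. This step is routine.

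For (A), the inclusion $\supseteq$ comes from the same computation iterated. For $y\in I_0^{s.\flat}=\Ker\Phi_0$ we have $(y^{p^m})_m=\ol{t_{m-1}}\circ\cdots\circ\ol{t_0}(y_0)=0$. Since $\Ker\Phi_m$ is an ideal, it contains $(I_0^{s.\flat})^{p^m}$.

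The reverse inclusion $\Ker\Phi_m\subseteq (I_0^{s.\flat})^{p^m}$ is the main obstacle, since it needs actual divisibility in $R^{s.\flat}$. I would try the following route.
\begin{enumerate}
\item Use \cref{thm:INS3.35}(2): the tilt tower is a perfectoid tower arising from $(R^{s.\flat},I_0^{s.\flat})$, so $I_0^{s.\flat}$ is principal, say $I_0^{s.\flat}=(g)$.
\item Use that the ``shift'' map $R^{s.\flat}\to R^{s.\flat}_m$, $(x_k)_k\mapsto(x_{m+k})_k$, is well defined, and identify the Frobenius $R^{s.\flat}_m\to R^{s.\flat}$ (via $F$ composed $m$ times on components) as an isomorphism of perfect-type rings (cf.\ \cite[\S3D]{INS25}).
\item Under this identification, $\Ker\Phi_m$ corresponds to the small tilt $I_m^{s.\flat}=\Ker\Phi^{(m)}_0$ of the $m$-th pillar.
\item By \cref{prop:Ii} applied to the tilt tower, $(I_m^{s.\flat})^{[p^m]}=I_0^{s.\flat}R_m^{s.\flat}$, with the pillars principal in the sense of \cref{rem:pre}.
\item Transport back along the Frobenius to conclude $\Ker\Phi_m=(g^{p^m})$.
\end{enumerate}
If this bookkeeping becomes messy, the fallback is to prove $\subseteq$ directly by induction on $m$, using the INS25 lemma that $\Ker\Phi_m$ is generated by $g^{p^m}$ (\cite[Lemma 3.39]{INS25}-type statement). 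Given $x$ with $x_m=0$, one first writes $x=g\,z$ using the case $m=0$. One then uses the torsion-freeness of $\varphi_{I_0^{s.\flat}}$ and the reducedness of the tilt tower (it is of characteristic $p$ and perfect in the limit) to show $z\in\Ker\Phi_{m}$ modulo the appropriate power, so that the divisibility can be iterated.
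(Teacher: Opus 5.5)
Your reduction to (A) and (B) is sound. Your proofs of (B) and of $(I_0^{s.\flat})^{p^m}\subseteq\Ker\Phi_m$ are correct. The latter needs $I_0^{s.\flat}=(g)$ to be principal, so that $(I_0^{s.\flat})^{p^m}=(g^{p^m})$; you only invoke this later.

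The gap is the reverse inclusion in (A). It carries all the content, and the route you sketch for it does not hold together. Let $\sigma_m\colon R_m^{s.\flat}\xr{\cong}R^{s.\flat}$ be the inverse of your shift map. Then indeed $\Ker\Phi_m=\sigma_m(\Ker\Phi^{(m)}_0)$, as in your step 3.

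But whatever name you attach to it, $\Ker\Phi^{(m)}_0$ equals $I_0^{s.\flat}R_m^{s.\flat}$ by the second half of \cref{thm:INS3.35}(1). The ideal to which \cref{prop:Ii} applies in step 4 is instead the $m$-th perfectoid pillar of the tilt tower. That is a different ideal, whose $[p^m]$-th power is $I_0^{s.\flat}R_m^{s.\flat}$. So steps 3 and 4 cannot both hold for the same $I_m^{s.\flat}$.

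This already fails in \cref{ex:perfectoid tower}(3) with $m=1$. There $\Ker\Phi^{(1)}_0=(T)\subset\F_p\llbracket T^{1/p}\rrbracket$, while the first pillar of the tilt is $(T^{1/p})$.
\begin{itemize}
\item If $I_1^{s.\flat}$ means $(T)$, step 4 asserts $(T^p)=(T)$.
\item If $I_1^{s.\flat}$ means $(T^{1/p})$, step 3 asserts $\Ker\Phi_1=\sigma_1((T^{1/p}))=(T)$, whereas in fact $\Ker\Phi_1=(T^p)$.
\end{itemize}
The fallback does not repair this. If an INS25 lemma giving $\Ker\Phi_m=(g^{p^m})$ is cited, nothing is left to prove. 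Otherwise the inductive divisibility step is only gestured at.

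The missing step is short once you apply \cref{thm:INS3.35}(1) at level $m$. Iterating your identity $(x^p)_{k+1}=\ol{t_k}(x_k)$ shows $\sigma_m\circ t^{s.\flat}_{m-1}\circ\cdots\circ t^{s.\flat}_0=\varphi^m$ on $R^{s.\flat}$. Hence
\[
\Ker\Phi_m=\sigma_m\bigl(\Ker\Phi^{(m)}_0\bigr)=\sigma_m\bigl(I_0^{s.\flat}R_m^{s.\flat}\bigr)=g^{p^m}R^{s.\flat}=(I_0^{s.\flat})^{p^m},
\]
with no pillars and no \cref{prop:Ii} needed.

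With this repair, your argument becomes an unpacked version of the paper's proof, which reads the lemma off in one line. The tilt tower is a perfect tower and, by \cref{thm:INS3.35}(2), a perfectoid tower arising from $(R^{s.\flat},I_0^{s.\flat})$. Under the identification $R_i^{s.\flat}\cong R^{s.\flat}$ with transition maps $\varphi$, the ideal $I_0^{s.\flat}R_i^{s.\flat}$ becomes $(I_0^{s.\flat})^{p^i}$. So condition \textbf{(b)} for the tilt is exactly the asserted injectivity. Your (A) is that identification. Your (B) is the injectivity of $\ol{t_m}$, from which condition \textbf{(b)} for the tilt is inherited.
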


\begin{proof}
The tilt $\textrm{{\boldmath $R$}}^\flat=\{R_i^{s.\flat},t_i^{s.\flat}\}_{i\geq 0}$ is a perfect tower (\cite[Proposition 3.10 (2)]{INS25}). But $\textrm{{\boldmath $R$}}^\flat$ is also a perfectoid tower arising from $(R^{s.\flat},I_0^{s.\flat})$ by \cref{thm:INS3.35} (2), and thus condition \textbf{(b)} in \cref{def:INS3.4} implies the assertion.
\end{proof}

%\begin{lemma}
%\label{lem:PB}
%In the category of sets, consider the commutative diagram of solid arrows
%\[
%\begin{tikzcd}
%X \rar[dashed,"x"] \dar["f"'] \ar[rr,bend left,"x_0"] & X' \rar["x'"] \dar["f'"] & X'' \dar["f''"] \\
%Y \rar["y"'] & Y' \rar["y'"'] & Y''
%\end{tikzcd}
%\]
%Assume that the right-hand square is cartesian.
%  \begin{enumerate}
%  \item There uniquely exists a dotted arrow $x$ such that $x'\circ x=x_0$ and $f'\circ x=y\circ f$.
%  \item Assume that the outer square is cartesian. If $y$ is bijective, both $f$ and $y'$ are injective, then $x$ is bijective.
%  \end{enumerate}
%\end{lemma}
%
%\begin{proof}
%(1) immediately follows from the universal property of fiber products.
%
%(2) Since $f'\circ x=y\circ f$ is injective, so is $x$. 
%\end{proof}

\begin{theorem}
\label{thm:PB}
Let $\textrm{{\boldmath $R$}}=\{R_i,t_i\}_{i\geq 0}$ be a purely inseparable tower arising from a pair $(R,I_0)$. Then, under \emph{\textbf{(d)}} and \emph{\textbf{(f)}}, the condition \emph{\textbf{(g)}} is equivalent to the following one:
  \begin{itemize}
  \item For every $i\geq 0$, the diagrams of sets
  \begin{equation}
  \label{eq:PBt}
  \begin{tikzcd}[column sep=large]
  (R_i)_{\Iztor} \rar["\varphi_{I_0,R_i}"] \dar["(t_i)_{\mathrm{tor}}"',hookrightarrow] & R_i/I_0R_i \dar["\ol{t_i}",hookrightarrow] \\
  (R_{i+1})_{\Iztor} \rar["\varphi_{I_0,R_{i+1}}"] & R_{i+1}/I_0R_{i+1}
  \end{tikzcd}
  \end{equation}
  and
  \begin{equation}
  \label{eq:PBphi}
  \begin{tikzcd}[column sep=large]
  (R_{i+1})_{\Iztor} \rar["\varphi_{I_1,R_{i+1}}"] \dar["\varphi"'] & R_{i+1}/I_1R_{i+1} \dar["\varphi'",hookrightarrow] \\
  (R_{i+1})_{\Iztor} \rar["\varphi_{I_0,R_{i+1}}"] & R_{i+1}/I_0R_{i+1}
  \end{tikzcd}
  \end{equation}
  are cartesian, where $(t_i)_{\mathrm{tor}}$ is the restriction of $t_i$, $\varphi'$ is the injection induced by the absolute Frobenius of $R_{i+1}/I_0R_{i+1}$ (cf.\ \cite[Lemma 3.6 (1)]{INS25}), and $\varphi$ is given by $x\mapsto x^p$.
  \end{itemize}
\end{theorem}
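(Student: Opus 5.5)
The plan is to prove the two implications separately. The converse direction (cartesian squares $\Rightarrow$ \textbf{(g)}) can be done by hand, using only the universal properties of \eqref{eq:PBt} and \eqref{eq:PBphi}. The direct direction will be reduced to characteristic $p$ via \cref{thm:INS3.35}. Throughout, write $T_i\coloneqq(R_i)_{\Iztor}$ and $\ol{R_i}=R_i/I_0R_i$. We will constantly use the identity $\ol{t_i}\circ F_i=\varphi_{\ol{R_{i+1}}}$, which by \textbf{(f-2)} factors as $\ol{R_{i+1}}\to R_{i+1}/I_1R_{i+1}\xr{\varphi'}\ol{R_{i+1}}$.

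\emph{Cartesian $\Rightarrow$ \textbf{(g)}.}
\begin{enumerate}
\item First show $I_0T_i=0$. Let $a\in I_0$, $x\in T_i$ and $z=ax$. Choose $k$ with $a^{p^k}x=0$, so $z^{p^k}=0$.
\item Both $z^{p^{k-1}}$ and $0$ map to $0$ in $R_i/I_1R_i$, since $I_0\subset I_1$. They have the same $p$-th power, namely $0$.
\item The uniqueness part of the cartesian square \eqref{eq:PBphi} then forces $z^{p^{k-1}}=0$. Descending in $k$ gives $z=0$. (For $i=0$ one first pushes into $T_1$ via the injective $(t_0)_{\mathrm{tor}}$.)
\item By \cite[Corollary 3.15]{INS25}, $\varphi_{I_0,R_i}$ is now injective.
\item Define $(F_i)_{\mathrm{tor}}$. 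For $x\in T_{i+1}$, the image of $x^p$ in $\ol{R_{i+1}}$ is $\varphi'(x\bmod I_1)=\ol{t_i}(F_i(\ol{x}))$. So \eqref{eq:PBt} produces a unique $y\in T_i$ with $t_i(y)=x^p$ and $\ol{y}=F_i(\ol{x})$; set $(F_i)_{\mathrm{tor}}(x)=y$. The required square commutes by construction.
\item Injectivity of $(F_i)_{\mathrm{tor}}$: if $y=y'$, then $x^p=x'^p$. Moreover $\ol{x},\ol{x'}$ have the same image under $F_i$, so they agree modulo $\Ker F_i=I_1\ol{R_{i+1}}$. Uniqueness in \eqref{eq:PBphi} then gives $x=x'$.
\item Surjectivity: given $y\in T_i$, pick $w\in\ol{R_{i+1}}$ with $F_i(w)=\ol{y}$, using \textbf{(d)}. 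Then $\varphi'(w\bmod I_1)=\ol{t_i}(\ol{y})$ is the image of $t_i(y)\in T_{i+1}$. Now \eqref{eq:PBphi} gives $x\in T_{i+1}$ with $x^p=t_i(y)$ and $x\equiv w$ modulo $I_1$, and hence $(F_i)_{\mathrm{tor}}(x)=y$.
\end{enumerate}

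\emph{\textbf{(g)} $\Rightarrow$ cartesian.} Here the tower is preperfectoid, so \cref{thm:INS3.35} (1) applies. It identifies $T_i$ and $\ol{R_i}$ with $(R_i^{s.\flat})_{\Izstor}$ and $R_i^{s.\flat}/I_0^{s.\flat}R_i^{s.\flat}$, compatibly with $t_i$, $t_i^{s.\flat}$ and with the maps $\varphi_{I_0,-}$. I will also need the analogous identification of $R_{i+1}/I_1R_{i+1}$ with $R_{i+1}^{s.\flat}/I_1^{s.\flat}$, compatibly with $\varphi'$; this should follow from $F_i$ being surjective with kernel $I_1\ol{R_{i+1}}$. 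Granting this, both squares are transported to the tilt. By \cref{thm:INS3.35} (2) and \cite[Proposition 3.10 (2)]{INS25}, the tilt is a perfect tower, i.e.\ after identification $A\xr{\varphi}A\xr{\varphi}\cdots$ with $A$ perfect and reduced, and $I_0^{s.\flat}=(f)$.

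After transport, the statements become elementary facts about a perfect ring $A$ with $f$-torsion $T$, and these are what remain to prove:
\begin{itemize}
\item For \eqref{eq:PBt}: if $x\in T$ and $x^p\equiv a\pmod{f}$ with $a\in A$ (or the appropriate Frobenius-twisted version), then $x$ comes from a unique torsion element.
\item For \eqref{eq:PBphi}: $p$-th power gives a bijection between torsion elements and pairs (class mod $f^{1/p}$, torsion element) that are compatible.
\end{itemize}
The key inputs are reducedness and perfectness. From $fx=0$ one gets $f^{1/p^n}x=0$ for all $n$, so $T$ meets $fA$ trivially. Together with \cref{lem:tiltinj}, this handles both existence and uniqueness.

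I expect the main obstacle to be this transport step. It requires checking that the identifications of \cref{thm:INS3.35} are compatible with the quotient by $I_1$ and with $\varphi'$, and not only with $I_0$ and $t_i$. If this proves awkward, the fallback is to argue directly with $(F_i)_{\mathrm{tor}}$. For \eqref{eq:PBt}, given $x\in T_{i+1}$ with $\ol{x}=\ol{t_i}(\ol{a})$, take $y=(F_i)_{\mathrm{tor}}((F_{i+1})_{\mathrm{tor}}^{-1}(x))$-type preimages and use injectivity of $\varphi_{I_0,-}$ to compare.
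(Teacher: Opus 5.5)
Your ``if'' direction is correct. It is the paper's pasting argument unwound elementwise. Your descent proving $I_0(R_i)_{\Iztor}=0$ from uniqueness in \eqref{eq:PBphi} is a fine substitute for the paper's route, which uses injectivity of $x\mapsto x^p$ on torsion together with \cite[Lemma 3.18]{INS25}.

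The ``only if'' direction has a genuine gap at its core. The tilt is a perfect tower in the sense of \cite[Definition 3.2]{INS25}: it is isomorphic to $A\xr{\varphi}A\xr{\varphi}\cdots$ with $A$ \emph{reduced but not necessarily perfect}. For \cref{ex:perfectoid tower} (3), $A=\F_p\llbracket T\rrbracket$, so $f^{1/p^n}$ is not available in $A$. If $A$ were perfect, the tower would be isomorphic to the constant tower $A=A=\cdots$ and \eqref{eq:PBt} would be trivially cartesian. All the content lies in the non-perfect case, and there you only assert that ``$T\cap fA=0$ together with \cref{lem:tiltinj} handles existence''. The fact $T\cap fA=0$ gives uniqueness only.

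The missing existence argument goes as follows. Suppose $x\in A$ and $x^p\equiv z \pmod{f^p}$ with $z\in T$.
\begin{enumerate}
\item Since $fz=0$, one has $(f^{p-1}x)^p=f^{p(p-1)}x^p\in f^{p^2}A$.
\item The Frobenius $A/f^p\to A/f^{p^2}$ is injective by \cref{lem:tiltinj}, so $f^{p-1}x=f^py$ for some $y\in A$.
\item Hence $f^{p-1}(x-fy)=0$, so $x-fy\in T$ lifts $x \bmod f$.
\item Finally $(x-fy)^p=z$, because $T\cap f^pA=0$.
\end{enumerate}
At level $i$, replace $f$ by $f^{p^i}$. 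The key idea your proposal lacks is multiplying by $f^{p-1}$ to kill the torsion term before invoking \cref{lem:tiltinj}.

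The transport of \eqref{eq:PBphi} to the tilt, which you flag as the main obstacle, is not needed. Under \textbf{(g)}, the map $(F_i)_{\mathrm{tor}}$ is bijective and satisfies $(t_i)_{\mathrm{tor}}\circ(F_i)_{\mathrm{tor}}=\varphi$; check this after composing with the injective $\varphi_{I_0,R_{i+1}}$. Conditions \textbf{(d)} and \textbf{(f-2)} make $F_i$ induce an isomorphism $F'_i\colon R_{i+1}/I_1R_{i+1}\to R_i/I_0R_i$ with $\ol{t_i}\circ F'_i=\varphi'$. So \eqref{eq:PBphi} is the outer rectangle obtained by pasting onto \eqref{eq:PBt} the square whose horizontal maps are the bijections $(F_i)_{\mathrm{tor}}$ and $F'_i$. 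Hence \eqref{eq:PBphi} is cartesian if and only if \eqref{eq:PBt} is. Only \eqref{eq:PBt} has to be moved to the tilt, and \cref{thm:INS3.35} (1) does exactly that.

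Your fallback does not close the gap either. As written, $(F_i)_{\mathrm{tor}}\circ(F_{i+1})_{\mathrm{tor}}^{-1}$ does not compose, since $(F_{i+1})_{\mathrm{tor}}^{-1}$ lands in $(R_{i+2})_{\Iztor}$. Applying $(F_i)_{\mathrm{tor}}$ to $x$ merely produces a torsion lift of $\ol{a}^{\,p}$, and extracting a $p$-th root of it in $(R_i)_{\Iztor}$ is the same problem again.
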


\begin{proof}
\textbf{(The ``only if'' part)}: Fix $i\geq 0$. First we show that \eqref{eq:PBt} is cartesian if and only if so is \eqref{eq:PBphi}. We have the commutative diagram of sets
\begin{equation}
\label{eq:PBtphi}
\begin{tikzcd}
(R_{i+1})_{\Iztor} \rar["(F_i)_{\mathrm{tor}}","\cong"'] \dar["\varphi_{I_1,R_{i+1}}"'] & (R_i)_{\Iztor} \rar["(t_i)_{\mathrm{tor}}",hookrightarrow] \dar["\varphi_{I_0,R_i}"] & (R_{i+1})_{\Iztor} \dar["\varphi_{I_0,R_{i+1}}"] \\
R_{i+1}/I_1R_{i+1} \rar["F'_i"',"\cong"] & R_i/I_0R_i \rar["\ol{t_i}"',hookrightarrow] & R_{i+1}/I_0R_{i+1},
\end{tikzcd}
\end{equation}
where $F'_i$ is the isomorphism induced from the Frobenius projection $F_i\colon R_{i+1}/I_0R_{i+1}\to R_i/I_0R_i$. Thus the right-hand square is cartesian if and only if so is the outer square. Since the right-hand square is nothing but \eqref{eq:PBt} and the outer square is nothing but \eqref{eq:PBphi}, our claim follows.

Next we show that \eqref{eq:PBt} is cartesian. By \cref{thm:INS3.35} (1), we may replace $\textrm{{\boldmath $R$}}=\{R_j,t_j\}_{j\geq 0}$ by $\textrm{{\boldmath $R$}}^\flat=\{R_j^{s.\flat},t_j^{s.\flat}\}_{j\geq 0}$. Then by \cref{lem:tiltinj} and the fact that $\{R_{i+j},t_{i+j}\}_{j\geq 0}$ is a perfectoid tower arising from $(R_i,I_0R_i)$, we are reduced to show that the following diagram of (possibly) non-unital rings is cartesian.
\[
\begin{tikzcd}
R_{\Iztor} \rar["\varphi_{I_0,R}"] \dar["\varphi"',hookrightarrow] & R/I_0 \dar[hookrightarrow] \\
R_{\Iztor} \rar["\varphi_{I_0^p,R}"] & R/I_0^p.
\end{tikzcd}
\]
where the right hand vertical arrow is induced from the absolute Frobenius of $R$.
Choose a generator $f_0$ of $I_0$, and take $x\ \mathrm{mod}\ I_0\in R/I_0$ such that $x^p\ \mathrm{mod}\ I_0^p \in \Im(\varphi_{I_0^p,R})$. Then
\[
(f_0^{p-1}x)^p = f_0^{p(p-1)}x^p \in I_0^{p(p-1)+p} = I_0^{p^2}.
\]
Since the $p$-th power map $R/I_0^p\to R/I_0^{p^2}$ is injective (\cref{lem:tiltinj}), we have $f_0^{p-1}x \in I_0^p$. Hence we can write $f_0^{p-1}x = f_0^py$ for some $y\in R$. Then $f_0^{p-1}(x-f_0y)=0$, and so $x\ \mathrm{mod}\ I_0 = \varphi_{I_0,R}(x-f_0y)$.

\textbf{(The ``if'' part)}: Fix $i\geq 0$. By assumption, both $(t_i)_{\mathrm{tor}}$ and $\varphi\colon(R_{i+1})_{\Iztor}\to(R_{i+1})_{\Iztor}$ are injective, and thus $I_0(R_i)_{\Iztor}\hookrightarrow I_0(R_{i+1})_{\Iztor}=(0)$ by \cite[Lemma 3.18]{INS25}. The rest part is a formal consequence: since \eqref{eq:PBt} is cartesian, there exists a unique map $(F_i)_{\mathrm{tor}}$ as in \eqref{eq:PBtphi}. Then, in the commutative diagram \eqref{eq:PBtphi}, the outer square and the right-hand square are cartesian. Hence the left-hand square is also cartesian. Since $F'_i$ is an isomorphism, so is $(F_i)_{\mathrm{tor}}$.
\end{proof}

Finally, we define tilts of morphisms of purely inseparable towers.

\begin{definition}
Let $\textrm{{\boldmath $R$}}=\{R_i,t_i\}_{i\geq 0}$ and $\textrm{{\boldmath $S$}}=\{S_i,u_i\}_{i\geq 0}$ be purely inseparable towers arising from $(R,I_0)$ and $(S,J_0)$, respectively.
Let $\textrm{{\boldmath $\alpha$}}=\{\alpha_i\}_{i\geq 0}\colon \textrm{{\boldmath $R$}} \to \textrm{{\boldmath $S$}}$ be a morphism of towers of rings such that $\alpha_0(I_0)\subset J_0$. Then for any $i\geq 0$ the right-hand and outer squares in the diagram
\[
\begin{tikzcd}
R_{i+1}/I_0R_{i+1} \rar["F_i"] \dar["\ol{\alpha_{i+1}}"'] & R_i/I_0R_i \rar["\ol{t_i}"] \dar["\ol{\alpha_i}"] & R_{i+1}/I_0R_{i+1} \dar["\ol{\alpha_{i+1}}"] \\
S_{i+1}/J_0S_{i+1} \rar["G_i"] & S_i/J_0S_i \rar["\ol{u_i}"] & S_{i+1}/J_0S_{i+1}
\end{tikzcd}
\]
commute, where $F_i$ and $G_i$ are the $i$-th Frobenius projections for {\boldmath $R$} and {\boldmath $S$}, respectively. But $\ol{u_i}$ is injective, and so the left-hand square also commutes. Hence {\boldmath $\alpha$} induces a morphism of towers of rings $\textrm{{\boldmath $\alpha$}}^{\mathrm{frep}}=\{\alpha_i^{\mathrm{q.frep}}\}_{i\geq 0}\colon \textrm{{\boldmath $R$}}^{\mathrm{q.frep}} \to \textrm{{\boldmath $S$}}^{\mathrm{q.frep}}$. We call {\boldmath $\alpha$} the \emph{inverse perfection of {\boldmath $\alpha$} (associated to $(R,I_0)$ and $(S,J_0)$)}. If {\boldmath $R$} and {\boldmath $S$} are preperfectoid towers, then we define the \emph{tilt of {\boldmath $\alpha$} (associated to $(R,I_0)$ and $(S,J_0)$)} as $\textrm{{\boldmath $\alpha$}}^{\mathrm{q.frep}}$ and denote it by $\textrm{{\boldmath $\alpha$}}^\flat=\{\alpha_i^{s.\flat}\}_{i\geq 0}\colon \textrm{{\boldmath $R$}}^\flat \to \textrm{{\boldmath $S$}}^\flat$.
\end{definition}

The following lemma is useful when we consider the small tilt $J_0^{s.\flat}\subset S^{s.\flat}$ of $J_0$.

\begin{lemma}
\label{lem:alphaflat}
Let $\textrm{{\boldmath $R$}}=\{R_i,t_i\}_{i\geq 0}$ and $\textrm{{\boldmath $S$}}=\{S_i,u_i\}_{i\geq 0}$ be preperfectoid towers arising from $(R,I_0)$ and $(S,J_0)$, respectively. Let $\textrm{{\boldmath $\alpha$}}=\{\alpha_i\}_{i\geq 0}\colon \textrm{{\boldmath $R$}} \to \textrm{{\boldmath $S$}}$ be a morphism of towers of rings such that $I_0S=J_0$ and $I_1S_1=J_1$. Fix $i\geq 0$.
  \begin{enumerate}
  \item The $i$-th perfectoid pillar $J_i$ of {\boldmath $S$} is $I_iS_i$, where $I_i$ is the $i$-th perfectoid pillar of {\boldmath $R$}.
  \item The small tilt $J_i^{s.\flat}\subset S_i^{s.\flat}$ of $J_i$ is $I_i^{s.\flat}S_i^{s.\flat}$, where $I_i^{s.\flat}\subset R_i^{s.\flat}$ is the small tilt of $I_i$.
  \end{enumerate}
\end{lemma}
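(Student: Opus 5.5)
For (1), I will use the characterization of the perfectoid pillars in \cref{prop:Ii}. By \cref{prop:Ii} (3), the pillars $\{J_i\}$ of {\boldmath $S$} are the unique finitely generated ideals with $J_0S_i\subset J_i$ such that $\{J_i\ol{S_i}\}$ satisfies condition (b) of \cref{prop:Ii} (1), namely $G_i(J_{i+1}\ol{S_{i+1}})=J_i\ol{S_i}$. So it suffices to check that the candidates $I_iS_i$ have these properties.

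\emph{Finiteness and containment.} Each $I_iS_i$ is finitely generated. It contains $J_0S_i=I_0S_i$ because $I_0R_i\subset I_i$. The case $i=1$ is the hypothesis $I_1S_1=J_1$.

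\emph{Condition (b).} Write $I_{i+1}=(f_{i+1})+I_0R_{i+1}$ with $F_i(\ol{f_{i+1}})=\ol{f_i}$. Then $I_{i+1}\ol{S_{i+1}}$ is the principal ideal generated by $\ol{\alpha_{i+1}}(\ol{f_{i+1}})$. From the commutative square $\ol{\alpha_i}\circ F_i=G_i\circ\ol{\alpha_{i+1}}$ in the definition of tilts of morphisms, we get
\[
G_i(\ol{\alpha_{i+1}}(\ol{f_{i+1}}))=\ol{\alpha_i}(\ol{f_i}).
\]
Since $G_i$ is a surjective ring homomorphism, the image of a principal ideal is the principal ideal generated by the image of its generator. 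Hence $G_i(I_{i+1}\ol{S_{i+1}})=I_i\ol{S_i}$.

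\emph{Conclusion of (1).} Uniqueness in \cref{prop:Ii} (3) now gives $J_i=I_iS_i$. The argument is an induction in spirit, but in fact each level is handled directly. The only delicate point is that \cref{prop:Ii} (2)(a) requires $\Ker(G_i)\subset I_{i+1}\ol{S_{i+1}}$. This holds because $\Ker(G_i)=J_1\ol{S_{i+1}}=I_1\ol{S_{i+1}}$, and $I_1\ol{S_{i+1}}\subset I_{i+1}\ol{S_{i+1}}$ since $I_1R_{i+1}\subset I_{i+1}$ by \cref{prop:Ii} (2)(a) for {\boldmath $R$}.

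For (2), $J_i^{s.\flat}$ is the kernel of $\Phi^{(i)}_0\colon S_i^{s.\flat}\to S_i/J_0S_i$. The inclusion $I_i^{s.\flat}S_i^{s.\flat}\subset J_i^{s.\flat}$ follows from the compatibility $\Phi^{(i)}_0\circ\alpha_i^{s.\flat}=\ol{\alpha_i}\circ\Phi^{(i)}_0$.

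\emph{Reverse inclusion (the main obstacle).} I will identify both small tilts as principal ideals generated by compatible elements. Take $f^\flat=(\ol{f_i},\ol{f_{i+1}},\ldots)\in R_i^{s.\flat}$, built from the compatible system in \cref{prop:Ii} (3) applied to the shifted tower. Then $I_i^{s.\flat}=f^\flat R_i^{s.\flat}$, as in \cite{INS25}, where $I_0^{s.\flat}$ is principal generated by such a compatible system; this is guaranteed by \cref{thm:INS3.35} (2) for shifts. By part (1), the image $\alpha_i^{s.\flat}(f^\flat)$ is exactly the analogous compatible system of generators for {\boldmath $S$}. The same description then gives $J_i^{s.\flat}=\alpha_i^{s.\flat}(f^\flat)S_i^{s.\flat}$. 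Consequently $J_i^{s.\flat}=I_i^{s.\flat}S_i^{s.\flat}$.

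If the principal-generator description is not directly available from \cite{INS25}, I would prove it by hand. Take $x=(x_m)_m\in\Ker\Phi^{(i)}_0$. Using $\Ker F=I_1\ol{\,\cdot\,}$ at each level, one has $x_m\in I_{i+m}\ol{S_{i+m}}$ for all $m$. Now divide by $f^\flat$ levelwise and use inverse-limit compatibility, adjusting by the surjectivity of the Frobenius projections.
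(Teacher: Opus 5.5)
Your proposal is correct and follows essentially the same route as the paper. Part (1) uses the characterization of the pillars in \cref{prop:Ii} together with the square $G_i\circ\ol{\alpha_{i+1}}=\ol{\alpha_i}\circ F_i$, and part (2) pushes a compatible system of generators through $\alpha_i^{s.\flat}$ and invokes the characterization of generators of small tilts, which is \cite[Theorem 3.35 (1)]{INS25} rather than \cref{thm:INS3.35} (2). One caveat: the $0$-th component $\ol{f_i}$ of $f^\flat$ is nonzero in general, so the small tilt must be read as the kernel of $R_i^{s.\flat}\to R_i/I_i$, as the paper's proof implicitly does; your separate inclusion step is then redundant, since the generator argument gives the equality directly.
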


\begin{proof}
For each let $\ol{R_i}\coloneqq R_i/I_0R_i$ and $\ol{S_i}\coloneqq S_i/J_0S_i=S_i/I_0S_i$. Moreover, let $F_i\colon \ol{R_{i+1}}\to\ol{R_i}$ and $G_i\colon \ol{S_{i+1}}\to \ol{S_i}$ denote the $i$-th Frobenius projections.

(1) Since $\Ker(G_i) = J_1\ol{S_{i+1}} = I_1\ol{S_{i+1}} \subset I_{i+1}\ol{S_{i+1}}$ for every $i\geq 0$, it suffices to show that $G_i(I_{i+1}\ol{S_{i+1}})=I_i\ol{S_i}$ for every $i\geq 0$ (\cref{prop:Ii} (1)). By the commutative diagram
\[
\begin{tikzcd}
\ol{R_{i+1}} \rar["F_i"] \dar["\ol{\alpha_{i+1}}"'] & \ol{R_i} \dar["\ol{\alpha_i}"] \\
\ol{S_{i+1}} \rar["G_i"] & \ol{S_i},
\end{tikzcd}
\]
we immediately have the assertion as follows:
\[
G_i(I_{i+1}\ol{S_{i+1}}) = F_i(I_{i+1}\ol{R_{i+1}})\cdot \ol{S_i} = I_i\ol{R_i}\cdot\ol{S_i} = I_i\ol{S_i}.
\]

(2) Choose a generator $f_i^{s.\flat}$ of $I_i^{s.\flat}$. For each $j\geq 0$, let $\ol{f_{i+j}}\in \ol{R_{i+j}}$ denote the image of $f_i^{s.\flat}$. Then $\ol{f_{i+j}}$ is a generator of $I_{i+j}\ol{R_{i+j}}$ by \cite[Theorem 3.35 (1)]{INS25}, and thus $\ol{\alpha_{i+j}}(\ol{f_{i+j}})$ is a generator of $I_{i+j}\ol{S_{i+j}}$. But $I_{i+j}\ol{S_{i+j}}$ is the ($i+j$)-th perfectoid pillar. Hence, again by \cite[Theorem 3.35 (1)]{INS25}, $\alpha_i^{s.\flat}(f_i^{s.\flat})$ is a generator of $J_i^{s.\flat}$. This means that $J_i^{s.\flat}=I_i^{s.\flat}S_i^{s.\flat}$.
\end{proof}

%%%%%%%%%%%%%%%%%%%%%%%%%%%%%%%%%%%%%%%%%%%%%%%%%%%%%%%%%%%%%%%%%%%%%%%%%%%%%%%%%%%%%%%%%%%%%%%
%%%%%%%%%%%%%%%%%%%%%%%%%%%%%%%%%%%%%%%%%%%%%%%%%%%%%%%%%%%%%%%%%%%%%%%%%%%%%%%%%%%%%%%%%%%%%%%
\section{Structural properties of perfectoid towers}
\label{s:str}

In this section, we prove that perfectoid towers admit a canonical decomposition into the fiber product of $p$-torsion free perfectoid towers and perfect towers (\cref{thm:decomp}), and conversely that they can be obtained from such fiber products (\cref{prop:gluetower}). We also discuss several consequences of these theorems. As stated in the introduction, these results are tower-theoretic analogues of the corresponding results for perfectoid rings.

%%%%%%%%%%%%%%%%%%%%%%%%%%%%%%%%%%%%%%%%%%%%%%%%%%%%%%%%%%%%%%%%%%%%%%%%%%%%%%%%
\subsection{Canonical decompositions of perfectoid towers}

%We will show that the construction as in \cref{prop:gluetower} is essentially all we can do;
In this subsection we are going to prove the following theorem.

\begin{theorem}
\label{thm:decomp}
Let $\textrm{{\boldmath $R$}} = \{R_i,t_i\}_{i\geq 0}$ be a preperfectoid (resp.\ perfectoid) tower arising from a pair $(R,I_0)$, and consider the induced towers of rings
\begin{align*}
\wt{\textrm{{\boldmath $R$}}}&\colon\quad R_0/(R_0)_{\Iztor} \to R_1/(R_1)_{\Iztor} \to \cdots \to \wt{R_i}\coloneqq R_i/(R_i)_{\Iztor} \to \cdots, \\
(\textrm{{\boldmath $R$}}/I_0\textrm{{\boldmath $R$}})_{\mathrm{red}} &\colon\quad (R_0/I_0)_\red \to (R_1/I_0R_1)_\red \to \cdots \to (R_i/I_0R_i)_\red \to \cdots, \\
(\wt{\textrm{{\boldmath $R$}}}/I_0\wt{\textrm{{\boldmath $R$}}})_{\mathrm{red}} &\colon\quad 
(\wt{R_0}/I_0\wt{R_0})_\red \to (\wt{R_1}/I_0\wt{R_1})_\red \to \cdots \to (\wt{R_i}/I_0\wt{R_i})_\red \to \cdots.
\end{align*}
  \begin{enumerate}
  \item $\wt{\textrm{{\boldmath $R$}}}$ is a preperfectoid (resp.\ perfectoid) tower arising from $(S,I_0S)$.
  \item $\left(\textrm{{\boldmath $R$}}/I_0\textrm{{\boldmath $R$}}\right)_{\mathrm{red}}$ and $\left(\wt{\textrm{{\boldmath $R$}}}/I_0\wt{\textrm{{\boldmath $R$}}}\right)_{\mathrm{red}}$ are perfect towers.
  \item The commutative diagrams
  \[
  \begin{tikzcd}
  \textrm{{\boldmath $R$}} \rar \dar & \wt{\textrm{{\boldmath $R$}}} \dar \\
  (\textrm{{\boldmath $R$}}/I_0\textrm{{\boldmath $R$}})_{\mathrm{red}} \rar & (\wt{\textrm{{\boldmath $R$}}}/I_0\wt{\textrm{{\boldmath $R$}}})_{\mathrm{red}}
  \end{tikzcd}
  \qquad
  \begin{tikzcd}
  \textrm{{\boldmath $R$}}^\flat \rar \dar & (\wt{\textrm{{\boldmath $R$}}})^\flat \dar \\
  (\textrm{{\boldmath $R$}}/I_0\textrm{{\boldmath $R$}})_{\mathrm{red}} \rar & (\wt{\textrm{{\boldmath $R$}}}/I_0\wt{\textrm{{\boldmath $R$}}})_{\mathrm{red}}
  \end{tikzcd}
  \]
  consisting of canonical projections of towers of rings are cartesian.
  \end{enumerate}
\end{theorem}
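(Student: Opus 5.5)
We argue levelwise and use \cref{thm:PB} and \cref{thm:INS3.35} as the main tools; here $S=\wt{R_0}$. Throughout write $T_i\coloneqq (R_i)_{\Iztor}$ and $\ol{R_i}\coloneqq R_i/I_0R_i$.

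\textbf{Step 1: the quotient tower $\wt{\textrm{{\boldmath $R$}}}$.} First I would show that $\wt{\textrm{{\boldmath $R$}}}$ is preperfectoid. Since $T_i\cap I_0R_i=0$ (the map $\varphi_{I_0,R_i}$ is injective) and $I_0T_i=0$, we have
\[
\wt{R_i}/I_0\wt{R_i}=R_i/(I_0R_i+T_i)=\ol{R_i}/\varphi_{I_0,R_i}(T_i).
\]
Put $\fraka_i\coloneqq \varphi_{I_0,R_i}(T_i)\subset\ol{R_i}$. Conditions \textbf{(b)}, \textbf{(c)}, \textbf{(d)} and \textbf{(f)} for $\wt{\textrm{{\boldmath $R$}}}$ then reduce to two identities: $\ol{t_i}^{-1}(\fraka_{i+1})=\fraka_i$, and $F_i^{-1}(\fraka_i)=\fraka_{i+1}+I_1\ol{R_{i+1}}$. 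The first identity is exactly the cartesianness of \eqref{eq:PBt}. For the second, the inclusion $\supset$ comes from $(F_i)_{\mathrm{tor}}$ in \textbf{(g)}. For $\subset$, take $x$ with $F_i(\ol x)=\varphi_{I_0,R_i}(\tau)$; then $\ol{x}^p=\ol{t_i}(\varphi(\tau))$. Applying \eqref{eq:PBphi} shows that $\ol{x}$ lies in the image of $T_{i+1}$ modulo $I_1$, which gives the claim. The first pillar of $\wt{\textrm{{\boldmath $R$}}}$ is $I_1\wt{R_1}$. Condition \textbf{(g)} holds trivially because $\wt{R_i}$ is $I_0$-torsion free: if $f_0^n x\in T_i$, then $f_0^{n+1}x=0$. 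Condition \textbf{(e)} passes to quotients. This proves (1).

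\textbf{Step 2: the reduced towers.} For (2), I would show that $\ol{t_i}$ induces isomorphisms $(\ol{R_i})_\red\to(\ol{R_{i+1}})_\red$ that are compatible with Frobenius. Injectivity of $\ol{t_i}$ after passing to reductions follows from the injectivity of $\varphi'$ in \eqref{eq:PBphi} and its analogues $R/I_0\hookrightarrow R/I_0^{p}$. Surjectivity up to nilpotents follows from \textbf{(c)}. Thus $(\ol{R_i})_\red$ is perfect, the transition maps are Frobenius, and the tower is perfect. The same argument applies to $\wt{\textrm{{\boldmath $R$}}}$ using Step 1.

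\textbf{Step 3: cartesianness.} For the first square in (3), I would use that $R_i\to\wt{R_i}$ is surjective with kernel $T_i$. So it suffices to show that $T_i$ maps isomorphically onto the kernel of $(\ol{R_i})_\red\to(\wt{R_i}/I_0\wt{R_i})_\red$, which is $\sqrt{\fraka_i+0}$ taken inside $(\ol{R_i})_\red$. This breaks into two claims.
\begin{enumerate}
\item $\fraka_i$ is a radical ideal containing the nilradical of $\ol{R_i}$.
\item $T_i\cap \mathrm{nil}=0$.
\end{enumerate}
For claim (1): if $\ol{x}^p\in\fraka_i$, then by the second identity of Step 1 (after reindexing using $\ol{t_i}$ and $F_i$), $\ol{x}\in\fraka_i$ up to $I_1$-corrections. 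One then iterates up the tower using the bijections $(F_i)_{\mathrm{tor}}$.

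\textbf{Main obstacle.} I expect the main difficulty to lie here: showing that nilpotents of $\ol{R_i}$ come from $T_i$, and that there is no extra $I_0R_i$-part. I would first attempt this by reducing to the tilt through \cref{thm:INS3.35}(1), where the question becomes one about a perfect $\F_p$-algebra with element $f=f_0^{s.\flat}$. In that setting one checks directly that
\[
R\cong R/R_{f\text{-tor}}\times_{(R/(f,R_{f\text{-tor}}))_\red}(R/f)_\red,
\]
using $R_{f\text{-tor}}=\Ker(R\to R[1/f])$, which is a radical ideal, and perfectness, which makes $\sqrt{fR}$ become $f^{1/p^\infty}R$.

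The second square in (3) is then deduced formally. By Step 1, $(\wt{\textrm{{\boldmath $R$}}})^\flat=\wt{\textrm{{\boldmath $R$}}^\flat}$, since the inverse limits of the $\ol{R_i}/\fraka_i$ are compatible with the isomorphisms $T_i^{\flat}\cong T_i$ of \cref{thm:INS3.35}(1). The mod-$I_0$ reductions of the two towers are identified by \eqref{eq:tiltisom}. Applying the perfect-characteristic decomposition just described to $\textrm{{\boldmath $R$}}^\flat$ then gives the second cartesian square.
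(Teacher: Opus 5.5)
Your Step~3 has a genuine gap, and it cannot be closed.

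\textbf{The split into claims.} The two claims you reduce Step~3 to are not the right ones. Claim~(1) already fails for $\Z_p[p^{1/p^i}]$: there $\fraka_1=0$, but $\ol{R_1}\cong\F_p[u]/(u^p)$ is not reduced. More generally, since $T_i\cap I_0R_i=0$, claims (1) and (2) together would force $\ol{R_i}$ to be reduced. Cartesianness of the first square at level $i$ amounts to two conditions:
\begin{itemize}
\item[(i)] $\fraka_i\cap\mathrm{nil}(\ol{R_i})=0$, i.e.\ $T_i\cap\sqrt{I_0R_i}=0$. This does follow from $T_i$ having no nonzero nilpotents.
\item[(ii)] $\sqrt{\fraka_i}=\fraka_i+\mathrm{nil}(\ol{R_i})$.
\end{itemize}
Everything hinges on (ii).

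\textbf{Your two routes to (ii).} Your descent via \eqref{eq:PBphi} turns $\ol{x}^{p^m}\in\fraka_i$ into $\ol{x}^{p^{m-1}}\in\fraka_i+I_1\ol{R_i}$ and then stalls. Raising this back to the $p$-th power only returns the hypothesis. Your fallback of passing to the tilt is a legitimate reduction, since (i) and (ii) only involve $\fraka_i\subset\ol{R_i}$, which \cref{thm:INS3.35}~(1) transports. But you then treat $R_i^{s.\flat}$ as a perfect ring, and it is not. $\textrm{{\boldmath $R$}}^\flat$ is a perfect \emph{tower}, so each $R_i^{s.\flat}$ is merely reduced (e.g.\ $\F_p\llbracket T\rrbracket$ in \cref{ex:perfectoid tower}~(3)). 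In a perfect ring a sum of radical ideals is radical, so (ii) holds. In a reduced ring it can fail.

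\textbf{(ii) does fail, so (3) is false as stated.} Let $R=\F_p\llbracket x,y\rrbracket/(y(y-x^2))\cong\{(g,h)\in\F_p\llbracket x\rrbracket^2 : g\equiv h \bmod x^2\}$ with $y=(0,x^2)$. Regard $R\xr{\varphi}R\xr{\varphi}\cdots$ as a tower arising from $(R,yR)$. Conditions (a)--(g) check directly:
\begin{itemize}
\item (b) says $a^p\in y^{p^{i+1}}R\Rightarrow a\in y^{p^i}R$, which is clear in the model.
\item $F_i$ is the projection $R/y^{p^{i+1}}R\to R/y^{p^i}R$, and $I_1=yR_1$.
\item $(F_i)_{\mathrm{tor}}=\mathrm{id}$ on $R_{\textrm{$y$-}\mathrm{tor}}=(y-x^2)R$.
\item $R$ is local and $y$-adically complete, so (e) holds and the tower is its own tilt.
\end{itemize}
Here $\wt R\cong\F_p\llbracket x\rrbracket$ with $y\mapsto x^2$, $(\wt R/y\wt R)_\red=\F_p$ and $(R/yR)_\red=\F_p\llbracket x\rrbracket$. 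So the fiber product is the node $\F_p\llbracket u,v\rrbracket/(uv)$, while $R$ is a tacnode. The canonical map is injective but misses $(x,0)$. Equivalently, $\fraka_0=(x^2)\subset\F_p\llbracket x\rrbracket$ is not radical.

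\textbf{The paper has the same hole.} \cref{lem:BC} only proves $A_{\Itor}\cap\sqrt I=0$, i.e.\ injectivity, and calls it cartesianness. For perfectoid rings, surjectivity comes from perfectness of $A^\flat$, which has no levelwise analogue for towers. What does survive is the unreduced square $R_i\cong\wt{R_i}\times_{\wt{R_i}/I_0\wt{R_i}}R_i/I_0R_i$, which needs only $T_i\cap I_0R_i=0$.

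\textbf{Step~2.} Your Step~2 is wrong as written, though its conclusion is true. Neither need $(\ol{t_i})_\red$ be an isomorphism nor $(\ol{R_i})_\red$ be perfect: take $\F_p\llbracket x,y\rrbracket$ with $I_0=(y)$. Also, \textbf{(c)} gives no surjectivity modulo nilpotents. The correct argument, as in \cref{prop:modIred}, uses \textbf{(d)} to show that $(F_i)_\red$ is bijective.

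Step~1 is fine and matches \cref{prop:torftower}.
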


We first study the tower $\wt{\textrm{{\boldmath $R$}}}$ consisting of the maximal $I_0$-torsion free quotients.

\begin{proposition}
\label{prop:torftower}
Let $\textrm{{\boldmath $R$}}=\{R_i,t_i\}_{i\geq 0}$ be a preperfectoid tower arising from a pair $(R,I_0)$, and consider the induced tower of rings
\[
\wt{\textrm{{\boldmath $R$}}} \colon \quad \wt{R}\coloneqq R/R_{\Iztor} \xr{\wt{t_0}} \wt{R_1}\coloneqq R_1/(R_1)_{\Iztor} \xr{\wt{t_1}} \cdots \to \wt{R_i}\coloneqq R_i/(R_i)_{\Iztor} \xr{\wt{t_i}} \cdots.
\]
  \begin{enumerate}
  \item $\wt{\textrm{{\boldmath $R$}}}$ is a preperfectoid tower arising from $(\wt{R},I_0\wt{R})$.
  \item If {\boldmath $R$} is a perfectoid tower arising from $(R,I_0)$, then $\wt{\textrm{{\boldmath $R$}}}$ is a perfectoid tower arising from $(\wt{R},I_0\wt{R})$.
  \item The tilt $(\wt{\textrm{{\boldmath $R$}}})^\flat$ of $\wt{\textrm{{\boldmath $R$}}}$ is given by
  \[
  R^{s.\flat}/(R^{s.\flat})_{\Izstor} = R_0^{s.\flat}/(R_0^{s.\flat})_{\Izstor} \to R_1^{s.\flat}/(R_1^{s.\flat})_{\Izstor} \to \cdots \to R_i^{s.\flat}/(R_i^{s.\flat})_{\Izstor} \to \cdots.
  \]
  \item The $i$-th perfectoid pillar of $\wt{\textrm{{\boldmath $R$}}}$ is $I_i\wt{R_i}$, where $I_i$ is the $i$-th perfectoid pillar of {\boldmath $R$}. Moreover, the small tilt of $I_i\wt{R_i}$ is $I_i^{s.\flat}\wt{R_i}^{s.\flat}$, where $I_i^{s.\flat}\subset R_i^{s.\flat}$ is the small tilt of $I_i$.
  \item The $I_1\wt{R_1}$-completed colimit $(\wt{R}_\infty)^\wedge$ of the perfectoid tower $\wt{\textrm{{\boldmath $R$}}}$ is isomorphic to the maximal $I_0$-torsion free quotient $\wh{R_\infty}/(\wh{R_\infty})_{\Iztor}$ of the perfectoid ring $\wh{R_\infty}$ (cf.\ \cite[Corollary 3.52]{INS25}).
  \end{enumerate}
\end{proposition}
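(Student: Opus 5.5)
We verify the axioms (a)--(g) for $\wt{\textrm{{\boldmath $R$}}}$ directly and then identify the tilt. The key input is a description of $\wt{R_i}/I_0\wt{R_i}$. Since $\varphi_{I_0,R_i}$ is injective and its image is $(R_i)_{\Iztor}$ modulo $I_0R_i$, we get
\[
\wt{R_i}/I_0\wt{R_i}\cong R_i/\bigl((R_i)_{\Iztor}+I_0R_i\bigr)\cong (R_i/I_0R_i)/\varphi_{I_0,R_i}\bigl((R_i)_{\Iztor}\bigr).
\]
Write $\ol{R_i}=R_i/I_0R_i$ and $T_i\coloneqq \varphi_{I_0,R_i}((R_i)_{\Iztor})\subset\ol{R_i}$; this is an ideal of $\ol{R_i}$. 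By (g), $F_i(T_{i+1})=T_i$. The cartesian square \eqref{eq:PBt} of \cref{thm:PB} gives $\ol{t_i}^{-1}(T_{i+1})=T_i$. The cartesian square \eqref{eq:PBphi} gives that $\bar x^p\in T_{i+1}$ implies $\bar x\in T_{i+1}+I_1\ol{R_{i+1}}$. These identities are exactly what we need to descend the structure maps of $\ol{R_i}$ to the quotients $\ol{R_i}/T_i$.

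\textbf{Axioms (a)--(f).} Condition (a) is clear. For (b), injectivity of $\ol{R_i}/T_i\to\ol{R_{i+1}}/T_{i+1}$ is precisely $\ol{t_i}^{-1}(T_{i+1})=T_i$. Condition (c) is inherited. Since $F_i(T_{i+1})\subset T_i$, the map $F_i$ descends to a Frobenius projection $\wt F_i$, and it is surjective, giving (d). For (f) we take $\wt I_1=I_1\wt{R_1}$, so that (f-1) is immediate. For (f-2) we must show $\ker\wt F_i=I_1(\ol{R_{i+1}}/T_{i+1})$, i.e.\ $F_i^{-1}(T_i)=T_{i+1}+I_1\ol{R_{i+1}}$. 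This follows from $F_i(T_{i+1})=T_i$ together with $\ker F_i=I_1\ol{R_{i+1}}$, by the argument of \cref{prop:Ii}~(1).

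\textbf{Axiom (g) and the perfectoid case.} We must show that $\wt{R_i}$ is $I_0$-torsion free. If $f_0^n x\in(R_i)_{\Iztor}$, then $x$ is $I_0$-torsion, since $I_0$ is principal. Hence $\wt{R_i}$ is torsion free and (g) holds trivially. This proves (1). For (2), observe that $\wt{R_i}$ is a quotient of $R_i$, and the Zariskian property passes to quotients.

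\textbf{Tilt, pillars, completion.} For (3), the small tilt of $\wt{\textrm{{\boldmath $R$}}}$ is $\varprojlim_{\wt F}\ol{R_{i+j}}/T_{i+j}$. Since the $F$'s restrict to bijections $T_{j+1}\to T_j$ (by (g)), the inverse system $\{T_{i+j}\}_j$ has surjective transitions, so its $\varprojlim^1$ vanishes. Therefore
\[
\varprojlim_j \ol{R_{i+j}}/T_{i+j}\cong R_i^{s.\flat}/\varprojlim_j T_{i+j}.
\]
Using \cref{thm:INS3.35}~(1), the $0$-th projection identifies $\varprojlim_j T_{i+j}$ with $(R_i^{s.\flat})_{\Izstor}$: the torsion of $R_i^{s.\flat}$ maps isomorphically onto $T_i$, and is compatible along the tower. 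This compatibility is the step I expect to be the most delicate, namely checking that the element of the limit is determined by its $0$-th component via bijectivity of $(F_j)_{\mathrm{tor}}$.

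For (4), the statement about pillars follows from \cref{lem:alphaflat} applied to the projection $\textrm{{\boldmath $R$}}\to\wt{\textrm{{\boldmath $R$}}}$, since the hypotheses $I_0\wt R=I_0\wt R$ and $I_1\wt R_1$ hold by construction.

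For (5), I will use that $I_0$-torsion is bounded, namely killed by $I_0$. From this, the colimit of the $\wt{R_i}$ is $R_\infty/(R_\infty)_{\Iztor}$. Completion then commutes with this quotient: since the torsion is killed by $f_0$, the sequence $0\to(R_\infty)_{\Iztor}\to R_\infty\to\wt{R_\infty}\to0$ stays exact after completion, and the completed torsion equals $(\wh{R_\infty})_{\Iztor}$.
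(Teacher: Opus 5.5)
Your proposal is correct and follows essentially the same route as the paper. It uses the exact sequence $0\to(R_i)_{\Iztor}\to R_i/I_0R_i\to\wt{R_i}/I_0\wt{R_i}\to 0$ (your identification with $\ol{R_i}/T_i$), the cartesian square \eqref{eq:PBt} of \cref{thm:PB} for (b), descent of $F_i$ via (g) for (c), (d), (f), Mittag-Leffler together with the identification $\varprojlim_j T_{i+j}\cong(R_i^{s.\flat})_{\Izstor}$ from the proof of \cite[Theorem 3.35]{INS25} for (3), \cref{lem:alphaflat} for (4), and exactness of completion under bounded torsion for (5). The differences are cosmetic: you run the diagram chases through the ideals $T_i$, so (f-2) needs only $F_i(T_{i+1})=T_i$ rather than the snake-lemma isomorphism on kernels, and you check the completion step by hand instead of citing \cite{FKI} and \cite[Lemma 3.16]{INS25}.
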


\begin{proof}
We first note that for every $i\geq 0$, since the map $\varphi_{I_0,R_i}\colon (R_i)_{\Iztor}\to R_i/I_0R_i$ is injective, it yields the exact sequence of $R_i$-modules
\begin{equation}
\label{eq:torfquot}
0 \to (R_i)_{\textrm{$I_0$-}\mathrm{tor}} \xr{\varphi_{I_0,R_i}} R_i/I_0R_i \to \wt{R_i}/I_0\wt{R_i} \to 0.
\end{equation}

(1) We are going to check the conditions \textbf{(a)} $\sim$ \textbf{(d)}, \textbf{(f)}, \textbf{(g)} in \cref{def:perfectoidtower}.

\textbf{(a)} is obvious.

\textbf{(b)} By \eqref{eq:torfquot}, we have the commutative diagram of abelian groups with exact rows
\[
\begin{tikzcd}
0\rar & (R_i)_{\Iztor} \rar[hookrightarrow] \dar["(t_i)_{\mathrm{tor}}"',hookrightarrow] \ar[rd,phantom,"\textrm{p.b.}"] & R_i/I_0R_i \dar["\ol{t_i}",hookrightarrow] \rar & \wt{R_i}/I_0\wt{R_i} \rar \dar["\ol{t_i}"] & 0 \\
0\rar & (R_{i+1})_{\Iztor} \rar[hookrightarrow] & R_{i+1}/I_0R_{i+1} \rar & \wt{R_{i+1}}/I_0\wt{R_{i+1}}\rar & 0,
\end{tikzcd}
\]
where the left-hand square is cartesian by \cref{thm:PB}. We deduce from this that the map $\ol{u_i}$ is injective.

\textbf{(c)}, \textbf{(d)} and \textbf{(f)}: By \eqref{eq:torfquot}, we have the commutative diagram of abelian groups with exact rows and columns
\begin{equation}
\label{eq:torfF}
\begin{tikzcd}
 & & 0 \dar & 0 \dar & \\
 & & I_1(R_{i+1}/I_0R_{i+1}) \rar[dashed,"\cong"] \dar & I_1(\wt{R_{i+1}}/I_0\wt{R_{i+1}}) \dar & \\
0 \rar & (R_{i+1})_{\textrm{$I_0$-}\mathrm{tor}} \rar \dar["\cong","(F_i)_{\mathrm{tor}}"'] & R_{i+1}/I_0R_{i+1} \rar \dar["F_i"] & \wt{R_{i+1}}/I_0\wt{R_{i+1}} \rar \dar["\wt{F_i}"] & 0 \\
0 \rar & (R_i)_{\textrm{$I_0$-}\mathrm{tor}} \rar & R_i/I_0R_i \rar \dar & \wt{R_i}/I_0\wt{R_i} \rar \dar & 0 \\
 & & 0 & 0, & 
\end{tikzcd}
\end{equation}
where $\wt{F_i}$ is the map induced by the universality of cokernels, and by an easy diagram chasing we deduce that the dotted arrow is an isomorphism. Since $F_i$ is a ring homomorphism, so is $\wt{F_i}$. Thus we see that $\wt{F_i}$ gives the $i$-th Frobenius projection for $\wt{\textrm{{\boldmath $R$}}}$, and that the tower $\wt{\textrm{{\boldmath $S$}}}$ satisfies \textbf{(d)} and \textbf{(f)}.

\textbf{(g)} follows automatically, since $\wt{R_i}=R_i/(R_i)_{\Iztor}$ is $I_0$-torsion free for every $i\geq 0$.

(2) follows because we have the canonical surjection $R_i\to \wt{R_i}$ for every $i\geq 0$.

(3) Fix $i\geq 0$. By taking projective limits of \eqref{eq:torfF}, we have the exact sequence of $R_i^{s.\flat}$-modules
\[
0\to \varprojlim_{j\geq 0}(R_{i+j})_{\Iztor} \to R_i^{s.\flat} \to (\wt{R_i})^{s.\flat} \to 0.
\]
But $\varprojlim_{j\geq 0}(R_{i+j})_{\Iztor} \cong (R_i^{s.\flat})_{\Izstor}$ by the proof of \cite[Theorem 3.35 (2)]{INS25}, and so the assertion follows.

(4) By the proof of (1), the first perfectoid pillar of $\wt{\textrm{{\boldmath $R$}}}$ is $I_1\wt{R_1}$. Hence the assertion follows from \cref{lem:alphaflat}.

(5) By taking inductive limits of the canonical exact sequences $0 \to (R_i)_{\Iztor} \to R_i \to \wt{R_i}\to 0$, we have the exact sequence of $R_\infty$-modules
\[
0\to (R_\infty)_{\Iztor} \to R_\infty \to \wt{R}_\infty \to 0.
\]
Since both $R_\infty$ and $\wt{R}_\infty$ have bounded $I_0$-torsion, the sequence
\[
0\to (R_\infty)_{\Iztor} \to \wh{R_\infty} \to (\wt{R}_\infty)^\wedge \to 0
\]
consisting of $I_0$-adically completions is exact (\cite[Chapter 0, Lemma 8.2.14]{FKI}). Moreover, by \cite[Lemma 3.16]{INS25}, the canonical map $R_\infty\to\wh{R_\infty}$ induces an isomorphism $(R_\infty)_{\Iztor}\xr{\cong}(\wh{R_\infty})_{\Iztor}$. This confirms the assertion.
\end{proof}

We turn to explain how to obtain a perfect tower from a perfecotid tower. The following lemma follows immediately from the definition or \cite[Lemma 3.24]{INS25}.

\begin{lemma}
\label{lem:perfecttower}
A tower of $\F_p$-algebras $\textrm{{\boldmath $R$}}=\{R_i,t_i\}_{i\geq 0}$ is a perfect tower if and only if it satisfies the following conditions.
  \begin{enumerate}
  \item $R_0$ is reduced.
  \item For any $i\geq 0$, the map $t_i$ is injective.
  \item For any $i\geq 0$, we have $R_{i+1}^p=\Im(t_i)$, where $R_{i+1}^p\coloneqq\{x^p\mid x\in R_{i+1}\}$. In other words, there exists a (unique) ring isomorphism $F_i\colon R_{i+1}\to R_i$ such that the following diagram commutes.
  \[
  \begin{tikzcd}
  R_{i+1} \rar["\varphi"] \ar[rd,"F_i"',"\cong" sloped] & R_{i+1} \\
   & R_i \uar["t_i"',hookrightarrow]
  \end{tikzcd}
  \]
  \end{enumerate}
\end{lemma}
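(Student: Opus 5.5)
We prove the two implications separately, comparing conditions (1)--(3) directly with the definition of a perfect tower, namely being isomorphic to $R\xr{\varphi}R\xr{\varphi}\cdots$ for a reduced $\F_p$-algebra $R$.

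\textbf{``Only if''.} Assume $\textrm{{\boldmath $R$}}$ is isomorphic to $R\xr{\varphi}R\xr{\varphi}\cdots$ with $R$ reduced. Conditions (1)--(3) are invariant under isomorphism of towers, so we may work with this standard tower.
\begin{itemize}
\item $R_0=R$ is reduced, which gives (1).
\item Each transition map is $\varphi$, which is injective because $R$ is reduced; this gives (2).
\item For (3), the image of $t_i=\varphi$ is $R^p=\{x^p\mid x\in R\}=R_{i+1}^p$.
\end{itemize}
The map $F_i$ is then $\id_R$, and it satisfies $t_i\circ F_i=\varphi$. It is unique because $t_i$ is injective.

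\textbf{``If''.} Assume (1)--(3).
\begin{itemize}
\item By (3), $t_i$ is surjective onto $R_{i+1}^p$, and by (2) it is injective. So $F_i:=t_i^{-1}\circ\varphi_{R_{i+1}}$ is a well-defined ring homomorphism $R_{i+1}\to R_i$, and it is surjective.
\item Each $R_i$ is reduced, by induction on $i$. The case $i=0$ is (1). If $R_i$ is reduced, then $R_{i+1}\cong R_{i+1}^p=t_i(R_i)$ fails to be reduced only if $\varphi_{R_{i+1}}$ has a kernel, so we check $\varphi_{R_{i+1}}$ directly. Suppose $x\in R_{i+1}$ is nilpotent. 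Then $x^{p^n}=0$ for some $n$, and $x^p=t_i(y)$ for some $y\in R_i$. Since $t_i$ is injective, $y^{p^{n-1}}=0$ in $R_i$, so $y=0$ because $R_i$ is reduced. Hence $x^p=0$. Iterating this argument with $x^{p^{k}}$ in place of $x$ only shows $x^p=0$, so this step needs more care.
\item The fix is to use (3) once more. We have $x=t_i(z)$ only if $x$ is a $p$-th power, so instead we argue through $\varphi_{R_{i+1}}$ as follows. Because $x^p=0$ and $x^p=t_i(y)$, injectivity of $t_i$ forces $y=0$. This does not yet give $x=0$, which is exactly the potential gap. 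I will therefore read the lemma's intended use as requiring $F_i$ to be a bijection, as the statement ``ring isomorphism $F_i$'' asserts. Then $\varphi_{R_{i+1}}=t_i\circ F_i$ is injective, so $R_{i+1}$ is reduced.
\item With every $F_i$ an isomorphism, put $\psi_i:=F_0\circ\cdots\circ F_{i-1}\colon R_i\xr{\cong}R_0$. We check that these maps intertwine $t_i$ with $\varphi_{R_0}$, that is, $\psi_{i+1}\circ t_i=\varphi_{R_0}\circ\psi_i$. Since $F_i\circ t_i=\varphi_{R_i}$ (apply $t_i^{-1}$ to $\varphi(t_i(a))=t_i(\varphi(a))$), we get $\psi_{i+1}t_i=\psi_i F_i t_i=\psi_i\varphi_{R_i}=\varphi_{R_0}\psi_i$. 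Here the last equality holds because ring homomorphisms commute with Frobenius.
\end{itemize}
So $\textrm{{\boldmath $R$}}\cong(R_0\xr{\varphi}R_0\xr{\varphi}\cdots)$ with $R_0$ reduced, which is a perfect tower.

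The main obstacle is justifying the bijectivity of $F_i$ from (1)--(3) alone. Surjectivity follows from $R_{i+1}^p=\Im(t_i)$, but injectivity of $F_i$ is equivalent to reducedness of $R_{i+1}$. I would take the statement's phrase ``ring isomorphism $F_i$'' as the content of (3), alternatively citing \cite[Lemma 3.24]{INS25}.
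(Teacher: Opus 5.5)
Once (3) is read as asserting that $F_i$ is a ring isomorphism, your argument is correct. It is the direct unwinding of the definition that the paper has in mind; the paper gives no proof beyond calling the lemma immediate. The key points are:
\begin{itemize}
\item $\varphi_{R_{i+1}}=t_i\circ F_i$ is injective, so every $R_{i+1}$ is reduced.
\item $F_i\circ t_i=\varphi_{R_i}$ holds by injectivity of $t_i$.
\item Hence $\psi_i=F_0\circ\cdots\circ F_{i-1}$ intertwines $t_i$ with $\varphi_{R_0}$.
\end{itemize}

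What you should change is the hedging around reducedness. The step you could not complete is not something more care or a citation would fix: it is false. So the isomorphism reading of (3) is forced, not merely convenient.

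Given (2), the existence of a ring isomorphism $F_i$ with $t_i\circ F_i=\varphi_{R_{i+1}}$ is equivalent to $R_{i+1}^p=\Im(t_i)$ \emph{together with} reducedness of $R_{i+1}$. The equality alone does not suffice, even when $R_0$ is reduced. For example, consider the tower
\[
\F_p=\F_p[T]/(T)\hookrightarrow \F_p[T^{1/p}]/(T)\hookrightarrow \F_p[T^{1/p^2}]/(T)\hookrightarrow\cdots.
\]
This is $\{\Z_p[p^{1/p^i}]/(p)\}_{i\geq 0}$, the reduction modulo $I_0$ of the tower in \cref{ex:perfectoid tower} (3). It has these properties:
\begin{itemize}
\item its transition maps are injective;
\item it satisfies $R_{i+1}^p=\Im(t_i)$, because $\bigl(\sum_{m<p^{i+1}}a_mT^{m/p^{i+1}}\bigr)^p=\sum_{m<p^i}a_mT^{m/p^i}$ for $a_m\in\F_p$;
\item $R_0=\F_p$ is reduced.
\end{itemize}
Yet $T^{1/p}\in R_1$ is a nonzero nilpotent, so the tower is not perfect.

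Thus the two formulations inside (3) are not equivalent, and the lemma holds only in the isomorphism version. That version is what \cref{prop:modIred} actually checks: it passes to $(-)_\red$ and proves $(F_i)_\red$ injective. With this reading, you should delete the abortive induction in your third bullet. Hypothesis (1) even becomes redundant, since $R_1$ is reduced and $R_0\cong R_1$ via $F_0$.
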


\begin{proposition}
\label{prop:modIred}
Let $\textrm{{\boldmath $R$}}=\{R_i,t_i\}_{i\geq 0}$ be a purely inseparable tower arising from a pair $(R,I_0)$, and assume that it satisfies \emph{\textbf{(d)}}. Then the induced tower of $\F_p$-algebras
\[
(\textrm{{\boldmath $R$}}/I_0\textrm{{\boldmath $R$}})_\red \colon\quad (R/I_0)_\red = (R_0/I_0)_\red \xr{(\ol{t_0})_\red} (R_1/I_0R_1)_\red \xr{(\ol{t_1})_\red} \cdots \to (R_i/I_0R_i)_\red \xr{(\ol{t_i})_\red} \cdots
\]
is a perfect tower.
\end{proposition}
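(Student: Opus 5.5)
We verify the three conditions of \cref{lem:perfecttower} for the tower $\{(\ol{R_i})_\red,(\ol{t_i})_\red\}_{i\geq 0}$, where $\ol{R_i}\coloneqq R_i/I_0R_i$. Since $p\in I_0$ by \textbf{(a)}, each $(\ol{R_i})_\red$ is an $\F_p$-algebra. Condition (1) of \cref{lem:perfecttower} holds trivially, because $(\ol{R_0})_\red$ is reduced by construction.

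The main tool is the Frobenius projection $F_i\colon \ol{R_{i+1}}\to \ol{R_i}$, which satisfies $\ol{t_i}\circ F_i=\varphi_{\ol{R_{i+1}}}$. First we show that $F_i$ descends to the reductions. If $x\in\ol{R_{i+1}}$ is nilpotent, then $F_i(x)$ is nilpotent. Conversely, suppose $F_i(x)$ is nilpotent. Then $x^p=\ol{t_i}(F_i(x))$ is nilpotent, so $x$ is nilpotent. Hence $F_i^{-1}(\mathrm{nil}(\ol{R_i}))=\mathrm{nil}(\ol{R_{i+1}})$. Since $F_i$ is surjective by \textbf{(d)}, $F_i$ induces an \emph{injective and surjective} ring homomorphism
\[
(F_i)_\red\colon (\ol{R_{i+1}})_\red \to (\ol{R_i})_\red,
\]
that is, an isomorphism. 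The identity $\ol{t_i}\circ F_i=\varphi$ passes to the reductions, giving $(\ol{t_i})_\red\circ (F_i)_\red=\varphi_{(\ol{R_{i+1}})_\red}$.

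Next we check condition (2), the injectivity of $(\ol{t_i})_\red$. Let $y\in \ol{R_i}$ with $\ol{t_i}(y)$ nilpotent. By the surjectivity of $F_i$, write $y=F_i(x)$. Then $x^p=\ol{t_i}(y)$ is nilpotent, so $x$ is nilpotent, and therefore $y=F_i(x)$ is nilpotent. This is the step where \textbf{(d)} is genuinely needed. Injectivity of $\ol{t_i}$ from \textbf{(b)} alone does not directly give injectivity modulo nilradicals; we need every element of $\ol{R_i}$ to be a Frobenius projection.

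Finally we check condition (3). The image of $(\ol{t_i})_\red$ equals the image of $(\ol{t_i})_\red\circ(F_i)_\red$, because $(F_i)_\red$ is surjective. That composite is $\varphi$, whose image is $((\ol{R_{i+1}})_\red)^p$. Hence $\Im((\ol{t_i})_\red)=((\ol{R_{i+1}})_\red)^p$, and \cref{lem:perfecttower} shows the tower is perfect. I expect no serious obstacle. The only care needed is in the nilradical bookkeeping above, which also identifies $(F_i)_\red$ as the isomorphism $F_i$ required in condition (3) of \cref{lem:perfecttower}.
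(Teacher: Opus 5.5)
Your proof is correct and follows essentially the same route as the paper: verify the three conditions of \cref{lem:perfecttower}, using the surjectivity of $F_i$ and the factorization $\ol{t_i}\circ F_i=\varphi$ to show that $(F_i)_\red$ is an isomorphism and that $\Im((\ol{t_i})_\red)$ is the image of Frobenius. One correction: your remark that \textbf{(b)} alone does not give injectivity of $(\ol{t_i})_\red$ is false, because $\ol{t_i}(y)^n=\ol{t_i}(y^n)=0$ already forces $y^n=0$ (any injective ring map stays injective on reductions); this one-line argument is exactly what the paper uses for condition (2), so your detour through \textbf{(d)} there is valid but unnecessary.
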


\begin{proof}
We are going to check (1), (2), and (3) in \cref{lem:perfecttower}.

(1) is obvious.

(2) Since $\ol{t_i}$ is injective, so is $(\ol{t_i})_\red$.

(3) By applying $(-)_\red$ to the compatibility $\ol{t_i}\circ F_i = \varphi_{R_{i+1}/I_0R_{i+1}}$, we have the commutative diagram of rings
\[
\begin{tikzcd}
(R_{i+1}/I_0R_{i+1})_\red \rar["\varphi"] \ar[rd,"(F_i)_\red"',twoheadrightarrow] & (R_{i+1}/I_0R_{i+1})_\red \\
 & (R_i/I_0R_i)_\red \uar["(\ol{t_i})_\red"',hookrightarrow]
\end{tikzcd}
\]
Here the absolute Frobenius $\varphi$ is injective because $(R_{i+1}/I_0R_{i+1})_\red$ is reduced. Thus $(F_i)_\red$ is also injective, which yields the assertion.
\end{proof}

The last ingredient of our proof is the following lemma on decomposable properties of rings.

\begin{lemma}
\label{lem:BC}
Let $(A,I)$ be a pair, and let $\wt{A}\coloneqq A/A_{\Itor}$.
If $A_{\textrm{$I$-}\mathrm{tor}}$ does not contain any non-zero nilpotent element of $A$, then the commutative diagram
\[
\begin{tikzcd}
A \rar \dar & \wt{A} \dar \\
(A/I)_\red \rar & (\wt{A}/I\wt{A})_\red
\end{tikzcd}
\]
consisting of canonical projections of rings is cartesian, that is, $A_{\textrm{$I$-}\mathrm{tor}}\cap \sqrt{I} = (0)$.
\end{lemma}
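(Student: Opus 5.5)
The diagram is cartesian exactly when the induced map $A \to \wt{A}\times_{(\wt{A}/I\wt{A})_\red}(A/I)_\red$ is bijective. The plan is to check injectivity and surjectivity separately and reduce each to an ideal-theoretic statement. Write $J\coloneqq \sqrt{I}$, meaning the radical of $I$ in $A$. The kernel of $A\to (A/I)_\red$ is $J$, and the kernel of $A\to\wt{A}$ is $A_{\Itor}$.

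\textbf{Injectivity.} The kernel of the map into the fiber product is $A_{\Itor}\cap J$. So injectivity is equivalent to $A_{\Itor}\cap\sqrt{I}=(0)$, which is the reformulation in the statement.

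To prove it, I would take $x\in A_{\Itor}\cap\sqrt{I}$, so that $x^m\in I$ for some $m$. Since $x$ is $I$-torsion, some power of each element of $I$ kills $x$. Applying this to the element $x^m\in I$ gives $(x^m)^n x=0$ for some $n$. Hence $x$ is nilpotent. By hypothesis $A_{\Itor}$ contains no nonzero nilpotent element of $A$, so $x=0$.

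\textbf{Surjectivity.} Take $(\wt{a},b)$ in the fiber product, with $\wt{a}\in\wt{A}$ and $b\in(A/I)_\red$. Lift $\wt{a}$ to some $a\in A$. The images of $a$ and $b$ in $(\wt{A}/I\wt{A})_\red=A/\sqrt{I+A_{\Itor}}$ agree. I would therefore lift $b$ to $c\in A$ and get $a-c\in\sqrt{I+A_{\Itor}}$.

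It remains to show that
\[
\sqrt{I+A_{\Itor}}=\sqrt{I}+A_{\Itor}.
\]
Granting this, write $a-c=s+t$ with $s\in\sqrt{I}$ and $t\in A_{\Itor}$. Then $a-t$ maps to $\wt{a}$ in $\wt{A}$ and to $b$ in $(A/I)_\red$, since $a-t=c+s$.

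For the ideal equality, I would take $y$ with $y^N=i+t$, where $i\in I$ and $t\in A_{\Itor}$, and work modulo $\sqrt{I}$. Put $\bar A=A/\sqrt{I}$. In $\bar A$ we have $\bar y^N=\bar t$. The element $t$ is annihilated by a power of any element of $I$. So it suffices to show that the image of $A_{\Itor}$ in $\bar A$ is closed under radicals, i.e.\ that the ideal $(A_{\Itor}+\sqrt{I})/\sqrt{I}$ is radical.

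\textbf{Main obstacle.} I expect the radicality of $(A_{\Itor}+\sqrt{I})/\sqrt{I}$ to be the difficult step. My first attempt uses the injectivity step: $A_{\Itor}\cap\sqrt{I}=0$ gives $A_{\Itor}\cong(A_{\Itor}+\sqrt{I})/\sqrt{I}$. This is a nil-free ideal of the reduced ring $\bar A$.

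Then I would argue primewise. Let $\mathfrak q\supset \sqrt{I}$ be a prime. If $A_{\Itor}\not\subset\mathfrak q$, pick $t$ with $t\notin\mathfrak q$. For $f\in I$ we have $f^n t=0\in\mathfrak q$, hence $f\in\mathfrak q$; this is consistent with $\mathfrak q\supset\sqrt I$ but not yet the needed conclusion.

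More promising is to use $I\cdot A_{\Itor}$ being killed in the finitely generated/principal settings of interest. If each $t$ is killed by some power of $I$, then $y^{N}\equiv t$ and $t^2$ lies in $t\cdot A$. If that fails, I would fall back to showing directly that $y^{N}f^{k}\in\sqrt I$ and $y^N(1-e)$ splits via the idempotent-like decomposition coming from $\sqrt{I}\cdot A_{\Itor}\subset \sqrt I\cap A_{\Itor}=0$. Indeed, the product of the two ideals is then $0$ and their sum makes $\bar A$ behave like a product near $V(\sqrt I)$, which should yield the radical property.
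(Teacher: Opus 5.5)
Your reduction is correct: since $(\wt{A}/I\wt{A})_\red=A/\sqrt{I+A_{\Itor}}$, the square is cartesian if and only if two conditions hold. The first is $A_{\Itor}\cap\sqrt{I}=(0)$ (injectivity); the second is $\sqrt{I+A_{\Itor}}=\sqrt{I}+A_{\Itor}$ (surjectivity). Your injectivity argument is also correct and is essentially the paper's. The paper first replaces $I$ by $\sqrt{I}$ and notes $x^n\cdot x=0$ for $x\in A_{\Itor}\cap I$; you apply the torsion condition to $x^m\in I$ directly. That is the entire content of the paper's proof. It establishes only $A_{\Itor}\cap\sqrt{I}=(0)$ and, through ``that is'', treats this as what ``cartesian'' means.

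The genuine gap is the surjectivity step. It cannot be closed, because $\sqrt{I+A_{\Itor}}=\sqrt{I}+A_{\Itor}$ fails under the stated hypothesis. Take a field $k$, $A=k[x,y]/(y(y-x^2))$ and $I=yA$. Then:
\begin{itemize}
\item $A$ is reduced, so the hypothesis holds;
\item $A_{\Itor}=\mathrm{Ann}_A(y)=(y-x^2)A$;
\item $\sqrt{I}=I$, since $A/yA\cong k[x]$;
\item $A_{\Itor}\cap\sqrt{I}=(0)$.
\end{itemize}
But $x^2=y-(y-x^2)\in I+A_{\Itor}$, whereas $x\notin(y,x^2)A=\sqrt{I}+A_{\Itor}$, because $A/(y,x^2)A\cong k[x]/(x^2)$.

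Concretely, $\wt{A}\times_{(\wt{A}/I\wt{A})_\red}(A/I)_\red\cong k[x]\times_k k[x]$, which is a node. Meanwhile $A$, a tacnode, maps isomorphically onto the proper subring $\{(f,g): f-g\in x^2k[x]\}$, which misses $(x,0)$. This is also why your heuristic fails. The conditions $\sqrt{I}\cdot A_{\Itor}=0$ and $\sqrt{I}\cap A_{\Itor}=0$ do not make the two ideals comaximal near $V(\sqrt{I})$; here $V(\sqrt{I})$ and $V(A_{\Itor})$ are tangent at the origin.

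In this example $I$ is principal and $I\cdot A_{\Itor}=0$, so the special features you hoped to exploit do not help. The power-series version $\F_p\llbracket x,y\rrbracket/(y(y-x^2))$, with its Frobenius tower, is even a perfectoid tower arising from the ideal $(y)$. So the paper's tower setting does not supply the missing equality either.

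What does hold under the hypothesis is that the square is cartesian when its lower-right corner is $\wt{A}/\sqrt{I}\,\wt{A}=A/(\sqrt{I}+A_{\Itor})$. Replacing that corner by $(\wt{A}/I\wt{A})_\red$ needs the extra input $\sqrt{I\wt{A}}=\sqrt{I}\,\wt{A}$, which requires a separate argument. For instance, it holds when $A$ is a perfect $\F_p$-algebra: write $y^{p^n}=j+t$ and take $p^n$-th roots.
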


\begin{proof}
Since $A_{\Itor}=A_{\textrm{$\sqrt{I}$-}\mathrm{tor}}$ and $(\wt{A}/I\wt{A})_\red = (\wt{A}/\sqrt{I}\wt{A})_\red$, we may assume that $I=\sqrt{I}$. Pick $x\in A_{\textrm{$I$-}\mathrm{tor}}\cap I$. Then there exists some $n>0$ such that $x^{n+1}=x^n\cdot x=0$. But $A_{\textrm{$I$-}\mathrm{tor}}$ does not contain any non-zero nilpotent element of $A$, and thus $x=0$.%\footnote{In fact, we can take $m=1$ because $IA_{\textrm{$I$-}\mathrm{tor}}=(0)$ by \cite[Lemma 3.18]{INS25}.}
\end{proof}

\begin{proof}[{Proof of \cref{thm:decomp}}]
(1) follows from \cref{prop:torftower}.

(2) follows from (1) and \cref{prop:modIred}.

(3) One can deduce from \textbf{(b)} and \textbf{(g)} that $(R_i)_{\Iztor}$ does not contain any non-zero nilpotent element of $R_i$ for every $i\geq 0$ (see the proof of \cite[Lemma 3.47]{INS25}). Thus the assertion follows from \cref{lem:BC} and \cref{prop:torftower} (3) (4).
\end{proof}

%%%%%%%%%%%%%%%%%%%%%%%%%%%%%%%%%%%%%%%%%%%%%%%%%%%%%%%%%%%%%%%%%%%%%%%%%%%%%%%%%%%%%%%%%%%%%%%
\subsection{Reducedness of separated perfectoid towers}

We begin with the $I_0$-torsion free case (\cref{prop:torfred}).

\begin{lemma}
\label{lem:inj lift}
Let $(A,a)$ be a pair with $a\in A$. Let $f\colon M\to N$ be a morphism of $A$-modules, and suppose that $M$ is $a$-adically separated and $N$ is $a$-torsion free. If the induced map $\ol{f}\colon M/aM\to N/aN$ is injective, then so is $f$. %Moreover, the similar assertions hold for $A$-algebras.
\end{lemma}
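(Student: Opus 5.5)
The plan is to show that every element of $\Ker(f)$ is divisible by arbitrarily high powers of $a$, and then use separatedness of $M$. Take $x\in M$ with $f(x)=0$.

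First, $x$ lies in $aM$. Indeed, $\ol{f}(x \bmod aM)= f(x)\bmod aN = 0$, and $\ol{f}$ is injective, so $x\in aM$.

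Next, show by induction on $n$ that $x\in a^nM$ for every $n\geq 0$. In fact it is cleaner to prove the stronger statement $\Ker(f)\subset a\Ker(f)$. Let $x\in\Ker(f)$ and write $x=ay$ with $y\in M$. Then $a f(y)=f(ay)=f(x)=0$. Since $N$ is $a$-torsion free, this gives $f(y)=0$, so $y\in\Ker(f)$. Hence $\Ker(f)=a\Ker(f)$, and iterating yields $\Ker(f)=a^n\Ker(f)\subset a^nM$ for all $n$.

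Finally, $\Ker(f)\subset\bigcap_{n}a^nM=0$ because $M$ is $a$-adically separated, so $f$ is injective.

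The only step needing care is the use of torsion-freeness. Here $a$-torsion free in the sense of the conventions means $N_{\torsion{(a)}}=0$. So $af(y)=0$ makes $f(y)$ an $(a)$-torsion element, and hence $f(y)=0$. Since this is immediate, there is no real obstacle; the whole argument is the divisibility bootstrap $\Ker(f)=a\Ker(f)$ combined with separatedness.
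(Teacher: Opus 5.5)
Your proof is correct and follows the paper's argument. Both proofs write a kernel element as $x=ay$ using injectivity of $\ol{f}$, use $a$-torsion freeness of $N$ to get $y\in\Ker(f)$, iterate to conclude $\Ker(f)\subset\bigcap_n a^nM$, and finish by separatedness of $M$; stating the key step as $\Ker(f)=a\Ker(f)$ is just a tidier way of writing the paper's induction.
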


\begin{proof}
Pick $x\in\Ker f$. Then $\ol{f}(x\ \mathrm{mod}\ aM)=0$, and so $x\in aM$. Hence we can write $x=ax_1$ for some $x_1\in M$. Moreover, we obtain $f(x_1)=0$ because $N$ is $a$-torsion free. By induction, we see that for any $n>0$ there exists $x_n\in M$ such that $x=a^nx_n$. Since $M$ is $a$-adically separated, we obtain $x\in\bigcap_{n>0}a^nM=0$, as desired.
\end{proof}

\cref{lem:inj lift} was motivated by the following special case:

\begin{example}
Let $(A,\delta)$ be a $\delta$-ring, and let $\phi\colon A\to A$ denote the associated Frobenius lift. Then applying \cref{lem:inj lift} to $\phi\colon A\to \phi_*A$ recovers \cite[Lemma 3.7]{Ish26}: if $A$ is $p$-adiclly separated and $p$-torsion free and $A/pA$ is reduced, then $\phi$ is injective.
\end{example}

%By \cref{lem:inj lift} we immediately have the following proposition.
%
%\begin{proposition}
%\label{prop:ti inj}
%Let $(R,I_0)$ be a pair, and $\textrm{{\boldmath $R$}}=\{R_i,t_i\}_{i\geq 0}$ a tower of rings. Suppose that the following conditions are satisfied.
%  \begin{enumerate}
%  \item {\boldmath $R$} satisfies \emph{\textbf{(a)}} and \emph{\textbf{(b)}} for $(R,I_0)$.
%  \item For any $i\geq 0$, $R_i$ is $I_0$-adically separated and $I_0$-torsion free.
%  \end{enumerate}
%Then the map $t_i\colon R_i\to R_{i+1}$ is injective for any $i\geq 0$.
%\end{proposition}

Then we get the following result.

\begin{proposition}
\label{prop:torfred}
Let $\textrm{{\boldmath $R$}}=\{R_i,t_i\}_{i\geq 0}$ be an $I_0$-adically separated and $I_0$-torsion free purely inseparable tower of rings arising from a pair $(R,I_0)$.
  \begin{enumerate}
  \item The map $t_i\colon R_i\to R_{i+1}$ is injective for every $i\geq 0$.
  \item If, moreover, {\boldmath $R$} satisfies \emph{\textbf{(f)}}, then it is reduced.
  \end{enumerate}
\end{proposition}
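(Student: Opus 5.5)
For (1), apply \cref{lem:inj lift} with $A=R$, $a$ a generator of $I_0$ when $I_0$ is principal, $M=R_i$, $N=R_{i+1}$ (viewed as $R$-modules) and $f=t_i$. The hypotheses are exactly those assumed: $R_i$ is $I_0$-adically separated, $R_{i+1}$ is $I_0$-torsion free, and $\ol{t_i}\colon R_i/I_0R_i\to R_{i+1}/I_0R_{i+1}$ is injective by condition \textbf{(b)}.

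If $I_0$ is not assumed principal, I would run the proof of \cref{lem:inj lift} with the ideal $I_0$ in place of $(a)$. Suppose $x\in\Ker t_i$. Then $x\in I_0R_i$, say $x=\sum a_k y_k$ with $a_k\in I_0$, and we want $t_i(y_k)$ to be controlled. This is where principality is genuinely convenient. In a purely inseparable tower only \textbf{(a)}--\textbf{(c)} are assumed, so I would either note that (f) (or the intended setting) gives principality, or restrict to a principal generator as the statement implicitly allows. With $I_0=(a)$, the induction gives $x\in\bigcap_n a^nR_i=0$.

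For (2), assume \textbf{(f)}, so $I_0=(f_0)$ and $I_1=(f_1)$ with $I_1^p=I_0R_1$. By (1) every $t_i$ is injective, so it suffices to show each $R_i$ is reduced. The key step is to show $R_i/I_0R_i$ has the property that $x^p\in I_0R_i$ forces $x$ into a controlled ideal. From $\ol{t_i}\circ F_i=\varphi$ and $\Ker F_i = I_1\ol{R_{i+1}}$ (condition \textbf{(f-2)}, but without \textbf{(d)} I only use the inclusion), $x^p\in I_0R_{i+1}$ implies $x\in I_1R_{i+1}$. Then take a nilpotent $x\in R_{i}$, pushed into $R_{i+1}$ via $t_i$ (injective), and assume $x^{p}=0$ after raising powers. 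We get $x\in I_1R_{i+1}$, so $x=f_1y$. Then $0=x^p=f_1^py=u f_0 y^p$ (up to a unit, using $I_1^p=I_0R_1$). Torsion-freeness gives $y^p=0$, and iterating yields $x\in\bigcap_n I_1^nR_{i+1}\subset\bigcap_n I_0^{\lfloor n/p\rfloor}R_{i+1}=0$ by separatedness.

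The main obstacle is justifying $\Ker(\varphi_{\ol{R_{i+1}}})\subset I_1\ol{R_{i+1}}$. The identification $\Ker F_i=I_1\ol{R_{i+1}}$ from \textbf{(f-2)} combined with injectivity of $\ol{t_i}$ gives exactly $\Ker\varphi=\Ker F_i=I_1\ol{R_{i+1}}$, which is what I will use. A second point of care is the element-level translation of $I_1^p=I_0R_1$: principality of both ideals gives $f_1^p=uf_0$ with $u$ a unit only up to torsion issues. Torsion-freeness, together with $f_0=vf_1^p$, gives $f_1^p(1-uv)=0$, so $uv=1$ whenever $f_1$ is a nonzerodivisor, which holds since $f_1^p$ generates the torsion-free ideal $I_0R_1$.
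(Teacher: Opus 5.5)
Your proposal is correct and is essentially the paper's proof. Part (1) is \cref{lem:inj lift} applied to $t_i$ with $a$ a generator of $I_0$. Part (2) runs the same iteration through the Frobenius-induced injection $R_{i+1}/I_1R_{i+1}\hookrightarrow R_{i+1}/I_0R_{i+1}$ given by \textbf{(b)} and \textbf{(f-2)}, and concludes by $I_0$-adic separatedness; note the typo $f_1^py$ for $f_1^py^p$.

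Your caution about principality in (1) is justified, since the paper's argument also tacitly uses it, and without it the claim fails. For example, $\Z_p\llbracket X\rrbracket\to\F_p\llbracket X^{1/p}\rrbracket\hookrightarrow\F_p\llbracket X^{1/p^2}\rrbracket\hookrightarrow\cdots$ with $I_0=(p,X)$ is an $I_0$-adically separated, $I_0$-torsion free purely inseparable tower whose first map kills $p$.
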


\begin{proof}
(1) immediately follows from \cref{lem:inj lift}.

(2) By (1), it suffices to show that $R_{i+1}$ is reduced for every $i\geq 0$. By \textbf{(f)}, there exists a principal ideal $I_1\subset R_1$ such that $I_1^p=I_0R_1$ and the absolute Frobenius of $R_{i+1}/I_0R_{i+1}$ induces an injection $R_{i+1}/I_1R_{i+1} \hookrightarrow R_{i+1}/I_0R_{i+1}$. Thus the assertion follows from (a similar argument as in) \cref{lem:inj lift}.
\end{proof}

The general case can be proven with the aid of \cref{thm:decomp}; we present the details of the following observation found in \cite[footnote 7]{CS24}.

\begin{lemma}
\label{lem:non-unital}
Let $(A,I)$ be a pair.
  \begin{enumerate}
  \item $A$ is $I$-adically separated (resp.\ complete) if and only if every $I$-adic Cauchy sequence in $I$ has at most one (resp.\ a unique) limit in $I$.
  \item Let $(B,J)$ be another pair. If $I\cong J$ as (possibly) non-unital rings, then $A$ is $I$-adically separated (resp.\ complete) if and only if $B$ is $J$-adically separated (resp.\ complete).
  \end{enumerate}
\end{lemma}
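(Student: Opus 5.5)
For (1), we will first rewrite $I$-adic separatedness and completeness so that they involve only the subset $I$. For separatedness: $\bigcap_n I^nA=0$ holds iff $\bigcap_n I^n=0$, where for $n\geq 1$ we read $I^n$ as powers of the ideal $I$ of $A$. Indeed, $I^nA=I^n$ for $n\geq1$, and the intersection over $n\geq 1$ is already contained in $I$. A Cauchy sequence in $I$ with two distinct limits in $I$ produces a nonzero element of $\bigcap_n I^n$, namely the difference of the limits. Conversely, if $0\neq x\in\bigcap_n I^n$, the constant sequence $0$ has both $0$ and $x$ as limits in $I$. This settles the separated case.

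For completeness we then add existence of limits. Any $I$-adic Cauchy sequence $(a_n)$ in $A$ satisfies $a_n-a_N\in I$ for $n\geq N$ with $N$ large. Hence $(a_n-a_N)_{n\geq N}$ is a Cauchy sequence in $I$. A limit $y$ of it in $A$ automatically lies in $I$, because $I$ is closed: $y-(a_n-a_N)\in I^m\subset I$ for $m\geq 1$. Then $a_N+y$ is a limit of $(a_n)$. Conversely, a Cauchy sequence in $I$ is in particular Cauchy in $A$, and its limit lies in $I$ by the same closedness argument. So completeness of $A$ is equivalent to every Cauchy sequence in $I$ having a unique limit in $I$.

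For (2), we observe that the data in (1) are intrinsic to $I$ as a non-unital ring. The ideal $I^n$ of $A$ equals the additive subgroup generated by $n$-fold products of elements of $I$, so it is determined by the multiplication on $I$ alone. Thus the filtration $\{I^n\}_{n\geq1}$ on $I$, and with it the notions of Cauchy sequence and limit inside $I$, are preserved by any isomorphism $I\cong J$ of non-unital rings. Applying (1) to both pairs then gives the equivalence.

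The only subtle point is the bookkeeping in (1): limits must be taken in $A$ with respect to $I^nA$, and we have to check that this agrees with the intrinsic filtration $I^n$ on $I$. This is the identity $I^nA=I^n$ for $n\geq1$, together with the closedness of $I$. Neither step is a real obstacle.
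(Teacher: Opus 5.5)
Your proof is correct and follows essentially the same route as the paper. In both, an $I$-adic Cauchy sequence in $A$ is translated by a fixed element so that it lies in $I$, closedness of $I$ handles the converse, and (2) holds because $I^n$, the additive subgroup generated by $n$-fold products, depends only on the non-unital ring $I$. Two details of yours are slightly cleaner than the paper's: subtracting $a_N$ does the work of the paper's telescoping $y_n=x_{n+1}-x_n$, and your explicit description of $I^n$ spells out what the paper leaves implicit in (2).
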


\begin{proof}
(1) The ``only if'' part follows from the fact that $I$ is $I$-adically closed (or even open) in $A$. To show the converse, let $\{x_n\}_{n\geq 0}$ be a sequence in $I$ such that for $n\leq m$ we have $x_n\equiv x_m\ \mathrm{mod}\ I^n$. For each $n\geq 1$, let $y_n\coloneqq x_{n+1}-x_n$. Then $\{y_n\}_{n\geq 1}$ is a sequence in $I$ such that for $n\geq 1$ we have $y_n\equiv y_{n+1}\ \mathrm{mod}\ I^{n}$. Since $x_n=x_0+\sum\limits_{k=1}^n y_k$ for every $n\geq 0$, $\{x_n\}_{n\geq 0}$ converges if and only if so does $\{y_n\}_{n\geq 1}$, and in this case $\lim\limits_{n\to\infty}x_n=x_0+\lim\limits_{n\to\infty}y_n$. This establishes the assertion.

(2) follows from (1), which implies that the property that $A$ be $I$-adically separated (resp.\  complete) only depends on $I$ as a (possibly) non-unital ring.
\end{proof}

\begin{proposition}
\label{prop:separated}
Let $(A,I)$ be a pair such that $A_{\Itor}\cap I=(0)$. Then $A$ is $I$-adically separated (resp.\ complete) if and only if so is $A/A_{\Itor}$.
\end{proposition}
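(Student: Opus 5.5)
The plan is to apply \cref{lem:non-unital} (2) directly: let $\wt{A}\coloneqq A/A_{\Itor}$. We will show that the canonical projection $\pi\colon A\to\wt{A}$ restricts to an isomorphism of (possibly) non-unital rings $I\xr{\cong} I\wt{A}$. Then \cref{lem:non-unital} (2), applied to the pairs $(A,I)$ and $(\wt{A},I\wt{A})$, gives the equivalence of $I$-adic separatedness (resp.\ completeness). Here we note that the $I$-adic topology on the $A$-module $\wt{A}$ coincides with the $I\wt{A}$-adic topology on the ring $\wt{A}$, since $I^n\wt{A}=(I\wt{A})^n$.

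The key steps are as follows. First, $\pi$ is a ring homomorphism, so its restriction $\pi|_I\colon I\to\wt{A}$ is a homomorphism of non-unital rings with image $\pi(I)=I\wt{A}$; surjectivity onto $I\wt{A}$ is therefore automatic. Second, $\Ker(\pi|_I)=I\cap A_{\Itor}=(0)$ by hypothesis, so $\pi|_I$ is injective. Hence $I\cong I\wt{A}$ as non-unital rings, and the claim follows from \cref{lem:non-unital} (2).

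The only point requiring care is the compatibility of the adic topologies. In \cref{lem:non-unital} (1), the Cauchy sequences in $I$ are measured modulo $I^n$, and this is intrinsic to $I$ as a non-unital ring, since $I^n$ is the $n$-th power ideal of $I$ computed inside $I$. Likewise $(I\wt{A})^n=\pi(I^n)$ corresponds to $I^n$ under the isomorphism. So no real obstacle is expected; the argument is essentially formal once \cref{lem:non-unital} is available.
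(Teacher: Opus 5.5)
Your proposal is correct and follows the paper's own argument. The projection $A\to A/A_{\Itor}$ restricts to an isomorphism of non-unital rings $I\xr{\cong}I(A/A_{\Itor})$, since its kernel on $I$ is $I\cap A_{\Itor}=(0)$, and \cref{lem:non-unital} then gives the equivalence. Your extra check that $(I\wt{A})^n=\pi(I^n)$, so the adic filtrations are intrinsic to $I$ as a non-unital ring, is exactly what justifies applying part (2) of that lemma.
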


\begin{proof}
Since the canonical projection $A\to A/A_{\Itor}$ induces an isomorphism of (possibly) non-unital rings $IA\xr{\cong}I(A/A_{\Itor})$, the assertion follows from \cref{lem:non-unital}.
\end{proof}

\begin{remark}
For the ``only if'' part, we can use the fact that every discrete subgroup of a Hausdorff group is closed (\cite[Chapter III, \S2.1, Proposition 5]{BouGT1}).
\end{remark}

Now we have the following main result of this subsection.

\begin{corollary}
\label{cor:reduced}
Let $\textrm{{\boldmath $R$}}=\{R_i,t_i\}_{i\geq 0}$ be an $I_0$-adically separated preperfectoid tower arising from a pair $(R,I_0)$. %Suppose that for any $i\geq 0$, $R_i$ is $I_0$-adically separated.
Then $t_i$ is injective and $R_i$ is reduced for every $i\geq 0$.
\end{corollary}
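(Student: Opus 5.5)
The plan is to reduce to the $I_0$-torsion free case via the decomposition of \cref{thm:decomp}. Fix $i\geq 0$. By \cref{thm:decomp} (3), $R_i$ is the fiber product $\wt{R_i}\times_{(\wt{R_i}/I_0\wt{R_i})_\red}(R_i/I_0R_i)_\red$, and the transition maps $t_i$ are compatible with this decomposition levelwise. A fiber product of reduced rings is reduced: it is a subring of the product $\wt{R_i}\times (R_i/I_0R_i)_\red$. A morphism of fiber products that is injective on both of the glued factors is also injective. Hence it suffices to show two things for every $i$:
\begin{enumerate}
\item $\wt{R_i}$ is reduced and $\wt{t_i}\colon\wt{R_i}\to\wt{R_{i+1}}$ is injective;
\item $(R_i/I_0R_i)_\red$ is reduced and $(\ol{t_i})_\red$ is injective.
\end{enumerate}
The second is immediate from \cref{prop:modIred}, since $(\textrm{{\boldmath $R$}}/I_0\textrm{{\boldmath $R$}})_\red$ is a perfect tower and its transition maps are injective by \cref{lem:perfecttower} (2).

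For the first, I apply \cref{prop:torfred} to $\wt{\textrm{{\boldmath $R$}}}$. By \cref{prop:torftower} (1), $\wt{\textrm{{\boldmath $R$}}}$ is a preperfectoid tower arising from $(\wt{R},I_0\wt{R})$. In particular it is purely inseparable and satisfies \textbf{(f)}, with first pillar $I_1\wt{R_1}$ by \cref{prop:torftower} (4). It is $I_0$-torsion free by construction. The remaining hypothesis is $I_0$-adic separatedness of each $\wt{R_i}$. This follows from \cref{prop:separated} applied to the pair $(R_i,I_0R_i)$, provided $(R_i)_{\Iztor}\cap I_0R_i=(0)$.

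That intersection condition is obtained as follows. Condition \textbf{(g)} gives $I_0(R_i)_{\Iztor}=0$. If $x\in(R_i)_{\Iztor}\cap I_0R_i$, then $x^2\in I_0(R_i)_{\Iztor}=0$, so $x$ is nilpotent. By the argument in the proof of \cref{thm:decomp} (3) (via \cite[Lemma 3.47]{INS25}), $(R_i)_{\Iztor}$ contains no nonzero nilpotents, so $x=0$. Equivalently, this is the statement $A_{\Itor}\cap\sqrt{I}=0$ of \cref{lem:BC}.

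The main point to check carefully is that injectivity of $t_i$ really follows from the two factorwise injectivities. Write $t_i(a,b)=(\wt{t_i}(a),(\ol{t_i})_\red(b))$ inside the product. This map is injective as a map of products, and the fiber products are subrings of the products, so the restriction is injective. One also has to confirm that the maps in the cartesian square of \cref{thm:decomp} (3) are morphisms of towers. This holds because all the projections are canonical and natural in $i$.
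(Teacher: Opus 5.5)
Your proposal is correct and takes the same route as the paper. The paper's proof is the one-line reduction: via the decomposition of \cref{thm:decomp} and \cref{prop:separated}, pass to the $I_0$-torsion free tower $\wt{\textrm{{\boldmath $R$}}}$ (with the perfect-tower factor handled separately), then apply \cref{prop:torfred}; you have simply written out the details of that reduction.
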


\begin{proof}
By \cref{thm:decomp} (2) and \cref{prop:separated}, we may assume that $R_i$ is $I_0$-torsion free for every $i\geq 0$. Then we apply \cref{prop:torfred} to deduce the desired result.
\end{proof}

\begin{remark}
\label{rem:reduced}
Note that the assumption that {\boldmath $R$} is $I_0$-adically separated is automatic in either of the following cases:
  \begin{enumerate}
  \item {\boldmath $R$} is a Noetherian perfectoid tower arising from $(R,I_0)$; then every $R_i$ is an $I_0$-adically Zariskian Noetherian ring, hence $I_0$-adically separated by Krull's intersection theorem (\cite[Theorem 8.10 i)]{Mat2}, \cite[Chapter 0, Proposition 7.4.16]{FKI}).
  \item $\textrm{{\boldmath $R$}}=(A\xr{\id}A\xr{\id}A\xr{\id}\cdots)$ is a perfectoid tower arising from $(A,\varpi^p)$, where $A$ is a perfectoid ring that is $\varpi$-adically complete for a $\varpi\in A$ with $p\in\varpi^pA$.
  \end{enumerate}
\end{remark}

In general, a perfectoid tower is not reduced:

\begin{example}
Consider the polynomial ring $\Z_p[T]$, and let
\[
A\coloneqq \Z_p[T]/(T^2),\quad 
R_i\coloneqq A[p^{1/p^i}]\left[\tfrac{T}{p},\tfrac{T}{p^2},\tfrac{T}{p^3},\ldots\right] \subset A[p^{1/p^i}]\left[\tfrac{1}{p}\right]\quad (i\geq 0).
\]
For each $i\geq 0$, let $t_i\colon R_i\to R_{i+1}$ denote the ring map induced by the canonical injection $A[p^{1/p^i}][1/p]\hookrightarrow A[p^{1/p^{i+1}}][1/p]$. Then $\{R_i,t_i\}_{i\geq 0}$ is a preperfectoid tower arising from $(R_0,p)$ consisting of non-reduced rings.
\end{example}

\begin{proof}
We are going to check the conditions \textbf{(a)} $\sim$ \textbf{(d)}, \textbf{(f)}, \textbf{(g)} in \cref{def:perfectoidtower}.

\textbf{(a)} is obvious.

\textbf{(b)} $\sim$ \textbf{(d)}, \textbf{(f)} Since we know that $\{\Z_p[p^{1/p^i}]\}_{i\geq 0}$ is a perfectoid tower arising from $(\Z_p,p)$ with the first perfectoid pillar $(p^{1/p})$, it suffices to show the following claim.

\begin{claim}
For every $i\geq 0$, the canonical homomorphism $\Z_p[p^{1/p^i}] \to R_i$ induces an isomorphism $\Z_p[p^{1/p^i}]/(p) \xr{\cong} R_i/pR_i$.
\end{claim}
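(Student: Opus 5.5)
The plan is to describe $R_i$ explicitly and then reduce modulo $p$. Write $B_i\coloneqq \Z_p[p^{1/p^i}]$, a discrete valuation ring with uniformizer $\pi_i=p^{1/p^i}$. Since $A=\Z_p[T]/(T^2)$, we have $A[p^{1/p^i}]=B_i\oplus B_iT$ as $B_i$-modules, and $A[p^{1/p^i}][1/p]=B_i[1/p]\oplus B_i[1/p]T$ with $T^2=0$. The first step is to show that
\[
R_i = B_i \oplus B_i[1/p]\,T
\]
inside $A[p^{1/p^i}][1/p]$. The inclusion $\supseteq$ holds because every element of $B_i[1/p]T$ has the form $bT/p^n$ with $b\in B_i$, and $T/p^n\in R_i$. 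For $\subseteq$, the right-hand side is a subring: it contains $B_i$ and all $T/p^n$, and it is closed under multiplication because $(a+xT)(a'+x'T)=aa'+(ax'+a'x)T$, using $T^2=0$ and the fact that $B_i[1/p]$ is a $B_i$-module. The same reasoning gives $p$-torsion $(R_i)_{\textrm{$p$-}\mathrm{tor}}=0$ and shows that $t_i$ is the obvious inclusion.

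Next I compute $pR_i=pB_i\oplus pB_i[1/p]T=pB_i\oplus B_i[1/p]T$, since multiplication by $p$ is bijective on $B_i[1/p]$. Hence
\[
R_i/pR_i\cong B_i/pB_i,
\]
and the composite $B_i\hookrightarrow R_i\to R_i/pR_i$ is exactly this isomorphism, which proves the claim. These isomorphisms are clearly compatible with the transition maps, since all of them are induced by inclusions. Consequently \textbf{(b)}, \textbf{(c)}, \textbf{(d)} and \textbf{(f)} for $\{R_i\}$ follow from the corresponding properties of $\{B_i\}$, with first pillar $p^{1/p}R_1$. For \textbf{(f-1)} I will check directly that $(p^{1/p}R_1)^p=pR_1$.

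The only step that needs care is \textbf{(g)}. The torsion $(R_i)_{\textrm{$p$-}\mathrm{tor}}$ consists of those $a+xT$ with $p^n(a+xT)=0$; inside the localization this forces $a=0$ and $x=0$, so the $p$-torsion is zero and \textbf{(g)} holds trivially. The real point, then, is the explicit description of $R_i$, specifically the inclusion $\subseteq$. I expect this to be easy, because the square-zero relation makes the right-hand side a ring.

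Finally, non-reducedness is immediate from this description: $T\neq 0$ in $R_i$, since it is nonzero already in $A[p^{1/p^i}][1/p]$, while $T^2=0$. Note that this does not contradict \cref{cor:reduced}. Each $R_i$ fails to be $p$-adically separated, because $T\in\bigcap_n p^nR_i$.
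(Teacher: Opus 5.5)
Your proof is correct, but it takes a different route from the paper's. The paper never determines $R_i$ explicitly. It gets surjectivity of $\Z_p[p^{1/p^i}]\to R_i/pR_i$ from $T/p^n=p\cdot(T/p^{n+1})$. It then notes that the kernel is $(p^{1/p^i})^m$ for some $m\leq p^i$, because $\Z_p[p^{1/p^i}]$ is a DVR. Finally it excludes $m<p^i$ by saying this would force $R_i/pR_i=0$, contradicting $R_i/(p,T)\cong A[p^{1/p^i}]/(p,T)\cong\F_p[X]/(X^{p^i})$. You instead show $R_i=B_i\oplus B_i[1/p]\,T$, where $B_i=\Z_p[p^{1/p^i}]$; the right-hand side is a subring because $T^2=0$. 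From this you get $pR_i=pB_i\oplus B_i[1/p]\,T$ and read off surjectivity and injectivity at once. The paper's route is shorter, but it asserts the identification of $R_i/(p,T)$ without justification. Supplying that justification needs exactly the information your decomposition provides, or equivalently the retraction $R_i\to\Z_p[p^{1/p^i}]$ induced by $T\mapsto 0$, which by itself already gives injectivity. Your description also pays off elsewhere in the example. It shows $R_i$ is $p$-torsion free, which is the correct reason \textbf{(g)} holds; the paper says ``$R_i$ is a domain'', which is false since $T^2=0$. It also exhibits $T\in\bigcap_n p^nR_i$, which explains why this non-reduced example does not contradict \cref{cor:reduced}.
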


\begin{pfclaim}
For any $n\geq 0$, we have $T/p^n = p\cdot(T/p^{n+1}) = 0$ in $R_n/(p)$.
Since $R_i$ is generated by the system $\{T/p^n\}_{n\geq 0}$ as an $\Z_p[p^{1/p^i}]$-algebra, the composite $\Z[p^{1/p^i}] \to R_i \to R_i/pR_i$ is surjective.
Its kernel is a principal ideal generated by $(p^{1/p^i})^m$ for some $m\leq p^i$ because $(\Z_p[p^{1/p^i}],(p^{1/p^i}))$ is a DVR. If $m<p^i$, then we would have $pR_i = R_i$, i.e., $R_i/pR_i=0$. But then $0=R_i/(p,T) \cong A[p^{1/p^i}]/(p,T) \cong \F_p[X]/(X^{p^i})$, a contradiction. Hence $m=p^i$, as hoped.
\end{pfclaim}

\textbf{(g)} follows automatically, since $R_i$ is a domain for every $i\geq 0$.
\end{proof}

\subsection{Gluing of perfectoid towers}

We will show the converse of \cref{thm:decomp}. We prepare the following lemma about cartesian diagrams of rings. 
%Since products of perfectoid towers are again perfectoid towers (\cref{ex:perfectoid tower} (5)), one can obtain ``mixed'' examples of perfectoid towers by taking products. We will give slightly more complicated examples by passing to fiber products. 
  
\begin{lemma}
\label{lem:PBZar}
Consider the cartesian diagrams of rings
\[
\begin{tikzcd}
A \rar["\pi_2"] \dar["\pi_1"'] & A_2 \dar["f_2"] \\
A_1 \rar["f_1"] & B.
\end{tikzcd}
\]
Let $\fraka_1\subset A_1$, $\fraka_2\subset A_2$, $\frakb\subset B$ be ideals such that $f_1(\fraka_1)\subset \frakb$ and $f_2(\fraka_2)\subset\frakb$. Define the ideal $\fraka\subset A$ as the fiber product $\fraka_1\times_\frakb\fraka_2$ of $A$-modules.
  \begin{enumerate}
  \item The induced diagram of rings
  \[
  \begin{tikzcd}
  A/\fraka \rar["\ol{\pi_2}"] \dar["\ol{\pi_1}"'] & A_2/\fraka_2 \dar["\ol{f_2}"] \\
  A_1/\fraka_1 \rar["\ol{f_1}"'] & B/\frakb
  \end{tikzcd}
  \]
  is cartesian.
  \item If $(A_1,\fraka_1)$, $(A_2,\fraka_2)$, and $(B,\frakb)$ are Zariskian, then so is $(A,\fraka)$.\footnote{We say that a pair $(A,I)$ is \emph{Zariskian} if $A$ is $I$-adically Zariskian.}
  \end{enumerate}
\end{lemma}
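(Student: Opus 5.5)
For (1), I will first construct the map and then check that the square is cartesian. Since $\fraka=\fraka_1\times_\frakb\fraka_2=\pi_1^{-1}(\fraka_1)\cap\pi_2^{-1}(\fraka_2)$, the maps $\pi_1,\pi_2$ descend to $A/\fraka$. The resulting square commutes because $f_1\pi_1=f_2\pi_2$. It gives a ring map $\theta\colon A/\fraka\to A_1/\fraka_1\times_{B/\frakb}A_2/\fraka_2$.

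Injectivity of $\theta$ holds by definition. If $a\in A$ maps into $\fraka_1$ and into $\fraka_2$, then $a=(a_1,a_2)$ with $a_j\in\fraka_j$, so $a\in\fraka$.

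For surjectivity, take $(\ol{a_1},\ol{a_2})$ with $f_1(a_1)-f_2(a_2)\in\frakb$. I would like to modify $a_1$ by an element of $\fraka_1$ (or $a_2$ by an element of $\fraka_2$) so that $f_1(a_1)=f_2(a_2)$ holds exactly. This requires $\frakb=f_1(\fraka_1)+f_2(\fraka_2)$, or at least that the difference lies in that sum. That is not given, so it is the main obstacle.

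I will look at how the lemma is applied: $\fraka_1,\fraka_2,\frakb$ are extensions of $I_0$, and $f_1,f_2$ are the gluing maps to a reduced quotient $(\wt R/I_0)_\red$-type ring. In such cases $f_2$ is surjective with $f_2(\fraka_2)=\frakb$. So I will first try to prove (1) under the natural reading that $f_2$ (or $f_1$) is surjective and $\frakb=f_2(\fraka_2)$. Then I can lift $f_1(a_1)-f_2(a_2)\in\frakb$ to some $x\in\fraka_2$, replace $a_2$ by $a_2+x$, and conclude.

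If the statement is meant in full generality, I will instead check whether the cartesian property forces this. Failing that, I will add the hypothesis $\frakb=f_1(\fraka_1)+f_2(\fraka_2)$, which is exactly what the argument uses.

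For (2), I will use that $(A,\fraka)$ is Zariskian iff $1+\fraka\subset A^\times$. Take $a=(a_1,a_2)\in\fraka$. Then $1+a_j$ is a unit in $A_j$, since $a_j\in\fraka_j$ and $(A_j,\fraka_j)$ is Zariskian. Its inverses $u_j$ satisfy $f_1(u_1)=f_1(1+a_1)^{-1}=f_2(1+a_2)^{-1}=f_2(u_2)$, so $(u_1,u_2)\in A$ and it inverts $1+a$. Hence $1+a\in A^\times$.

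Part (2) as argued does not use that $(B,\frakb)$ is Zariskian. It uses only $f_j(\fraka_j)\subset\frakb$, and in fact only that ring maps preserve inverses. So the only real difficulty in the lemma is the surjectivity step in (1).
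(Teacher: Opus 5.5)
Your diagnosis of (1) is right, and the question you leave open has a definite answer: without an extra hypothesis, (1) is false. Take $A_1=A_2=B=\Z$ and $f_1=f_2=\id$, so that $A=\Z$ embedded diagonally. Take $\fraka_1=\fraka_2=0$ and $\frakb=2\Z$. Then $\fraka=0$, but the map $\Z\to\Z\times_{\Z/2\Z}\Z=\{(x,y): x\equiv y \bmod 2\}$ misses $(0,2)$. Here $f_2$ is even surjective, so surjectivity of $f_2$ alone does not rescue the statement. Your element chase shows that $A/\fraka\to A_1/\fraka_1\times_{B/\frakb}A_2/\fraka_2$ is always injective. It also shows the map is surjective if and only if $\frakb\cap\bigl(f_1(A_1)+f_2(A_2)\bigr)\subset f_1(\fraka_1)+f_2(\fraka_2)$. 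So adding the hypothesis $\frakb=f_1(\fraka_1)+f_2(\fraka_2)$ is exactly the right repair.

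The paper proves (1) by applying the nine lemma to a $3\times 3$ diagram whose top two rows are $0\to\fraka\to\fraka_1\oplus\fraka_2\to\frakb\to 0$ and $0\to A\to A_1\oplus A_2\to B\to 0$. Declaring these rows exact tacitly assumes $f_1(\fraka_1)+f_2(\fraka_2)=\frakb$ and $f_1(A_1)+f_2(A_2)=B$. That is the same gap you found; for cartesianness alone, only the first equality is needed. Under your hypothesis, your direct chase and the paper's nine-lemma argument are the same argument.

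In the only place the lemma is used, \cref{prop:gluetower}, one has $f_2=\sigma_i$ surjective and $\frakb=J'_0S'_i=J_0S'_i=\sigma_i(J_0S_i)$. Likewise $J'_1S'_{i+1}=\sigma_{i+1}(J_1S_{i+1})$. So your reading matches the application.

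For (2), your argument is the paper's: glue the inverses of $\pi_1(a)$ and $\pi_2(a)$ componentwise. You are also right that Zariskianness of $(B,\frakb)$ is superfluous. The paper invokes it, but $f_j(\pi_j(a))$ is already a unit because $\pi_j(a)$ is.
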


\begin{proof}
(1) We have the commutative diagram of abelian groups with exact columns
\[
\begin{tikzcd}[ampersand replacement=\&]
 \& 0 \dar \& 0 \dar \&[0.5cm] 0 \dar \& \\
0 \rar \& \fraka_0 \rar \dar \& \fraka_1 \oplus \fraka_2 \rar \dar \& \frakb \rar \dar \& 0 \\
0 \rar \& A \rar["{\begin{bsmatrix}\pi_1\\ \pi_2\end{bsmatrix}}"] \dar \& A_1 \oplus A_2 \rar["{\begin{bsmatrix}f_1 & -f_2\end{bsmatrix}}"] \dar \& B \rar \dar \& 0 \\
0 \rar \& A/\fraka \rar["{\begin{bsmatrix}\ol{\pi_1}\\ \ol{\pi_2}\end{bsmatrix}}"] \dar \& A_1/\fraka_1 \oplus A_2/\fraka_2 \rar["{\begin{bsmatrix}\ol{f_1} & -\ol{f_2}\end{bsmatrix}}"] \dar \& B/\frakb \rar \dar \& 0 \\
 \& 0 \& 0 \& 0.
\end{tikzcd}
\]
Since the first and second rows are exact, the third row is exact by the nine lemma. Hence the assertion follows.

(2) follows immediately from the fact that a pair $(A,I)$ is Zariskian if and only if an element $a\in A$ is invertible if and only if $a\ \mathrm{mod}\ I$ is invertible in $A/I$ (\cite[Chapter 0, Proposition 7.3.2]{FKI}). Indeed, if $\ol{a}\coloneqq a\ \mathrm{mod}\ \fraka$ is invertible in $A/\fraka$, then for each $i\in\{1,2\}$, $\ol{\pi_i}(\ol{a})$ and $(\ol{f_i}\circ\ol{\pi_i})(\ol{a})$ are invertible in $A_i/\fraka_i$ and $B/\frakb$, respectively. Hence $a_i\coloneqq \pi_i(a)$ and $b\coloneqq f_i\circ\pi_i(a)$ are invertible in $A_i$ and $B$, respectively. Then $(a_1^{-1},a_2^{-1})\in A_1\times_BA_2=A$ gives the inverse of $a$.
\end{proof}

We now arrive at the promised construction of perfectoid towers.

\begin{proposition}
\label{prop:gluetower}
Let $\textrm{{\boldmath $R$}}'=\{R'_i,t'_i\}_{i\geq 0},\textrm{{\boldmath $S$}}=\{S_i,u_i\}_{i\geq 0}, \textrm{{\boldmath $S$}}'=\{S'_i,u'_i\}_{i\geq 0}$ be three preperfectoid towers arising from pairs $(R',I'_0)$, $(S,J_0)$, $(S',J'_0)$, with first perfectoid pillars $I'_1,J_1,J'_1$, respectively. Let $\psi'\colon \textrm{{\boldmath $R$}}'\to\textrm{{\boldmath $S$}}$ and $\sigma\colon
\textrm{{\boldmath $S$}}\to \textrm{{\boldmath $S$}}'$ be morphisms of towers such that $I'_0S'=J_0S'=J'_0$ and $I'_1S'_1=J_1S'_1=J'_1$, and suppose that $\sigma_i$ is surjective for all $i\geq 0$. Define the tower of rings $\textrm{{\boldmath $R$}}=\{R_i,t_i\}_{i\geq 0}$ (resp.\ the ideals $I_0\subset R_0$, $I_1\subset R_1$) as the fiber product in the resulting cartesian diagram of towers of rings (resp.\ of $R_0$-, $R_1$-modules)
\[
\begin{tikzcd}
\textrm{{\boldmath $R$}} \rar["\psi"] \dar["\rho"'] & \textrm{{\boldmath $S$}} \dar["\sigma"] \\
\textrm{{\boldmath $R$}}' \rar["\psi'"'] & \textrm{{\boldmath $S$}}',
\end{tikzcd}
\qquad
\begin{tikzcd}
I_0 \rar["\psi_0"] \dar["\rho_0"'] & J_0 \dar["\sigma_0"] \\
I'_0 \rar["\psi'_0"'] & J'_0,
\end{tikzcd}
\qquad
\begin{tikzcd}
I_1 \rar["\psi_1"] \dar["\rho_1"'] & J_1 \dar["\sigma_1"] \\
I'_1 \rar["\psi'_1"'] & J'_1.
\end{tikzcd}
\]
Suppose that the following conditions are satisfied.
  \begin{enumroman}
  \item $I_0$ is generated by an element $f_0=(f'_0,g_0)\in R_0$ such that $f'_0R'_0=I'_0$ and $g_0S_0=J_0$ (hence $\sigma_0(g_0)S'_0=J'_0$).
  \item $I_1$ is generated by an element $f_1=(f'_1,g_1)\in R_1$ such that $f'_1R'_1=I'_1$ and $g_1S_1=J_1$ (hence $\sigma_1(g_1)S'_1=J'_1$).
  \item For any $i\geq 0$, the diagrams of $R_i$-, $R_{i+1}$-modules
  \[
  \begin{tikzcd}
  I_0R_i \rar["\psi_i"] \dar["\rho_i"'] & J_0S_i \dar["\sigma_i"] \\
  I'_0R'_i \rar["\psi'_i"'] & J'_0S'_i,
  \end{tikzcd}
  \qquad
  \begin{tikzcd}
  I_1R_{i+1} \rar["\psi_{i+1}"] \dar["\rho_{i+1}"'] & J_1S_{i+1} \dar["\sigma_{i+1}"] \\
  I'_1R'_{i+1} \rar["\psi'_{i+1}"'] & J'_1S'_{i+1}
  \end{tikzcd}
  \]
  are cartesian.
  \item For any $i\geq 0$, the induced map $\begin{bsmatrix}(\psi'_i)_{\mathrm{tor}} & (\sigma_i)_{\mathrm{tor}}\end{bsmatrix}\colon (R'_i)_{\textrm{$I'_0$-}\mathrm{tor}}\oplus (S_i)_{\textrm{$J_0$-}\mathrm{tor}} \to (S'_i)_{\textrm{$J'_0$-}\mathrm{tor}}$ is surjective.
  \end{enumroman}
Let $R\coloneqq R_0$. Then the following hold.
  \begin{enumerate}
  \item {\boldmath $R$} is a preperfectoid tower arising from $(R,I_0)$ with first perfectoid pillar $I_1$.
  \item If {\boldmath $R'$}, {\boldmath $S$}, {\boldmath $S'$} are perfectoid towers arising from $(R',I'_0)$, $(S,J_0)$, $(S',J'_0)$, respectively, then {\boldmath $R$} is a perfectoid tower arising from $(R,I_0)$.
  \item The tilt $\textrm{{\boldmath $R$}}^\flat$ of {\boldmath $R$} is given by
  \[
  {R'}^{s.\flat}\times_{{S'}^{s.\flat}}S^{s.\flat} = {R'_0}^{s.\flat}\times_{{S'_0}^{s.\flat}}S_0^{s.\flat} \to {R'_1}^{s.\flat}\times_{{S'_1}^{s.\flat}}S_1^{s.\flat} \to \cdots \to {R'_i}^{s.\flat}\times_{{S'_i}^{s.\flat}}S_i^{s.\flat}\to\cdots.
  \]
  %\item The $I_0$-completed colimit $\wh{R_\infty}$ of the perfectoid tower {\boldmath $R$} is isomorphic to the fiber product of perfectoid rings $\wh{R'_\infty}\times_{\wh{S'_\infty}}\wh{S_\infty}$. 完備化するときに少し条件が必要そう、、
  \end{enumerate}
\end{proposition}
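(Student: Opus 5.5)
The plan is to verify every axiom of \cref{def:perfectoidtower} levelwise, reducing each one to the corresponding axiom for $\textrm{{\boldmath $R$}}'$, $\textrm{{\boldmath $S$}}$, $\textrm{{\boldmath $S$}}'$. The basic tool is \cref{lem:PBZar}. By (iii), the ideal $I_0R_i$ is exactly the fiber product $I'_0R'_i\times_{J'_0S'_i}J_0S_i$. So \cref{lem:PBZar} (1) gives a cartesian square
\[
R_i/I_0R_i\cong R'_i/I'_0R'_i\times_{S'_i/J'_0S'_i}S_i/J_0S_i ,
\]
and the same argument with the second diagram in (iii) gives $R_{i+1}/I_1R_{i+1}\cong R'_{i+1}/I'_1R'_{i+1}\times_{S'_{i+1}/J'_1S'_{i+1}}S_{i+1}/J_1S_{i+1}$. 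In these fiber products the map from $S_i/J_0S_i$ is surjective, because $\sigma_i$ is surjective. Axiom \textbf{(a)} is clear, since $p\in I'_0$ and $p\in J_0$ give $p\in I_0$.

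For \textbf{(b)} and \textbf{(c)}, note that $\ol{t_i}$ is the fiber product of the injections $\ol{t'_i}$ and $\ol{u_i}$, hence injective. The morphisms $\psi'$ and $\sigma$ commute with the Frobenius projections, by the argument in the definition of tilts of morphisms. Therefore $F_i\coloneqq F'_i\times G_i$ is a well-defined ring map on the fiber products with $\ol{t_i}\circ F_i=\varphi$, which gives \textbf{(c)} and identifies the Frobenius projection. Its kernel is the fiber product of the kernels $I'_1\ol{R'_{i+1}}$ and $J_1\ol{S_{i+1}}$. Using the second diagram of (iii) and \cref{lem:PBZar} (1), this equals $I_1\ol{R_{i+1}}$, which gives \textbf{(f-2)}.

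For \textbf{(d)}, take $(x',y)$ at level $i$ and lift $x'$ to $x''$ and $y$ to $y''$. The images of $x''$ and $y''$ in $S'_{i+1}/J'_0S'_{i+1}$ differ by an element of $\Ker(G'_i)=J'_1\ol{S'_{i+1}}$. Since $J_1S_{i+1}\to J'_1S'_{i+1}$ is surjective, I can correct $y''$ by an element of $\Ker(G_i)$ so that the two images agree. For \textbf{(f-1)}, $I_1$ is principal by (ii). To show $I_1^p=I_0R_1$ I would write $I_1^p=f_1^{p-1}I_1R_1$ and compare both sides inside $R'_1\times S_1$ using the cartesian squares in (iii). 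One inclusion is immediate from $f_1^p=(f_1'^p,g_1^p)$. For the reverse inclusion one must express $f_0$ as $f_1^p$ times an element of $R_1$, and the two component factors need not agree in $S'_1$. I expect this step to be the main obstacle. My first attempt would be to fix the discrepancy by adding a term in $\Ker\sigma_1$, which is annihilated appropriately by $g_1$, possibly invoking (iv) to handle torsion discrepancies.

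For \textbf{(g)} I use \cref{thm:PB}. An element $(x',y)$ of $R_i$ is $I_0$-torsion exactly when $f_0^n$ kills it for some $n$, i.e.\ when both components are torsion. Hence
\[
(R_i)_{\Iztor}=(R'_i)_{\textrm{$I'_0$-}\mathrm{tor}}\times_{(S'_i)_{\textrm{$J'_0$-}\mathrm{tor}}}(S_i)_{\textrm{$J_0$-}\mathrm{tor}},
\]
and (iv) guarantees that this fiber product behaves well, e.g.\ that the torsion maps are compatible surjectively with the quotients. With this, the diagrams \eqref{eq:PBt} and \eqref{eq:PBphi} for $\textrm{{\boldmath $R$}}$ are levelwise fiber products of the corresponding cartesian diagrams for the three given towers. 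Since limits commute with limits, they are cartesian, and \cref{thm:PB} gives \textbf{(g)}. This proves (1).

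Part (2) follows directly from \cref{lem:PBZar} (2) applied to the ideals $I'_0R'_i$, $J_0S_i$, $J'_0S'_i$. For (3), the tilt $R_i^{s.\flat}$ is the inverse limit along the $F_{i+j}$ of the rings $R_{i+j}/I_0R_{i+j}$. Each of these rings is a fiber product, and the $F_{i+j}$ are fiber products of Frobenius projections. Since inverse limits commute with fiber products, $R_i^{s.\flat}\cong {R'_i}^{s.\flat}\times_{{S'_i}^{s.\flat}}S_i^{s.\flat}$, compatibly with the transition maps $t_i^{s.\flat}$.
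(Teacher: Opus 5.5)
\textbf{Gap at (f-1).} Your treatment of \textbf{(a)}--\textbf{(d)}, \textbf{(f-2)}, (2) and (3) is sound and follows the paper. Your elementwise lifting for \textbf{(d)} and your ``kernel of a fiber product'' argument for \textbf{(f-2)} are unwound versions of the paper's nine-lemma diagrams. The genuine gap is \textbf{(f-1)}, which you leave open.

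Your sketched repair does not work as stated. Adding an element of $\Ker\sigma_1$ to the $S_1$-component does not change its image in $S'_1$, so it cannot remove a discrepancy that lives in $S'_1$. The missing idea is that the discrepancy is torsion, and that (iv) together with \textbf{(g)} for the given towers lets you absorb it. Concretely:
\begin{itemize}
\item Write $f'_0={f'_1}^p r'$ and $g_0=g_1^p s$, using \textbf{(f-1)} for $\textrm{{\boldmath $R$}}'$, $\textrm{{\boldmath $S$}}$ and (ii). Set $d\coloneqq\psi'_1(r')-\sigma_1(s)\in S'_1$.
\item Since $\psi'_1(f'_1)=\sigma_1(g_1)$ and $\psi'_0(f'_0)=\sigma_0(g_0)$, you get $\sigma_1(g_1)^p d=0$. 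As $\sigma_1(g_1)^p$ generates $J'_0S'_1$, this gives $d\in(S'_1)_{\textrm{$J'_0$-}\mathrm{tor}}$.
\item By (iv), $d=\psi'_1(a')+\sigma_1(b)$ with $a'\in(R'_1)_{\textrm{$I'_0$-}\mathrm{tor}}$ and $b\in(S_1)_{\textrm{$J_0$-}\mathrm{tor}}$.
\item By \textbf{(g)} for $\textrm{{\boldmath $R$}}'$ and $\textrm{{\boldmath $S$}}$, and since ${f'_1}^p\in I'_0R'_1$ and $g_1^p\in J_0S_1$, one has ${f'_1}^p a'=0$ and $g_1^p b=0$.
\item Hence $(r'-a',s+b)$ lies in $R_1$ and $f_1^p\cdot(r'-a',s+b)=f_0$, which gives $I_0R_1\subset I_1^p$.
\end{itemize}
This is the elementwise form of the paper's snake-lemma argument. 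Consider multiplication by $f_1^p$, ${f'_1}^p\oplus g_1^p$ and $\sigma_1(g_1)^p$ on the exact rows $0\to R_1\to R'_1\oplus S_1\to S'_1\to 0$ and $0\to I_0R_1\to I'_0R'_1\oplus J_0S_1\to J'_0S'_1\to 0$. The snake lemma identifies $\operatorname{Coker}(f_1^p\colon R_1\to I_0R_1)$ with the cokernel of exactly the map in (iv) for $i=1$. So (iv) is not an optional aid: given the other hypotheses, it is equivalent to \textbf{(f-1)}.

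\textbf{Effect on (g).} Your proof of \textbf{(g)} goes through \cref{thm:PB}, whose hypotheses include \textbf{(f)}, so the gap propagates there as well. Once \textbf{(f-1)} is established, your limit-of-cartesian-squares argument is valid. Contrary to your remark, that argument does not use (iv) at all; (iv) is needed only for \textbf{(f-1)}. You could also bypass \cref{thm:PB} entirely:
\begin{itemize}
\item $f_0=(f'_0,g_0)$ kills $(R'_i)_{\textrm{$I'_0$-}\mathrm{tor}}\times_{(S'_i)_{\textrm{$J'_0$-}\mathrm{tor}}}(S_i)_{\textrm{$J_0$-}\mathrm{tor}}=(R_i)_{\Iztor}$ componentwise.
\item $(F_i)_{\mathrm{tor}}$ is the fiber product of the bijections $(F'_i)_{\mathrm{tor}}$ and $(G_i)_{\mathrm{tor}}$ over $(G'_i)_{\mathrm{tor}}$, hence bijective.
\end{itemize}
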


\begin{proof}
%To begin with, we remark:
%
%\begin{claim}
%\label{claim:pbquot}
%For any $i\geq 0$, the induced diagrams of $R_i/I_0R_i$-algebras and of $R_{i+1}/I_0R_{i+1}$-algebras
%\[
%\begin{tikzcd}%[sep=small]
%R_i/I_0R_i \rar["\ol{\psi_i}"] \dar["\ol{\rho_i}"'] & S_i/J_0S_i \dar["\ol{\sigma_i}"] \\
%R'_i/I'_0R'_i \rar["\ol{\psi'_i}"'] & S'_i/J'_0S'_i
%\end{tikzcd}
%\qquad
%\begin{tikzcd}%[sep=small]
%R_{i+1}/I_1R_{i+1} \rar["\ol{\psi_{i+1}}"] \dar["\ol{\rho_{i+1}}"'] & S_{i+1}/J_0S_{i+1} \dar["\ol{\sigma_{i+1}}"] \\
%R'_{i+1}/I'_1R'_{i+1} \rar["\ol{\psi'_{i+1}}"'] & S'_{i+1}/J'_0S'_{i+1}
%\end{tikzcd}
%\]
%are cartesian.
%\end{claim}
%
%\begin{pfclaim}
%We have the commutative diagram with exact columns
%\[
%\begin{tikzcd}[ampersand replacement=\&]
% \& 0 \dar \& 0 \dar \&[0.5cm] 0 \dar \& \\
%0 \rar \& I_0R_i \rar \dar \& I'_0R'_i \oplus J_0S_i \rar \dar \& J'_0S'_i \rar \dar \& 0 \\
%0 \rar \& R_i \rar["{\begin{bsmatrix}\rho_i\\ \psi_i\end{bsmatrix}}"] \dar \& R'_i \oplus S_i \rar["{\begin{bsmatrix}\psi'_i & -\sigma_i\end{bsmatrix}}"] \dar \& S'_i \rar \dar \& 0 \\
%0 \rar \& R_i/I_0R_i \rar["{\begin{bsmatrix}\ol{\rho_i}\\ \ol{\psi_i}\end{bsmatrix}}"] \dar \& (R'_i/I'_0R'_i) \oplus (S_i/J_0S_i) \rar["{\begin{bsmatrix}\ol{\psi'_i} & -\ol{\sigma_i}\end{bsmatrix}}"] \dar \& (S'_i/J'_0S'_i) \rar \dar \& 0 \\
% \& 0 \& 0 \& 0.
%\end{tikzcd}
%\]
%Since the first row is exact by the assumption (3) and the second row is exact by the definition of $R_i$, the third row is exact by the nine lemma. Hence the first diagram is cartesian. The proof for the second diagram is similar.
%\end{pfclaim}
(1) We are going to check the conditions \textbf{(a)} $\sim$ \textbf{(d)}, \textbf{(f)}, \textbf{(g)} in \cref{def:perfectoidtower}.

\textbf{(a)} is obvious.

\textbf{(b)} For every $i\geq 0$, we have the commutative diagram of abelian groups with exact rows by \cref{lem:PBZar} (1)
\[
\begin{tikzcd}[ampersand replacement=\&]
0 \rar \& R_i/I_0R_i \rar["{\begin{bsmatrix}\ol{\rho_i}\\ \ol{\psi_i}\end{bsmatrix}}"] \dar["\ol{t_i}"'] \&[0.5cm] (R'_i/I'_0R'_i) \oplus (S_i/J_0S_i) \rar["{\begin{bsmatrix}\ol{\psi'_i} & -\ol{\sigma_i}\end{bsmatrix}}"] \dar["\ol{t'_i}\oplus\ol{u_i}"'] \&[1.5cm] S'_i/J'_0S'_i \rar \dar["\ol{u'_i}"] \& 0 \\
0 \rar \& R_{i+1}/I_0R_{i+1} \rar["{\begin{bsmatrix}\ol{\rho_{i+1}}\\ \ol{\psi_{i+1}}\end{bsmatrix}}"] \& (R'_{i+1}/I'_0R'_{i+1}) \oplus (S_{i+1}/J_0S_{i+1}) \rar["{\begin{bsmatrix}\ol{\psi'_{i+1}} & -\ol{\sigma_{i+1}}\end{bsmatrix}}"] \rar \& S'_{i+1}/J'_0S'_{i+1} \rar \& 0.
\end{tikzcd}
\]
Since $\ol{t'_i}$ and $\ol{u_i}$ are injective, so is $\ol{t_i}$.

\textbf{(c)} For every $i\geq 0$, we have the commutative diagram of solid arrows with exact rows by \cref{lem:PBZar} (1)
\[
\begin{tikzcd}[ampersand replacement=\&]
0 \rar \& R_{i+1}/I_0R_{i+1} \rar["{\begin{bsmatrix}\ol{\rho_{i+1}}\\ \ol{\psi_{i+1}}\end{bsmatrix}}"] \dar["F_i"',dashed] \&[0.5cm] (R'_{i+1}/I_0R'_{i+1})\oplus (S_{i+1}/J_0S_{i+1}) \rar["{\begin{bsmatrix}\ol{\psi'_{i+1}} & -\ol{\sigma_{i+1}}\end{bsmatrix}}"] \dar["F'_i\oplus G_i"] \&[1.5cm] S'_{i+1}/J'_0S'_{i+1} \rar \dar["G'_i"] \& 0 \\
0 \rar \& R_i/I_0R_i \rar["{\begin{bsmatrix}\ol{\rho_i}\\ \ol{\psi_i}\end{bsmatrix}}"] \& (R'_i/I_0R'_i)\oplus (S_i/J_0S_i) \rar["{\begin{bsmatrix}\ol{\psi'_i} & -\ol{\sigma_i}\end{bsmatrix}}"] \& S'_i/J'_0S'_i \rar \& 0,
\end{tikzcd}
\]
where $F'_i$ (resp.\ $G_i$, $G'_i$) is the $i$-th Frobenius projection for $\textrm{{\boldmath $R$}}'$ (resp.\ {\boldmath $S$}, $\textrm{{\boldmath $S$}}'$). Then we have the dotted arrow  $F_i$ by the universality of kernels. Here $F_i$ can only be taken as a homomorphism of abelian groups. But $\begin{bsmatrix}\ol{\rho_i}\\ \ol{\psi_i}\end{bsmatrix}\circ F_i = (F'_i\oplus G_i)\circ \begin{bsmatrix}\ol{\rho_{i+1}}\\ \ol{\psi_{i+1}}\end{bsmatrix}$ is a ring homomorphism and $\begin{bsmatrix}\ol{\rho_i}\\ \ol{\psi_i}\end{bsmatrix}$ is injective, and thus we see that $F_i$ is a ring homomorphism. Moreover, it follows from the construction that $\ol{t_i}\circ F_i$ coincides with the absolute Frobenius of $R_{i+1}/I_0R_{i+1}$.

\textbf{(d)} and \textbf{(f-2)} Fix $i\geq 0$. We have the commutative diagram of abelian groups with exact columns
\[
\begin{tikzcd}[ampersand replacement=\&]
 \&[-0.5cm] 0 \dar \& 0 \dar \&[1cm] 0 \dar \&[-0.5cm] \\
0 \rar \& I_1(R_{i+1}/I_0R_{i+1}) \rar \dar \& I_1(R'_{i+1}/I_0R'_{i+1})\oplus J_1(S_{i+1}/J_0S_{j+1}) \rar \dar \& J'_1(S'_{i+1}/J'_0S'_{i+1}) \rar \dar \& 0 \\
0 \rar \& R_{i+1}/I_0R_{i+1} \rar["{\begin{bsmatrix}\ol{\rho_{i+1}}\\ \ol{\psi_{i+1}}\end{bsmatrix}}"] \dar \& (R'_{i+1}/I_0R'_{i+1})\oplus (S_{i+1}/J_0S_{i+1}) \rar["{\begin{bsmatrix}\ol{\psi'_{i+1}} & -\ol{\sigma_{i+1}}\end{bsmatrix}}"] \dar \& S'_{i+1}/J'_0S'_{i+1} \rar \dar \& 0 \\
0 \rar \& R_{i+1}/I_1R_{i+1} \rar["{\begin{bsmatrix}\ol{\rho_{i+1}}\\ \ol{\psi_{i+1}}\end{bsmatrix}}"] \dar \& (R'_{i+1}/I_1R'_{i+1})\oplus (S_{i+1}/J_1S_{i+1}) \rar["{\begin{bsmatrix}\ol{\psi'_{i+1}} & -\ol{\sigma_{i+1}}\end{bsmatrix}}"] \dar \& S'_{i+1}/J'_1S'_{i+1} \rar \dar \& 0 \\
 \& 0 \& 0 \& 0.
\end{tikzcd}
\]
Since the second and third rows are exact by \cref{lem:PBZar} (1), so is the first row by the nine lemma. Hence we have the commutative diagram of abelian groups with exact rows
\[
\begin{tikzcd}[ampersand replacement=\&]
 \&[-0.5cm] 0 \dar \& 0 \dar \&[1cm] 0 \dar \&[-0.5cm] \\
0 \rar \& I_1(R_{i+1}/I_0R_{i+1}) \rar \dar \& I_1(R'_{i+1}/I_0R'_{i+1})\oplus J_1(S_{i+1}/J_0S_{j+1}) \rar \dar \& J'_1(S'_{i+1}/J'_0S'_{i+1}) \rar \dar \& 0 \\
0 \rar \& R_{i+1}/I_0R_{i+1} \rar["{\begin{bsmatrix}\ol{\rho_{i+1}}\\ \ol{\psi_{i+1}}\end{bsmatrix}}"] \dar["F_i"'] \& (R'_{i+1}/I_0R'_{i+1})\oplus (S_{i+1}/J_0S_{i+1}) \rar["{\begin{bsmatrix}\ol{\psi'_{i+1}} & -\ol{\sigma_{i+1}}\end{bsmatrix}}"] \dar["F'_i\oplus G_i"] \& S'_{i+1}/J'_0S'_{i+1} \rar \dar["G'_i"] \& 0 \\
0 \rar \& R_i/I_0R_i \rar["{\begin{bsmatrix}\ol{\rho_i}\\ \ol{\psi_i}\end{bsmatrix}}"] \dar \& (R'_i/I_0R'_i)\oplus (S_i/J_0S_i) \rar["{\begin{bsmatrix}\ol{\psi'_i} & -\ol{\sigma_i}\end{bsmatrix}}"] \dar \& S'_i/J'_0S'_i \rar \dar \& 0 \\
 \& 0 \& 0 \& 0.
\end{tikzcd}
\]
Since the second and third columns are exact, so is the first column by the nine lemma. This establishes \textbf{(d)} as well as \textbf{(f-2)}.

\textbf{(f-1)} Let $f_1=(f'_1,g_1)\in R_1$ be a generator of $I_1$ as in the assumption (ii). Then we have the commutative diagram of $R_1$-modules with exact rows by the assumption (iii)
\[
\begin{tikzcd}[ampersand replacement=\&]
0 \rar \& R_1 \rar["{\begin{bsmatrix}\rho_1\\ \psi_1\end{bsmatrix}}"] \dar["f_1^p"'] \& R'_1 \oplus S_1 \rar["{\begin{bsmatrix}\psi'_1 & -\sigma_1\end{bsmatrix}}"] \dar["{f'_1}^p\oplus g_1^p",twoheadrightarrow] \&[0.3cm] S'_1 \rar \dar["{g'_1}^p",twoheadrightarrow] \& 0 \\
0 \rar \& I_0R_1 \rar["{\begin{bsmatrix}\rho_1\\ \psi_1\end{bsmatrix}}"] \& I'_0R'_1\oplus J_0S_1 \rar["{\begin{bsmatrix}\psi'_1 & -\sigma_1\end{bsmatrix}}"] \& J'_0S'_1 \rar \& 0,
\end{tikzcd}
\]
and we have to show that the first vertical map is surjective. By the snake lemma, it suffices to show that the induced map
\[
\begin{bsmatrix}(\psi'_1)_{\mathrm{tor}} & (\sigma_1)_{\mathrm{tor}}\end{bsmatrix} \colon (R'_1)_{\textrm{$I'_0$-}\mathrm{tor}} \oplus (S_1)_{\textrm{$J_0$-}\mathrm{tor}} \to (S'_1)_{\textrm{$J'_0$-}\mathrm{tor}}
\]
is surjective, which follows from the assumption (iv).

\textbf{(g)} Fix $i\geq 0$. Take a generator $f_0=(f'_0,g_0)\in R$ of $I_0$ as in the assumption (i). Then we have the commutative diagram of $R_i$-modules with exact rows by the assumption (iv)
\[
\begin{tikzcd}[ampersand replacement=\&]
0\rar \& (R_i)_{\Iztor} \rar["{\begin{bsmatrix}(\rho_i)_{\mathrm{tor}}\\ (\psi_i)_{\mathrm{tor}}\end{bsmatrix}}"] \dar["f_0"'] \&[0.6cm] (R'_i)_{\textrm{$I'_0$-}\mathrm{tor}} \oplus (S_i)_{\textrm{$J_0$-}\mathrm{tor}} \rar["{\begin{bsmatrix}(\psi'_i)_{\mathrm{tor}} & -(\sigma_i)_{\mathrm{tor}}\end{bsmatrix}}"] \dar["f'_0\oplus g_0"] \&[1.8cm] (S'_i)_{\textrm{$J'_0$-}\mathrm{tor}}\rar \dar["g'_0"] \& 0 \\
0\rar \& (R_i)_{\Iztor} \rar["{\begin{bsmatrix}(\rho_i)_{\mathrm{tor}}\\ (\psi_i)_{\mathrm{tor}}\end{bsmatrix}}"] \& (R'_i)_{\textrm{$I'_0$-}\mathrm{tor}} \oplus (S_i)_{\textrm{$J_0$-}\mathrm{tor}} \rar["{\begin{bsmatrix}(\psi'_i)_{\mathrm{tor}} & -(\sigma_i)_{\mathrm{tor}}\end{bsmatrix}}"] \& (S'_i)_{\textrm{$J'_0$-}\mathrm{tor}}\rar \& 0.
\end{tikzcd}
\]
Since the second vertical map is zero, so is the first vertical map. Hence $I_0(R_i)_{\Iztor}=(0)$.

Moreover, we have the commutative diagram of abelian groups with exact rows by the assumption (iv)
\[
\begin{tikzcd}[ampersand replacement=\&]
0\rar \& (R_{i+1})_{\Iztor} \rar["{\begin{bsmatrix}(\rho_{i+1})_{\mathrm{tor}}\\ (\psi_{i+1})_{\mathrm{tor}}\end{bsmatrix}}"] \dar["(F_i)_{\mathrm{tor}}"',dashed,"\cong"] \&[0.9cm] (R'_{i+1})_{\textrm{$I'_0$-}\mathrm{tor}} \oplus (S_{i+1})_{\textrm{$J_0$-}\mathrm{tor}} \rar["{\begin{bsmatrix}(\psi'_{i+1})_{\mathrm{tor}}& -(\sigma_{i+1})_{\mathrm{tor}}\end{bsmatrix}}"] \dar["(F'_i)_{\mathrm{tor}}\oplus (G_i)_{\mathrm{tor}}","\cong"'] \&[2.3cm] (S'_{i+1})_{\textrm{$J'_0$-}\mathrm{tor}} \rar \dar["\cong"',"(G'_i)_{\mathrm{tor}}"] \& 0 \\
0\rar \& (R_i)_{\Iztor} \rar["{\begin{bsmatrix}(\rho_i)_{\mathrm{tor}}\\ (\psi_i)_{\mathrm{tor}}\end{bsmatrix}}"] \& (R'_i)_{\textrm{$I'_0$-}\mathrm{tor}} \oplus (S_i)_{\textrm{$J_0$-}\mathrm{tor}} \rar["{\begin{bsmatrix}(\psi'_i)_{\mathrm{tor}} & -(\sigma_i)_{\mathrm{tor}}\end{bsmatrix}}"] \& (S'_i)_{\textrm{$J'_0$-}\mathrm{tor}}\rar \& 0,
\end{tikzcd}
\]
where $(F_i)_{\mathrm{tor}}$ is the unique map induced by the universal property of kernels. This completes the proof of (1).

(2) follows from (1) and \cref{lem:PBZar} (2).

(3) follows from the construction of the Frobenius projections $F_i$ for {\boldmath $R$}; note that the projective system $\{R_i/I_0R_i,F_i\}_{i\geq 0}$ is strict.
\end{proof}

One of the most interesting examples of \cref{prop:gluetower} is the following.

\begin{remark}%[{Modifications in characteristic $p$; \cite[Example 3.8 (6)]{BIM19}, \cite[Example 2.1.5]{CS24}}]
\label{rem:gluetower}
The assumption of \cref{prop:gluetower} is satisfied if $\textrm{{\boldmath $S$}}'$ is a perfect tower and either $\textrm{{\boldmath $R$}}'$ or {\boldmath $S$} is a perfect tower (i.e., $J'_0=(0)$ and either $I'_0=(0)$ or $J_0=(0)$). 
Hence one can construct new (pre)perfectoid towers from old ones as follows: 
if $\textrm{{\boldmath $S$}}=\{S_i,u_i\}_{i\geq 0}$ is a (pre)perfectoid tower arising from a pair $(S,J_0)$, and $R'\to (S/J_0S)_\red$ is any homomorphism of reduced $\F_p$-algebras, then
\[
R\coloneqq R'\times_{(S/J_0S)_\red} S \to (R')^{\frac{1}{p}}\times_{(S_1/J_0S_1)_\red} S_1 \to \cdots \to (R')^{\frac{1}{p^i}}\times_{(S_i/J_0S_i)_\red} S_i\to \cdots
\]
is a (pre)perfectoid tower arising from $(R,I_0)$, where $I_0=(0)R'\times_{(0)(S/J_0S)_\red}J_0$ (the fact that $\{(S_i/J_0S_i)_\red\}_{i\geq 0}$ is a perfect tower follows from \cref{prop:modIred}).
\end{remark}

%Using \cref{prop:gluetower}, one can construct examples of perfectoid towers that have $p$-torsion:

\begin{example}
\label{ex:PB}
Consider the perfectoid tower $\Z_p\hookrightarrow\Z_p[p^{1/p}] \hookrightarrow \cdots \hookrightarrow \Z_p[p^{1/p^i}]\hookrightarrow\cdots$ arising from $(\Z_p,p)$ and the canonical projection $\F_p\llbracket x,y\rrbracket\to\F_p$. Then for each $i\geq 0$ we have the cartesian diagram of rings
\[
\begin{tikzcd}
A[p^{1/p^i},x^{1/p^i},y^{1/p^i}]/(p^{1/p^i}x^{1/p^i},p^{1/p^i}y^{1/p^i}) \rar \dar \ar[rd,phantom,"\textrm{p.b.}"] & \Z_p[p^{1/p^i}] \dar \\
\F_p\llbracket x,y\rrbracket[x^{1/p^i},y^{1/p^i}] \rar & \F_p,
\end{tikzcd}
\]
where $A\coloneqq\Z_p\llbracket x,y\rrbracket/(px,py)$. Hence $\{A[p^{1/p^i},x^{1/p^i},y^{1/p^i}]/(p^{1/p^i}x^{1/p^i},p^{1/p^i}y^{1/p^i})\}_{i\geq 0}$ is a perfectoid tower arising from $(A,p)$.

This example was already found in Ishiro--Shimomoto \cite[Example 4.27]{IS25}. More generally, they gave a construction of perfectoid towers arising from quotients by squarefree monomial ideals with $p$-torsion elements. While their proof is based on a calculation, \cref{prop:gluetower} gives a conceptual proof of it. Indeed, in the notations as in \cite[\S4.4]{IS25}, verify that the diagram of rings
\[
\begin{tikzcd}
A[x_1^{1/p^i},\ldots,x_d^{1/p^d}]/(\mathbf{x}_1^{1/p^i},\ldots,\mathbf{x}_n^{1/p^i}) \rar \dar & A[x_1^{1/p^i},\ldots,x_d^{1/p^i}]/(\mathbf{y}_1^{1/p^i},\ldots,\mathbf{y}_l^{1/p^i}) \dar \\
k\llbracket\ol{p^{1/p^i}},\ol{x_2^{1/p^i}},\ldots,\ol{x_d^{1/p^i}}\rrbracket/(\ol{\mathbf{x}_1^{1/p^i}},\ldots,\ol{\mathbf{x}_n^{1/p^i}}) \rar & k\llbracket\ol{p^{1/p^i}},\ol{x_2^{1/p^i}},\ldots,\ol{x_d^{1/p^i}}\rrbracket/(\mathbf{y}_1^{1/p^i},\ldots,\mathbf{y}_l^{1/p^i}))
\end{tikzcd}
\]
is cartesian, and apply the result of the $p$-torsion free case \cite[\S4.1]{IS25} to conclude that
\[
\{A[x_1^{1/p^i},\ldots,x_d^{1/p^i}]/(\mathbf{y}_1^{1/p^i},\ldots,\mathbf{y}_l^{1/p^i})\}_{i\geq 0}
\]
is a perfectoid tower arising from $(A,p)$.
\end{example}

%%%%%%%%%%%%%%%%%%%%%%%%%%%%%%%%%%%%%%%%%%%%%%%%%%%%%%%%%%%%%%%%%%%%%%%%%%%%%%%%%%%%%%%%%%%%%%%
\subsection{Stability of preperfectoid towers under weakly \'{e}tale base change}
\label{ss:wetbc}

In this subsection, we consider a certain base change stability of preperfectoid towers.

For a not necessarily unital $\F_p$-algebra $A$, let $F_*A$ denote the ring $A$ considered as an $A$-module via restriction of scalars for the absolute Frobenius.\footnote{For more details of the $A$-algebra $F_*A$, we refer to \cite{SS}.}

\begin{lemma}
Let $\textrm{{\boldmath $R$}}=\{R_i,t_i\}_{i\geq 0}$ be a purely inseparable tower arising from a pair $(R,I_0)$, and fix $i\geq 0$.
  \begin{enumerate}
  \item The $i$-th Frobenius projection $F_i\colon R_{i+1}/I_0R_{i+1}\to F_*(R_i/I_0R_i)$ is $R_i/I_0R_i$-linear.
  \item Assume that {\boldmath $R$} satisfies \emph{\textbf{(g)}}. Then the map $(F_i)_{\mathrm{tor}}\colon(R_{i+1})_{\Iztor}\to F_*(R_i)_{\Iztor}$ is $R_i/I_0R_i$-linear.
  \end{enumerate}
\end{lemma}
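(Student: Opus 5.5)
We first make precise what the module structures are. The ring $R_{i+1}/I_0R_{i+1}$ is an $R_i/I_0R_i$-module via $\ol{t_i}$. The target $F_*(R_i/I_0R_i)$ is an $R_i/I_0R_i$-module via $a\cdot y=a^py$. The claim in (1) is that $F_i(\ol{t_i}(a)x)=a^pF_i(x)$ for $a\in R_i/I_0R_i$ and $x\in R_{i+1}/I_0R_{i+1}$. Since $F_i$ is a ring homomorphism, this reduces to the single identity $F_i\circ\ol{t_i}=\varphi_{R_i/I_0R_i}$.

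To prove this identity we use the injectivity of $\ol{t_i}$ (condition \textbf{(b)}). It suffices to check $\ol{t_i}\circ F_i\circ\ol{t_i}=\ol{t_i}\circ\varphi$. By the defining property $\ol{t_i}\circ F_i=\varphi_{R_{i+1}/I_0R_{i+1}}$, the left side equals $\varphi\circ\ol{t_i}$. This equals $\ol{t_i}\circ\varphi$ because ring homomorphisms commute with Frobenius, which proves (1).

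For (2), $(R_{i+1})_{\Iztor}$ is an $R_{i+1}$-module. Since $I_0(R_{i+1})_{\Iztor}=0$ by \textbf{(g)}, it is a module over $R_{i+1}/I_0R_{i+1}$, and hence over $R_i/I_0R_i$ via $\ol{t_i}$. The same applies to $(R_i)_{\Iztor}$, which is a module over $R_i/I_0R_i$, and $F_*$ twists this structure by $a\mapsto a^p$.

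The key input is the commutative square in \textbf{(g)}, $\varphi_{I_0,R_i}\circ(F_i)_{\mathrm{tor}}=F_i\circ\varphi_{I_0,R_{i+1}}$, together with the injectivity of $\varphi_{I_0,R_i}$ (\cite[Corollary 3.15]{INS25}). Additivity of $(F_i)_{\mathrm{tor}}$ follows from this square and injectivity, as the footnote to \textbf{(g)} notes. For scalar compatibility, take $a\in R_i/I_0R_i$ and $x\in(R_{i+1})_{\Iztor}$. The map $\varphi_{I_0,R_{i+1}}$ is $R_{i+1}$-linear, so $\varphi_{I_0,R_{i+1}}(\ol{t_i}(a)x)=\ol{t_i}(a)\varphi_{I_0,R_{i+1}}(x)$. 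Applying $F_i$ and using (1) gives $a^pF_i(\varphi_{I_0,R_{i+1}}(x))=a^p\varphi_{I_0,R_i}((F_i)_{\mathrm{tor}}(x))$. By linearity of $\varphi_{I_0,R_i}$ this equals $\varphi_{I_0,R_i}(a^p(F_i)_{\mathrm{tor}}(x))$. Injectivity of $\varphi_{I_0,R_i}$ then yields $(F_i)_{\mathrm{tor}}(ax)=a^p(F_i)_{\mathrm{tor}}(x)$.

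The only delicate point is confirming that the module structures on the torsion submodules genuinely factor through $R_i/I_0R_i$. This is exactly where the hypothesis $I_0(R_i)_{\Iztor}=0$ from \textbf{(g)} is used, and the rest is formal diagram chasing.
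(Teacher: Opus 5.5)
Your proof is correct and follows the paper's argument. For (1) the paper does the same one-line computation $F_i(\ol{t_i}(a)x)=a^pF_i(x)$, and it proves (2) ``similarly''; you add the justifications the paper leaves implicit: $F_i\circ\ol{t_i}=\varphi_{R_i/I_0R_i}$ via the injectivity of $\ol{t_i}$, and the transfer of (2) through the square in \textbf{(g)} using the injectivity of $\varphi_{I_0,R_i}$.
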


\begin{proof}
(1) If $a\in R_i/I_0R_i$ and $x\in R_{i+1}/I_0R_{i+1}$, then $F_i(a\cdot x)=F_i(\ol{t_i}(a)x)=a^pF_i(x)=a\cdot F_i(x)$ in $F_*(R_i/I_0R_i)$. 

(2) is shown similarly to (1). %If $\ol{a}=a\ \mathrm{mod}\ I_0R_i \in\ol{R_i}$ and $x\in(R_{i+1})_{\Iztor}$, then $(F_i)_{\mathrm{tor}}(\ol{a}\cdot x)=(F_i)_{\mathrm{tor}}(t_i(a)x) = a^p (F_i)_{\mathrm{tor}}(x) = a\cdot (F_i)_{\mathrm{tor}}(x)$ in $F_*(R_i)_{\Iztor}$.
\end{proof}

For a homomorphism of $\F_p$-algebras $A\to B$, we denote the \emph{relative Frobenius} by
\[
\varphi_{B/A} \colon F_*A\otimes_AB \to F_*B.
\]

\begin{remark}
\label{rem:relFrobFun}
  \begin{enumerate}
  \item The relative Frobenius $\varphi_{B/A}$ is functorial in the following sense. Let
\begin{equation}
\label{eq:relFrobFun}
\begin{tikzcd}
A \rar["f"] \dar & A' \dar \\
B \rar["g"'] & B'
\end{tikzcd}
\end{equation}
be a commutative diagram of $\F_p$-algebras. Then we have the commutative diagram of $B$-algebras
\[
\begin{tikzcd}
F_*A\otimes_AB \rar["\varphi_{B/A}"] \dar["F_*f\otimes g"'] & F_*B \dar["F_*g"] \\
F_*A'\otimes_{A'}B' \rar["\varphi_{B'/A'}"'] & F_*B'.
\end{tikzcd}
\]
If \eqref{eq:relFrobFun} is cocartesian (i.e., $B'=A'\otimes_AB$), then $F_*A'\otimes_{A'}B' \xr{\cong} F_*A'\otimes_AB$, and thus we get the commutative diagram of $B$-algebras
\[
\begin{tikzcd}
F_*A\otimes_AB \rar["\varphi_{B/A}"] \dar["F_*f\otimes_AB"'] & F_*B \dar["F_*g"] \\
F_*A'\otimes_AB \rar["\varphi_{B'/A'}"'] & F_*B'.
\end{tikzcd}
\]
  \item We continue with the assumption that \eqref{eq:relFrobFun} is cocartesian. Then the factorization of the absolute Frobenius of $B'$ by the relative Frobenius $\varphi_{B'/A'}$ is as follows:
\[
\begin{tikzcd}
B' \rar["\varphi_{B'}"] \dar["\varphi_{A'}\otimes_AB"'] & F_*B' \\
F_*A'\otimes_AB \rar["\cong"] & F_*A'\otimes_{A'}B' \uar["\varphi_{B'/A'}"']
\end{tikzcd}
\]
  \end{enumerate}
\end{remark}

Using these notions, we have the following result on purely inseparable towers.

\begin{lemma}
\label{lem:insepBC}
Let $\textrm{{\boldmath $R$}}=\{R_i,t_i\}_{i\geq 0}$ be a purely inseparable tower arising from a pair $(R,I)$, and let $R\to S$ be a flat homomorphism of rings. Then $\textrm{{\boldmath $R$}}\otimes_RS=\{R_i\otimes_RS, t_i\otimes_RS\}_{i\geq 0}$ is a purely inseparable tower arising from $(S,IS)$. Moreover, for any $i\geq 0$, the $i$-th Frobenius projection is given by the composite
\[
\ol{S_{i+1}}=\ol{R_{i+1}}\otimes_{\ol{R_i}}\ol{S_i} \xr{F_i\otimes_{\ol{R_i}}\ol{S_i}} F_*\ol{R_i}\otimes_{\ol{R_i}}\ol{S_i} \xr{\varphi_{\ol{S_i}/\ol{R_i}}} F_*\ol{S_i},
\]
where $\ol{R_i}\coloneqq R_i/IR_i$, $S_i\coloneqq R_i\otimes_RS$, $\ol{S_i}\coloneqq S_i/IS_i = \ol{R_i}\otimes_RS$ for each $i\geq 0$.
\end{lemma}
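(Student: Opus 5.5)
We check conditions \textbf{(a)}, \textbf{(b)}, \textbf{(c)} of \cref{def:INS3.4} for the tower $\{S_i,t_i\otimes_RS\}_{i\geq 0}$ with $S_0=R\otimes_RS=S$. Then we identify the Frobenius projection with the displayed composite.

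Condition \textbf{(a)} is immediate: $S_0=S$ and $p\in I\subset IS$. We first record the identification $\ol{S_i}=S_i/IS_i\cong \ol{R_i}\otimes_RS$, which follows from right exactness of $-\otimes_RS$. Since $\ol{R_i}$ is an $R/I$-algebra, we also get $\ol{S_i}\cong \ol{R_i}\otimes_{R/I}\ol{S}$, where $\ol{S}=S/IS$. Moreover $R/I\to \ol{S}$ is flat, being a base change of the flat map $R\to S$. Iterating these identifications gives
\[
\ol{S_{i+1}}\cong \ol{R_{i+1}}\otimes_R S\cong \ol{R_{i+1}}\otimes_{\ol{R_i}}(\ol{R_i}\otimes_RS)=\ol{R_{i+1}}\otimes_{\ol{R_i}}\ol{S_i}.
\]
This is the identification appearing in the statement.

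For \textbf{(b)}, the map $\ol{t_i}\otimes_RS\colon \ol{R_i}\otimes_RS\to\ol{R_{i+1}}\otimes_RS$ is injective, since $\ol{t_i}$ is injective and $S$ is $R$-flat.

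For \textbf{(c)}, consider the cocartesian square with $A=\ol{R_i}$, $A'=\ol{R_{i+1}}$ (via $\ol{t_i}$), $B=\ol{S_i}$ and $B'=\ol{S_{i+1}}$. By \cref{rem:relFrobFun} (2), the absolute Frobenius of $\ol{S_{i+1}}$ factors as
\[
\ol{S_{i+1}}\xr{\varphi_{A'}\otimes_AB} F_*\ol{R_{i+1}}\otimes_{\ol{R_i}}\ol{S_i}\xr{\varphi_{B'/A'}} F_*\ol{S_{i+1}}.
\]
We have $\varphi_{A'}=\ol{t_i}\circ F_i$, and $F_i\colon \ol{R_{i+1}}\to F_*\ol{R_i}$ is $\ol{R_i}$-linear by the preceding lemma. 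Hence $\varphi_{A'}\otimes_AB$ factors through $F_i\otimes_{\ol{R_i}}\ol{S_i}\colon \ol{S_{i+1}}\to F_*\ol{R_i}\otimes_{\ol{R_i}}\ol{S_i}$, followed by $F_*\ol{t_i}\otimes\ol{S_i}$.

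Now apply the functoriality in \cref{rem:relFrobFun} (1) to the square $\ol{R_i}\to\ol{R_{i+1}}$, $\ol{S_i}\to\ol{S_{i+1}}$. It gives
\[
\varphi_{B'/A'}\circ(F_*\ol{t_i}\otimes\ol{S_i})=F_*(\ol{t_i}\otimes_RS)\circ\varphi_{\ol{S_i}/\ol{R_i}}.
\]
Composing the two factorizations, we obtain
\[
\varphi_{\ol{S_{i+1}}}=(\ol{t_i}\otimes_RS)\circ\varphi_{\ol{S_i}/\ol{R_i}}\circ(F_i\otimes_{\ol{R_i}}\ol{S_i}).
\]
This shows that $\Im(\varphi_{\ol{S_{i+1}}})\subset\Im(\ol{t_i}\otimes_RS)$, which is \textbf{(c)}. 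Since $\ol{t_i}\otimes_RS$ is injective by \textbf{(b)}, the Frobenius projection is uniquely determined, and it must equal the displayed composite.

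The main obstacle is bookkeeping rather than any deep input. One must check that the ring structures on the tensor products and the $F_*$-twisted module structures are compatible, so that $F_i\otimes_{\ol{R_i}}\ol{S_i}$ is well defined and is a ring map. This reduces to the $\ol{R_i}$-linearity of $F_i$, and to verifying that the identification $\ol{R_{i+1}}\otimes_{\ol{R_i}}\ol{S_i}\cong\ol{S_{i+1}}$ respects the relevant maps.
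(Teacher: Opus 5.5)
Your proposal is correct and follows essentially the same route as the paper. Condition \textbf{(a)} is immediate, \textbf{(b)} follows from flatness, and for \textbf{(c)} you factor $\varphi_{\ol{S_{i+1}}}$ through the relative Frobenius via \cref{rem:relFrobFun}~(2), then use $\varphi_{\ol{R_{i+1}}}=\ol{t_i}\circ F_i$ and the functoriality in \cref{rem:relFrobFun}~(1) to obtain the displayed composite; your explicit appeal to the $\ol{R_i}$-linearity of $F_i$ and to the injectivity from \textbf{(b)} (for uniqueness of the Frobenius projection) spells out steps the paper leaves implicit.
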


\begin{proof}
We are going to check conditions \textbf{(a)}, \textbf{(b)}, and \textbf{(c)} in \cref{def:perfectoidtower}.

\textbf{(a)} is obvious.

\textbf{(b)} follows from the flatness of $R\to S$.

\textbf{(c)} Let $i\geq 0$. %and consider the commutative diagram of $\ol{R_i}$-modules
%\[
%\begin{tikzcd}[column sep=large]
%\ol{R_{i+1}} \rar["\varphi_{\ol{R_{i+1}}}"] \ar[rd,"F_i"' description] & F_*\ol{R_{i+1}} \\
%%\ol{R_i} \rar["\varphi_{\ol{R_i}}"'] \uar[hookrightarrow,"\ol{t_i}"]
% & F_*\ol{R_i}. \uar["F_*\ol{t_i}"',hookrightarrow]
%\end{tikzcd}
%\]
%Applying $-\otimes_{\ol{R_i}}\ol{S_i}=-\otimes_RS=-\otimes_{\ol{R_{i+1}}}\ol{S_{i+1}}$, we have the commutative diagram of $\ol{S_i}$-modules
%\[
%\begin{tikzcd}[column sep=huge]
%\ol{R_{i+1}}\otimes_{\ol{R_i}}\ol{S_i} \rar["\varphi_{\ol{R_{i+1}}}\otimes 1"] \ar[rr,bend left=20,"\varphi_{\ol{S_{i+1}}}"] \ar[rd,"F_i\otimes S" description] & (F_*\ol{R_{i+1}})\otimes_{\ol{R_i}}\ol{S_i} \rar["\varphi_{\ol{S_{i+1}}/\ol{R_{i+1}}}"] & F_*(\ol{R_{i+1}}\otimes_RS) \\
%%\ol{R_i}\otimes_{\ol{R_i}}\ol{S_i} \rar["\varphi_{\ol{R_i}}\otimes 1"'] \ar[rr,bend right=20,"\varphi_{\ol{S_i}}"'] \uar[hookrightarrow,"\ol{t_i}\otimes 1"] 
% & (F_*\ol{R_i})\otimes_{\ol{R_i}}\ol{S_i} \rar["\varphi_{\ol{S_i}/\ol{R_i}}"'] \uar[hookrightarrow,"(F_*\ol{t_i})\otimes S"'] & F_*(\ol{R_i}\otimes_RS). \uar[hookrightarrow,"F_*(\ol{t_i}\otimes S)"']
%\end{tikzcd}
%\]
%which induces the commutative diagram of rings
Since the diagram of rings
\[
\begin{tikzcd}[column sep=large]
\ol{R_i}\rar["\ol{t_i}"] \dar & \ol{R_{i+1}} \dar \\
\ol{S_i}\rar["\ol{t_i}\otimes_{\ol{R_i}}\ol{S_i}"'] & \ol{S_{i+1}}
\end{tikzcd}
\]
is cocartesian, the factorization of the absolute Frobenius of $\varphi_{\ol{S_{i+1}}}$ by the relative Frobenius $\varphi_{\ol{S_{i+1}}/\ol{R_{i+1}}}$ is as follows (\cref{rem:relFrobFun} (2)):
\[
\begin{tikzcd}
\ol{S_{i+1}} \rar["\varphi_{\ol{S_{i+1}}}"] \dar["\varphi_{\ol{R_{i+1}}}\otimes_{\ol{R_i}}\ol{S_i}"'] & F_*\ol{S_{i+1}} \\
F_*\ol{R_{i+1}}\otimes_{\ol{R_i}}\ol{S_i} \rar["\cong"] & F_*\ol{R_{i+1}}\otimes_{\ol{R_{i+1}}}\ol{S_{i+1}} \uar["\varphi_{\ol{S_{i+1}}/\ol{R_{i+1}}}"']
\end{tikzcd}
\]
Moreover, this factorization satisfies the following compatibility (\cref{rem:relFrobFun} (1))
\[
\begin{tikzcd}
\ol{S_{i+1}} \rar["\varphi_{\ol{R_{i+1}}}\otimes_{\ol{R_i}}\ol{S_i}"] \ar[rd,"F_i\otimes_{\ol{R_i}}\ol{S_i}"'] &[1cm] F_*\ol{R_{i+1}}\otimes_{\ol{R_i}}\ol{S_i} \rar["\cong"] & F_*\ol{R_{i+1}}\otimes_{\ol{R_{i+1}}}\ol{S_{i+1}} \rar["\varphi_{\ol{S_{i+1}}/\ol{R_{i+1}}}"] &[1cm] F_*\ol{S_{i+1}} \\
 & F_*\ol{R_i}\otimes_{\ol{R_i}}\ol{S_i} \uar["F_*\ol{t_i}\otimes_{\ol{R_i}}\ol{S_i}"'] \rar[equal] & F_*\ol{R_i}\otimes_{\ol{R_i}}\ol{S_i} \uar["F_*\ol{t_i}\otimes(\ol{t_i}\otimes_{\ol{R_i}}\ol{S_i})"'] \rar["\varphi_{\ol{S_i}/\ol{R_i}}"'] & F_*\ol{S_i}. \uar["F_*(\ol{t_i}\otimes_{\ol{R_i}}\ol{S_i})"']
\end{tikzcd}
\]
%\[
%\begin{tikzcd}
%\ol{S_i}\otimes_{\ol{R_i}}F_*\ol{R_i} \rar["\varphi_{\ol{S_i}/\ol{R_i}}"] \dar["(\ol{t_i}\otimes_RS)\otimes F_*\ol{t_i}"'] &[1.5cm] F_*\ol{S_i} \dar["F_*(\ol{t_i}\otimes_RS)"] \\
%\ol{S_{i+1}}\otimes_{\ol{R_{i+1}}}F_*\ol{R_{i+1}} \rar["\varphi_{\ol{S_{i+1}}/\ol{R_{i+1}}}"'] & F_*\ol{S_{i+1}}
%\end{tikzcd}
%\]
Hence we obtain the desired compatibility
\[
\begin{tikzcd}[column sep=huge]
\ol{S_{i+1}} \rar["\varphi_{\ol{S_{i+1}}}"] \ar[rd,"\varphi_{\ol{S_i}/\ol{R_i}}\circ (F_i\otimes_{\ol{R_i}}\ol{S_i})"'] & \ol{S_{i+1}} \\
 & \ol{S_i}. \uar["\ol{t_i}\otimes_{\ol{R_i}}\ol{S_i}"',hookrightarrow]
\end{tikzcd}
\]
\end{proof}

In order to obtain the base change stability of preperfectoid towers, we need the notion of \emph{weakly \'{e}tale morphisms} (a.k.a. \emph{absolutely flat morphisms}). Here we follow the terminology introduced in \cite[Definition 3.1.1]{GR} (see also \cite[Remark 2.3.2]{BS15}).

\begin{definition}
We say that a ring homomorphism $A\to B$ is called \emph{weakly \'{e}tale} if it is flat and the multiplication map $B\otimes_AB\to B$ is flat.
\end{definition}

\begin{remark}
\label{rem:wet}
The property ``weakly \'{e}tale'' is stable under %compositions and 
base change (\cite[Lemma 3.1.2 (i)]{GR}).%(\cite[\href{https://stacks.math.columbia.edu/tag/094T}{Tag 094T} and \href{https://stacks.math.columbia.edu/tag/094U}{Tag 094U}]{stacks-project}).
\end{remark}

\begin{example}
\label{ex:wet}
We have the implications
\begin{center}
\'{e}tale $\Longrightarrow$ ind-\'{e}tale $\Longrightarrow$ weakly \'{e}tale $\Longrightarrow$ formally \'{e}tale.
\end{center}
See \cite[Proposition 2.3.3]{BS15} for the proof. It is, moreover,  known that for any weakly \'{e}tale homomorphism $f\colon A\to B$, there exists a faithfully flat ind-\'{e}tale homomorphism $g\colon B\to C$ such that $g\circ f\colon A\to C$ is ind-\'{e}tale (\cite[Theorem 2.3.4]{BS15}).
\end{example}

An important property of weakly \'{e}tale morphisms is the following.

\begin{theorem}[{\cite[Theorem 3.5.13]{GR}, \cite[\href{https://stacks.math.columbia.edu/tag/0F6W}{Tag 0F6W}]{stacks-project}}]
\label{thm:wetFrob}
If $A\to B$ is a weakly \'{e}tale homomorphism of $\F_p$-algebras, then the relative Frobenius $\varphi_{B/A}$ is an isomorphism.
\end{theorem}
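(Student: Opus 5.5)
The plan is to reduce to the ind-\'{e}tale case using \cref{ex:wet}, and then to treat that case by passing to colimits from the \'{e}tale case. Throughout, the composite $A\to B\to C$ will be compared using the functoriality of the relative Frobenius from \cref{rem:relFrobFun}.

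\textbf{Step 1 (\'{e}tale case).} Let $A\to B$ be \'{e}tale and write $B^{(p)}\coloneqq F_*A\otimes_AB$. The map $\varphi_{B/A}\colon B^{(p)}\to F_*B$ is a morphism of $F_*A$-algebras. Both $B^{(p)}$ and $F_*B$ are \'{e}tale over $F_*A$: the first by base change, the second because it is $B$ over $A$ after identifying $F_*A$ with $A$. Hence $\varphi_{B/A}$ is \'{e}tale. It is also a universal homeomorphism on spectra, since it factors the absolute Frobenius $\varphi_B$ after base change, and absolute Frobenii are universal homeomorphisms. An \'{e}tale morphism that is radicial and surjective is an open immersion which is surjective, hence an isomorphism. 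So $\varphi_{B/A}$ is an isomorphism.

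\textbf{Step 2 (ind-\'{e}tale case).} Write $B=\varinjlim B_\lambda$ with each $A\to B_\lambda$ \'{e}tale. Both $F_*A\otimes_A(-)$ and $F_*(-)$ commute with filtered colimits, and $\varphi_{B/A}$ is the colimit of the maps $\varphi_{B_\lambda/A}$ by \cref{rem:relFrobFun} (1). Each $\varphi_{B_\lambda/A}$ is an isomorphism by Step 1, so $\varphi_{B/A}$ is an isomorphism.

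\textbf{Step 3 (weakly \'{e}tale case).} By \cref{ex:wet}, choose a faithfully flat ind-\'{e}tale $g\colon B\to C$ such that $A\to C$ is ind-\'{e}tale. The relative Frobenius of the composite factors as
\[
F_*A\otimes_AC\cong(F_*A\otimes_AB)\otimes_BC\xr{\varphi_{B/A}\otimes_BC}F_*B\otimes_BC\xr{\varphi_{C/B}}F_*C .
\]
To confirm that this composite equals $\varphi_{C/A}$, I would check it on pure tensors $a\otimes c\mapsto a^pc^p$. In this factorization $F_*B$ is regarded as a $B$-module via Frobenius, so the tensor product $F_*B\otimes_BC$ is taken over $B$ acting on the target, and $\varphi_{B/A}$ is $B$-linear for this structure. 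By Step 2, $\varphi_{C/A}$ and $\varphi_{C/B}$ are isomorphisms, hence so is $\varphi_{B/A}\otimes_BC$. Since $B\to C$ is faithfully flat, $\varphi_{B/A}$ is an isomorphism.

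\textbf{Main obstacle.} The expected difficulty is Step 1. It needs the facts that a morphism between \'{e}tale algebras is \'{e}tale and that \'{e}tale plus universally bijective implies isomorphism. I would invoke the standard results from EGA IV or the Stacks project rather than reprove them. If that route is unavailable, I would instead argue that $\varphi_{B/A}$ is an isomorphism modulo the nilpotent kernel of Frobenius, and then use the topological invariance of the \'{e}tale site to lift this.
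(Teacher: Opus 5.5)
Your proof is correct. Note that the paper gives no argument for this statement; it is quoted from \cite[Theorem 3.5.13]{GR} and \cite[Tag 0F6W]{stacks-project}.

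Your route has three steps. First, the étale case, using that étale, radicial and surjective implies isomorphism. Second, passage to filtered colimits. Third, descent along the faithfully flat ind-étale refinement $B\to C$ of \cite[Theorem 2.3.4]{BS15}, which the paper quotes in \cref{ex:wet}. The factorization $\varphi_{C/A}=\varphi_{C/B}\circ(\varphi_{B/A}\otimes_BC)$ is valid because $\varphi_{B/A}$ is $B$-linear for the second-factor structure on $F_*A\otimes_AB$ and the Frobenius structure on $F_*B$, as you observe.

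What your approach buys is that, once the Bhatt--Scholze structure theorem is granted, only standard facts about étale maps are needed. The price is that this theorem (proved via w-strictly local rings and Olivier's theorem) is much deeper than the statement itself. It is also unavailable to \cite{GR}, which predates it and works with almost algebras.

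A shorter route is to run your Step 1 directly in the weakly étale setting. The map $\varphi_{B/A}$ is a homomorphism of $F_*A$-algebras between the weakly étale $F_*A$-algebras $F_*A\otimes_AB$ and $F_*B$, so it is itself weakly étale, and it is a universal homeomorphism. It then suffices to know that a weakly étale map $R\to S$ which is a universal homeomorphism is an isomorphism:
\begin{itemize}
\item Universal injectivity makes the diagonal $\Spec S\to\Spec(S\otimes_RS)$ surjective, so the kernel of the flat surjection $S\otimes_RS\to S$ lies in every prime.
\item The equational criterion of flatness, applied at each prime, forces this kernel to vanish.
\item Hence $R\to S$ is a faithfully flat epimorphism, and therefore an isomorphism.
\end{itemize}
This replaces your Steps 2 and 3, and the appeal to \cref{ex:wet}, entirely.
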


Moreover, we have the following lemma.

\begin{lemma}
\label{lem:wetred}
If $A\to B$ is weakly \'{e}tale, then $A_\red \otimes_AB \to B_\red$ is an isomorphism.
\end{lemma}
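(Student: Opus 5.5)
Let $N\subset A$ denote the nilradical, so that $A_\red=A/N$. Since $A\to B$ is flat, tensoring the exact sequence $0\to N\to A\to A_\red\to 0$ with $B$ gives
\[
A_\red\otimes_AB\cong B/NB .
\]
So the claim reduces to showing $NB=\mathrm{nil}(B)$. The inclusion $NB\subset\mathrm{nil}(B)$ is clear, since $NB$ is generated by nilpotent elements. It therefore remains to prove that $B/NB$ is reduced. Because $B/NB\cong A_\red\otimes_AB$ and weakly \'{e}tale maps are stable under base change (\cref{rem:wet}), I will replace $A\to B$ by $A_\red\to A_\red\otimes_AB$. This reduces the lemma to the following statement: \emph{if $A$ is reduced and $A\to B$ is weakly \'{e}tale, then $B$ is reduced.}

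To prove this, I would use \cref{ex:wet}. Choose a faithfully flat ind-\'{e}tale map $g\colon B\to C$ such that $A\to C$ is ind-\'{e}tale. Since $g$ is faithfully flat, it is injective, so it suffices to show that $C$ is reduced. Write $C=\varinjlim C_\lambda$ as a filtered colimit of \'{e}tale $A$-algebras. A filtered colimit of reduced rings is reduced, so it suffices to know that an \'{e}tale algebra over a reduced ring is reduced. This is standard: \'{e}tale implies smooth with reduced fibres, hence it preserves reducedness (e.g.\ via Serre's criterion $(R_0)+(S_1)$ in the Noetherian case and a limit argument in general, or directly from the Stacks project).

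The main obstacle is this last citation. A more self-contained route avoids \cref{ex:wet} altogether. Reduce to $A$ a reduced local ring, or even further: reducedness of $B$ can be checked after localizing at primes of $B$, and flatness lets one embed $B$ into $\prod_{\mathfrak p}B\otimes_A A_{\mathfrak p}$ over minimal primes only when $A$ is Noetherian. So I would rather rely on the cited structure theorem from \cite[Theorem 2.3.4]{BS15} than attempt a direct argument. If the author instead intends a characteristic-$p$ argument, there is an alternative for $\F_p$-algebras. By \cref{thm:wetFrob}, $F_*A\otimes_AB\cong F_*B$. When $A$ is reduced, $\varphi_A$ is injective, and by flatness $\varphi_A\otimes_AB$ is injective too; composing with the relative Frobenius isomorphism shows that $\varphi_B$ is injective, i.e.\ $B$ is reduced. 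This is presumably the intended proof in the present context, since the lemma is used for $\F_p$-algebras such as $R_i/I_0R_i$. I would present it first and mention the general case via ind-\'{e}tale covers.
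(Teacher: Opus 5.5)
Your proof is correct and has the same structure as the paper's: pass to the weakly \'{e}tale base change $A_\red\to A_\red\otimes_AB\cong B/NB$ (this identification needs only right exactness, not flatness), show this ring is reduced, and conclude $NB=\mathrm{nil}(B)$, which the paper phrases via the universal property of $B_\red$. The only difference is in the key step, that a weakly \'{e}tale algebra over a reduced ring is reduced: the paper cites \cite[\href{https://stacks.math.columbia.edu/tag/092I}{Tag 092I}]{stacks-project}, whereas you derive it either from \cite[Theorem 2.3.4]{BS15} together with reducedness of \'{e}tale algebras over reduced rings, or, for $\F_p$-algebras (the only case used in \cref{thm:bc}), from \cref{thm:wetFrob}, flatness and the factorization $\varphi_B=\varphi_{B/A}\circ(\varphi_A\otimes_AB)$; both derivations are valid.
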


\begin{proof}
If $A\to B$ is weakly \'{e}tale, so is its base change $A_\red \to A_\red \otimes_AB$. Then $A_\red\otimes_AB$ is reduced by \cite[\href{https://stacks.math.columbia.edu/tag/092I}{Tag 092I}]{stacks-project}, and so we obtain the desired inverse $B_\red \to A_\red\otimes_AB$ by the universality of $B_\red$.
\end{proof}

Now we formulate our base change theorem.

\begin{theorem}
\label{thm:bc}
Let $\textrm{{\boldmath $R$}}=\{R_i,t_i\}_{i\geq 0}$ be a preperfectiod tower arising from a pair $(R,I_0)$, and let $R\to S$ be a weakly \'{e}tale homomorphism of rings.
  \begin{enumerate}
  \item $\textrm{{\boldmath $R$}}\otimes_RS=\{R_i\otimes_RS, t_i\otimes_RS\}_{i\geq 0}$ is a preperfectoid tower arising from $(S,I_0S)$.
  \item The $i$-th perfectoid pillar of $\textrm{{\boldmath $R$}}\otimes_RS$ is given by $I_iS_i$, where $I_i\subset R_i$ is the $i$-th perfectoid pillar of {\boldmath $R$}. Moreover, the small tilt of $I_iS_i$ is $I_i^{s.\flat}S_i^{s.\flat}$, where $I_i^{s.\flat}\subset R_i^{s.\flat}$ is the small tilt of $I_i$.
  \end{enumerate}
\end{theorem}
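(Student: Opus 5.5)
The plan is to verify the axioms of \cref{def:perfectoidtower} for $\textrm{{\boldmath $S$}}\coloneqq\textrm{{\boldmath $R$}}\otimes_RS$, writing $S_i\coloneqq R_i\otimes_RS$, $\ol{R_i}\coloneqq R_i/I_0R_i$ and $\ol{S_i}\coloneqq S_i/I_0S_i=\ol{R_i}\otimes_{R}S$.

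\textbf{Purely inseparable axioms and (d).} Since weakly \'{e}tale maps are flat, \cref{lem:insepBC} gives \textbf{(a)}--\textbf{(c)}. It also identifies the $i$-th Frobenius projection as
\[
G_i=\varphi_{\ol{S_i}/\ol{R_i}}\circ(F_i\otimes_{\ol{R_i}}\ol{S_i}).
\]
The map $\ol{R_i}\to\ol{S_i}$ is a base change of $R\to S$, so it is weakly \'{e}tale by \cref{rem:wet}. Hence $\varphi_{\ol{S_i}/\ol{R_i}}$ is an isomorphism by \cref{thm:wetFrob}. Since $F_i\otimes_{\ol{R_i}}\ol{S_i}$ is surjective by right exactness, $G_i$ is surjective, which is \textbf{(d)}.

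\textbf{Axiom (f).} For \textbf{(f-2)}, flatness gives
\[
\Ker(G_i)=\Ker(F_i)\otimes_{\ol{R_i}}\ol{S_i}=I_1(\ol{R_{i+1}})\otimes_{\ol{R_i}}\ol{S_i}=I_1\ol{S_{i+1}},
\]
because the relative Frobenius is injective and $\ol{S_{i+1}}=\ol{R_{i+1}}\otimes_{\ol{R_i}}\ol{S_i}$. For \textbf{(f-1)}, $I_1S_1$ is principal and $(I_1S_1)^p=I_1^pS_1=I_0S_1$.

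\textbf{Axiom (g).} I expect this to be the main obstacle, because $(F_i)_{\mathrm{tor}}$ is only defined through a non-additive compatibility, and I must produce the corresponding map for $\textrm{{\boldmath $S$}}$.

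First, write $I_0=(f_0)$. Then $(M)_{\Iztor}=\varinjlim_n\Ker(f_0^n\colon M\to M)$ for any module $M$. Flat base change commutes with kernels and filtered colimits, so
\[
(S_i)_{\Iztor}=(R_i)_{\Iztor}\otimes_RS .
\]
Since $(R_i)_{\Iztor}$ is killed by $I_0$, this equals $(R_i)_{\Iztor}\otimes_{\ol{R_i}}\ol{S_i}$, and in particular $I_0(S_i)_{\Iztor}=0$. Moreover, $\varphi_{I_0,S_i}=\varphi_{I_0,R_i}\otimes\ol{S_i}$.

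Second, by the preceding lemma (the linearity lemma for $F_i$ and $(F_i)_{\mathrm{tor}}$), $(F_i)_{\mathrm{tor}}\colon(R_{i+1})_{\Iztor}\to F_*(R_i)_{\Iztor}$ is an $\ol{R_i}$-linear bijection. I will define
\[
(G_i)_{\mathrm{tor}}\colon (S_{i+1})_{\Iztor}=(R_{i+1})_{\Iztor}\otimes_{\ol{R_i}}\ol{S_i}\xr{(F_i)_{\mathrm{tor}}\otimes\ol{S_i}} F_*(R_i)_{\Iztor}\otimes_{\ol{R_i}}\ol{S_i}\xr{\cong}F_*\big((R_i)_{\Iztor}\otimes_{\ol{R_i}}\ol{S_i}\big).
\]
The last isomorphism is a module version of \cref{thm:wetFrob}: for an $\ol{R_i}$-module $M$,
\[
F_*M\otimes_{\ol{R_i}}\ol{S_i}\cong F_*M\otimes_{F_*\ol{R_i}}(F_*\ol{R_i}\otimes_{\ol{R_i}}\ol{S_i})\cong F_*M\otimes_{F_*\ol{R_i}}F_*\ol{S_i}=F_*(M\otimes_{\ol{R_i}}\ol{S_i}).
\]
Thus $(G_i)_{\mathrm{tor}}$ is bijective. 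It is compatible with $\varphi_{I_0,-}$ and $G_i$ because both $(G_i)_{\mathrm{tor}}$ and $G_i$ arise by the same base-change-then-relative-Frobenius recipe from the commutative square for $\textrm{{\boldmath $R$}}$, and that recipe is natural in the module. As a cross-check, I could instead invoke \cref{thm:PB}: the square \eqref{eq:PBt} is a cartesian square of $\ol{R_i}$-modules with injective maps, i.e.\ an exact sequence $0\to A\to B\oplus C\to D$, and this exactness survives flat base change.

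\textbf{Part (2).} With \textbf{(f)} established, $I_0S=J_0$ and $I_1S_1$ is the first perfectoid pillar of $\textrm{{\boldmath $S$}}$. Applying \cref{lem:alphaflat} to the canonical morphism $\textrm{{\boldmath $R$}}\to\textrm{{\boldmath $R$}}\otimes_RS$ then gives that the $i$-th pillar is $I_iS_i$ and its small tilt is $I_i^{s.\flat}S_i^{s.\flat}$.
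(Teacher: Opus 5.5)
Your proof is correct, but it takes a different route from the paper.

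\textbf{The paper's route.} It base-changes the canonical decomposition $R_i\cong\wt{R_i}\times_{(\wt{R_i}/I_0\wt{R_i})_\red}(R_i/I_0R_i)_\red$, using flatness and \cref{lem:wetred}. It then uses the gluing result \cref{prop:gluetower} to reduce to two cases. Perfect towers are handled directly by \cref{thm:wetFrob}. For $I_0$-torsion free towers, \textbf{(g)} is vacuous, and only \textbf{(d)} and \textbf{(f)} need checking, via \cref{lem:insepBC} exactly as you do.

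\textbf{Your route.} You verify \textbf{(g)} head-on. Flatness gives $(S_i)_{\Iztor}=(R_i)_{\Iztor}\otimes_RS$. The $\ol{R_i}$-linear bijection $(F_i)_{\mathrm{tor}}\colon (R_{i+1})_{\Iztor}\to F_*(R_i)_{\Iztor}$ then base-changes, via the module version of the relative Frobenius isomorphism, to the bijection $x\otimes s\mapsto (F_i)_{\mathrm{tor}}(x)\otimes s^p$. Its compatibility with $G_i$ is naturality applied to the commuting square of $\ol{R_i}$-linear maps $F_i\circ\varphi_{I_0,R_{i+1}}=F_*\varphi_{I_0,R_i}\circ(F_i)_{\mathrm{tor}}$ into $F_*\ol{R_i}$.

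\textbf{What each buys.} Your argument is more elementary and self-contained. It bypasses the decomposition theorem and \cref{lem:wetred}, and it avoids checking hypotheses (i)--(iv) of \cref{prop:gluetower} for the base-changed diagram. It also makes transparent that weak \'{e}taleness enters only through the relative Frobenius isomorphism of $\ol{R_i}\to\ol{S_i}$. The paper's argument instead showcases the structural decomposition and gluing machinery of \cref{s:str}, so that \textbf{(g)} never has to be touched directly.

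\textbf{One caveat on your side remark.} The suggested cross-check via \cref{thm:PB} is incomplete as stated. The ``if'' direction there also requires \eqref{eq:PBphi} to be cartesian, and that square involves the non-linear map $x\mapsto x^p$. So it does not follow from flat base change of \eqref{eq:PBt} alone; you would need essentially your relative-Frobenius argument again. Your main proof does not depend on this.
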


\begin{proof}
(1) \textsc{Step 1}. Let $\wt{R_i}\coloneqq R_i/(R_i)_{\Iztor}$ and $\wt{S_i}\coloneqq S_i/(S_i)_{\Iztor}$ for each $i\geq 0$. Then we have the canonical decomposition (\cref{lem:BC})
\[
R_i\xr{\cong} \wt{R_i} \times_{(\wt{R_i}/I_0\wt{R_i})_\red} (R_i/I_0R_i)_\red.
\]
Since $R_i\to S_i$ is flat, we also have the decomposition
\begin{align*}
S_i &\xr{\cong} (\wt{R_i} \otimes_{R_i}S_i) \mathop{\times}_{(\wt{R_i} /I_0\wt{R_i})_\red \otimes_{R_i}S_i} ((R_i/I_0R_i)_\red\otimes_{R_i}S_i) \\
&\xr{\cong} \wt{S_i}\times_{(\wt{S_i}/I_0\wt{S_i})_\red} (S_i/I_0S_i)_\red
\end{align*}
where the second isomorphism follows from the flatness of $R_i\to S_i$ and \cref{lem:wetred}.
Thus \cref{prop:gluetower} allows us to pass to the $I_0$-torsion free perfectoid tower 
$\wt{\textrm{{\boldmath $R$}}}=\{\wt{R_i}\}_{i\geq 0}$ and the perfect tower $(\textrm{{\boldmath $R$}}/I_0\textrm{{\boldmath $R$}})_\red=\{(R_i/I_0R_i)_{\red}\}_{i\geq 0}$.

\textsc{Step 2}. The case where {\boldmath $R$} is a perfect tower. Then we see from \cref{thm:wetFrob} that $\textrm{{\boldmath $R$}}\otimes_RS$ is a perfect tower.

\textsc{Step 3}. The case where {\boldmath $R$} is $I_0$-torsion free. By \cref{lem:insepBC}, we know that $\textrm{{\boldmath $R$}}\otimes_RS$ is a purely inseparable tower of rings arising from $(S,I_0S)$, and the $i$-th Frobenius projection is given by the composite
\[
\ol{S_{i+1}}=\ol{R_{i+1}}\otimes_{\ol{R_i}}\ol{S_i} \xr{F_i\otimes_{\ol{R_i}}\ol{S_i}} F_*\ol{R_i}\otimes_{\ol{R_i}}\ol{S_i} \xr[\cong]{\varphi_{\ol{S_i}/\ol{R_i}}} F_*\ol{S_i},
\]
where $\ol{R_i}\coloneqq R_i/I_0R_i$, $S_i\coloneqq R_i\otimes_RS$, $\ol{S_i}\coloneqq S_i/I_0S_i = \ol{R_i}\otimes_RS$. Here the relative Frobenius $\varphi_{\ol{S_i}/\ol{R_i}}$ is an isomorphism because $R\to S$ is weakly \'{e}tale (\cref{rem:wet,thm:wetFrob}). We are going to the remaining conditions \textbf{(d)}, \textbf{(f)}, and \textbf{(g)} in \cref{def:perfectoidtower}.

\textbf{(d)} Since $F_i$ is surjective, so is $F_i\otimes_{\ol{R_i}}\ol{S_i}$.

\textbf{(f)} Since $I_0\subset R$ is principal, so is $I_0S\subset S$. Let $I_1\subset R_1$ be the principal ideal as in \textbf{(f)} for {\boldmath $R$}, and consider the principal ideal $I_1S_1=I_1\otimes_RS \subset S_1 = R_1\otimes_RS$.
  \begin{enumerate}
  \item[\textbf{(f-1)}] By extending the equation $I_1^p=I_0R_1$ to $S_1$, we have $I_1^pS_1 = (I_0R_1)S_1$, that is, $(I_1S_1)^p = (I_0S)S_1$.
  \item[\textbf{(f-2)}] Applying $-\otimes_{\ol{R_i}}\ol{S_i} = -\otimes_RS$ to the equation $\Ker(F_i)=I_1\ol{R_i}$, we obtain $\Ker(F_i\otimes_RS) = I_1\ol{S_i}$.
  \end{enumerate}

\textbf{(g)} follows automatically, since we are assumed that $R_i$ (hence $S_i$) is -$I_0$torsion free for every $i\geq 0$.

(2) By the proof of (1), the first perfectoid pillar of $\textrm{{\boldmath $R$}}\otimes_RS$ is $I_1S_1$. Hence the assertion follows form \cref{lem:alphaflat}.
\end{proof}

%%%%%%%%%%%%%%%%%%%%%%%%%%%%%%%%%%%%%%%%%%%%%%%%%%%%%%%%%%%%%%%%%%%%%%%%%%%%%%%%%%%%%%%%%%%%%%%
%%%%%%%%%%%%%%%%%%%%%%%%%%%%%%%%%%%%%%%%%%%%%%%%%%%%%%%%%%%%%%%%%%%%%%%%%%%%%%%%%%%%%%%%%%%%%%%
\section{Applications to tilting correspondences}
\label{s:tilt}

In this section, we apply the results obtained in the previous section to show the invariance of several properties of perfectoid towers under tilting.

%%%%%%%%%%%%%%%%%%%%%%%%%%%%%%%%%%%%%%%%%%%%%%%%%%%%%%%%%%%%%%%%%%%%%%%%%%%%%%%%%%%%%%%%%%%%%%%
\subsection{Tilting \'{e}tale cohomology}
\label{ss:etale}

The first tilting invariants are idempotents, that is, clopen subschemes.

\begin{proposition}
\label{prop:tilting-idem}
Let $\textrm{{\boldmath $R$}}=\{R_i,t_i\}_{i\geq 0}$ be a perfectoid tower arising from a pair $(R,I_0)$.
Suppose that $R$ is $I_0$-adically henselian.
Let
\[
\Spec(R)\setminus V(I_0) \subset U\subset \Spec(R),\quad \Spec(R^{s.\flat})\setminus V(I_0^{s.\flat})\subset U^{s.\flat}\subset\Spec(R^{s.\flat})
\]
be open sets whose complements agree via the isomorphism $R^{s.\flat}/I_0^{s.\flat}\xr{\cong}R/I_0$.
Then there is a bijection of idempotents
\[
\mathrm{Idem}(U) \cong \mathrm{Idem}(U^{s.\flat}),
\]
compatibly with orthogonality, functorial in {\boldmath $R$} and $U$.
\end{proposition}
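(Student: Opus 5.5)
We plan to reduce to the case of perfectoid rings, where the corresponding statement is known, by passing to the completed colimit of the tower.

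\textbf{Reduction.} First we reduce to the two pure cases using \cref{thm:decomp}. Idempotents on an open $U$ are sections of the constant sheaf $\Z/2$, that is, clopen subsets of $U$. A cartesian square of rings whose maps to the corner are surjective gives a pushout of spectra along a closed subscheme, so clopen subsets glue. Apply this to the square $R\cong\wt{R}\times_{(\wt{R}/I_0\wt{R})_\red}(R/I_0)_\red$ and to its tilted counterpart from \cref{thm:decomp} (3). We get
\[
\mathrm{Idem}(U)\cong \mathrm{Idem}(\wt U)\times_{\mathrm{Idem}(\wt Z)}\mathrm{Idem}(Z'),
\]
where $\wt U$ is the preimage of $U$ in $\Spec\wt{R}$, and $Z'$, $\wt Z$ are the preimages in the reduced quotients modulo $I_0$. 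The analogous formula holds for $U^{s.\flat}$. The two corner terms lie over $V(I_0)$. They agree on both sides via \eqref{eq:tiltisom} and \cref{prop:torftower} (3), because both sides have the same underlying rings there. Henselianity along $I_0$ passes to the quotient $\wt R$ and to the tilt, since the tilt has the same reduction modulo $I_0$. Hence it suffices to treat the $I_0$-torsion free case compatibly with the restriction maps to $V(I_0)$. In the perfect case ($I_0=0$) there is nothing to prove, because the tower equals its tilt.

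\textbf{Torsion free case.} Next, assume $\textrm{{\boldmath $R$}}$ is $I_0$-torsion free. Let $A\coloneqq\wh{R_\infty}$ be the $I_0$-completed colimit, which is a perfectoid ring. Then $A^\flat$ is the completed colimit of $\textrm{{\boldmath $R$}}^\flat$, by \cite[Corollary 3.52]{INS25} and the identification of tilts. Let $U_\infty\subset\Spec A$ and $U^{s.\flat}_\infty\subset\Spec A^\flat$ be the preimages of $U$ and $U^{s.\flat}$. For perfectoid rings, henselian along $p$, there is a bijection $\mathrm{Idem}(U_\infty)\cong\mathrm{Idem}(U^{s.\flat}_\infty)$. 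It comes from the tilting equivalence of \'etale sites of $\Spec A\setminus V(p)$ and $\Spec A^\flat\setminus V(p^\flat)$ (almost purity), glued along the common closed part $\Spec A/p=\Spec A^\flat/p^\flat$, as in \cite[\S2.2]{CS24}. It therefore remains to show that the pullback maps $\mathrm{Idem}(U)\to\mathrm{Idem}(U_\infty)$ and $\mathrm{Idem}(U^{s.\flat})\to\mathrm{Idem}(U^{s.\flat}_\infty)$ are bijective.

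\textbf{Descent of idempotents.} To show this, factor the pullback through each finite stage $R_i$. Every map $\Spec R_{i+1}\to\Spec R_i$ is a homeomorphism over $V(I_0)$: this follows from conditions \textbf{(b)}, \textbf{(c)} and \textbf{(d)}, since modulo $I_0$ the map is purely inseparable with Frobenius inverse. Each $R_i$ is henselian along $I_0$, being integral over $R$ on the closed fiber. Completion of the colimit does not change idempotents, by henselianity. The bijection then follows from a Mayer--Vietoris-type description of $\mathrm{Idem}(U)$ as triples: idempotents on $\Spec R[1/I_0]$, idempotents on the closed part $U\cap V(I_0)$, and matching data on the punctured henselian neighbourhood. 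Functoriality in $\textrm{{\boldmath $R$}}$ and $U$, and compatibility with orthogonality, are then automatic, because every map used is a pullback of clopen subsets.

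\textbf{Main obstacle.} The hardest step is the descent from $U_\infty$ to the finite stage $U$ on the generic part $\Spec R_i[1/I_0]$. Surjectivity of $\mathrm{Idem}(U)\to\mathrm{Idem}(U_\infty)$ is not formal, since $R_i[1/I_0]\to R_{i+1}[1/I_0]$ need not be a universal homeomorphism. My first attempt would be to use henselianity to lift the restriction of a given idempotent from the closed part of $U_\infty$. Here $U_\infty\cap V(I_0)\cong U\cap V(I_0)$ up to homeomorphism, so we get a unique idempotent on a henselian neighbourhood at the base. I would then argue, via the cartesian squares of \cref{thm:PB} and the fact that $\Spec R_\infty[1/I_0]$ sees no new connected components meeting $V(I_0)$, that the lifted and original idempotents agree on $U_\infty$. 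If this fails, I would instead restrict to the case where every component of $U$ meets $V(I_0)$ and treat the remaining components separately.
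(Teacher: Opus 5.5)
There is a genuine gap, and it sits exactly where the content of the proposition lies.

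\textbf{The generic part is not proved.} Your reduction via \cref{thm:decomp}, gluing clopen subsets along the Milnor square, is sound. But it does not touch the difficulty: $R[1/f_0]=\wt{R}[1/f_0]$, so everything comes down to showing that $\mathrm{Idem}(R[1/f_0])\to\mathrm{Idem}(\wh{R_\infty}[1/f_0])$ is bijective. (On the tilt side this is much easier, since $\textrm{{\boldmath $R$}}^\flat$ is a perfect tower.) You leave this step open, and the strategy you sketch cannot settle it. For $U=\Spec(R)\setminus V(I_0)$ there is no closed part to lift from. Moreover, idempotents of the generic fibre are invisible on $V(I_0)$. For instance, $R=\Z_p\times_{\F_p}\Z_p$ with $I_0=(p)$ is local and henselian, and it carries the perfectoid tower $\{\Z_p[p^{1/p^i}]\times_{\F_p}\Z_p[p^{1/p^i}]\}_{i\geq 0}$. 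Yet $R[1/p]=\Q_p\times\Q_p$ is disconnected. So henselian lifting, together with the homeomorphisms $\Spec(R_{i+1}/I_0R_{i+1})\to\Spec(R_i/I_0R_i)$, says nothing about whether $R_i[1/f_0]\to R_{i+1}[1/f_0]$ creates new idempotents. Your assertion that the colimit ``sees no new connected components'' is essentially the statement to be proved. Even injectivity is not free: it needs $R\to\wh{R_\infty}$ to be injective, which uses $I_0$-adic separatedness together with \cref{cor:reduced}.

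\textbf{Unjustified auxiliary claims.} That each $R_i$ is $I_0$-adically henselian does not follow from $R_i/I_0R_i$ being purely inseparable over $R/I_0$. Henselianity ascends along integral maps, and $R\to R_i$ need not be integral. Likewise, the tilt is henselian because $R^{s.\flat}$ is $I_0^{s.\flat}$-adically complete, not because it has the same reduction modulo $I_0$.

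\textbf{How the paper proceeds.} It first replaces every $R_i$ by its $I_0$-adic completion. This changes neither $R_i/I_0R_i$ (hence not the tilt) nor $\mathrm{Idem}(U)$, by \cite[Theorem 2.3.1]{BC22}, and it makes all $R_i$ separated. It then uses Beauville--Laszlo gluing to reduce to two cases.
\begin{itemize}
\item For $U=\Spec(R)$ it cites \cite[Proposition 3.26]{HIS26}.
\item For $U=\Spec(R)\setminus V(I_0)$ it avoids almost purity entirely. For $A=\wh{R_\infty}$, the multiplicative isomorphism $\varprojlim_{x\mapsto x^p}A_{f_0}\cong(A^\flat)_{f_0^{s.\flat}}$ of \cite[(2.1.7.2)]{CS24} directly gives $\mathrm{Idem}(A_{f_0})\cong\mathrm{Idem}((A^\flat)_{f_0^{s.\flat}})$ via $e\mapsto(e,e,e,\ldots)$, since $e^p=e$. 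Using the injections $R\hookrightarrow A$ and $R^{s.\flat}\hookrightarrow A^\flat$, it then checks that this explicit bijection restricts to $\mathrm{Idem}(R_{f_0})\cong\mathrm{Idem}((R^{s.\flat})_{f_0^{s.\flat}})$.
\end{itemize}
Your proposal has no mechanism of this kind comparing finite-level generic idempotents with perfectoid-level ones. Invoking the tilting equivalence of \'etale sites at the perfectoid level is heavier than necessary and leaves exactly this descent step unaddressed.
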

  
\begin{proof}
By \cite[Theorem 2.3.1]{BC22}, base change to the $I_0$-adically completion of $R$ changes $\mathrm{Idem}(U)$, so we assume that every $R_i$ is $I_0$-adically complete (in particular, $I_0$-adically separated).
By \cref{thm:bc}, Zariski descent and Beauville--Laszlo glueing \cite[Proposition 3.5]{Kun23}, we can reduce to the case $U=\Spec(R)$ or $U=\Spec(R)\setminus V(I_0)$.

The case $U=\Spec(R)$ (hence $U^{s.\flat}=\Spec(R^{s.\flat})$) can be found in \cite[Proposition 3.26]{HIS26}. To deal with the case $U=\Spec(R)\setminus V(I_0)$ (hence $U^{s.\flat}=\Spec(R^{s.\flat})\setminus V(I_0^{s.\flat})$), choose a generator $f_0\in R$ (resp.\ $f_0^{s.\flat}\in R^{s.\flat}$) of $I_0$ (resp.\ $I_0^{s.\flat}$). Then $A\coloneqq\wh{R_\infty}$ has tilt $A^\flat=\varprojlim_{x\mapsto x^p}(A/f_0A)$ (\cite[Lemma 3.2]{HIS26}). Thus we have the functorial, compatible, multiplicative isomorphism (\cite[(2.1.7.2)]{CS24})
\[
\varprojlim_{x\mapsto x^p}(A_{f_0}) \cong (A^\flat)_{f_0^{s.\flat}},
\]
where $A_{f_0}$ (resp.\ $(A^\flat)_{f_0^{s.\flat}}$) denotes the localization of $A$ (resp.\ $A^\flat$) by $f_0$ (resp.\ $f_0^{s.\flat}$). This induces a bijection
\begin{align*}
\mathrm{Idem}(A_{f_0}) &\cong \mathrm{Idem}((A^\flat)_{f_0^{s.\flat}}), \\
e &\mapsto (e,e,e,\ldots), \\
e_0 &\mapsfrom (e_0,e_1,e_2,\ldots).
\end{align*}
Since we assumed that $R_i$ is $I_0$-adically separated for any $i\geq 0$, so is $R_\infty$. Combined with \cref{cor:reduced}, we see that the canonical map $R\to \wh{R_\infty}=A$ is injective. On the other hand, since $R_i^{s.\flat}$ is $I_0$-adically complete for any $i\geq 0$ (\cite[Lemma 3.36]{INS25}), the canonical map $R^{s.\flat}\to \wh{R_\infty^{s.\flat}}\cong A^\flat$ is injective (cf.\ \cite[Lemma 3.55]{INS25}). Then one can check that the bijection above restricts to the desired one
\[
\mathrm{Idem}(R_{f_0}) \cong \mathrm{Idem}((R^{s.\flat})_{f_0^{s.\flat}}).
\]
\end{proof}

As for \'{e}tale cohomology, we have the following result.

\begin{proposition}
Let $\textrm{{\boldmath $R$}}=\{R_i,t_i\}_{i\geq 0}$ be a perfectoid tower arising from a pair $(R,I_0)$. Fix $i\geq 0$, and assume that both $R_i$ and $R_{i+1}$ are $I_0$-adically henselian. Then for any torsion abelian group $G$, there are identifications of \'{e}tale cohomology
\[
\begin{tikzcd}
\bbR\Gamma_{\et}(\Spec(R_i),G) \dar["\cong"'] & \bbR\Gamma_{\et}(\Spec(R_{i+1}),G) \dar["\cong"] \\
\bbR\Gamma_{\et}(\Spec(R_i/I_0R_i),G) & \bbR\Gamma_{\et}(\Spec(R_{i+1}/I_1R_{i+1}),G) \lar["\cong"']
\end{tikzcd}
\]
and
\[
\begin{tikzcd}
\bbR\Gamma_{\et}(\Spec(R_i),G) \dar["\cong"'] & \bbR\Gamma_{\et}(\Spec(R_i^{s.\flat}),G) \dar["\cong"] \\
\bbR\Gamma_{\et}(\Spec(R_i/I_0R_i),G) & \bbR\Gamma_{\et}(\Spec(R_i^{s.\flat}/I_0^{s.\flat}R_i^{s.\flat}),G) \lar["\cong"']
\end{tikzcd}
\]
\end{proposition}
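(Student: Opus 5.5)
The vertical isomorphisms in both squares come from affine analogues of proper base change, namely Gabber's theorem for henselian pairs. Since $R_i$ is $I_0$-adically henselian, so is the pair $(R_i,I_0R_i)$, and restriction to the closed subscheme gives $\bbR\Gamma_{\et}(\Spec(R_i),G)\xr{\cong}\bbR\Gamma_{\et}(\Spec(R_i/I_0R_i),G)$ for torsion $G$; the same holds for $R_{i+1}$. For $R_{i+1}/I_1R_{i+1}$ we use that $I_1^p=I_0R_1$, so $\sqrt{I_1R_{i+1}}=\sqrt{I_0R_{i+1}}$. Hence $(R_{i+1},I_1R_{i+1})$ is also henselian, and $\Spec(R_{i+1}/I_1R_{i+1})$ and $\Spec(R_{i+1}/I_0R_{i+1})$ have the same underlying reduced scheme. \'Etale cohomology is insensitive to nilpotent thickenings by topological invariance of the \'etale site, so the right vertical arrow of the first square is the composite of two isomorphisms. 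For the tilt, $R_i^{s.\flat}$ is $I_0^{s.\flat}$-adically complete (\cite[Lemma 3.36]{INS25}), hence henselian along $I_0^{s.\flat}R_i^{s.\flat}$, and Gabber's theorem applies again.

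For the bottom horizontal arrows we argue as follows. In the first square the Frobenius projection $F_i$ induces an isomorphism $F'_i\colon R_{i+1}/I_1R_{i+1}\xr{\cong}R_i/I_0R_i$; this is condition \textbf{(f-2)} combined with the surjectivity \textbf{(d)}, as in the proof of \cref{thm:PB}. So $\Spec(F'_i)$ identifies the two schemes and hence their \'etale cohomology. Equivalently, $\ol{t_i}\circ F'_i$ equals the map induced by absolute Frobenius $R_{i+1}/I_1R_{i+1}\to R_{i+1}/I_0R_{i+1}$, which is a universal homeomorphism. This shows the identification is compatible with the map induced by $t_i$ on cohomology, which I would record as a remark. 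In the second square the bottom arrow is induced by the isomorphism $R_i^{s.\flat}/I_0^{s.\flat}R_i^{s.\flat}\xr{\cong}R_i/I_0R_i$ of \cref{thm:INS3.35} (1).

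I would state precisely which version of affine proper base change is used: Gabber's theorem for henselian pairs, \cite[\href{https://stacks.math.columbia.edu/tag/09ZI}{Tag 09ZI}]{stacks-project} or its derived form. I would also check that the henselian hypothesis on $R_{i+1}$ transfers from $I_0R_{i+1}$ to $I_1R_{i+1}$; this holds because henselianity of a pair depends only on the radical of the ideal. The main obstacle is this bookkeeping, namely whether henselian is meant with respect to $I_0R_i$ and whether topological invariance applies to the nilpotent but not necessarily finitely generated ideal $I_0R_{i+1}/I_1^{\,}R_{i+1}$. Topological invariance needs only a universal homeomorphism, so no finiteness is required, and I expect no genuine difficulty beyond citing these results correctly.
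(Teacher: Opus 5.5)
Your proposal is correct and follows the paper's proof. The paper likewise gets the vertical isomorphisms from Gabber's affine analogue of proper base change, and the bottom arrows from the ring isomorphisms $F'_i\colon R_{i+1}/I_1R_{i+1}\xrightarrow{\cong} R_i/I_0R_i$ and $R_i^{s.\flat}/I_0^{s.\flat}R_i^{s.\flat}\xrightarrow{\cong} R_i/I_0R_i$. The paper leaves two points implicit that you make explicit: henselianity of $(R_{i+1},I_1R_{i+1})$, via equality of radicals or topological invariance, and henselianity of the tilt, via $I_0^{s.\flat}$-adic completeness. One small correction: the nilpotent ideal you mean is $I_1R_{i+1}/I_0R_{i+1}$, not the reverse, and it is in fact principal.
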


\begin{proof}
We only have to show that the vertical arrows are isomorphisms, which follows from affine analog of proper base change (\cite{Gab94}).
\end{proof}

For a perfectoid ring $A$ that is $\varpi$-adically complete for a $\varpi\in A$ with $p\in\varpi^pA$, K.~\v{C}esnavicius and P.~Scholze (\cite[Theorem 2.2.7]{CS24}; see also \cite[Theorem 3.6]{Kun23}) gave an algebraic proof of the isomorphism
\[
\bbR\Gamma_{\et}(U,G) \cong \bbR\Gamma_{\et}(U^\flat,G),
\]
where $\Spec(A)\setminus V(\varpi A) \subset U\subset\Spec(A)$ and $\Spec(A^\flat)\setminus V(\varpi^\flat A^\flat)\subset U^\flat\subset\Spec(A^\flat)$ are open subsets corresponding to each other. Although their proof is based on the general result that \'{e}tale cohomology satisfies arc hyperdescent (\cite[Theorem 2.2.5]{CS24}), it cannot work for perfectoid towers. Indeed, given a perfectoid tower $\textrm{{\boldmath $R$}}=\{R_i,t_i\}_{i\geq 0}$ arising from a pair $(R,I_0)$ such that every $t_i$ is integral injective, we can construct, as in the proof of \cite[Theorem 2.2.7]{CS24}, an $I_0$-complete arc hypercover
\[
\begin{tikzcd}
R\rar & \big( A_0 \rar[shift left] \rar[shift right] & A_1 \rar[shift left] \rar \rar[shift right] & \cdots \big),
\end{tikzcd}
\]
where $R\to A_0$ factors through $R\to\wh{R_\infty}$ and $A_n=\prod_{\lambda\in\Lambda_n}V_\lambda$ for $I_0$-adically complete valuation rings $V_\lambda$ over $\wh{R_\infty}$ with algebraically closed fraction fields (\cite[Lemma 2.2.3]{CS24} and \cite[\href{https://stacks.math.columbia.edu/tag/0DAV}{Tag 0DAV}]{stacks-project}). Then we hope that
\[
\begin{tikzcd}
R^{s.\flat}\rar & \big( A_0^\flat \rar[shift left] \rar[shift right] & A_1^\flat \rar[shift left] \rar \rar[shift right] & \cdots \big)
\end{tikzcd}
\]
is an $I_0^{s.\flat}$-complete arc hypercover, that is, for every $n\geq 0$ the canonical morphism $A^\flat_{n+1}\to (\cosk_n(\sk_n(A^\flat_\bullet)))_{n+1}$ is an $I_0^{s.\flat}$-complete arc cover. If we work on the category of $\wh{R_\infty}$-algebras, then $(\cosk_n(\sk_n(A_\bullet)))_{n+1}$ is an $I_0$-adically complete perfectoid ring with tilt $(\cosk_n(\sk_n(A^\flat_\bullet)))_{n+1}$, and so our claim follows from \cite[Lemma 2.2.2]{CS24}. However, we need work on the category of $R$-algebras, and thus $(\cosk_n(\sk_n(A_\bullet)))_{n+1}$ is not necessarily perfectoid. This is why an argument of \v{C}esnavi\v{c}ius--Scholze does not fit in the framework of pefectoid towers.

Here let us give one example of tilting \'{e}tale cohomology of perfectoid towers. Consider the perfectoid tower $\{\Z_p[p^{1/p^i}]\}_{i\geq 0}$ arising from $(\Z_p,p)$ whose tilt is $\{\F_p\llbracket T^{1/p^i}\rrbracket\}_{i\geq 0}$, and the open sets
\[
U=\Spec(\Z_p)\setminus V(p)=\Spec(\Q_p),\quad U^{s.\flat}=\Spec(\F_p\llbracket T\rrbracket)\setminus V(T)=\Spec \F_p((T))
\]
corresponding to each other (see \cite[Definition 4.3]{INS25}). By local class field theory, we observe that
\[
H^1_{\et}(U,\Z/p\Z) = H^1(G_{\Q_p},\Z/p\Z) = \Hom_{\mathrm{cont}}(G_{\Q_p},\Z/p\Z) = \Hom_{\mathrm{cont}}(\Q_p^\times,\Z/p\Z),
\]
which is a $2$-dimensional $\F_p$-vector space. On the other hand, by a similar argument (or, by using the Artin--Schreier sequence), $H^1_{\et}(U^{s.\flat},\Z/p\Z)$ has infinite dimension over $\F_p$.

%%%%%%%%%%%%%%%%%%%%%%%%%%%%%%%%%%%%%%%%%%%%%%%%%%%%%%%%%%%%%%%%%%%%%%%%%%%%%%%%%%%%%%%%%%%%%%%
\subsection{Tilting Koszul homology}
\label{ss:Koszul}

The second tilting invariant is Koszul homology $H_q(x_1,\ldots,x_n;R)$, which was already considered in \cite[Remark 3.40]{INS25} when $n=1$ and $x_1$ is a generator of $I_0$. Let us first remark the following canonical morphism on Koszul complexes.

\begin{definition}
Let $R$ be a ring, $\bm{x}=x_1,\ldots,x_n$ a sequence of elements in $R$, and $M$ an $R$-module. Let $K_\bullet(\bm{x};M)$ denote the Koszul complex of $\bm{x}$ with coefficients in $M$. Set $\bm{x}'\coloneqq x_2,\ldots,x_n$. We denote by
\[
\pi_{\bm{x},M}\colon K_\bullet(\bm{x};M) \to K_\bullet(\bm{x}';M/x_1M)
\]
the canonical morphism of $R$-complexes
\[
K_\bullet(\bm{x};M) = K_\bullet(x_1;M)\otimes_R K_\bullet(\bm{x}') \to (M/x_1M)\otimes_RK_\bullet(\bm{x}') = K_\bullet(\bm{x}';M/x_1M).
\]
\end{definition}

\begin{remark}
\label{rem:pi}
  \begin{enumerate}
  \item $\pi_{\bm{x},M}$ is functorial with respect to $M$.
  \item If $x_1$ is $M$-regular, then $\pi_{\bm{x},M}$ is a quasi-isomorphism.
  \end{enumerate}
\end{remark}

\begin{lemma}
\label{lem:Kos}
Let $R$ be a ring. Let $(A,I)$ and $(B,J)$ be pairs satisfying the following conditions.
  \begin{itemize}
  \item $A$ and $B$ are $R$-algebras.
  \item $I$ and $J$ are principal.
  \item $IA_{\Itor}=(0)$ and $JB_{\textrm{$J$-}\mathrm{tor}}=(0)$.
  \item There exists a commutative diagram of \emph{(}possibly non-unital\emph{)} $R$-algebras
  \[
  \begin{tikzcd}
  A_{\Itor} \rar["\varphi_{I,A}"] \dar["\Phi_{\mathrm{tor}}"',"\cong"] & A/I \dar["\Phi","\cong"'] \\
  B_{\textrm{$J$-}\mathrm{tor}} \rar["\varphi_{J,B}"'] & B/J.
  \end{tikzcd}
  \]
  \end{itemize}
Let $\bm{x}=x_1,\ldots,x_n$ and $\bm{y}=y_1,\ldots,y_n$ be sequences of elements in $A$ and $B$, respectively, such that $I=x_1A$, $J=y_1B$, and $\Phi(x_k\ \mathrm{mod}\ I)=y_k\ \mathrm{mod}\ J$ for all $1\leq k\leq n$. Then there exists an isomorphism
\[
K_\bullet(\bm{x};A) \cong K_\bullet(\bm{y};B)
\]
in the derived category $D(R)$ of $R$-modules.
\end{lemma}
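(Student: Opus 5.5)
The plan is to reduce to the case $n=1$ over an auxiliary polynomial ring, and then compare the two length-one Koszul complexes through their homology and the extension class that glues it.

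\emph{Reduction to $n=1$.} Put $P\coloneqq R[T_2,\ldots,T_n]$. Make $A$ a $P$-algebra via $T_k\mapsto x_k$ and $B$ a $P$-algebra via $T_k\mapsto y_k$. Then
\[
K_\bullet(\bm{x};A)\cong K_\bullet(x_1;A)\otimes_P K_\bullet(T_2,\ldots,T_n;P),
\]
and similarly for $B$. Since $K_\bullet(T_2,\ldots,T_n;P)$ is a bounded complex of free $P$-modules, it suffices to produce an isomorphism $K_\bullet(x_1;A)\cong K_\bullet(y_1;B)$ in $D(P)$ and then restrict scalars to $R$.

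\emph{$P$-linearity of $\Phi$ and $\Phi_{\mathrm{tor}}$.} Because $IA_{\Itor}=0$, the module $A_{\Itor}$ is an $A/I$-module, and each $x_k$ acts on it through $x_k\ \mathrm{mod}\ I$. Likewise $y_k$ acts on $B_{\textrm{$J$-}\mathrm{tor}}$ through $y_k\ \mathrm{mod}\ J$. Since $\Phi(x_k\ \mathrm{mod}\ I)=y_k\ \mathrm{mod}\ J$ and the square in the statement commutes as non-unital $R$-algebras, the maps $\Phi$ and $\Phi_{\mathrm{tor}}$ are isomorphisms of $P$-modules. They also intertwine $\varphi_{I,A}$ and $\varphi_{J,B}$.

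\emph{Homology of $K_\bullet(x_1;A)$.} Here $K_\bullet(x_1;A)=[A\xr{x_1}A]$. Its homology is $H_1=\mathrm{ann}_A(x_1)$ and $H_0=A/I$. Clearly $\mathrm{ann}_A(x_1)\subset A_{\Itor}$, and $A_{\Itor}\subset\mathrm{ann}_A(x_1)$ by the hypothesis $IA_{\Itor}=0$. Hence $H_1=A_{\Itor}$.

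\emph{Standard triangle.} Consider the subcomplex $[A\xr{x_1}x_1A]$, which has homology $A_{\Itor}$ in degree $1$, with quotient $A/I$ in degree $0$. This gives a triangle
\[
A_{\Itor}[1]\to K_\bullet(x_1;A)\to A/I\xr{+1}
\]
in $D(P)$. So $K_\bullet(x_1;A)$ is determined up to isomorphism by the class $\xi_A\in\mathrm{Ext}^2_P(A/I,A_{\Itor})$ of the Yoneda $2$-extension
\[
0\to A_{\Itor}\to A\xr{x_1}A\to A/I\to 0.
\]
The same holds for $B$ with a class $\xi_B$. It therefore suffices to show that $\Phi_{\mathrm{tor}}$ and $\Phi$ carry $\xi_A$ to $\xi_B$.

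\emph{Main obstacle.} The expected difficulty is that $\xi_A$ is built from the middle terms $A$, $x_1A\cong\wt{A}=A/A_{\Itor}$, which have no counterpart on the $B$-side a priori. I would try to rewrite $\xi_A$ using only the data $(A_{\Itor}\xr{\varphi_{I,A}}A/I)$ and the $P$-module structures. The first attempt is a splicing argument. Use the short exact sequence
\[
0\to A_{\Itor}\xr{\varphi_{I,A}}A/I\to \wt{A}/I\wt{A}\to 0
\]
(exact because $\varphi_{I,A}$ is injective and $A_{\Itor}\cap I=0$). Together with the regularity of $x_1$ on $\wt{A}$, express $\xi_A$ as a Yoneda product of this sequence with a class in $\mathrm{Ext}^1_P(A/I,\wt{A}/I\wt{A})$ coming from $0\to\wt{A}/I\wt{A}\to\wt{A}/I^2\wt{A}\to \wt{A}/I\wt{A}\to 0$. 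The hope is that this second factor can in turn be expressed intrinsically through $A/I$ and $\varphi_{I,A}$ (perhaps using that $x_1$ acts on $A_{\Itor}$ as zero, so $K_\bullet(x_1;A_{\Itor})$ splits).

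\emph{Fallback.} If this intrinsic description fails, I would instead construct an explicit zig-zag of $P$-linear quasi-isomorphisms. The target would be a complex assembled only from $A_{\Itor}$, $A/I$, and $\varphi_{I,A}$, for instance a mapping cone involving $\varphi_{I,A}$. One then checks directly on homology and on the connecting map that $K_\bullet(x_1;A)$ maps to it by a quasi-isomorphism. Applying $(\Phi_{\mathrm{tor}},\Phi)$ to this intermediate complex would then give the desired isomorphism $K_\bullet(x_1;A)\cong K_\bullet(y_1;B)$ in $D(P)$.
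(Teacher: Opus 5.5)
\textbf{There is a genuine gap, and in the stated generality it cannot be closed.}

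Your set-up is sound. The reduction to $n=1$ over $P=R[T_2,\ldots,T_n]$ is correct, as are the $P$-linearity of $\Phi$ and $\Phi_{\mathrm{tor}}$ and the identification $H_1=A_{\Itor}$. It is also correct that $K_\bullet(x_1;A)$ is classified by its $k$-invariant $\xi_A\in\mathrm{Ext}^2_P(A/I,A_{\Itor})$. Your splicing $\xi_A=\delta\circ\beta_A\circ q$ is right as well. Here $\delta$ is the class of $0\to A_{\Itor}\to A/I\to\wt{A}/I\wt{A}\to 0$, $\beta_A$ is the class of $\wt{A}/I^2\wt{A}$, and $q\colon A/I\to\wt{A}/I\wt{A}$ is the projection. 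Equivalently, $\xi_A$ is the class of $0\to A_{\Itor}\to A/I\xr{x_1}A/I^2\to A/I\to 0$.

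But the proposal stops exactly where all the content lies: nothing shows that $(\Phi_{\mathrm{tor}},\Phi)$ carries $\xi_A$ to $\xi_B$. Your hope that $\beta_A$ is intrinsic is false. For $R=\Z$, $A=\Z_p$, $B=\F_p\llbracket t\rrbracket$ the hypotheses hold. Yet $\Z_p/p^2$ is a non-split extension of $\F_p$ by $\F_p$, while $\F_p\llbracket t\rrbracket/t^2$ splits. The fallback is the same claim in other words, so it fails for the same reason.

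In fact the needed step is false for general $R$, so no argument can supply it. Take $n=1$ and
\[
A=\Z_p\times_{\F_p}\F_p[u],\quad x_1=(p,0),\qquad B=\F_p\llbracket t\rrbracket\times_{\F_p}\F_p[u],\quad y_1=(t,0).
\]
Let $R=A$, acting on $B$ via $(a,f)\mapsto(a\bmod p,f)$. Then $A/I\cong B/J\cong\F_p[u]$ and $A_{\Itor}\cong B_{\Jtor}\cong u\F_p[u]$, compatibly and $A$-linearly, so every hypothesis holds.

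On the $B$ side, the $A$-action factors through the hereditary ring $\F_p[u]$. Hence $K_\bullet(y_1;B)\cong A/I\oplus A_{\Itor}[1]$ in $D(A)$.

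On the $A$ side, let $\mathfrak{m}=\{(a,f): f(0)=0\}$ and $k=A/\mathfrak{m}$. Then $K_\bullet(x_1;A)\otimes_A k=[k\xr{0}k]$ has $H_2=0$. On the other hand,
\[
\Tor_2^A(A/I,k)\cong\Tor_1^A(A/A_{\Itor},k)\cong A_{\Itor}/\mathfrak{m}A_{\Itor}\cong\F_p\neq 0.
\]
So $\xi_A\neq 0=\xi_B$, and $K_\bullet(x_1;A)\not\cong K_\bullet(y_1;B)$ in $D(A)$.

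Your argument does close whenever $\mathrm{Ext}^2_P(A/I,A_{\Itor})=0$, e.g.\ for $n=1$ and $R=\Z$ or $W(k)$. Beyond that, extra input is required to control $\delta\circ(\beta_A-\beta_B)\circ q$.

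For comparison, the paper uses the decomposition $A\cong\wt{A}\times_{\wt{A}/I\wt{A}}A/I$ and an isomorphism of triangles built from $\pi_{\bm{x},-}$ and $\Phi$. That requires the square containing $K_\bullet(\bm{x};\wt{A})\to K_\bullet(\bm{x};\wt{A}/I\wt{A})$ to commute in $D(R)$. This is precisely your Bockstein issue, and the square already fails to commute for $R=\Z$, $A=\Z_p$, $B=\F_p\llbracket t\rrbracket$. So the paper's route does not supply the missing step either.
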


\begin{proof}

Let $\wt{A}\coloneqq A/A_{\Itor}$ and $\wt{B}\coloneqq B/B_{\Jtor}$. Then we have decompositions
\begin{equation}
A \xr{\cong} \wt{A}\times_{\wt{A}/I\wt{A}} (A/I),\quad B\xr{\cong} \wt{B}\times_{\wt{B}/J\wt{B}}(B/J)
\end{equation}
by \cref{lem:BC}. From this we obtain the exact sequences of $R$-modules
\[
0\to A \to \wt{A}\oplus (A/I) \to \wt{A}/I\wt{A} \to 0, \quad 0\to B \to \wt{B}\oplus(B/J) \to \wt{B}/J\wt{B}\to 0.
\]
Set $\bm{x}'\coloneqq x_2,\ldots,x_n$ and $\bm{y}'\coloneqq y_2,\ldots,y_n$. 
Since the components of the Koszul complex are (finitely generated) free, we have the commutative diagram of complexes of $R$-modules with exact rows
\[
\begin{tikzcd}
0 \rar & K_\bullet(\bm{x};A) \rar \dar["\pi_{\bm{x},A}"'] & K_\bullet(\bm{x};\wt{A})\oplus K_\bullet(\bm{x};A/I) \rar \dar["{\pi_{\bm{x},\wt{A}}\oplus\pi_{\bm{x},A/I}}"'] & K_\bullet(\bm{x};\wt{A}/I\wt{A}) \rar \dar["\pi_{\bm{x},\wt{A}/I\wt{A}}"] & 0 \\
0 \rar & K_\bullet(\bm{x}';A/I) \rar \dar["\Phi"',"\cong"] & K_\bullet(\bm{x}';\wt{A}/I\wt{A})\oplus K_\bullet(\bm{x}';A/I) \rar \dar["\Phi"',"\cong"] & K_\bullet(\bm{x}';\wt{A}/I\wt{A}) \dar["\Phi","\cong"'] \rar & 0 \\
0 \rar & K_\bullet(\bm{y}';B/J) \rar & K_\bullet(\bm{y}';\wt{B}/J\wt{B})\oplus K_\bullet(\bm{y}';B/J) \rar & K_\bullet(\bm{y}';\wt{B}) \rar & 0 \\
0 \rar & K_\bullet(\bm{y};B) \rar \uar["\pi_{\bm{y},B}"] & K_\bullet(\bm{y},\wt{B})\oplus K_\bullet(\bm{y};B/J) \rar \uar["\pi_{\bm{y},\wt{B}}\oplus\pi_{\bm{y},B/J}"] & K_\bullet(\bm{y};\wt{B}/J\wt{B}) \rar \uar["\pi_{\bm{y},\wt{B}/J\wt{B}}"'] & 0
\end{tikzcd}
\]
Here $\pi_{\bm{x},\wt{A}}$ and $\pi_{\bm{y},\wt{B}}$ are quasi-isomorphisms by \cref{rem:pi} (2). Moreover, \cref{rem:pi} (1) establishes that, in $D(R)$, the zigzag
\[
K_\bullet(\bm{x};A/I) \xr{\pi_{\bm{x},A/I}} K_\bullet(\bm{x}';A/I) \xr{\Phi} K_\bullet(\bm{y}';B/J) \xleftarrow{\pi_{\bm{y},B/J}} K_\bullet(\bm{y};B/J)
\]
coincides with the isomorphism $K_\bullet(\bm{x};A/I) \xr{\cong} K_\bullet(\bm{y};B/J)$ induced by $\Phi$. The similar assertion holds for $\wt{A}/I\wt{A}$ and $\wt{B}/J\wt{B}$ because $\Phi$ induces a ring isomorphism $\wt{\Phi}\colon\wt{A}/I\wt{A}\xr{\cong} \wt{B}/J\wt{B}$.\footnote{Note that the maps $\varphi_{I,A}$ and $\varphi_{J,B}$ are injective by \cite[Corollary 3.15]{INS25}.} Hence we have the desired isomorphism of distinguished triangles in $D(R)$:
\[
\begin{tikzcd}
K_\bullet(\bm{x};A) \rar \dar["\cong","{}^\exists"'] & K_\bullet(\bm{x};\wt{A})\oplus K_\bullet(\bm{x};A/I) \rar \dar["\cong"] & K_\bullet(\bm{x};\wt{A}/I\wt{A}) \rar[dashed] \dar["\cong"] & {} \\
K_\bullet(\bm{y};B) \rar & K_\bullet(\bm{y},\wt{B})\oplus K_\bullet(\bm{y};B/J) \rar & K_\bullet(\bm{y};\wt{B}/J\wt{B}) \rar[dashed] & {}.
\end{tikzcd}
\]
\end{proof}

The following remark is based on a private communication with Ryo Ishizuka.

\begin{remark}
If we regard Koszul complexes $K(\bm{x};A)$ as \emph{derived quotients}
\[
A/^\bbL\bm{x}=A\otimes_{\Z[X_1,\ldots,X_n]}^\bbL\Z
\]
(where $\Z[X_1,\ldots,X_n]\to A$ is defined by $X_k\mapsto x_k$), then we can prove the slightly more precise statement of \cref{lem:Kos}: $A/^\bbL\bm{x}\cong B/^\bbL\bm{y}$ as animated rings.
Indeed, by assumption, we are given an isomorphism
\[
A/^\bbL x_1 \cong B/^\bbL y_1
\]
as commutative algebra objects in the derived $\infty$-category $\calD(\Z)=\calD(\Ab)$ of abelian groups. We endow $A/^\bbL x_1$ (resp.\ $B/^\bbL y_1$) with the $\Z[X_2,\ldots,X_n]$-algebra structure by setting $X_k\mapsto x_k$ (resp.\ $X_k\mapsto y_k$) for all $2\leq k\leq n$. Then the isomorphism above is that in $\calD(\Z[X_2,\ldots,X_n])$, and hence, applying $-\otimes_{\Z[X_2,\ldots,X_n]}^\bbL\Z=-\otimes_{\Z[X_2,\ldots,X_n]}^\bbL\Z[X_2,\ldots,X_n]/(X_2,\ldots,X_n)$, we get the desired isomorphism
\begin{align*}
A/^\bbL\bm{x} \cong (A/^\bbL x_1)/^\bbL(x_2,\ldots,x_n) &\cong A/^\bbL x_1 \otimes_{\Z[X_2,\ldots,X_n]}^\bbL \Z \\
&\cong B/^\bbL y_1 \otimes_{\Z[X_2,\ldots,X_n]}^\bbL \Z \cong (B/^\bbL y_1)/^\bbL(y_2,\ldots,y_n) \cong B/^\bbL\bm{y}.
\end{align*}
\end{remark}

Now our main theorem of this subsection is established as follows.

\begin{theorem}
\label{thm:tilt-Kos}
Let $\textrm{{\boldmath $R$}}=\{R_i,t_i\}_{i\geq 0}$ be a perfectoid tower arising from a pair $(R,I_0)$. Fix $i\geq 0$. Let $\bm{x}=x_1,\ldots,x_n$ and $\bm{x}^{s.\flat}=x_1^{s.\flat},\ldots,x_n^{s.\flat}$ be sequences of elements in $R_i$ and $R_i^{s.\flat}$, respectively, such that $I_0R_i=(x_1)$, $I_0^{s.\flat}R_i^{s.\flat}=(x_1^{s.\flat})$, and $\Phi_0^{(i)}(x_k^{s.\flat})=x_k\ \mathrm{mod}\ I_0R_i$ for all $1\leq k\leq n$. Then there exists an isomorphism
\[
K_\bullet(\bm{x};R_i) \cong K_\bullet(\bm{x}^{s.\flat};R_i^{s.\flat})
\]
in the derived category $D(\Ab)$ of abelian groups.
\end{theorem}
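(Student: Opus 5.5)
The plan is to deduce the statement directly from \cref{lem:Kos}, applied with $R=\Z$ (so $D(R)=D(\Ab)$), $(A,I)=(R_i,I_0R_i)$ and $(B,J)=(R_i^{s.\flat},I_0^{s.\flat}R_i^{s.\flat})$. The sequences are $\bm{x}$ and $\bm{y}=\bm{x}^{s.\flat}$. The whole proof therefore reduces to verifying the four hypotheses of \cref{lem:Kos}, together with the compatibility of the chosen sequences.

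First, $I=I_0R_i=(x_1)$ and $J=I_0^{s.\flat}R_i^{s.\flat}=(x_1^{s.\flat})$ are principal by assumption. Second, $I_0(R_i)_{\Iztor}=(0)$ holds by condition \textbf{(g)} for {\boldmath $R$}. The analogous vanishing $I_0^{s.\flat}(R_i^{s.\flat})_{\Izstor}=(0)$ holds because $\textrm{{\boldmath $R$}}^\flat$ is itself a perfectoid tower arising from $(R^{s.\flat},I_0^{s.\flat})$ by \cref{thm:INS3.35} (2), so it too satisfies \textbf{(g)}. Third, the required commutative square of possibly non-unital rings is exactly the one provided by \cref{thm:INS3.35} (1). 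Its horizontal maps are $\varphi_{I_0,R_i}$ and $\varphi_{I_0^{s.\flat},R_i^{s.\flat}}$, and its vertical isomorphisms are $(R_i^{s.\flat})_{\Izstor}\cong(R_i)_{\Iztor}$ and the map $R_i^{s.\flat}/I_0^{s.\flat}R_i^{s.\flat}\cong R_i/I_0R_i$ induced by $\Phi_0^{(i)}$. We take $\Phi$ and $\Phi_{\mathrm{tor}}$ to be the inverses of these two isomorphisms, so the arrows point from the $R_i$ side to the tilt side. Fourth, the hypothesis $\Phi_0^{(i)}(x_k^{s.\flat})=x_k \bmod I_0R_i$ gives precisely $\Phi(x_k\bmod I)=x_k^{s.\flat}\bmod J$ for all $k$.

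The step needing the most care is the third one. We must check that the statement of \cref{thm:INS3.35} (1) really supplies the square in the form \cref{lem:Kos} requires: the maps must be (possibly non-unital) ring maps, hence $\Z$-linear, and the square must commute with the two torsion embeddings. Commutativity is stated in \cref{thm:INS3.35} (1), and inverting both vertical arrows preserves commutativity. Linearity over $\Z$ is automatic because these are homomorphisms of additive groups.

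With these hypotheses in place, \cref{lem:Kos} yields $K_\bullet(\bm{x};R_i)\cong K_\bullet(\bm{x}^{s.\flat};R_i^{s.\flat})$ in $D(\Z)=D(\Ab)$, as claimed. No further work, such as comparing over a common base ring, is needed, since we only ask for an isomorphism of complexes of abelian groups.
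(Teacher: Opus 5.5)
Your proposal is correct and follows the paper's proof: the paper likewise deduces the theorem by applying \cref{lem:Kos} over $\Z$, using the commutative square supplied by \cref{thm:INS3.35} (1). The only cosmetic difference is the choice of roles. The paper takes $(A,I)=(R_i^{s.\flat},I_0^{s.\flat}R_i^{s.\flat})$ and $(B,J)=(R_i,I_0R_i)$, so the vertical isomorphisms already point the right way. You swap the roles and invert them, which is equally valid.
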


\begin{proof}
Due to \cref{thm:INS3.35} (1), we may apply \cref{lem:Kos} to the pairs $(R_i^{s.\flat},I_0^{s.\flat}R_i^{s.\flat})$ and $(R_i,I_0R_i)$.
\end{proof}

%%%%%%%%%%%%%%%%%%%%%%%%%%%%%%%%%%%%%%%%%%%%%%%%%%%%%%%%%%%%%%%%%%%%%%%%%%%%%%%%%%%%%%%%%%%%%%%
\subsection{Tilting several invariants of Noetherian local rings}

In this subsection, we focus on Noetherian local perfectoid towers, that is, perfectoid towers consisting of Noetherian local rings (\cref{def:towerP}). Note that if a perfectoid tower {\boldmath $R$} arising from a pair $(R,I_0)$ is Noetherian local, then so is the tilt $\textrm{{\boldmath $R$}}^\flat$ (\cite[Lemma 3.11 (2), Proposition 3.42 (2)]{INS25}).

We begin with the tilting invariance of Krull dimension, which was already considered in \cite[Proposition 3.42 (3)]{INS25} in the $I_0$-torsion free case. The general case can be proven with the aid of \cref{thm:decomp}:

\begin{proposition}
\label{prop:tilt-dim}
Let $\textrm{{\boldmath $R$}}=\{R_i,t_i\}_{i\geq 0}$ be a Noetherian local perfectoid tower arising from a pair $(R,I_0)$. Then we have the equalities
\[
\begin{tikzcd}[sep=small]
\dim(R) \rar[equal] \dar[equal] & \dim(R_0) \rar[equal] \dar[equal] & \dim(R_1) \rar[equal] \dar[equal] & \cdots \rar[equal] & \dim(R_i) \rar[equal] \dar[equal] & \cdots \\
\dim(R^{s.\flat}) \rar[equal] & \dim(R_0^{s.\flat}) \rar[equal] & \dim(R_1^{s.\flat}) \rar[equal] & \cdots \rar[equal] & \dim(R_i^{s.\flat}) \rar[equal] & \cdots.
\end{tikzcd}
\]
\end{proposition}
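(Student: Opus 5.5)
The plan is to reduce to the two ``pure'' cases via \cref{thm:decomp} and then take maxima. Fix $i\geq 0$. Write $T_i\coloneqq (R_i)_{\Iztor}$, $\wt{R_i}\coloneqq R_i/T_i$ and $P_i\coloneqq (R_i/I_0R_i)_\red$.

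First I will prove a dimension formula for the fiber product:
\[
\dim(R_i)=\max\{\dim(\wt{R_i}),\ \dim(P_i)\},
\]
with the convention $\dim(0)=-\infty$, in case $\wt{R_i}=0$. Both maps $R_i\to\wt{R_i}$ and $R_i\to P_i$ are surjective, with kernels $T_i$ and $\sqrt{I_0R_i}$. The proof of \cref{thm:decomp} (3), via \cref{lem:BC}, gives $T_i\cap\sqrt{I_0R_i}=(0)$. Hence $T_i\cdot\sqrt{I_0R_i}\subset T_i\cap\sqrt{I_0R_i}=(0)$, and so
\[
\Spec(R_i)=V(T_i)\cup V(\sqrt{I_0R_i})=\Spec(\wt{R_i})\cup\Spec(P_i)
\]
as a union of two closed subsets. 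Every chain of primes lies in a single irreducible component, hence in one of the two pieces, so the formula follows.

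The same argument applies to the tilt. By the second cartesian square of \cref{thm:decomp} (3), together with \cref{prop:torftower} (3), the map $R_i^{s.\flat}\to(\wt{R_i})^{s.\flat}$ is the surjection with kernel $(R_i^{s.\flat})_{\Izstor}$. The map $R_i^{s.\flat}\to P_i$ is the composite of $\Phi_0^{(i)}$ with reduction, so it is surjective with kernel $\sqrt{I_0^{s.\flat}R_i^{s.\flat}}$. The two kernels again meet trivially by the cartesian property. This gives
\[
\dim(R_i^{s.\flat})=\max\{\dim((\wt{R_i})^{s.\flat}),\ \dim(P_i)\}.
\]

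It then remains to compare the two pieces separately, along the tower and under tilting. By \cref{thm:decomp} (1), $\wt{\textrm{{\boldmath $R$}}}$ is a perfectoid tower arising from $(\wt{R},I_0\wt{R})$. It is Noetherian local, since each $\wt{R_i}$ is a quotient of the Noetherian local ring $R_i$; the degenerate case $\wt{R_i}=0$ needs a separate check, but it only contributes $-\infty$. It is also $I_0$-torsion free, so the torsion-free case \cite[Proposition 3.42 (3)]{INS25} gives $\dim(\wt{R_i})=\dim(\wt{R_{i+1}})=\dim((\wt{R_i})^{s.\flat})$ for all $i$. For the other piece, $(\textrm{{\boldmath $R$}}/I_0\textrm{{\boldmath $R$}})_\red$ is a perfect tower by \cref{prop:modIred}. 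By \cref{lem:perfecttower} (3), $P_{i+1}\cong P_i$ via the Frobenius projection, so $\dim(P_i)=\dim(P_0)$ for every $i$. Substituting these equalities into the two max formulas gives all the equalities in the diagram, both the horizontal and the vertical ones.

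I expect the main obstacle to be the bookkeeping for the tilt side. One has to check that the right-hand cartesian square of \cref{thm:decomp} (3) is really induced by surjections whose kernels are the $I_0^{s.\flat}$-torsion and the nilradical of $I_0^{s.\flat}R_i^{s.\flat}$. One also has to check that the tilt $(\wt{\textrm{{\boldmath $R$}}})^\flat$ is Noetherian local, so that \cite[Proposition 3.42]{INS25} applies; for this I would invoke \cite[Proposition 3.42 (2)]{INS25} for $\wt{\textrm{{\boldmath $R$}}}$. The point-set dimension argument itself is routine.
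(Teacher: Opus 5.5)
Your proposal is correct and follows essentially the same route as the paper. You decompose $R_i$ and $R_i^{s.\flat}$ as fiber products over the $I_0$-torsion-free quotient and the reduction modulo $I_0$, take $\dim$ as the maximum over the two pieces, and reduce to the torsion-free case. The differences are cosmetic:
\begin{itemize}
\item You prove the max formula directly via $\Spec(R_i)=V(T_i)\cup V(\sqrt{I_0R_i})$, whereas the paper cites it from [BF26].
\item The paper settles the torsion-free case from the isomorphism $\wt{R_i}^{s.\flat}/I_0^{s.\flat}\wt{R_i}^{s.\flat}\cong\wt{R_i}/I_0\wt{R_i}$, instead of quoting [INS25, Prop.\ 3.42 (3)] as you do.
\item The paper obtains the horizontal equalities from the tilt being a perfect tower, so that all the $R_i^{s.\flat}$ are isomorphic as rings, rather than from $P_{i+1}\cong P_i$.
\end{itemize}
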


\begin{proof}
Fix $i\geq 0$, and let $\wt{R_i}\coloneqq R_i/(R_i)_{\Iztor}$ and $\wt{R_i}^{s.\flat}\coloneqq R_i^{s.\flat}/(R_i^{s.\flat})_{\Izstor}$. Then we have decompositions
\[
R_i\xr{\cong} \wt{R_i}\times_{\wt{R_i}/I_0\wt{R_i}}(R_i/I_0R_i),\quad R_i^{s.\flat}\xr{\cong}\wt{R_i}^{s.\flat}\times_{\wt{R_i}/I_0\wt{R_i}}(R_i/I_0R_i),
\]
which yields
\[
\dim(R_i) = \max\{\dim(\wt{R_i}), \dim(R_i/I_0R_i)\},\quad \dim(R_i^{s.\flat}) = \max\{\dim(\wt{R_i}^{s.\flat}), \dim(R_i/I_0R_i)\}
\]
(cf.\ \cite[Corollary 3.2]{BF26}). Hence it suffices to show that $\dim \wt{R_i}=\dim\wt{R_i}^{s.\flat}$, and thus we may assume that {\boldmath $R$} is $I_0$-torsion free (due to \cref{prop:torftower} (2) (3) (4)). Then the claim follows from the isomorphism $\wt{R_i}^{s.\flat}/I_0^{s.\flat}\wt{R_i}^{s.\flat} \xr{\cong} \wt{R_i}/I_0\wt{R_i}$.
By a similar argument one can also show the equality $\dim(R_i)=\dim(R_{i+1})$. Alternatively, it is enough to invoke that $R^{s.\flat}\cong R_1^{s.\flat}\cong\cdots\cong R_i^{s.\flat}\cong\cdots$ as rings (\cref{ex:perfectoid tower} (1)).
\end{proof}

For a Noetherian local ring $(R,\m,k)$, let $\edim(R)=\dim_k(\m/\m^2)$ denote the \emph{embedding dimension of $R$}. Let us here make the following observation.

\begin{lemma}
\label{lem:mIm}
Let $\textrm{{\boldmath $R$}}=\{R_i\}_{i\geq 0}$ be a local perfectoid tower arising from a pair $(R,I_0)$. For each $i\geq 0$, let $\m_i$ and $\m_i^{s.\flat}$ denote the maximal ideals of $R_i$ and $R_i^{s.\flat}$, respectively. Then we have the canonical isomorphisms
\[
\begin{tikzcd}[sep=small]
 & \m_1/I_1\m_1 & \cdots \lar["\cong"'] & \m_i/I_i\m_i \lar["\cong"'] & \cdots \lar["\cong"'] \\
\m_0^{s.\flat}/I_0^{s.\flat}\m_0^{s.\flat} & \m_1^{s.\flat}/I_1^{s.\flat}\m_1^{s.\flat} \lar["\cong"'] \uar["\cong"] & \cdots \lar["\cong"'] & \m_i^{s.\flat}/I_i^{s.\flat}\m_i^{s.\flat} \lar["\cong"'] \uar["\cong"'] & \cdots \lar["\cong"'].
\end{tikzcd}
\]
\end{lemma}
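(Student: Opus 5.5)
The plan is to reduce every quotient $\m_i/I_i\m_i$ (for $i\geq 1$) to a quotient of the maximal ideal of $\ol{R_i}\coloneqq R_i/I_0R_i$. Then the vertical maps come from the isomorphism $R_i^{s.\flat}/I_0^{s.\flat}R_i^{s.\flat}\cong \ol{R_i}$ of \cref{thm:INS3.35} (1). The bottom row comes from the fact that the tilt is a perfect tower, and the top row follows by composing.

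\emph{Step 1 (absorbing $I_0$ into $I_i\m_i$).} By \cref{prop:Ii} (2)(b) and induction, $I_0R_i=I_i^{[p^i]}\subset I_i^{p^i}$. Since $R_i$ is local and $I_0R_i$ lies in the Jacobson radical by \textbf{(e)}, we also have $I_i\subset\m_i$. Indeed, $I_i\ol{R_i}$ is the preimage under the Frobenius projections of $I_0$-type ideals, so it is a nil ideal modulo $I_0$. Hence for $i\geq 1$ (so that $p^i\geq 2$) we get $I_0R_i\subset I_i^{p^i}\subset I_i\m_i$. Writing $\ol{\m_i}\coloneqq \m_i/I_0R_i$ for the maximal ideal of $\ol{R_i}$, this gives a canonical isomorphism
\[
\m_i/I_i\m_i\cong \ol{\m_i}/I_i\ol{\m_i}\qquad(i\geq 1).
\]
The same argument on the tilt side uses \cref{thm:INS3.35} (2) and the small tilts $I_i^{s.\flat}$, for which $(I_i^{s.\flat})^{p^i}=I_0^{s.\flat}R_i^{s.\flat}$ in the perfect tower $\textrm{{\boldmath $R$}}^\flat$. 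It gives $\m_i^{s.\flat}/I_i^{s.\flat}\m_i^{s.\flat}\cong \ol{\m_i^{s.\flat}}/I_i^{s.\flat}\ol{\m_i^{s.\flat}}$, where $\ol{\m_i^{s.\flat}}$ is the maximal ideal of $R_i^{s.\flat}/I_0^{s.\flat}R_i^{s.\flat}$.

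\emph{Step 2 (vertical maps).} The ring isomorphism $\Phi^{(i)}_0$ induced on $R_i^{s.\flat}/I_0^{s.\flat}R_i^{s.\flat}\xr{\cong}\ol{R_i}$ carries maximal ideal to maximal ideal. By definition of the small tilt, together with \cite[Theorem 3.35 (1)]{INS25} (a generator of $I_i^{s.\flat}$ projects to a generator of $I_i\ol{R_i}$, as used in \cref{lem:alphaflat} (2)), it carries $I_i^{s.\flat}$ onto $I_i\ol{R_i}$. Combining this with Step 1 gives the vertical isomorphisms for $i\geq 1$.

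\emph{Step 3 (bottom row, then top row).} The tilt is a perfect tower, so by \cref{lem:perfecttower} there are ring isomorphisms $F_i^{s.\flat}\colon R_{i+1}^{s.\flat}\xr{\cong}R_i^{s.\flat}$ with $t_i^{s.\flat}\circ F_i^{s.\flat}=\varphi$. These send $\m_{i+1}^{s.\flat}$ to $\m_i^{s.\flat}$. They also send $I_{i+1}^{s.\flat}$ to $I_i^{s.\flat}$, which I would check from the compatibility $F_i(\ol{f_{i+1}})=\ol{f_i}$ in \cref{prop:Ii} (3) on the $0$-th projections. So they induce the bottom row, including the arrow into $\m_0^{s.\flat}/I_0^{s.\flat}\m_0^{s.\flat}$; here no absorption step is needed. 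The top row is then defined as the composite of a vertical isomorphism, a bottom arrow, and the inverse of a vertical isomorphism. To see it is the natural map, I would check that it agrees with the additive bijection $\ol{\m_{i+1}}/I_{i+1}\ol{\m_{i+1}}\to\ol{\m_i}/I_i\ol{\m_i}$ induced by the Frobenius projection $F_i$. That bijection exists because $F_i$ is surjective on maximal ideals, sends $I_{i+1}$ to $I_i$, and has kernel $I_1\ol{R_{i+1}}\subset I_{i+1}\ol{\m_{i+1}}$ for $i\geq 1$. It is $p$-semilinear, so it is only an isomorphism of abelian groups, not of modules.

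The main obstacle is Step 1, namely the inclusion $I_i\subset\m_i$ together with $I_0R_i\subset I_i\m_i$. It is exactly this absorption that fails when $i=0$, which explains why the top row starts at $\m_1$. I would argue it through the identity $I_i^{[p^i]}=I_0R_i$ and locality.
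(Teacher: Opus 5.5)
Your proposal is correct and follows essentially the paper's route. You absorb $I_0R_i=I_i^{p^i}$ (equivalently, the kernel $I_1\ol{R_{i+1}}$ of the Frobenius projection) into $I_i\m_i$ for $i\geq 1$, then transport along $R_i^{s.\flat}/I_0^{s.\flat}R_i^{s.\flat}\cong R_i/I_0R_i$ and along the Frobenius projections. The only organizational difference is that you build the bottom row from the ring isomorphisms $F_i^{s.\flat}$ of the perfect tilt, which also covers the arrow into $\m_0^{s.\flat}/I_0^{s.\flat}\m_0^{s.\flat}$ that the paper's cokernel diagrams leave implicit.
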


\begin{proof}
Fix $i\geq 0$, and choose a generator $f_{i+1}^{s.\flat}$ of $I_{i+1}^{s.\flat}$. Then for any $j\geq 0$ the image $\ol{f_{i+j+1}}\in R_{i+j+1}/I_0R_{i+j+1}$ of $f_{i+1}^{s.\flat}$ is a generator of $I_{i+j+1}/I_0R_{i+j+1}$ by \cite[Theorem 3.35 (1)]{INS25}. Hence we have the commutative diagram with exact rows
\[
\begin{tikzcd}
\m_{i+2}/I_1R_{i+2} \rar["\ol{f_{i+2}}"] \dar["\cong"] & \m_{i+2}/I_1R_{i+2} \rar \dar["\cong"] & \m_{i+2}/I_{i+2}\m_{i+2} \rar \dar[dashed,"\cong"] & 0 \\
\m_{i+1}/I_0R_{i+j+1} \rar["\ol{f_{i+1}}"] & \m_{i+1}/I_0R_{i+1} \rar & \m_{i+1}/I_{i+1}\m_{i+1} \rar & 0,
\end{tikzcd}
\]
where the left two vertical arrows are induced by the isomorphism of local rings $R_{i+2}/I_1R_{i+2}\xr{\cong} R_{i+1}/I_0R_{i+1}$. Similarly, we have the commutative diagram with exact rows
\[
\begin{tikzcd}
\m_{i+1}^{s.\flat}/I_0^{s.\flat}R_{i+1}^{s.\flat} \rar["f_{i+1}^{s.\flat}"] \dar["\cong"] & \m_{i+1}^{s.\flat}/I_0^{s.\flat}R_{i+1}^{s.\flat} \rar \dar["\cong"] & \m_{i+1}^{s.\flat}/I_{i+1}^{s.\flat}\m_{i+1}^{s.\flat} \rar \dar[dashed,"\cong"] & 0 \\
\m_{i+1}/I_0R_{i+1} \rar["\ol{f_{i+1}}"] & \m_{i+1}/I_0R_{i+1} \rar & \m_{i+1}/I_{i+1}\m_{i+1}\rar & 0,
\end{tikzcd}
\]
where the left two vertical arrows are induced by the isomorphism of local rings $R_{i+1}^{s.\flat}/I_0^{s.\flat}R_{i+1}^{s.\flat}\xr{\cong}R_{i+1}/I_0R_{i+1}$.
\end{proof}

If $(R,\m)$ is a Noetherian local ring and $I\subset\m$ is a principal ideal, then
\[
\edim(R)=
\begin{cases}
\edim(R/I)+1 & (I\not\subset\m^2), \\
\edim(R/I) & (I\subset\m^2).
\end{cases}
\]
Thus the following result on local perfectoid towers will be useful.

\begin{lemma}
\label{lem:m2}
Let $\textrm{{\boldmath $R$}}=\{R_i,t_i\}_{i\geq 0}$ be a local perfectoid tower arising from a pair $(R,I_0)$. Assume that the transition maps $t_i\colon R_i\to R_{i+1}$ are injective. Then for any $i\geq 0$, $I_i\subset\m_i^2$ if and only if $I_{i+1}\subset\m_{i+1}^2$. Hence $I_0\subset\m^2$ if and only if $I_i\subset\m_i^2$ for all $i\geq 0$.
\end{lemma}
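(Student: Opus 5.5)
The plan is to transport the condition ``$I_i\subset\m_i^2$'' along the ring isomorphisms $F'_i\colon R_{i+1}/I_1R_{i+1}\xrightarrow{\cong}R_i/I_0R_i$ induced by the Frobenius projections, as in the proof of \cref{thm:PB}. By condition \textbf{(e)} and \cref{rem:pre}, every pillar is principal. So for each $i\geq 0$ I choose $f_i\in R_i$ with $I_i=(f_i)$, compatible in the sense of \cref{prop:Ii}~(3), i.e.\ $F_i(\ol{f_{i+1}})=\ol{f_i}$. Iterating \cref{prop:Ii}~(2)(b) gives $I_iR_{i+1}=I_{i+1}^p$. Hence
\[
I_0R_i=I_i^{p^i}\quad\text{and}\quad I_1R_{i+1}=I_{i+1}^{p^i}.
\]
Since $F'_i$ is an isomorphism of local rings, it carries $\m_{i+1}/I_1R_{i+1}$ onto $\m_i/I_0R_i$ and $\m_{i+1}^2$ onto $\m_i^2$ modulo these ideals. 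It also carries the image of $I_{i+1}$ onto the image of $I_i$, because $F_i(\ol{f_{i+1}})=\ol{f_i}$. Therefore
\[
I_i\subset \m_i^2+I_0R_i \iff I_{i+1}\subset \m_{i+1}^2+I_1R_{i+1}.
\]

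For $i\geq 1$ this already proves the lemma. Indeed, $p^i\geq 2$, so $I_0R_i=I_i^{p^i}\subset\m_i^2$ and $I_1R_{i+1}=I_{i+1}^{p^i}\subset\m_{i+1}^2$. Thus both correction terms disappear, and the displayed equivalence is exactly $I_i\subset\m_i^2\iff I_{i+1}\subset\m_{i+1}^2$. The final sentence of the lemma then follows by induction once the case $i=0$ is settled.

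The main obstacle is $i=0$. There the left-hand side of the equivalence is vacuous, because $I_0\subset \m_0^2+I_0$ always holds; this is where the injectivity of the $t_i$ must enter. I would compare $f_0$ and $f_1^p$ directly in $R_1$, using $f_0R_1=f_1^pR_1$. First assume $f_1\in\m_1^2$. Modulo $I_0R_1$, the Frobenius of $R_1/I_0R_1$ factors through $\ol{t_0}$, so $\m_1^{[p]}\subset \m_0R_1+I_0R_1$. Writing $f_0=uf_1^p$ with $u$ a unit, I get $f_0\in \m_0^2R_1+f_0\m_1$, and Nakayama's lemma gives $f_0\in \m_0^2R_1$. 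It then remains to descend this to $f_0\in\m_0^2$, i.e.\ to show $\m_0^2R_1\cap R_0=\m_0^2$ (at least for the element $f_0$). I would try to get this descent from the injectivity of $t_0$ together with the cartesian description of $R_0$ in \cref{thm:decomp}: in the $I_0$-torsion-free part use \cref{lem:inj lift}, and in the reduced part modulo $I_0$ use the injectivity of $\ol{t_0}$.

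For the converse, assume $f_0\in\m_0^2$ but $f_1\notin\m_1^2$. Then $f_1$ is part of a minimal generating set of $\m_1$, and I would derive a contradiction by computing $\edim$ on both sides of the isomorphism $R_1/I_1R_1\cong R_0/I_0$, using the formula for $\edim(R/I)$ recalled before the lemma. This descent step is where I expect the real difficulty to lie.
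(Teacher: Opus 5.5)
Your treatment of $i\geq 1$ is correct, and it differs from the paper's. Since $I_0R_i=I_i^{p^i}$ and $I_1R_{i+1}=I_{i+1}^{p^i}$ already lie in the squares of the maximal ideals once $p^i\geq 2$, the isomorphism $R_{i+1}/I_1R_{i+1}\cong R_i/I_0R_i$ transports the condition directly, and this step does not even use injectivity. The paper instead shifts to the tower arising from $(R_i,I_i)$ and proves only the case $i=0$, by an element computation.

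That case $i=0$ is exactly what your proposal leaves open, and it is the indispensable one: the last sentence of the lemma and the equality $\edim(R)=\edim(R_1)$ in \cref{thm:tilt-regular} rest on it. Neither of your routes for $i=0$ works as stated.
\begin{itemize}
\item For $I_1\subset\m_1^2\Rightarrow I_0\subset\m_0^2$: after Nakayama you only know $t_0(f_0)\in\m_0^2R_1$, and nothing available gives $\m_0^2R_1\cap R_0=\m_0^2$. The maps $t_0$ and $\ol{t_0}$ are injective, but $t_0$ need not be pure. Moreover, \cref{lem:inj lift} and \cref{thm:decomp} give injectivity, not contraction of extended ideals.
\item For the converse: the $\edim$ count yields a contradiction only if you already know $\edim(R_0)=\edim(R_1)$. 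That is precisely what \cref{thm:tilt-regular}~(1) deduces from this lemma, so the argument is circular.
\end{itemize}

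The missing idea is to build the relevant element inside $R_0$ rather than passing to $\m_0^2R_1$.

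For $I_1\subset\m_1^2\Rightarrow I_0\subset\m_0^2$: write $f_1=\sum c_ic'_i$ with $c_i,c'_i\in\m_1$. Using $\ol{t_0}\circ F_0=\varphi$, pick $b_i,b'_i\in\m_0$ with $t_0(b_i)\equiv c_i^p$ and $t_0(b'_i)\equiv {c'_i}^p$ modulo $I_0R_1=f_1^pR_1$, and set $g_0\coloneqq\sum b_ib'_i\in\m_0^2$. Expand, and use $p\in f_1^pR_1$ to absorb the mixed terms of $(\sum c_ic'_i)^p$. This gives $t_0(g_0)=uf_1^p$ with $u\in R_1^\times$. So $t_0(g_0)\in I_0R_1$, and condition \textbf{(b)} gives $g_0=a_0f_0$ with $a_0\in R_0$. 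Writing $t_0(f_0)=af_1^p$, you get $(u-t_0(a_0)a)f_1^p=0$. If $a_0\in\m_0$, this factor is a unit, forcing $I_0R_1=0$; that contradicts $I_0\neq 0$ together with the injectivity of $t_0$. Hence $I_0=(g_0)\subset\m_0^2$.

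For $I_0\subset\m_0^2\Rightarrow I_1\subset\m_1^2$: start from $f_0=\sum b_ib'_i$ with $b_i,b'_i\in\m_0$. Surjectivity of $F_0$ gives $c_i,c'_i\in\m_1$ with $t_0(b_i)\equiv c_i^p$ modulo $f_0R_1$. The same expansion yields $t_0(f_0)v={f'_1}^p$ with $f'_1\coloneqq\sum c_ic'_i\in\m_1^2$ and $v\in R_1^\times$. Then $f'_1\in I_1$ by \textbf{(f-2)} and the injectivity of $\ol{t_0}$. The same unit argument shows $f'_1R_1=I_1$, so $I_1\subset\m_1^2$.
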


It was proved by Shimomoto \cite[Proposition 4.9]{Shi16} that a ramified complete regular local ring admits a faithfully flat perfectoid algebra; the following argument is essentially the same as the one therein.

\begin{proof}
By shifting, we may assume $i=0$. If $I_0=(0)$, then {\boldmath $R$} is a perfect tower, and thus the assertion is obvious. We henceforth assume that $I_0\neq(0)$. Then $I_0R_1\neq(0)$ since the map $t_0\colon R_0\to R_1$ was assumed to be injective.

First, assume that $I_0\subset\m_0^2$. Choose a generator $f_0$ of $I_0$. Then we may write $f_0=\sum_{i=1}^nb_ib'_i$ with $b_i,b'_i\in\m_0$. Since the Frobenius projection $F_0\colon R_1/I_0R_1\to R_0/I_0$ is surjective, we may find $c_i,c'_i\in\m_1$ and $d_i,d'_i\in R_1$ such that
\[
b_i=c_i^p+f_0d_i,\quad b'_i={c'_i}^p+f_0d'_i.
\]
Substituting these equations into $f_0=\sum_{i=1}^n b_i b'_i$, we obtain $f_0(1-h_1)=\sum_{i=1}^n c_i^p {c'_i}^p$ with the property that $1-h_1\in R_1^\times$. Set
\[
f_1\coloneqq \sum_{i=1}^n c_ic'_i \in \m_1^2,
\]
and since $p\in I_0$, we can write
\[
\sum_{i=1}^nc_i^p{c'_i}^p - f_1^p=\sum_{i=1}^nc_i^p{c'_i}^p - \left(\sum_{i=1}^nc_ic'_i\right)^p
=f_0g_1
\]
for some $g_1\in\m_1$. Hence $f_0(1-h_1)=f_1^p+f_0g_1$ and $f_0(1-g_1-h_1)=f_1^p$. So $f_1^pR_1=I_0R_1=I_1^p$. Taking radicals, we get $\m_1^2\supset f_1R_1=I_1$.

Next, assume that $I_1\subset\m_1^2$. Choose a generator $f_1$ of $I_1$. Then we may write $f_1=\sum_{i=1}^nc_ic'_i$ with $c_i,c'_i\in\m_1$. Since the absolute Frobenius on $R_1/I_0R_1$ factors through the Frobenius projection $F_0\colon R_1/I_0R_1\to R_0/I_0$, we may find $b_i,b'_i\in\m_0$ and $d_i,d'_i\in R_1$ such that
\[
b_i=c_i^p+f_1^pd_i,\quad b'_i={c'_i}^p+f_1^pd'_i.
\]
Set
\[
g_0\coloneqq \sum_{i=1}^n b_ib'_i \in \m_0^2,
\]
and since $p\in I_0R_1=I_1^p$, we can write
\[
\sum_{i=1}^nc_i^p{c'_i}^p - f_1^p = \sum_{i=1}^nc_i^p{c'_i}^p - \left(\sum_{i=1}^nc_ic'_i\right)^p = f_1^p g_1
\]
for some $g_1\in\m_1$. Hence we obtain
\begin{align*}
f_1^p &= \sum_{i=1}^n c_i^p {c'_i}^p - \left(\sum_{i=1}^n c_i^p {c'_i}^p - \left(\sum_{i=1}^n c_ic'_i\right)^p \right) \\
&= \sum_{i=1}^n (b_i-f_1^pd_i)(b'_i-f_1^p d'_i) - f_1^pg_1 \\
&= g_0 +f_1^ph_1-f_1^pg_1,
\end{align*}
where $h_1\in \m_1$. Thus $(1+g_1-h_1)f_1^p=g_0\in I_0R_1\cap \m^2\subset I_0$. Set $u_1\coloneqq 1+g_1-h_1\in R_1^\times$. Hence $u_1f_1^p=a_0f_0$ for some $a_0\in R_0$ and a generator $f_0$ of $I_0$. We may write $f_0=a_1f_1$ with $a_1\in R_1$. Then $(u_1-a_0a_1^p)f_1^p=0$. If $a_0\in\m_0$, then we would have $u_1-a_0a_1^p=u_1(1-u_1^{-1}a_0a_1^p)\in R_1^\times$, and thus $f_1^p=0$, which contradicts the assumption that $I_0R_1\neq(0)$. Hence $a_0\in R^\times$, and so $I_0=(a_0f_0)=(g_0)\subset\m_0^2$.
\end{proof}

Using the lemmas above, we readily obtain the following theorem.

%\begin{lemma}
%\label{lem:torZar}
%Let $\textrm{{\boldmath $R$}}=\{R_i,t_i\}_{i\geq 0}$ be a local perfectoid tower arising from a pair $(R,I_0)$. Then for any $i\geq 0$ the following conditions are equivalent.
%  \begin{enumerate}
%  \item $I_0R_i\neq(0)$.
%  \item $(R_i)_{\Iztor}$ is contained in the maximal ideal of $R_i$.
%  \item $(R_i^{s.\flat})_{\Izstor}$ is contained in the maximal ideal of $R_i^{s.\flat}$.
%  \end{enumerate}
%\end{lemma}
%
%\begin{proof}
%(1) $\Leftrightarrow$ (2): $(R_i)_{\Iztor}$ contains a unit if and only if $I_0R_i=(0)$.
%
%(2) $\Leftrightarrow$ (3): By \cref{thm:INS3.35} (1), $R_i/I_0R_i$ is $\Im(\varphi_{I_0,R_i})$-adically Zariskian if and only if $R_i^{s.\flat}/I_0^{s.\flat}R_i^{s.\flat}$ is $\Im(\varphi_{I_0^{s.\flat},R_i^{s.\flat}})$-adically Zariskian. The assertion follows from this.
%\end{proof}

\begin{theorem}
\label{thm:tilt-regular}
Let $\textrm{{\boldmath $R$}}=\{R_i,t_i\}_{i\geq 0}$ be a Noetherian local perfectoid tower arising from a pair $(R,I_0)$.
  \begin{enumerate}
  \item We have the equalities
  \[
  \begin{tikzcd}[sep=small]
  \edim(R) \rar[equal] \dar[equal] & \edim(R_0) \rar[equal] \dar[equal] & \edim(R_1) \rar[equal] \dar[equal] & \cdots \rar[equal] & \edim(R_i) \rar[equal] \dar[equal] & \cdots \\
  \edim(R^{s.\flat}) \rar[equal] & \edim(R_0^{s.\flat}) \rar[equal] & \edim(R_1^{s.\flat}) \rar[equal] & \cdots \rar[equal] & \edim(R_i^{s.\flat}) \rar[equal] & \cdots.
  \end{tikzcd}
  \]
  \item For each $i\geq 0$, the following conditions are equivalent.
    \begin{enumerate}
    \item $I_i\subset\m_i^2$, where $\m_i$ is the maximal ideal of $R_i$.
    \item $I_i^{s.\flat}\subset(\m_i^{s.\flat})^2$, where $\m_i^{s.\flat}$ is the maximal ideal of $R_i^{s.\flat}$.
    \end{enumerate}
  Moreover, if these equivalent conditions are satisfied for some $i\geq 0$, then they are satisfied for every $i\geq 0$.
  \item For each $i\geq 0$, the following conditions are equivalent.
    \begin{enumerate}
    \item $R_i$ is regular.
    \item $R_i^{s.\flat}$ is regular.
    \end{enumerate}
  Moreover, if these equivalent conditions are satisfied for some $i\geq 0$, then they are satisfied for every $i\geq 0$.
  \end{enumerate}
\end{theorem}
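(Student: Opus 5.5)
The plan is to read everything off \cref{lem:mIm}, \cref{lem:m2} and \cref{prop:tilt-dim}. The main work goes into (1); (2) and (3) are then formal consequences.

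For (1), fix $i\geq 0$ and write $k_i$ for the residue field of $R_i$. The isomorphism $R_i^{s.\flat}/I_0^{s.\flat}R_i^{s.\flat}\cong R_i/I_0R_i$ of \cref{thm:INS3.35} (1) is an isomorphism of local rings. It therefore identifies $k_i$ with the residue field of $R_i^{s.\flat}$. Since $I_i\subset \m_i$, we have $I_i\m_i\subset\m_i^2$, and hence
\[
(\m_i/I_i\m_i)\otimes_{R_i}k_i\cong \m_i/\m_i^2 .
\]
The same holds on the tilted side. The isomorphisms of \cref{lem:mIm} are induced by the isomorphisms of local rings $R_{i+1}/I_1R_{i+1}\cong R_i/I_0R_i$ and $R_i^{s.\flat}/I_0^{s.\flat}R_i^{s.\flat}\cong R_i/I_0R_i$. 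So they are semilinear over the corresponding identifications of residue fields. Tensoring them down to the residue fields gives isomorphisms of vector spaces $\m_i/\m_i^2\cong \m_{i+1}/\m_{i+1}^2$ and $\m_i/\m_i^2\cong \m_i^{s.\flat}/(\m_i^{s.\flat})^2$. The $i=0$ column is handled by the left-most diagonal arrow in \cref{lem:mIm}. This yields the whole diagram of equalities of embedding dimensions. The point I expect to need care is checking that the maps in \cref{lem:mIm} are compatible with these module structures, so that tensoring with the residue field is legitimate.

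For (2), apply the formula for $\edim$ modulo a principal ideal to $I_i\subset R_i$ and to $I_i^{s.\flat}\subset R_i^{s.\flat}$. I will use the isomorphism $R_i^{s.\flat}/I_i^{s.\flat}\cong R_i/I_i$. This should follow by applying \cite[Theorem 3.35]{INS25} to the shifted tower $\{R_{i+j}\}_{j\geq 0}$ arising from $(R_i,I_i)$, as in \cref{rem:pre}; if needed, one can argue directly via the generators $\ol{f_{i+j}}$ of $I_{i+j}\ol{R_{i+j}}$. It gives
\[
\edim(R_i)-\edim(R_i/I_i)=\edim(R_i^{s.\flat})-\edim(R_i^{s.\flat}/I_i^{s.\flat}),
\]
where both sides lie in $\{0,1\}$ and the first terms agree by (1). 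Consequently $I_i\subset\m_i^2$ if and only if $I_i^{s.\flat}\subset(\m_i^{s.\flat})^2$.

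For the propagation in $i$, I will use \cref{lem:m2}, which needs injective transition maps. For $\textrm{{\boldmath $R$}}$, the tower is Noetherian and perfectoid, so it is $I_0$-adically separated by \cref{rem:reduced} (1); then \cref{cor:reduced} gives injectivity of the $t_i$. For $\textrm{{\boldmath $R$}}^\flat$, the tilt is a perfect tower, so its transition maps are injective. Applying \cref{lem:m2} to both towers then gives the claim for all $i$.

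For (3), a Noetherian local ring is regular if and only if $\edim=\dim$. The Krull dimensions all agree by \cref{prop:tilt-dim}, and the embedding dimensions all agree by (1). Hence regularity of one $R_i$ or $R_i^{s.\flat}$ is equivalent to regularity of all of them.
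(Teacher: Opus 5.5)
\textbf{Gap at level $0$ in your proof of (1).} \cref{lem:mIm} contains no arrow involving $\m_0/I_0\m_0$. The top row of its diagram starts at $\m_1/I_1\m_1$. The only object in column $0$ is $\m_0^{s.\flat}/I_0^{s.\flat}\m_0^{s.\flat}$, and it is attached to level $1$ only through the bottom (perfect-tower) row. There is no diagonal arrow that reaches $\edim(R_0)$. So your argument gives every equality in the diagram except those involving $\edim(R_0)$.

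This omission is forced by the method. The proof of \cref{lem:mIm} identifies $\m_j/I_j\m_j$ with a cokernel of multiplication by $\ol{f_j}$ on $\m_j/I_0R_j$. That identification needs $I_0R_j=I_j^{p^j}\subset I_j\m_j$. At $j=0$ this inclusion fails, since $I_0\subset I_0\m_0$ would force $I_0=0$ by Nakayama, and one only obtains $\m_0/I_0$. Whether $\m_0/\m_0^2$ has the same dimension as $\m_1/\m_1^2$ depends on comparing $I_0\subset\m_0^2$ with $I_1\subset\m_1^2$, which is exactly what \cref{lem:m2} does.

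Your argument for (2) at $i=0$ also uses the missing equality $\edim(R_0)=\edim(R_0^{s.\flat})$. As written, it is circular at that level.

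\textbf{Repair.} You already have every ingredient; they only need reordering, as in the paper.
\begin{enumerate}
\item $F_0$ induces $R_0/I_0\cong R_1/I_1$.
\item For a principal ideal $I\subset\m$, $\edim(R)-\edim(R/I)$ is $0$ or $1$ according as $I\subset\m^2$ or not.
\item Apply \cref{lem:m2}, with injectivity of $t_0$ supplied by \cref{rem:reduced} (1) and \cref{cor:reduced} as you observed. Together with the first two points this gives $\edim(R_0)=\edim(R_1)$ directly.
\end{enumerate}
Equivalently, prove (2) at level $1$ from the level-$1$ case of (1). Then push it down to level $0$ with \cref{lem:m2} applied to both towers. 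Finally read off $\edim(R_0)=\edim(R_0^{s.\flat})$ from $R_0/I_0\cong R_0^{s.\flat}/I_0^{s.\flat}$ and the principal-ideal formula.

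With this fix, the rest of your proposal matches the paper's proof: levels $\geq 1$ via \cref{lem:mIm}, the propagation in (2), and (3) via \cref{prop:tilt-dim}. Your explicit check that the transition maps are injective before invoking \cref{lem:m2} is a point the paper leaves implicit.
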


\begin{proof}
(1) For every $i\geq 0$, it follows from \cref{lem:mIm} that 
\begin{align*}
\edim(R_{i+1})&= \dim_k((\m_{i+1}/I_{i+1}\m_{i+1}) \otimes_{R_{i+1}/I_{i+1}}k) \\
&= \dim_k((\m_{i+1}^{s.\flat}/I_{i+1}^{s.\flat}\m_{i+1}^{s.\flat})\otimes_{R_{i+1}^{s.\flat}/I_{i+1}^{s.\flat}} k) = \edim(R_{i+1}^{s.\flat}).
\end{align*}
It remains to show that $\edim(R)=\edim(R_1)$, which follows from \cref{lem:m2}:
\begin{align*}
\edim(R) &=
\begin{cases}
\edim(R/I_0)+1 & (I_0\not\subset\m^2), \\
\edim(R/I_0) & (I_0\subset\m^2)
\end{cases} \\
&=
\begin{cases}
\edim(R_1/I_1)+1 & (I_1\not\subset\m_1^2), \\
\edim(R_1/I_1) & (I_1\subset\m_1^2)
\end{cases} \\
&=\edim(R_1).
\end{align*}

(2) follows from \cref{lem:m2}, (1), and the isomorphism $R_i^{s.\flat}/I_i^{s.\flat}\xr{\cong}R_i/I_i$.

(3) follows from \cref{prop:tilt-dim} and (1).
\end{proof}

Note that \cite{Ha26c} gives another proof of \cref{thm:tilt-regular} (3) using a mixed characteristic analogue of Kunz's theorem established by O.~Gabber and J.~Lurie.

For a Noetherian local ring $R$, let $\depth(R)$ denote the depth. As usual, we say that $R$ is \emph{Cohen--Macaulay} if $\depth(R)=\dim(R)$. The tilting correspondence of Koszul homology yields the following result.

\begin{theorem}
\label{thm:tilt-CM}
Let $\textrm{{\boldmath $R$}}=\{R_i,t_i\}_{i\geq 0}$ be a Noetherian local perfectoid tower arising from a pair $(R,I_0)$.
  \begin{enumerate}
  \item We have the equalities
  \[
  \begin{tikzcd}[sep=small]
  \depth(R) \rar[equal] \dar[equal] & \depth(R_0) \rar[equal] \dar[equal] & \depth(R_1) \rar[equal] \dar[equal] & \cdots \rar[equal] & \depth(R_i) \rar[equal] \dar[equal] & \cdots \\
  \depth(R^{s.\flat}) \rar[equal] & \depth(R_0^{s.\flat}) \rar[equal] & \depth(R_1^{s.\flat}) \rar[equal] & \cdots \rar[equal] & \depth(R_i^{s.\flat}) \rar[equal] & \cdots.
  \end{tikzcd}
  \]
  \item For each $i\geq 0$, the following conditions are equivalent.
    \begin{enumerate}
    \item $R_i$ is Cohen--Macaulay \emph{(}resp.\ hypersurface\footnote{We say that a Noetherian local ring $R$ is \emph{hypersurface} if $\edim(R)-\depth(R)\leq 1$ (cf.\ \cite[5.1]{Avr}).}\emph{)}.
    \item $R_i^{s.\flat}$ is Cohen--Macaulay \emph{(}resp.\ hypersurface\emph{)}.
    \end{enumerate}
  Moreover, if these equivalent conditions are satisfied for some $i\geq 0$, then they are satisfied for every $i\geq 0$.
  \end{enumerate}
\end{theorem}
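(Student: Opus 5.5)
We plan to compute depth through Koszul homology and then transfer it along the tilt using \cref{thm:tilt-Kos}. Recall that for a Noetherian local ring $(A,\m)$ with $\m=(y_1,\ldots,y_n)$ we have $\depth(A)=n-\max\{q\mid H_q(\bm{y};A)\neq 0\}$ (the depth sensitivity of the Koszul complex). So fix $i\geq 0$ and choose generators of $\m_i$ adapted to the tower. Let $x_1$ be a generator of $I_0R_i$, and complete it to a system of generators $x_1,x_2,\ldots,x_n$ of $\m_i$; this works because $I_0R_i\subset\m_i$ by condition \textbf{(e)}. Using the surjection $\Phi_0^{(i)}\colon R_i^{s.\flat}\to R_i/I_0R_i$, pick lifts $x_k^{s.\flat}\in R_i^{s.\flat}$ of $x_k \bmod I_0R_i$ for $k\geq 2$, and take $x_1^{s.\flat}$ to be a generator of $I_0^{s.\flat}R_i^{s.\flat}$, which is principal. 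Then $x_1^{s.\flat},\ldots,x_n^{s.\flat}$ generate $\m_i^{s.\flat}$. Indeed, $\m_i^{s.\flat}$ is the preimage of the maximal ideal of $R_i/I_0R_i$ under the isomorphism $R_i^{s.\flat}/I_0^{s.\flat}R_i^{s.\flat}\cong R_i/I_0R_i$ of \cref{thm:INS3.35} (1), and it contains $I_0^{s.\flat}R_i^{s.\flat}$ because the tilt is again a perfectoid tower and so satisfies \textbf{(e)}.

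\cref{thm:tilt-Kos} then gives $H_q(\bm{x};R_i)\cong H_q(\bm{x}^{s.\flat};R_i^{s.\flat})$ as abelian groups for all $q$. In particular the largest $q$ with nonvanishing homology is the same on both sides. Since both sequences have the same length $n$, we obtain $\depth(R_i)=\depth(R_i^{s.\flat})$, which gives the vertical equalities in (1). For the horizontal equalities, note that $R_i^{s.\flat}\cong R_{i+1}^{s.\flat}$ as rings, since the tilt is a perfect tower (\cref{ex:perfectoid tower} (1)). Hence $\depth(R_i^{s.\flat})=\depth(R_{i+1}^{s.\flat})$, and chaining with the vertical equalities finishes (1).

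For (2), combine (1) with \cref{prop:tilt-dim}: both $\depth$ and $\dim$ are constant along the tower and invariant under tilting, so the condition $\depth=\dim$ transfers between all $R_i$ and $R_i^{s.\flat}$. For the hypersurface case, combine (1) with \cref{thm:tilt-regular} (1): $\edim-\depth$ is likewise invariant and constant along the tower.

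The main point to check is the hypothesis matching in \cref{thm:tilt-Kos}, namely that $\bm{x}$ and $\bm{x}^{s.\flat}$ generate the maximal ideals of $R_i$ and $R_i^{s.\flat}$ respectively. We also need that Koszul homology computed from an arbitrary finite generating set of $\m$, not necessarily a minimal one, still detects depth via the top nonvanishing degree; this is standard, since depth sensitivity holds for any generating system of $\m$ of length $n$. Beyond these points the argument is a bookkeeping exercise.
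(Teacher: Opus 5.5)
Your proposal is correct and follows essentially the paper's proof. For (1), you apply \cref{thm:tilt-Kos} to a generating sequence of $\m_i$ that begins with a generator of $I_0R_i$, use depth sensitivity of the Koszul complex, and get the horizontal equalities from $R_i^{s.\flat}\cong R_{i+1}^{s.\flat}$ (the tilt is a perfect tower, as in the proof of \cref{lem:tiltinj}); for (2), you combine (1) with \cref{prop:tilt-dim} and \cref{thm:tilt-regular}~(1), which is exactly the paper's argument, with the choice of $\bm{x}^{s.\flat}$ generating $\m_i^{s.\flat}$ made explicit.
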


\begin{proof}
(1) follows from \cref{thm:tilt-Kos} and the depth sensitivity \cite[Theorem 16.8]{Mat2}.

(2) follows from \cref{prop:tilt-dim} (resp.\ \cref{thm:tilt-regular} (1)) and (1).
\end{proof}

Recall here that one can construct perfectoid towers arising from $p$-Stanley--Reisner rings (\cref{ex:perfectoid tower} (4)). From this theorem we obtain the following corollary.

\begin{corollary}
\label{cor:Reisner}
Let $\Delta$ be a simplicial complex, and $C(k)$ the Cohen ring of a field $k$ of characteristic $p>0$. The following conditions are equivalent.
  \begin{enumerate}
  \item The $p$-Stanley--Reisner ring $C(k)\ol{[\Delta]}$ is Cohen--Macaulay.
  \item The Stanley--Reisner ring $k[\Delta]$ is Cohen--Macaulay.
  \end{enumerate}
\end{corollary}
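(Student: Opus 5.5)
The plan is to apply \cref{thm:tilt-CM} (2) to a Noetherian local perfectoid tower whose $0$-th ring is $C(k)\ol{[\Delta]}$ (suitably localized/completed) and whose $0$-th small tilt is $k[\Delta]$ (correspondingly localized/completed). \cref{ex:perfectoid tower} (4) only gives a \emph{pre}perfectoid tower for the graded ring, so the first step is to pass to local rings. Let $\m=(p,x_2,\ldots,x_n)\subset C(k)\ol{[\Delta]}$ be the homogeneous maximal ideal. Cohen--Macaulayness of a positively graded ring over a field or a complete DVR is detected at this homogeneous maximal ideal, and is unchanged by $\m$-adic completion. Hence (1) is equivalent to Cohen--Macaulayness of $R\coloneqq C(k)\llbracket x_2,\ldots,x_n\rrbracket/\ol{I_\Delta}$. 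Likewise (2) is equivalent to Cohen--Macaulayness of $k\llbracket x_1,\ldots,x_n\rrbracket/I_\Delta$.

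Next I will produce the tower. I will use the complete local version of the construction of Ishiro--Shimomoto (\cite[\S4]{IS25}), or equivalently the gluing description of \cref{ex:PB}. Its levels are $R_i=C(k)[p^{1/p^i}]\llbracket x_2^{1/p^i},\ldots,x_n^{1/p^i}\rrbracket/(\text{$p^{1/p^i}$-monomials for non-faces})$. Each $R_i$ is a complete Noetherian local ring, so condition \textbf{(e)} holds and we have a Noetherian local perfectoid tower arising from $(R,p)$. Its $0$-th small tilt is $R^{s.\flat}\cong k\llbracket x_1,\ldots,x_n\rrbracket/I_\Delta$, with $x_1$ corresponding to $p$. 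This is the local analogue of \cite[Theorem 4.6, Proposition 4.26]{IS25}.

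Alternatively, one can avoid recomputing the tilt. Take the graded preperfectoid tower from \cref{ex:perfectoid tower} (4) and localize every level at the unique maximal ideal over $\m$. This is a flat base change that preserves \textbf{(a)}--\textbf{(d)}, \textbf{(f)} and \textbf{(g)}, in the manner of \cref{lem:insepBC}, and it gives \textbf{(e)}. On the tilt side it replaces $k[\Delta]$ by its localization at the homogeneous maximal ideal, because $R_i^{s.\flat}/I_0^{s.\flat}\cong R_i/I_0R_i$ identifies the maximal ideals.

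With the tower in hand, \cref{thm:tilt-CM} (2) for $i=0$ gives that $R$ is Cohen--Macaulay if and only if $R^{s.\flat}$ is. Combined with the graded-to-local reductions above, this proves (1)$\Leftrightarrow$(2).

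The main obstacle is the second step: making sure that the tower on $C(k)\ol{[\Delta]}$ becomes a genuine \emph{perfectoid} (Zariskian, Noetherian local) tower after localization or completion, and that its small tilt is really the corresponding localization or completion of $k[\Delta]$. I would try localization first, since \textbf{(b)} and \textbf{(f-2)} commute with flat base change and the Frobenius projections are compatible with it. If compatibility of the torsion bijections in \textbf{(g)} causes trouble, I would instead use the fiber-product description together with \cref{prop:gluetower} and \cref{lem:PBZar} (2) to obtain \textbf{(e)} directly.
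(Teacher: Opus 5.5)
Your strategy is the paper's own: the paper gives no separate proof, deducing the corollary directly from \cref{thm:tilt-CM}~(2) applied to the tower of \cref{ex:perfectoid tower}~(4). Your reduction from the graded ring to its localization or completion at $\m=(p,x_2,\ldots,x_n)$, via the ${}^*$local criterion, makes explicit a step the paper leaves implicit. That step is needed because the example only asserts a \emph{pre}perfectoid tower.

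For the localization variant you need not worry about \textbf{(g)}. Since $R\to R_\m$ is weakly \'{e}tale, \cref{thm:bc} applies directly, whereas \cref{lem:insepBC} only gives \textbf{(a)}--\textbf{(c)}. Also, a small tilt is always $I_0^{s.\flat}$-adically complete. So the tilt of the localized tower is the $x_1$-adic completion of $k[\Delta]_\m$, not $k[\Delta]_\m$ itself. This is harmless, because that completion is faithfully flat over $k[\Delta]_\m$ with closed fiber $k$, so Cohen--Macaulayness is unchanged.

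The genuine problem is the explicit tower you write down. With $C(k)$ at every level and inclusions as transition maps, it is a perfectoid tower only when $k$ is perfect, but the corollary allows any field of characteristic $p$. Suppose $F_0\colon R_1/pR_1\to R_0/pR_0$ were surjective. Then it would be local and induce a surjection on residue fields $k\to k$. However, $\ol{t_0}$ is the identity on the coefficient field $k$, so $F_0(\lambda)=\lambda^p$ for $\lambda\in k$, and the residue map is the Frobenius of $k$. Hence \textbf{(d)} fails for imperfect $k$, and \cref{thm:tilt-CM} cannot be applied to this tower.

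The fix is to twist the coefficients. Use a Cohen ring $C(k^{1/p^i})$ at level $i$, with transition maps lifting $k\subset k^{1/p}\subset\cdots$; equivalently, let the transition maps act on $C(k)$ through a Frobenius lift. Alternatively, simply take the tower of \cref{ex:perfectoid tower}~(4) as given. With the twisted tower, the assignment $\lambda\mapsto\lambda^{p^i}$, $p^{1/p^i}\mapsto y_1$, $x_j^{1/p^i}\mapsto y_j$ identifies $R_i/pR_i$ with $k\llbracket y_1,\ldots,y_n\rrbracket/((y_1^{p^i})+I_\Delta)$. Under this identification the Frobenius projections become the canonical surjections, so the small tilt is $k\llbracket x_1,\ldots,x_n\rrbracket/I_\Delta$ as required. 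The rest of your argument then goes through unchanged.
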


\begin{example}
Let $\Delta$ be a triangulation of the real projective plane $\R\mathbb{P}^2$. Then, by Reisner's criterion, one can check that $\F_p[\Delta]$ is Cohen--Macaulay if and only if $p\neq 2$ (\cite[Exercise 5.3.14]{BH}). Hence $\Z_p\ol{[\Delta]}$ is Cohen--Macaulay if and only if $p\neq 2$.
\end{example}

Let $(R,\m,k)$ be a Noetherian local ring. The (\emph{Cohen--Macaulay}) \emph{type} $r(R)$ is an invariant which refines the information given by the depth. For example, $R$ is Gorenstein if and only if it is Cohen--Macaulay of type $1$.
On the other hand, the Koszul complex $K_\bullet(\bm{x})$ of a minimal basis $\bm{x}$ of $\m$ is determined by $R$ up to isomorphism. This justifies the notation $K_\bullet(R)\coloneqq K_\bullet(\bm{x})$. For an integer $q$, the number $\varepsilon_q(R)\coloneqq \dim_kH_q(R)$ is called the \emph{$q$-th deviation of $R$}. As usual, we say that $R$ is \emph{complete intersection} if $\varepsilon_1(R)=\edim(R)-\dim(R)$.

If $R$ is Cohen--Macaulay, its type is given by the dimension of the top homology of $K_\bullet(R)$. This result is well-known to experts. As the author was not able to find it in some standard references or books, we decided to give a proof.

\begin{lemma}
\label{lem:typeKos}
Let $(R,\m,k)$ be a Cohen--Macaulay local ring.
  \begin{enumerate}
  \item $H_q(R)=0$ for all $q>\edim(R)-\dim(R)$.
  \item $r(R)=\varepsilon_{\edim(R)-\dim(R)}(R)=\dim_k H_{\edim(R)-\dim(R)}(R)$.
  \end{enumerate}
\end{lemma}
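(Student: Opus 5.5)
Let $d=\dim(R)$ and $e=\edim(R)$. The plan is to reduce to the Artinian case by cutting down with a maximal regular sequence that is part of a minimal basis of $\m$. After that, both Koszul homology and type can be computed inside the socle.

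First choose $y_1,\ldots,y_d\in\m\setminus\m^2$ forming an $R$-regular sequence that extends to a minimal basis $\bm{x}=y_1,\ldots,y_d,z_1,\ldots,z_{e-d}$ of $\m$. Such a sequence exists by prime avoidance, after passing to $R[T]_{\m R[T]}$ if $k$ is finite. That extension is faithfully flat and preserves $\dim$, $\edim$, Cohen--Macaulayness, type, and Koszul homology dimensions, so we may assume $k$ infinite. At each step we avoid the associated primes of $R/(y_1,\ldots,y_j)$ together with $\m^2+(y_1,\ldots,y_j)$. Here $\m\not\subset\m^2+(y_1,\ldots,y_j)$ as long as $j<e$, which holds since $d\le e$.

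Set $\bar R=R/(\bm{y})$, an Artinian local ring. Its maximal ideal $\bar\m$ is minimally generated by the images $\bar{\bm z}$ of the $z$'s, so $\edim(\bar R)=e-d$. Since $\bm y$ is $R$-regular, iterating \cref{rem:pi} (2) gives a quasi-isomorphism $K_\bullet(\bm{x};R)\simeq K_\bullet(\bar{\bm z};\bar R)$. Hence $H_q(R)\cong H_q(\bar{\bm z};\bar R)$. Also $r(R)=r(\bar R)$, since type is preserved modulo a regular sequence ($\mathrm{Ext}^{d}_R(k,R)\cong\Hom_{\bar R}(k,\bar R)$).

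Now $K_\bullet(\bar{\bm z};\bar R)$ has length $e-d$, so $H_q=0$ for $q>e-d$, which proves (1). For (2), the top homology is
\[
H_{e-d}(\bar{\bm z};\bar R)=\{a\in\bar R \mid \bar z_j a=0 \text{ for all } j\}=(0:_{\bar R}\bar\m)=\mathrm{Soc}(\bar R)\cong\Hom_{\bar R}(k,\bar R).
\]
Its $k$-dimension is $r(\bar R)=r(R)$, so $\varepsilon_{e-d}(R)=r(R)$.

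The only delicate point is the choice of $\bm y$ outside $\m^2$: we need a regular sequence that is simultaneously part of a minimal generating set. I would handle this by the prime avoidance argument above, including the residue-field extension.
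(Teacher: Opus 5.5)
Your argument is correct, but it takes a different route from the paper.

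\textbf{The paper's route.} The paper passes to the completion and writes $R=S/\fraka$ with $(S,\n,k)$ regular and $\fraka\subset\n^2$. It then identifies $K_\bullet(R)$ with $R\otimes^{\bbL}_S k$, so that $H_q(R)=\Tor^S_q(R,k)$ are the Betti numbers of $R$ over $S$.
\begin{itemize}
\item Part (1) is then Auslander--Buchsbaum: $\pd_S(R)=\edim(R)-\dim(R)$.
\item Part (2) comes from dualizing a minimal $S$-free resolution of $R$ to get a minimal resolution of the canonical module $\omega_R$. Hence $r(R)=\mu_R(\omega_R)$ equals the last Betti number.
\end{itemize}

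\textbf{Your route.} You cut down by a maximal $R$-regular sequence that is part of a minimal basis of $\m$. You use \cref{rem:pi} (2) to pass to the Artinian ring $\bar R$, which has $\edim(\bar R)=e-d$.
\begin{itemize}
\item Part (1) follows from the length of the Koszul complex over $\bar R$.
\item Part (2) follows from the identification of the top Koszul homology with $(0:_{\bar R}\bar\m)$, combined with $\mathrm{Ext}^d_R(k,R)\cong\Hom_{\bar R}(k,\bar R)$.
\end{itemize}

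\textbf{Comparison.} Your approach is more elementary: it needs no completion, no Cohen structure theorem, no canonical module and no Auslander--Buchsbaum. It also makes transparent exactly where the Cohen--Macaulay hypothesis enters, namely in the existence of a regular sequence of length $d$. The paper's approach is more structural. It interprets every $\varepsilon_q(R)$ as a Betti number of $R$ over a regular presentation and the type as the last one, which is the natural framework for the Gorenstein and complete intersection statements that follow.

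\textbf{A small simplification.} The passage to $R[T]_{\m R[T]}$ is unnecessary, though your verification that it preserves all invariants is fine. Prime avoidance tolerates up to two non-prime ideals in the union. Here the only non-prime one is $\m^2+(y_1,\ldots,y_j)$. The associated primes of the Cohen--Macaulay ring $R/(y_1,\ldots,y_j)$, of dimension $d-j\geq 1$, are minimal and hence differ from $\m$. So a suitable $y_{j+1}$ exists for any residue field.
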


\begin{proof}
We may assume that $R$ is complete and of the form $R=S/\fraka$, where $(S,\n,k)$ is a regular local ring with $\fraka\subset\n^2$. Choose a regular system of parameters (that is a minimal basis of $\n$) $\bm{y}=y_1,\ldots,y_e$. Then the images $x_i$ of $y_i$ form a minimal basis $\bm{x}=x_1,\ldots,x_e$ of $\m$. Note that $R\otimes_S^\bbL k = R\otimes_S K_\bullet(\bm{y}) = K_\bullet(\bm{x})$.

(1) By the Auslander--Buchsbaum formula, the projective dimension of $R$ over $S$ is given by
\[
g\coloneqq \pd_S(R)=\dim(S)-\dim(R)=\edim(R)-\dim(R).
\]
The assertion follows from this.

(2) Let $F_\bullet=(F_g\to F_{g-1}\to\cdots\to F_0)$ be a minimal free $S$-resolution of $R$. Then the $S$-duals $G_q\coloneqq\Hom_S(F_{g-q},S)$ of $F_{g-q}$ form a minimal free $S$-resolution $G_\bullet=(G_g\to G_{g-1}\to\cdots\to G_0)$ of the canonical module $\omega_R$ (\cite[Corollary 3.3.9]{BH}). Hence
\[
r_R(R)=\mu_R(\omega_R)=\mu_S(\omega_R)=\dim_k(\omega_R\otimes_Sk)=\rk_S(G_0) = \rk_S(F_g) = \dim_k\Tor^S_g(R,k).
\]
(cf.\ \cite[Propositions 3.3.11 (c) (ii) and 1.3.1 (c)]{BH}). This confirms the assertion.
\end{proof}

Then one can readily establish the following theorem.

\begin{theorem}
\label{thm:tilt-Gor}
Let $\textrm{{\boldmath $R$}}=\{R_i,t_i\}_{i\geq 0}$ be a Noetherian local perfectoid tower arising from a pair $(R,I_0)$. Assume that the residue field of $R$ is perfect.
  \begin{enumerate}
  \item For any $q\in\Z$, we have the equalities
  \[
  \begin{tikzcd}[sep=small]
  \varepsilon_q(R) \rar[equal] \dar[equal] & \varepsilon_q(R_0) \rar[equal] \dar[equal] & \varepsilon_q(R_1) \rar[equal] \dar[equal] & \cdots \rar[equal] & \varepsilon_q(R_i) \rar[equal] \dar[equal] & \cdots \\
  \varepsilon_q(R^{s.\flat}) \rar[equal] & \varepsilon_q(R_0^{s.\flat}) \rar[equal] & \varepsilon_q(R_1^{s.\flat}) \rar[equal] & \cdots \rar[equal] & \varepsilon_q(R_i^{s.\flat}) \rar[equal] & \cdots.
  \end{tikzcd}
  \]
  \item Let $r$ be a positive integer. For each $i\geq 0$, the following conditions are equivalent.
    \begin{enumerate}
    \item $R_i$ is Cohen--Macaulay of type $r$ \emph{(}resp.\ Gorenstein, complete intersection\emph{)}.
    \item $R_i^{s.\flat}$ is Cohen--Macaulay of type $r$ \emph{(}resp.\ Gorenstein, complete intersection\emph{)}.
    \end{enumerate}
  Moreover, if these equivalent conditions are satisfied for some $i\geq 0$, then they are satisfied for every $i\geq 0$.
  \end{enumerate}
\end{theorem}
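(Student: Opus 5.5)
The plan is to use \cref{thm:tilt-Kos} to compare Koszul complexes on minimal bases of the maximal ideals. That comparison gives (1); part (2) then follows from the characterizations through $\varepsilon_q$, $\dim$, $\edim$, $\depth$ and \cref{lem:typeKos}.

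\textbf{Step 1: matching minimal bases.} Fix $i\geq 0$ and write $k$ for the residue field of $R_i$. The residue field of $R_i$ is a purely inseparable extension of that of $R$; it is perfect because it is isomorphic via Frobenius projections to that of $R$. The residue field of $R_i^{s.\flat}$ is identified with $k$ through $R_i^{s.\flat}/I_0^{s.\flat}R_i^{s.\flat}\cong R_i/I_0R_i$. To apply \cref{thm:tilt-Kos} we need a minimal basis $\bm{x}=x_1,\ldots,x_n$ of $\m_i$ with $I_0R_i=(x_1)$, together with a matching sequence $\bm{x}^{s.\flat}$ lifting the residues $x_k \bmod I_0R_i$, with $I_0^{s.\flat}R_i^{s.\flat}=(x_1^{s.\flat})$. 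The obstruction is that a generator of $I_0R_i$ need not be part of a minimal basis of $\m_i$. This happens when $I_0R_i\subset\m_i^2$, in which case $x_1$ is redundant.

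I split into the two cases of \cref{lem:m2} and \cref{thm:tilt-regular} (2). These conditions are equivalent for $R_i$ and $R_i^{s.\flat}$. By Krull's theorem we may also assume $I_0R_i\neq 0$, i.e.\ $I_0\not\subset$ the kernel of $R\to R_i$; otherwise the tower is perfect and $R_i\cong R_i^{s.\flat}$.

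\begin{itemize}
\item If $I_0R_i\not\subset\m_i^2$, extend $x_1$ to a minimal basis $\bm{x}$ of $\m_i$. Choose lifts $x_k^{s.\flat}$ and a generator $x_1^{s.\flat}$ of $I_0^{s.\flat}R_i^{s.\flat}$. By \cref{thm:tilt-regular} (1), (2), $\bm{x}^{s.\flat}$ is a generating set of $\m_i^{s.\flat}$ of size $\edim$, hence a minimal basis.
\item If $I_0R_i\subset\m_i^2$, take a minimal basis $\bm{x}'$ of $\m_i$, which also gives a minimal basis of $\m_i/I_0R_i$. Consider the sequence $\bm{x}=x_1,\bm{x}'$. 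Then $K_\bullet(\bm{x};R_i)\cong K_\bullet(x_1;R_i)\otimes K_\bullet(\bm{x}';R_i)$. Since $x_1\in(\bm{x}')$, the homology of this complex is $H_\bullet(\bm{x}';R_i)\otimes\bigwedge(k^{1})$. Concretely, $H_q(\bm{x})\cong H_q(\bm{x}')\oplus H_{q-1}(\bm{x}')$, because a Koszul complex on an element of the ideal generated by the others splits. The same holds on the tilt side. Induction on $q$ then recovers $\dim_k H_q(\bm{x}')$ from $\dim_k H_q(\bm{x})$.
\end{itemize}

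\textbf{Step 2: comparing homology dimensions.} \cref{thm:tilt-Kos} gives $H_q(\bm{x};R_i)\cong H_q(\bm{x}^{s.\flat};R_i^{s.\flat})$ as abelian groups. To pass to $k$-dimensions, I would check that the isomorphism is $k$-semilinear for a Frobenius power. Alternatively, I would use that both sides are finite-length modules with matching structure over $R_i/I_0R_i$. The proof of \cref{lem:Kos} is built from $R$-linear maps, with $R$ taken as the common ring over which $\Phi$ and $\Phi_{\mathrm{tor}}$ are linear. I would rerun \cref{lem:Kos} with the base ring replaced by $R_i/I_0R_i\cong R_i^{s.\flat}/I_0^{s.\flat}R_i^{s.\flat}$. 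The $\m$-adic parts of the Koszul homology are annihilated by $(\bm{x})\ni x_1$, so the homology modules are modules over $R_i/I_0R_i$, and hence over $k$ after killing $\m$. Making this linearity precise is the main obstacle. \cref{lem:Kos} was stated only in $D(R)$, and the middle terms $K_\bullet(\bm{x};\wt{A})$ are not $R_i/I_0R_i$-modules. So I must argue on homology through the long exact sequences, which are $R_i$-linear. The perfectness of $k$ enters here: any Frobenius-twisted identification still preserves $k$-dimension. Combined with $\varepsilon_q(R_i)=\varepsilon_q(R_{i+1})$, which follows by the same argument applied via the shifted tower and $R^{s.\flat}\cong R_1^{s.\flat}$, this gives (1).

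\textbf{Step 3: part (2).} Cohen--Macaulayness transfers by \cref{thm:tilt-CM}. The type is $\varepsilon_{\edim-\dim}$ by \cref{lem:typeKos}, and $\edim$, $\dim$ are tilting invariant by \cref{prop:tilt-dim} and \cref{thm:tilt-regular}, so type $r$ transfers. Gorenstein is the case $r=1$. Complete intersection means $\varepsilon_1=\edim-\dim$, which is preserved by (1) and the same invariants. Propagation along $i$ follows because all of these invariants are constant in $i$.
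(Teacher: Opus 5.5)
Your Steps 1 and 3 agree with the paper. That covers the split according to whether $I_0\subset\m^2$, and in the second case padding a minimal basis by a generator of $I_0$ and using $H_q(f_0,\bm{x})\cong H_q(\bm{x})\oplus H_{q-1}(\bm{x})$. It also covers deducing (2) from \cref{lem:typeKos}, \cref{prop:tilt-dim}, \cref{thm:tilt-regular} and \cref{thm:tilt-CM}.

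The gap is Step 2. This is the only part of (1) that goes beyond \cref{thm:tilt-Kos}, and you leave it open. An isomorphism in $D(\Ab)$ only gives abelian-group isomorphisms $H_q(\bm{x};R_i)\cong H_q(\bm{x}^{s.\flat};R_i^{s.\flat})$. For infinite $k$ this says nothing about $k$-dimension: for example, $\ol{\F}_p$ and $\ol{\F}_p^{\,2}$ are isomorphic abelian groups. The isomorphism of \cref{lem:Kos} is produced by completing a morphism of distinguished triangles, so nothing forces it to be semilinear for any field map. Your remark that ``any Frobenius-twisted identification still preserves $k$-dimension'' therefore presupposes a semilinearity you never construct. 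Running \cref{lem:Kos} over $R_i/I_0R_i$ fails, as you note yourself. Arguing through the long exact sequences does not help by itself either: they are $R_i$-linear on one side and $R_i^{s.\flat}$-linear on the other. To compare dimensions, the comparison maps must be linear over a common ring acting on both sides through their residue fields, and you never produce such a ring.

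The missing idea is to give $R$ and $R^{s.\flat}$ a common coefficient ring compatible with $\Phi^{(0)}_0$; this is where perfectness of $k$ is actually used. The paper does this in the following steps.
\begin{itemize}
\item It passes to completions; the completed tower is still a perfectoid tower by flatness of completion.
\item It fixes $W(k)\to R$ lifting $\id_k$.
\item It checks that the Frobenius projections are compatible with $\varphi_k$ on the induced maps $k\to R_j/I_0R_j$.
\item Since $k$ is perfect, this gives $k=\varprojlim_{\varphi}k\to R^{s.\flat}$.
\end{itemize}
The composite $W(k)\to k\to R^{s.\flat}$ then makes the isomorphism $R^{s.\flat}/I_0^{s.\flat}\cong R/I_0$ $W(k)$-linear. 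By injectivity of $\varphi_{I_0,R}$, the torsion isomorphism is $W(k)$-linear as well. So \cref{lem:Kos} applies with base ring $W(k)$ and gives $K_\bullet(R)\cong K_\bullet(R^{s.\flat})$ in $D(W(k))$.

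The homology is killed by $p$, so this is an isomorphism of $k$-vector spaces, with the $k$-structure on each side being the residue-field structure. That yields $\varepsilon_q(R)=\varepsilon_q(R^{s.\flat})$, and the rest of your outline then goes through.
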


\begin{proof}
(1) Since the absolute Frobenius is compatible with completion and the completion of a Noetherian local ring is (faithfully) flat, the tower $\wh{R}=\wh{R_0}\to\wh{R_1}\to\cdots$ consisting of completions is again a perfectoid tower arising from $(\wh{R},I_0\wh{R})$. Hence we may assume that every local ring $R_i$ is complete.

We fix once for all a ring homomorphism $W(k)\to R$ lifting the identity map on the residue field $k$ of $R$. Then we can see that the diagram of $W(k)$-algebras
\[
\begin{tikzcd}
k \rar["\varphi"] \dar & k \dar \\
R_{i+1}/I_0R_{i+1} \rar["F_i"'] & R_i/I_0R_i
\end{tikzcd}
\]
commutes for any $i\geq 0$, and thus we get a homomorphism $k\to R^{s.\flat}$ because $k$ is perfect. Consequently, both $R$ and $R^{s.\flat}$ have the same base ring $W(k)$ as follows:
\[
\begin{tikzcd}
W(k) \rar \dar \ar[rrr,bend left] & R \rar & R/I_0 \rar & k \\
k \rar \ar[rrr,bend right,equal] \ar[rru] & R^{s.\flat} \rar \ar[ru] & R^{s.\flat}/I_0^{s.\flat} \uar["\cong"] \rar & k \uar[equal]
\end{tikzcd}
\]
Let $\m$ and $\m^{s.\flat}$ denote the maximal ideals of $R$ and $R^{s.\flat}$, respectively.

\textsc{Step 1}. The case $I_0\not\subset\m^2$. Then $I_0^{s.\flat}\not\subset(\m^{s.\flat})^2$ by \cref{thm:tilt-regular} (2). Thus we can take a minimal basis $\bm{x}=x_1,\ldots,x_n$ and $\bm{x}^{s.\flat}=x_1^{s.\flat},\ldots,x_n^{s.\flat}$ of $\m$ and $\m^{s.\flat}$, respectively, as in \cref{thm:tilt-Kos}. Then $K_\bullet(R)\cong K_\bullet(R^{s.\flat})$ in $D(W(k))$. Since the $q$-th homologies of the both sides are annihilated by $p$, we have $H_q(R)\cong H_q(R^{s.\flat})$ as $k$-vector spaces, as desired.

\textsc{Step 2}. The case $I_0\subset \m^2$. Then $I_0^{s.\flat}\subset(\m^{s.\flat})^2$ by \cref{thm:tilt-regular} (2). Thus minimal basis of $\m$ (resp.\ $\m^{s.\flat}$) is the same as that of $\m/I_0$ (resp.\ $\m^{s.\flat}/I_0^{s.\flat}$). Hence we can take a minimal basis $\bm{x}=x_1,\ldots,x_n$ and $\bm{x}^{s.\flat}=x_1^{s.\flat},\ldots,x_n^{s.\flat}$ of $\m$ and $\m^{s.\flat}$, respectively, such that $\Phi_0^{(0)}(x_j^{s.\flat})=x_j\ \mathrm{mod}\ I_0$ for all $1\leq j\leq n$. Choose a generator $f_0$ and $f_0^{s.\flat}$ of $I_0$ and $I_0^{s.\flat}$, respectively, and set $\bm{x}'\coloneqq f_0,x_1,\ldots,x_n$ and ${\bm{x}^{s.\flat}}'\coloneqq f_0^{s.\flat},x_1^{s.\flat},\ldots,x_n^{s.\flat}$. Then, as in \textsc{Step 1}, one has $H_q(\bm{x}')\cong H_q({\bm{x}^{s.\flat}}')$ as $k$-vector spaces for all $q\in\Z$. But
\[
H_q(\bm{x}')\cong H_q(\bm{x})\oplus H_{q-1}(\bm{x}),\quad H_q({\bm{x}^{s.\flat}}')\cong H_q(\bm{x}^{s.\flat})\oplus H_q(\bm{x}^{s.\flat})
\]
by \cite[Proposition 1.6.21]{BH}, and thus we get $\dim_kH_q(\bm{x})=\dim_kH_q(\bm{x}^{s.\flat})$ by induction on $q\geq 0$.

We therefore proved $\varepsilon_q(R)=\varepsilon_q(R^{s.\flat})$. By a similar argument one can also show the equality $\varepsilon_q(R_i)=\varepsilon_q(R_i^{s.\flat})$ and $\varepsilon_q(R_i)=\varepsilon_q(R_{i+1})$ for all $i\geq 0$.

(2) follows from \cref{thm:tilt-CM}, \cref{lem:typeKos} (2), \cref{prop:tilt-dim}, \cref{thm:tilt-regular} (1), and (1).
\end{proof}

%%%%%%%%%%%%%%%%%%%%%%%%%%%%%%%%%%%%%%%%%%%%%%%%%%%%%%%%%%%%%%%%%%%%%%%%%%%%%%%%%%%%%%%%%%%%%%%
%%%%%%%%%%%%%%%%%%%%%%%%%%%%%%%%%%%%%%%%%%%%%%%%%%%%%%%%%%%%%%%%%%%%%%%%%%%%%%%%%%%%%%%%%%%%%%%

\end{document}